\documentclass[11pt]{article}
%Created on 4/10/07.
%\usepackage[OT1]{fontenc}
\usepackage{amsthm,amsmath,amssymb}
\usepackage[numbers,sort]{natbib}
\usepackage{multirow}
\usepackage{caption}
\usepackage{subcaption}
\usepackage{float}
\usepackage{makecell}
\usepackage{booktabs}
\usepackage{array}
\usepackage{fullpage}
\usepackage{url}
\usepackage{algorithm}
\usepackage{algorithmic}
\usepackage{bm}
\usepackage{authblk}
\usepackage{cancel}
\usepackage{bbm}
\usepackage{smile}
\usepackage{mathtools}
\usepackage{lipsum}
\usepackage{mathrsfs}
\usepackage{dsfont}
\usepackage{titling}
\usepackage{epstopdf}
\usepackage{caption}
\usepackage{marvosym}
\usepackage{diagbox}
\usepackage{slashbox}
\usepackage{fullpage}

\usepackage{epsfig}
\usepackage{pst-grad} % For gradients
\usepackage{pst-plot} % For axes
\usepackage[space]{grffile} % For spaces in paths
\usepackage{etoolbox} % For spaces in paths
\makeatletter % For spaces in paths
\patchcmd\Gread@eps{\@inputcheck#1 }{\@inputcheck"#1"\relax}{}{}
\makeatother

\makeatletter
\DeclareRobustCommand\widecheck[1]{{\mathpalette\@widecheck{#1}}}
\def\@widecheck#1#2{%
    \setbox\z@\hbox{\m@th$#1#2$}%
    \setbox\tw@\hbox{\m@th$#1%
       \widehat{%
          \vrule\@width\z@\@height\ht\z@
          \vrule\@height\z@\@width\wd\z@}$}%
    \dp\tw@-\ht\z@
    \@tempdima\ht\z@ \advance\@tempdima2\ht\tw@ \divide\@tempdima\thr@@
    \setbox\tw@\hbox{%
       \raise\@tempdima\hbox{\scalebox{1}[-1]{\lower\@tempdima\box
\tw@}}}%
    {\ooalign{\box\tw@ \cr \box\z@}}}
\makeatother

\newcommand{\T}{^\top}

\usepackage{natbib}
\setlength{\bibsep}{0pt plus 0.75ex}
\usepackage{multirow}

\usepackage[colorlinks=true,
            linkcolor=blue,
            urlcolor=blue,
            citecolor=blue]{hyperref}

\usepackage[utf8]{inputenc}
\definecolor{fondpaille}{cmyk}{0,0,0.1,0}

\newcommand\independent{\protect\mathpalette{\protect\independenT}{\perp}}
\def\independenT#1#2{\mathrel{\rlap{$#1#2$}\mkern2mu{#1#2}}}

\newcommand*{\diam}{\operatorname{diam}}
\newcommand*{\TV}{\operatorname{TV}}

\newcommand*{\Tr}{\operatorname{Tr}}

\newcommand*{\supp}{\mathrm{supp}}

%\pdfminorversion=4

\begin{document}

\title{\huge Minimax Optimal Conditional Independence Testing}
\author{Matey Neykov \and Sivaraman Balakrishnan \and Larry Wasserman}
\date{Department of Statistics \& Data Science\\ Carnegie Mellon University\\Pittsburgh, PA 15213\\[2ex]\texttt{\{mneykov, siva, larry\}@stat.cmu.edu}}

\maketitle

\begin{abstract} 
We consider the problem of conditional independence testing of $X$ and $Y$ given $Z$ where $X,Y$ and $Z$ are three real random variables and $Z$ is continuous. 
We focus on two main cases -- when $X$ and $Y$ are both discrete, and when $X$ and $Y$ are both continuous. In view of recent results on 
conditional independence testing \citep{shah2018hardness}, one cannot hope to design non-trivial tests, which control the type I error for all absolutely continuous conditionally independent 
distributions,
while
still ensuring power against interesting alternatives.
%Two main cases are distinguished ---  $X$ and $Y$ are both discrete, or $X$ and $Y$ are both continuous. 
Consequently, we identify various, natural 
smoothness assumptions on the conditional distributions of $X,Y|Z=z$ as $z$ varies in the support of $Z$, and study the hardness of conditional independence testing under these smoothness
assumptions.
We derive matching lower and upper bounds on the critical radius of separation between the null and alternative hypotheses in the total variation metric. The tests we consider are easily implementable and rely on binning the support of the continuous variable $Z$. To complement these results, we provide a new proof of the hardness result of \citet{shah2018hardness}. 
\end{abstract}

\section{Introduction}
Conditional independence (CI) testing is a fundamental problem, with widespread applications throughout statistics. 
From being a foundation of basic concepts such as sufficiency and ancillarity~\cite{dawid1979conditional}, to its applications in 
estimation and inference for graphical models \cite{margaritis2005distribution,Koller2009Probabilistic} and in 
causal inference and causal discovery \cite{zhang2012kernel, spirtes2000causation, pearl2014probabilistic}, the concept of conditional independence and conditional independence testing play a central role in the fields of statistics, machine learning and related areas. 
A large body of work has focussed on CI testing under the assumption of joint Gaussianity. In this setting 
CI testing corresponds to testing whether certain partial correlations between the variables are zero. 
Since partial correlations are (relatively) easy to estimate, the Gaussian assumption gives a shortcut to CI testing, but if the model is non-Gaussian this can lead to misleading conclusions as variables could be conditionally dependent even with zero partial correlation. 
In practice, the Gaussian assumption is unlikely to hold exactly and many applications call for the additional flexibility provided by nonparametric CI testing.

%An example of CI testing in machine learning includes Bayesian network learning \cite{margaritis2005distribution} where one can perform a series of CI tests to learn the structure of the model. 

%Similarly, in epidemiology, directed acyclic graphs (DAGs) are often used to represent causal relationships. 
%The property of d-separation of two nodes in a DAG can be statistically viewed as a CI statement regarding variables that correspond to the two nodes that are d-separated, and the set of variables that separates the two nodes. In such a setting, it is often assumed that the variables are Gaussian so that CI testing corresponds to testing whether the partial correlations between the variables are zero. Since partial correlations are easy to estimate, the Gaussian assumption gives a shortcut to CI testing, but if the model is non-Gaussian this can lead to misleading conclusions as variables could be conditionally dependent even with a zero partial correlation. In reality the Gaussian assumption is probably an exaggeration, hence the need arises to develop nonparametric tests of CI \cite{textor2016robust}. 

In this paper we consider CI testing from a nonparametric perspective. Following \citet{dawid1979conditional}, given three random vectors $(X,Y,Z) \in \RR^{d_X + d_Y + d_Z}$ we will denote the 
CI of $X$ and $Y$ given $Z$ by $X \independent Y | Z$. In the case when $d_X = d_Y = d_Z = 1$ and $Z$ is a continuous random variable supported on $[0,1]$, we construct nonparametric tests which are capable of testing the null hypothesis $X \independent Y | Z$ versus the alternative $X \not \independent Y | Z$. The variables $X$ and $Y$ are allowed to be either both discrete or both continuous supported on $[0,1]$. It was recently argued in a precise mathematical sense \cite{shah2018hardness} that CI testing is a statistically hard task for absolutely continuous (with respect to the Lebesgue measure) random variables --- namely if one wants to have a test that controls the type I error for all absolutely continuous triplets $(X,Y,Z)$ such that $X \independent Y | Z$, such a test cannot have power against any alternative. This discouraging result demystified the fact that despite a large body of literature on the subject, no fully satisfactory CI tests had been developed for continuous random variables. 

Concurrently with the paper of Shah and Peters \cite{shah2018hardness}, the work of \citet{canonne2018testing} constructed tests for CI of \emph{discrete} distributions $(X,Y,Z)$ which are minimax optimal in certain regimes. 
Part of the effort of this paper is devoted to extending the ideas of \citet{canonne2018testing} to the case when $Z$ is an absolutely continuous random variable on $[0,1]$.

In order to characterize the difficulty of CI testing in this setting we adopt the minimax perspective \cite{ingster1982minimax,ingster2003nonparametric}. Naturally, 
if an alternative distribution is 
very close to a null distribution (in a certain metric such as the total variation metric) 
it will be very difficult to test for CI given a finite number of $n$ samples. 
By discarding distributions under the alternative that are ``$\varepsilon_n$-close'' 
to the null hypothesis we are able to set up a well-defined testing problem. The goal in minimax hypothesis testing is then to characterize the optimal ``critical radius'' $\varepsilon_n$, i.e. the smallest $\varepsilon_n$ 
at which it is possible to reliably distinguish the null from the $\varepsilon_n$-separated alternative, as a function of the sample size $n$. %In other words, we are interested in a minimal separation $\varepsilon_n$ of the null and alternative which allows for testing CI even in the worst case of null and alternative distributions that one can come up with. 
%In view of the CI hardness results of \citet{shah2018hardness} however, 
This standard step of discarding ``near-null distributions'' is insufficient 
as one cannot hope to design a non-trivial test which controls the type I error for \emph{all} conditionally independent absolutely continuous triplets \cite{shah2018hardness}. 
In order to make the problem of CI testing well-posed we further impose certain natural 
smoothness assumptions on the conditional distributions of $X,Y|Z=z$ as $z$ varies in the support of $Z$, and establish upper and lower bounds on the critical radius of 
conditional independence testing under these smoothness
assumptions. 
%We do this by removing certain (pathological) distributions from the null and alternative hypothesis respectively. In particular we impose custom tailored smoothness assumptions on the conditional distributions of $X,Y | Z = z$ as $z$ varies in the support of $Z$. 
%We now review some of the relevant literature on CI testing.  

\subsection{Related Work}

%Recently there has been a lot of work on CI testing. 

As we mentioned earlier, there is a large body of work on independence and CI testing. We focus our review on the literature most relevant to our approach. 
It is worth noting that almost all relevant works considered here, with the notable exception of \citet{canonne2018testing} who consider minimax CI testing for discrete distributions, do not take a minimax perspective to the problem. 
We are not aware of tests that achieve the minimax rates for testing CI with a continuous random variable $Z$ other than the ones that we develop in this paper. In addition, we would like to note that the ideas introduced by Cannone et al. \cite{canonne2018testing} are instrumental in the development of the minimax rates in the present work. In particular \cite{canonne2018testing} offer a variety of results in the discrete $X,Y,Z$ conditional independence testing, including lower and upper bounds on the sample complexity. We borrow key constructs from this work, particularly an unbiased estimator of the $L^2_2$ distance, and tools to analyze its variance and expectation under Poisson sampling in order to come up with upper bounds for our estimators.  

Given knowledge of the conditional distribution of $X | Z$, \citet{berrett2018conditional} develop a permutation-based test for testing the null hypothesis of CI. 
We note that from a minimax perspective knowing $X|Z$ changes the problem of CI testing significantly and we do not address this CI testing variant here. 
The works \cite{bergsma2010nonparametric, bergsma2004testing} propose a partial copula approach, which needs estimators of the conditional distributions of $X|Z$ and $Y|Z$. Since estimation is typically more costly than testing, we anticipate that such a procedure does not attain minimax optimal rates for the critical radius. In a setting different from the present paper, \citet{song2009testing} proposes a CI test for two variables given a single index of a random vector via ``Rosenblatt transforms'', which are multivariate extensions of the probability integral transform. The techniques in this work also involve estimation of certain conditional distributions via kernel smoothing. Huang \cite{huang2010testing} proposes a nonparametric CI test using the so called maximal nonlinear conditional correlation. The author proves that under the null hypothesis given that certain conditions hold, the test achieves asymptotic normality. This work once again requires kernel smoothed estimates of certain conditional expectations and is therefore unlikely to result in minimax optimal tests of CI. In an interesting paper, Gy\"{o}rfy and Walk \cite{gyorfi2012strongly}, extend the independence testing results of \citet{gretton2010consistent} to the CI case, and propose strongly consistent nonparametric tests. We believe however that there is a gap 
in one of the proofs of this work, which would otherwise seem to contradict the CI hardness results of \citet{shah2018hardness}. In particular, in the proof of Theorem 1 of \citet{gyorfi2012strongly}, it is claimed that the following expression is $0$
\begin{align*}
\bigg|\PP(X \in A, Y \in B, Z \in C) - \frac{\PP(X \in A, Z \in C)\PP(Y \in A, Z \in C)}{\PP(Z \in C)}\bigg|,
\end{align*}
under the null hypothesis of independence, where $A,B,C$ are elements of a partition of the domains of $X,Y,Z$ respectively. Note that this need not hold in general for conditionally independent distributions since averaging over $Z$ does not necessarily preserve independence. In fact, this is one of the major complications that we have to deal with in our proofs.

\citet{patra2016nonparametric} design a novel nonparametric residual between a random variable and a random vector and use it to develop tests of CI with the help of the bootstrap. 
 An innovative approach to nonparametric CI testing using a nearest neighbor bootstrap and converting the testing problem to a classification problem was recently proposed by \citet{sen2017model}. \citet{fukumizu2008kernel} give a measure of CI of random variables, based on normalized cross-covariance operators on reproducing kernel Hilbert spaces. Different reproducing kernel based methods were proposed by \citet{zhang2012kernel} and \citet{doran2014permutation} respectively. The recent work of Shah and Peters \cite{shah2018hardness}, along with the hardness result, proposes CI tests based on the so called generalized covariance measure which is a measure related to the normalized residuals of regressing $X$ and $Y$ on $Z$. In another recent paper, \citet{azadkia2019simple} propose a novel measure of CI which takes values in $[0,1]$, where the measure takes the value $0$ when the variables are conditionally independent, and is equal to $1$ when the $Y$ is a measurable function of $X$ given $Z$. %They use this measure of CI to propose CI tests, which may not be practical since it is unclear how to select the threshold for the tests form the data. 
 
There is also a significant amount of work on CI testing in the econometrics literature (see for instance~\cite{su2007consistent, su2008nonparametric, su2014testing, wang2018characteristic}).
%as representative publications in that area. 
Su and White \cite{su2008nonparametric} give a Hellinger distance based approach to CI testing, which employs a plug in based estimate using kernel smoothed estimates of the joint and conditional densities of $X,Y,Z$. In follow-up work, Su and White \cite{su2007consistent} propose estimating a functional involving the difference of two conditional characteristic functions. They show asymptotic normality under the null hypothesis and explore the power of the test based on this estimator under local alternatives. In the work \cite{su2014testing} the authors propose an empirical likelihood based approach to CI testing. \citet{wang2018characteristic} develop a new test based on characteristic functions, which achieves faster rates against certain local alternatives in comparison to the test developed by \citet{su2007consistent}. 

So far we have discussed works which focus on nonparametric CI testing in the continuous case. It is noteworthy that there are also numerous CI tests in the discrete case as well. See for example the works 
\cite{rosenbaum1984testing, agresti1992survey, yao1993exact, canonne2018testing} as well as references therein.

\subsection{Summary of Results}
\label{sec:summary}

We will now informally summarize the main findings of our work. For the most part this paper is focused on the following two cases:
\begin{enumerate}
\item When $X$ and $Y$ are discrete supported on $[\ell_1]\times [\ell_2]$ for some integers $\ell_1, \ell_2$ (here $[\ell_1] = \{1,2,\ldots,\ell_1\}$ and similarly for $[\ell_2]$), and when $Z$ has an 
absolutely continuous (with respect to the Lebesgue measure) distribution supported on $[0,1]$, 
\item When all three variables $(X,Y,Z)$ have an absolutely continuous (with respect to the Lebesgue measure) distribution supported on $[0,1]$. 
\end{enumerate}
We study the minimax rate for the critical radius $\varepsilon_n$ which we define as  the separation between the null and alternative hypothesis, in the total variation (TV) distance, required to reliably distinguish them. Formally, we consider 
distinguishing,
\begin{align*}
H_0: &~~p_{X,Y,Z}~\mbox{ s.t. }~X \independent Y | Z~~\text{versus}~~ \\
H_1: &~~p_{X,Y,Z}~\mbox{ s.t. }~\inf_{q \mbox{ \scriptsize in } H_0}\|p_{X,Y,Z}-q\|_1 \geq \varepsilon_n. 
\end{align*}
In addition, we remove distributions under $H_0$ and $H_1$ which are not smooth enough, i.e., $p_{X,Y| Z = z}$ is not a smooth function of $z$ (for precise definitions refer to Section~\ref{sec:smoothness}). Given this set-up, our interest is in 
finding the smallest possible $\varepsilon_n$ such that even in the worst-case scenario for distributions under $H_0$ and under $H_1$ the sum of the type I and type II errors can be controlled under a pre-specified threshold.
%We mention along the way that to simplify our calculations our results are shown under the assumption of Poissonization, which is that the sample size $N \sim \operatorname{Poi}(n)$ is random and has a Poisson distribution with mean $n \in \NN$. 

\begin{enumerate}
\item Let us first discuss the case when $X$ and $Y$ are discrete on $[\ell_1]\times [\ell_2]$ where $\ell_1$ and $\ell_2$ are fixed integers which are not allowed to scale with $n$. In this setting we show that 
\begin{align*}
\varepsilon_n \asymp n^{-2/5}.
\end{align*}
That is we show matching minimax lower and upper bounds at the optimal rate of the critical radius which is given by $n^{-2/5}$. Here we use $\asymp$ to mean equal up to a positive absolute constant. 

\item Next, consider the more general case when $\ell_1$ and $\ell_2$ are allowed to scale with $n$. Then we are able to show that 
\begin{align*}
\varepsilon_n \gtrsim \frac{(\ell_1 \ell_2)^{1/5}}{n^{2/5}} \wedge 1,
\end{align*}
and we have a matching upper bound (i.e., a test) whenever, for $\ell_1 \geq \ell_2$, we have $\frac{\ell_1^{4}}{\ell_2} \lesssim n^{3}$. % and $\frac{\sqrt{\ell_1 \ell_2}}{n} \lesssim 1$.
We further show that this latter condition holds whenever $\ell_1 \asymp \ell_2$. Here, and throughout this paper, $\gtrsim$ and $\lesssim$ mean inequalities up to a positive absolute constant.
% and $\frac{\sqrt{\ell_1 \ell_2}}{n} \lesssim 1$.  \sbcomment{I ask this again later but if $\ell_1 \asymp \ell_2$ then isn't the second condition always true/otherwise things are trivial? Remove?} 

\item Finally in the fully continuous case we show that 
\begin{align*}
\varepsilon_n \asymp n^{-2s/(5s + 2)}, 
\end{align*}
where $s$ denotes the H\"{o}lder smoothness parameter of the conditional density $p_{X,Y|Z}$ under the alternative hypothesis. 
\end{enumerate}
\begin{center}
\begin{table}
\begin{tabular}{ rc|c|c| }& \multicolumn{3}{c}{$X,Y$}\\
\hline
\multicolumn{1}{|r|}{}& discrete on $[\ell_1] \times [\ell_2]$, $\ell_1,\ell_2$ fixed &  discrete on $[\ell_1] \times [\ell_2]$ & continuous \\ 
  \hline
 \multicolumn{1}{|r|}{$\varepsilon_n$-Upper Bounds} & $n^{-2/5}$ & $\frac{(\ell_1\ell_2)^{1/5}}{n^{2/5}}$, given $\frac{\ell_1^4}{\ell_2} \lesssim n^3$ & $n^{-2s/(5s + 2)}$ \\ 
 \multicolumn{1}{|r|}{$\varepsilon_n$-Lower Bounds} & $n^{-2/5}$ & $\frac{(\ell_1\ell_2)^{1/5}}{n^{2/5}}$ & $n^{-2s/(5s + 2)}$\\ 
 \hline
\end{tabular}
 \caption{This is a summary of the minimax results obtained in the main text of our paper. }%In addition, in the appendix we provide extensions to the case when $Z\in [0,1]^{d_Z}$ and $X,Y$ are either continuous or discrete with fixed many categories. For more details see Appendix \ref{multivariate:Z:extension}.} 
 \label{table:minimax:rates}
\end{table}
\end{center}

Our results are also summarized in Table \ref{table:minimax:rates}. The tests used to achieve the upper bounds for the above minimax rates, are computationally tractable and we implement them and provide some numerical results. Our tests do not require kernel smoothing. They are rather calculated based on binning the support of $Z$ (and $X$ and $Y$ when they are continuous) into a certain sample-size dependent number of bins. For each $Z$-bin, a (weighted) U-statistic is calculated and the resulting statistics are summed up according to appropriate weighting across the $Z$-bins. Roughly, the U-statistics target the $L_2^2$ distance between $p_{X,Y|Z}$ and $p_{X|Z}p_{Y|Z}$ within each of the $Z$-bins (or in the weighted U-statistic case a distance similar to the chi-square distance between $p_{X,Y|Z}$ and $p_{X|Z}p_{Y|Z}$). This strategy also reveals the need to impose certain smoothness assumptions on the conditional distribution of $p_{X,Y|Z = z}$ in $z$ since otherwise the binning may result in unreliable estimates of the $L_2^2$ distance. 

Along with the aforementioned results we also provide a new proof of the hardness result of Shah and Peters \cite{shah2018hardness}. Our proof is based on a coupling between an arbitrary absolutely continuous distribution and a statistically independent distribution, which bears some resemblance to the coupling used in Lemma 14 of \cite{shah2018hardness}. We use this coupling to show the fact that conditionally independent distributions are Wasserstein dense in the set of all absolutely continuous distributions of bounded support. %Moreover, we extend the proof of \citet{shah2018hardness} to the situation when $X$ and $Y$ are discrete variables and $Z$ is absolutely continuous (this result was suggested by \citet{shah2018hardness} but no proof was provided). The latter fact serves as a motivation to remove certain distributions under the null and under the alternative in the $X$ and $Y$ discrete case. 

\subsection{Organization}

The paper is structured as follows. We present some basic background in Section~\ref{sec:background}. We revisit the hardness results of \citet{shah2018hardness} in Section \ref{hardness:section}. Minimax lower bounds on the critical radius are given in Section \ref{minimax:lower:bounds:section}. Section \ref{upper:bound:section} is devoted to developing tests of CI which match the lower bounds of Section \ref{minimax:lower:bounds:section}. Section \ref{examples:section} gives examples for distributions satisfying the smoothness assumptions we impose in Sections \ref{minimax:lower:bounds:section} and \ref{upper:bound:section}. Section \ref{numerical:experiments:section} provides a brief numerical study, which is meant to show that our nonparametric tests are in fact readily implementable and perform well in practice. Finally a discussion is provided in Section \ref{discussion:section}. 

\section{Background}
\label{sec:background}
In this section, following some basic notation, we present some background on minimax testing and briefly introduce the various smoothness conditions we use in our minimax upper and lower bounds.
\subsection{Notation}
%In this section we outline some commonly used notation. 
We make extensive usage of metrics on probability distributions in this paper. The total variation (TV) metric between two distributions $p,q$ on a measurable space $(\Omega, \cF)$ is defined as
\begin{align*}
d_{\TV}(p,q) = \sup_{A \in \cF} |p(A) - q(A)| = \frac{1}{2}\|p-q\|_1 = \frac{1}{2}\int \bigg|\frac{dp}{d\nu} - \frac{dq}{d\nu}\bigg|d\nu,
\end{align*}
where the last identity assumes $\nu$ is a common dominating measure of $p$ and $q$, i.e., $p \ll \nu$ $q \ll \nu$ and $\frac{d p}{d \nu}, \frac{dq}{d\nu}$ denote the densities of $p$ and $q$ with respect to $\nu$ (note here that $\nu$ can always be taken as $\nu = p + q$). Under the latter assumption one can also define the $L_2$ distance between $p $ and $q$ as
\begin{align*}
\|p - q\|_2 = \bigg[\int\bigg|\frac{dp}{d\nu} - \frac{dq}{d\nu}\bigg|^2 d\nu\bigg]^{1/2}. 
\end{align*}
Assuming that $p \ll q$ we may define the $\chi^2$-divergence between $p$ and $q$ as
\begin{align*}
d_{\chi^2}(p,q) = \int \bigg(\frac{d p}{d q} - 1\bigg)^2 dq.
\end{align*}
If $p \ll q$ fails to hold then we take $d_{\chi^2}(p,q) = \infty$. 

Next we formalize our notation for conditional distributions. If the triplet $(X,Y,Z)$ has a distribution $p_{X,Y,Z}$ we will use $p_{X,Y|Z = z}$ to denote the conditional joint distribution of $X,Y | Z = z$. Additionally $p_{X|Z = z}$ and $p_{Y|Z = z}$ will denote the marginal conditional distributions of $X|Z = z$ and $Y|Z = z$ respectively. The marginal distributions will be denoted with $p_X, p_Y, p_Z$ and joint marginal distributions will be denoted with $p_{X,Y}, p_{Y,Z}, p_{X,Z}$. Furthermore, with a slight abuse of notation, $p_{X,Y|Z}(x,y|z)$ and $p_{X | Z}(x|z)$ and $p_{Y| Z }(y|z)$ will denote the densities of these distributions evaluated at the points $x,y$ and $z$ (or the corresponding probability mass functions when $X$ and $Y$ are discrete).

In addition we will use $\lesssim$ and $\gtrsim$ to mean $\leq$ and $\geq$ up to positive universal constants (which may be different from place to place). If both $\lesssim$ and $\gtrsim$ hold we denote this as $\asymp$. For an integer $n \in \NN$ we use the convenient shorthand $[n] = \{1,2,\ldots,n\}$. %Other notations will be defined as needed within the paper. 

\subsection{Minimax Testing}

In order to characterize the complexity of CI testing we use the minimax testing framework, introduced in the work of Ingster and co-authors \cite{ingster1982minimax,ingster2003nonparametric}, and which has since then been 
considered by many authors (see for instance \cite{balakrishnan2017hypothesis,balakrishnan2018hypothesis,baraud2002non,diakonikolas2016new,canonne2018testing,valiant2017automatic,ery2018remember, canonne2015survey}).
Formally, consider the testing problem
\begin{align}
\label{eqn:main}
H_0: p_{} \in \cH_0 \mbox{ vs } H_1: p_{} \in \cS_1(\varepsilon),
\end{align}
where $\cS_1(\varepsilon) := \{p \in \cH_1: \inf_{q \in \overline \cH_0} \|p - q\|_1 \geq \varepsilon\}$, and $\cH_0\subseteq \overline \cH_0$ and $\cH_1$ are pre-specified sets of distributions. We define the minimax risk of testing as 
\begin{align}\label{minimax:risk}
R_n(\cH_0,\overline \cH_0, \cH_1, \varepsilon) = \inf_{\psi}\bigg\{\sup_{p \in \cH_0} \EE_p[\psi(\cD_n)] + \sup_{p \in \cS_1(\varepsilon)} \EE_p [1 - \psi(\cD_n)]\bigg\}\footnotemark,
\end{align}
\footnotetext{Here and throughout with a slight abuse of notation we use $\EE_p$ to denote expectation under i.i.d. data $\cD_n  = \{(X_1,Y_1,Z_1), \ldots, (X_n, Y_n, Z_n)\}$ where each observation is drawn from $p$.}where the infimum is taken over all Borel measurable test functions $\psi: \supp(\cD_n) \mapsto [0,1]$ (which gives the probability of rejecting the null hypothesis), and $\supp(\cD_n)$ is the support of the random variables $\cD_n = \{(X_1,Y_1, Z_1),\allowbreak \ldots (X_n, Y_n, Z_n)\}$. 
We note that it is common to choose the sets $\cH_0$ and $\overline \cH_0$ to be identical. However, as will be clearer hereafter, in the setting of CI testing we will choose $\cH_0$ to be a subset of distributions which are conditionally independent \emph{and} appropriately smooth, while we will choose $\overline \cH_0$ to be the set of \emph{all} conditionally independent distributions. 

In the minimax framework our goal is to study the \emph{critical radius} of testing defined as
\begin{align}\label{critical:radius}
\varepsilon_n(\cH_0,\overline \cH_0, \cH_1) = \inf\bigg\{\varepsilon : R_n(\cH_0, \overline \cH_0, \cH_1, \varepsilon) \leq \frac{1}{3}\bigg\}. 
\end{align}
The constant $\frac{1}{3}$ above is arbitrary, and can be chosen as any small constant. 
The minimax testing radius or the critical radius, corresponds to the smallest radius $\varepsilon$ at which there exists \emph{some test} which distinguishes distributions in $\cH_0$ from those in $\cH_1$ which
are appropriately far from $\cH_0$. The critical radius provides a fundamental characterization of the statistical difficulty of the hypothesis testing problem in~\eqref{eqn:main}.

\subsection{Smoothness Conditions}
\label{sec:smoothness}
In Sections~\ref{minimax:lower:bounds:section} and~\ref{upper:bound:section} we derive upper and lower bounds on the minimax critical radius for conditional independence testing. However, in view of the
results of \citet{shah2018hardness}, and our own results in Section~\ref{hardness:section}, we must impose some restrictions on the distributions under consideration in order to obtain non-trivial minimax rates.
Broadly, we restrict our attention to settings where the conditional distributions are appropriately smooth.

We focus on two main settings in our work, the setting where $X$ and $Y$ are discrete but $Z$ is continuous and when all three are continuous.
For the case when $X$ and $Y$ are discrete and $Z$ is continuous, we consider $Z$ that is supported on $[0,1]$. 
Define the set of distributions $\cE'_{0,[0,1]}$ as distributions whose generating mechanism of the triple $(X,Y,Z)$ supported on $\RR^3$ is as follows: first a $Z$ from the distribution $p_Z$ (which is absolutely continuous with respect to the Lebesgue measure) with support $[0,1]$ is generated. Next, $X$ and $Y$ are generated from the distribution $p_{X,Y|Z}$ which is supported on\footnote{It is not crucial here that $X,Y \vert Z$ is supported on $[\ell_1]\times [\ell_2]$. It could be supported on any set $\cX \times \cY$ with $|\cX| = \ell_1$ and $|\cY| = \ell_2$. Here for the sake of simplicity of presentation we focus only on the case $[\ell_1] \times [\ell_2]$. } $[\ell_1] \times [\ell_2]$ for (almost) all $Z$. Denote by $\cP_{0,[0,1]}' \subset \cE'_{0,[0,1]}$ the set of null distributions (i.e. distributions such that $X \independent Y | Z$) and let $\cQ_{0, [0,1]}' = \cE'_{0,[0,1]} \setminus \cP_{0,[0,1]}'$. 
Similarly, in the case when $X, Y$ and $Z$ are continuous, we 
let $\cP_{0,[0,1]^3} \subset \cE_{0, [0,1]^3}$ be the set of distributions for which $X \independent Y | Z$ and let $\cQ_{0,[0,1]^3} = \cE_{0, [0,1]^3} \setminus \cP_{0,[0,1]^3}$.

With these preliminaries in place we can define the various smoothness classes that we work with in this paper:
\begin{definition}[Null Lipschitzness] \leavevmode 
\begin{enumerate}
\item {Null TV Lipschitzness: } Let $\cP_{0,[0,1], \TV}'(L) \subset \cP_{0, [0,1]}'$ (analogously $\allowbreak \cP_{0,[0,1]^3, \TV}(L) \subset \cP_{0, [0,1]^3}$) 
be the collection of distributions $p_{X,Y,Z}$ such that for all $z,z' \in [0,1]$ we have:
\begin{align*}
\|p_{X | Z = z} - p_{X | Z = z'}\|_1 \leq L |z - z'| \mbox{ and } \|p_{Y | Z = z} - p_{Y | Z = z'}\|_1 \leq L |z - z'|,
\end{align*}
where $p_{X| Z = z}$ and $p_{Y | Z = z}$ denote the conditional distributions of $X | Z = z$ and $Y | Z = z$ under $p_{X,Y,Z}$  respectively. 
\item {Null $\chi^2$ Lipschitzness: } Let $\cP_{0,[0,1], \chi^2}'(L) \subset \cP_{0, [0,1]}'$ (analogously $\cP_{0,[0,1]^3, \chi^2}(L)\allowbreak \subset \cP_{0, [0,1]^3}$)
be the collection of distributions $p_{X,Y,Z}$ such that for all $z,z' \in [0,1]$ we have:
\begin{align*}
d_{\chi^2}(p_{X | Z = z}, p_{X | Z = z'}) \leq L |z - z'| \mbox{ and } d_{\chi^2}(p_{Y | Z = z}, p_{Y | Z = z'}) \leq L |z - z'|,
\end{align*}
where $p_{X| Z = z}$ and $p_{Y | Z = z}$ denote the conditional distributions of $X | Z = z$ and $Y | Z = z$ under $p_{X,Y,Z}$ respectively. The distance $d_{\chi^2}(p_{X | Z = z}, p_{X | Z = z'})$ is considered $\infty$ if $p_{X | Z = z} \ll p_{X | Z = z'}$ is violated. 
\item {Null H\"{o}lder Lipschitzness: } Let $\cP_{0,[0,1], \TV^2}'(L) \subset \cP_{0, [0,1]}'$ be the collection of distributions $p_{X,Y,Z}$ such that for all $z,z' \in [0,1]$ we have:
\begin{align*}
\|p_{X | Z = z} - p_{X | Z = z'}\|_1 \leq \sqrt{L |z - z'|} \mbox{ and } \|p_{Y | Z = z} - p_{Y | Z = z'}\|_1 \leq \sqrt{L |z - z'|},
\end{align*}
where $p_{X| Z = z}$ and $p_{Y | Z = z}$ denote the conditional distributions of $X | Z = z$ and $Y | Z = z$ under $p_{X,Y,Z}$  respectively. 
\end{enumerate}
\label{def:nullsmoothness}
\end{definition}
Under the alternative we consider slightly different classes in the discrete and continuous cases. Formally, we define the following class for the discrete $X$ and $Y$ setting:
\begin{definition}[Alternative TV Lipschitzness]\label{alternative:TV:smoothness:def} Let $\cQ_{0,[0,1], \TV}'(L) \subset \cQ_{0,[0,1]}'$ be the collection of distributions 
$p_{X,Y,Z}$ such that for all $z, z' \in [0,1]$ we have
\begin{align*}
\|p_{X,Y | Z= z} - p_{X,Y | Z = z'}\|_1 \leq L |z - z'|, 
\end{align*}
where $p_{X,Y| Z = z}$ denotes the conditional distribution of $X,Y | Z = z$ under $p_{X,Y,Z}$. 
\end{definition}
In the continuous case, we will further restrict our attention to distributions which in addition to being TV smooth (as above), also have smooth conditional density $p_{X,Y|Z}$. In order for us to impose proper smoothnes on the density $p_{X,Y|Z}$ we will first define a H\"{o}lder smoothness class.

\begin{definition}[H\"{o}lder Smoothness] Let $s > 0$ be a fixed real number, and let $\lfloor s \rfloor$ denote the maximum integer strictly smaller than  $s$. Denote by $\cH^{2,s}(L)$, the class of functions $f:[0,1]^2 \mapsto \RR$, which posses all partial derivatives up to order $\lfloor s \rfloor$ and for all $x,y,x',y' \in [0,1]$ we have
\begin{align}\label{holder:class}
\sup_{k \leq \lfloor s \rfloor}\bigg|\frac{\partial^k}{\partial x^k}\frac{\partial^{\lfloor s \rfloor - k}}{\partial y^{\lfloor s \rfloor - k}}f(x,y) - \frac{\partial^k}{\partial x^k}\frac{\partial^{\lfloor s \rfloor - k}}{\partial y^{\lfloor s \rfloor - k}}f(x',y') \bigg| \leq L((x-x')^2 + (y - y')^2))^{\frac{s - \lfloor s \rfloor}{2}},
\end{align}
and in addition
\begin{align*}
\sup_{k \leq \lfloor s \rfloor}\bigg|\frac{\partial^k}{\partial x^k}\frac{\partial^{\lfloor s \rfloor - k}}{\partial y^{\lfloor s \rfloor - k}}f(x,y)\bigg| \leq L.
\end{align*}
\end{definition}
In the above assumption in the addition to the usual H\"{o}lder smoothness assumption we assume that there is a uniform bound on all derivatives of lower than $\lfloor s \rfloor$ order. When $s = 1$ the above is simply the class of $L$-Lipschitz functions.

\begin{definition}[Alternative Lipschitzness]\label{alternative:smoothness:holder} Let $\cQ_{0,[0,1]^3, \TV}(L,s) \subset \cQ_{0,[0,1]^3}$ be the collection of distributions $p_{X,Y,Z}$ 
such that for all $z, z' \in [0,1]$ we have
\begin{align*}
\|p_{X,Y | Z= z} - p_{X,Y | Z = z'}\|_1 \leq L |z - z'|, 
\end{align*}
where $p_{X,Y| Z = z}$ denotes the conditional distribution of $X,Y | Z = z$ under $p_{X,Y,Z}$. In addition we assume that for all $z,x,y \in [0,1]$: $p_{X,Y|Z}(x,y|z) \in \cH^{2,s}(L)$. 
\end{definition}%and $p_{X|Z}(x|z)p_{Y | Z}(y|z) \in \cH^{2,s}(L)$. 
%\begin{definition}[Alternative Smoothness]\label{alternative:smoothness} Let $\cQ_{0,[0,1]^3, \TV}(L,s) \subset \cQ_{0,[0,1]^3}$ be the collection of distributions $p_{X,Y,Z}$ 
%such that for all $z, z' \in [0,1]$ we have
%\begin{align*}
%\|p_{X,Y | Z= z} - p_{X,Y | Z = z'}\|_1 \leq L |z - z'|, 
%\end{align*}
%where $p_{X,Y| Z = z}$ denotes the conditional distribution of $X,Y | Z = z$ under $p_{X,Y,Z}$. In addition we assume that for all $z,x,x',y,y' \in [0,1]$:
%\begin{align*}
%|p_{X,Y|Z}(x,y|z) - p_{X,Y|Z}(x',y'|z)| \leq L (|x-x'| + |y - y'|),
%\end{align*}
%where $p_{X,Y|Z}(x,y|z)$ is the conditional density of $X,Y | Z$.
%\end{definition}
We devote Section~\ref{examples:section} to investigating various relationships between these different Lipschitzness assumptions, 
as well as to constructing broad nonparametric classes of distributions which satisfy these Lipschitzness conditions.

\section{The Hardness of CI Testing Revisited}\label{hardness:section}

In this section we revisit the recent work of \citet{shah2018hardness}. In order for us to review their results, and to build upon them, we will recall their notation. Let $\cE_0$ denote the set of all distributions for $(X,Y,Z)$ on $\RR^{d_X + d_Y + d_Z}$ which are absolutely continuous with respect to the Lebesgue measure. Define the set of conditionally independent distributions, i.e., distributions such that $X \independent Y | Z$,  as $\cP_0 \subset \cE_0$. Let $\cE_{0,M} \subseteq \cE_0$ be the set of distributions whose support is contained within an $L_\infty$ ball of radius $M$. Define the set of alternative distributions as $\cQ_0 = \cE_0 \setminus \cP_0$ and $\cP_{0,M} = \cE_{0,M} \cap \cP_0$ and $\cQ_{0,M} = \cE_{0,M} \cap \cQ_0$. 

In their Proposition 5, Shah and Peters argue that the null and alternative sets of distributions $\cP_{0,M}$ and $\cQ_{0,M}$ are separated in TV distance. Here separated is meant in the sense that there exists a distribution from $\cQ_{0,M}$ which is at least $1/24$ apart in TV distance from any distribution in $\cP_{0,M}$. Similarly, in Proposition 16, Shah and Peters argue that the sets of distributions $\cP_0$ and $\cQ_0$ are separated in KL divergence (in this proposition they consider only the case $(X,Y,Z) \in \RR^3$). In contrast, the first result of this section will show that when the Wasserstein distance is considered, the set of distributions $\cP_{0,M}$ is dense in the set $\cQ_{0,M}$. Let us first define the Wasserstein distance. 

\begin{definition}[Wasserstein Distance] Let $p \geq 1$ be a real number. Let $\cP_p(\RR^d)$ denote the set of measures $\mu$ on $(\RR^d, \|\cdot\|_2)$, such that there exists $x_0 \in \RR^d$ for which
\begin{align*}
\int_{\RR^d} \|x - x_0\|_2^p d \mu(x) < \infty.
\end{align*}
For two probability measures, $\mu$ and $\nu$ in $\cP_p(\RR^d)$ the $p$\textsuperscript{th} Wasserstein distance between $\mu$ and $\nu$ is defined as 
\begin{align*}
W_p(\mu, \nu) = \bigg(\inf_{\gamma \in \Gamma(\mu,\nu)} \int_{\RR^d \times \RR^d} \|x- y\|^p_2 d \gamma(x,y)\bigg)^{1/p},
\end{align*}
where $\Gamma(\mu,\nu)$ is set of all couplings between the measures $\mu$ and $\nu$, i.e., all probability measures on $\RR^d \times \RR^d$, with marginals $\mu$ and $\nu$. 
\end{definition}

We are now ready to state the first result of this section.

\begin{lemma}[Wasserstein Denseness]\label{Wasserstein:dense:lemma} Take any distribution $P \in \cE_{0,M}$ for some $M > 0$. Then for any $p \geq 1$ and any $\varepsilon > 0$ there exists a distribution $Q \in \cP_{0,M}$ such that
$$
W_p(P, Q) \leq \varepsilon. 
$$
\end{lemma}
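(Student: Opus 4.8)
The plan is to construct $Q$ by partitioning the support $[-M,M]^{d_X+d_Y+d_Z}$ into a fine grid of cells and replacing $P$, within each cell, by a distribution that is conditionally independent given $Z$ while moving mass only a small distance. Concretely, fix a large integer $k$ and partition each coordinate axis of the $Z$-support into $k$ intervals of length $2M/k$; this induces a partition of the $Z$-space into cubes $\{C_j\}$. Within each $C_j$ we want the conditional law of $(X,Y)$ given $Z \in C_j$ to be a product measure, so that the resulting global distribution has $X \independent Y \mid Z$: indeed, if on $\{Z \in C_j\}$ we declare $Z$ to keep its original conditional law but redraw $(X,Y)$ from the product of the $P$-conditional marginals $P_{X \mid Z \in C_j} \otimes P_{Y \mid Z \in C_j}$ — independently of the precise value of $Z$ within $C_j$ — then conditionally on the (fine) cell index $j$, and hence conditionally on $Z$, $X$ and $Y$ are independent. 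This $Q$ lies in $\cP_{0,M}$ since we never move any coordinate outside $[-M,M]$.

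The key estimate is that $W_p(P,Q) \to 0$ as $k \to \infty$. Here I would exhibit an explicit coupling $\gamma \in \Gamma(P,Q)$: sample $(X,Y,Z) \sim P$, keep $Z$ fixed, let $j$ be its cell, and resample $(X',Y')$ from $P_{X \mid Z \in C_j} \otimes P_{Y \mid Z \in C_j}$ (conditioning all of this on $Z$, or more precisely on $j$). Under this coupling $Z = Z'$ exactly, so the $Z$-coordinates contribute nothing; and $X, X'$ both lie in $[-M,M]^{d_X}$, so $\|X - X'\|_2 \le 2M\sqrt{d_X}$ always, with the analogous bound for $Y$. Thus $\mathbb{E}\|(X,Y,Z) - (X',Y',Z')\|_2^p$ is bounded, and — crucially — it is supported on the event that $(X,Y)$ actually changes, which happens only when the original conditional law $P_{X,Y \mid Z}$ genuinely differs from the product of its marginals \emph{as the cell shrinks}. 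By absolute continuity of $P$, the conditional laws $P_{X,Y \mid Z \in C_j}$ converge (for a.e. $Z$, along the shrinking cells containing $Z$, by a martingale/Lebesgue-differentiation argument) to $P_{X,Y \mid Z = z}$, and likewise the marginals converge; but this does \emph{not} make the coupling cost vanish, because $P_{X,Y \mid Z = z}$ need not be a product. So the naive single-grid construction does not by itself give denseness.

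The fix — and the main obstacle the proof must surmount — is to couple $P$ to an actually independent distribution by a \emph{Monge-type transport within each $Z$-slice}, as in Lemma 14 of Shah--Peters: on $\{Z \in C_j\}$, rather than redrawing $(X,Y)$ from the product of marginals (which incurs $O(M)$-sized moves), transport the conditional law $P_{X,Y \mid Z \in C_j}$ to the product law $P_{X \mid Z \in C_j} \otimes P_{Y \mid Z \in C_j}$ along an optimal (or near-optimal) coupling, whose $W_p$ cost is bounded by the diameter $2M\sqrt{d_X + d_Y}$ but occurs with small total probability only if $P_{X,Y\mid Z\in C_j}$ is already nearly product — which is false in general. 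Therefore the correct strategy is the reverse: fix the \emph{target} $Q$ to be a genuinely conditionally-independent distribution of the same ``shape'' by first discretizing $Z$ into cells, then on each cell using a coupling that matches the $X$-marginal exactly and reshuffles only the $Y$-coordinate via its conditional quantile transform given $X$ and the cell — but since $Y \in [-M,M]^{d_Y}$, each such move is at most $O(M)$, and the \emph{total variation mass that must be moved} is controlled: here one uses that on a sufficiently fine $Z$-partition, and after additionally binning $X$ finely, the conditional distribution of $Y$ given $(X \in \text{bin}, Z \in \text{cell})$ is within $W_p = o(1)$ of the conditional distribution of $Y$ given $Z \in \text{cell}$ alone — no, this again fails pointwise.

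Let me therefore state the construction I actually expect to work, which sidesteps the issue entirely: \textbf{discretize all three variables}. Fix $\delta>0$ and partition $[-M,M]^{d_X}$, $[-M,M]^{d_Y}$, $[-M,M]^{d_Z}$ into cubes of side $\delta$, inducing a joint partition into boxes $A_a \times B_b \times C_c$. Let $Q$ be the distribution that, conditionally on $Z \in C_c$, is the product of the two marginals $P_{X \mid Z \in C_c} \otimes P_{Y \mid Z \in C_c}$ — \emph{but first replace $P$ by its cell-wise conditional means, i.e. collapse $P$ to the atomic distribution $\widetilde P$ putting mass $P(A_a\times B_b\times C_c)$ at the center of each box}. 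Then $W_p(P,\widetilde P)\le \delta\sqrt{d_X+d_Y+d_Z}$ trivially (move each point to its box center), and $\widetilde P$ is a discrete distribution on finitely many atoms. Now approximate $\widetilde P$ by a conditionally independent discrete distribution $Q$ on the \emph{same} atoms: this is a purely finite/combinatorial problem, and here one can afford to pay — the subtlety is that a discrete distribution need not be close in $W_p$ to any conditionally independent one either. So the honest resolution must be the Shah--Peters coupling: I would follow their Lemma 14, which produces, from any absolutely continuous $P$ on a bounded set, a conditionally independent $Q$ together with a coupling under which $Z$ is unchanged and $(X,Y)$ is moved by an amount that can be made small in $W_p$ because the construction exploits the \emph{continuity} of the conditional CDFs (via the probability integral transform applied slice-wise) rather than crude diameter bounds; the absolute continuity of $P$ is exactly what makes those conditional quantile functions well-defined and the transported mass ``spread out'' rather than relocated wholesale.

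**Summary of the plan I would write up.** (i) Reduce to $P$ with a bounded continuous density by a standard mollification, losing $o(1)$ in $W_p$ and staying in $\cE_{0,M}$ (a slight enlargement of the support by $o(1)$ is harmless, or one restricts the mollifier). (ii) Apply a slice-wise probability-integral-transform / quantile coupling, essentially Lemma 14 of \cite{shah2018hardness}: writing $X = F_{X\mid Z}^{-1}(U)$, $Y = F_{Y \mid Z}^{-1}(V)$ with $(U,V)$ having some copula given $Z$, define $Q$ by replacing that copula with the independence copula while \emph{coupling $U$ to itself and $V$ to $V' = $ (a quantile transport of $V$ given $U,Z$ to the $U$-free conditional)}, keeping $Z$ fixed. (iii) Bound $W_p(P,Q)^p \le \mathbb{E}[\|X - X'\|_2^p + \|Y-Y'\|_2^p]$ and argue this tends to $0$: the $X$-part is zero by construction, and for the $Y$-part, uniform continuity of $y\mapsto F_{Y\mid X,Z}$ under the mollified density, together with a modulus-of-continuity argument on how far $F^{-1}_{Y\mid Z}$ and $F^{-1}_{Y\mid U,Z}$ can differ, gives the bound (one tunes the mollification scale last). (iv) Conclude $Q \in \cP_{0,M}$ and $W_p(P,Q)\le \varepsilon$. \textbf{The main obstacle} is precisely step (iii): showing the quantile-transport coupling has vanishing $W_p$ cost, since a priori reshuffling $Y$ within $[-M,M]^{d_Y}$ could be expensive; the resolution leans essentially on absolute continuity of $P$ (making all conditional quantile functions genuine measurable maps with no atoms to relocate) exactly as in \cite{shah2018hardness}, and on first regularizing $P$ so these quantile functions are uniformly continuous.
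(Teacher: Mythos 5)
There is a genuine gap, and it is not the one you flag. You are right that the naive construction (within each $Z$-cell, redraw $(X,Y)$ from the product of the cell-conditional marginals while leaving $Z$ alone) fails, because conditionally on $Z$ the target is a product measure while $P_{X,Y\mid Z=z}$ need not be, so the transport cost does not vanish. But the plan you finally commit to in steps (i)--(iv) --- keep $Z$ (and in fact $X$) fixed under the coupling and move only $Y$ by a slice-wise quantile transport, hoping that mollification and absolute continuity make the cost small --- cannot work, and the obstacle in your step (iii) is an impossibility rather than a technical difficulty. Take $Z\sim U[-M,M]$, $X\sim U[-M,M]$ independent of $Z$, and $Y=X$ smoothed by an arbitrarily small amount of noise so that $P\in\cE_{0,M}$. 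If $Q\in\cP_{0,M}$ and the coupling leaves $X$ and $Z$ unchanged, then the transported $Y'$ must satisfy $Y'\independent X\mid Z$ under $Q$, hence $\EE\|Y-Y'\|^p\approx\EE|X-Y'|^p\geq \inf_{y'}\EE|X-y'|^p = M^p/(p+1)$, a constant independent of any mollification scale or modulus of continuity. So no coupling of this form has small cost; absolute continuity of $P$ does not rescue it. (For the same reason your intermediate ``collapse to box centers'' detour stalls: the problem is not discreteness versus continuity.)

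The missing idea --- which is the actual content of the paper's proof and also of the Shah--Peters Lemma~14 coupling you cite but misdescribe --- is that you must \emph{perturb $Z$} so that its new value encodes which $(X,Y)$-cell the sample fell into. Concretely (say $d_X=d_Y=d_Z=1$): partition $[-M,M]$ into $m$ intervals $\{A_i\}$, $\{B_j\}$, $\{C_k\}$ for $X,Y,Z$ respectively, and subdivide each $C_k$ into $m^2$ equal sub-intervals $C_{ijk}$, one for each pair $(A_i,B_j)$. Given $(X,Y,Z)\sim P$ with $X\in A_i$, $Y\in B_j$, $Z\in C_k$, draw $\tilde Z$ uniformly on $C_{ijk}$ and $(\tilde X,\tilde Y)$ uniformly on $A_i\times B_j$. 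Conditionally on $\tilde Z$ the cell $(i,j)$ is determined, and $\tilde X,\tilde Y$ are independent uniforms on $A_i$ and $B_j$, so $\tilde X\independent\tilde Y\mid\tilde Z$ and $Q\in\cP_{0,M}$; meanwhile every coordinate moves by at most $2M/m$, giving $W_p(P,Q)\leq \sqrt{3}\cdot 2M/m\leq\varepsilon$ for $m$ large. Note that this cost bound is purely geometric and needs no quantile transforms, no mollification, and (for the coupling estimate itself) not even absolute continuity --- the small perturbation of $Z$ is exactly what lets you keep $(X,Y)$ essentially where they were while still achieving conditional independence, and it is the step your proposal never reaches.
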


\begin{proof} For simplicity, we will prove this result for the one dimensional case $d_X = d_Y = d_Z = 1$. The proof extends trivially to the more general case. First note that since both $P, Q \in \cE_{0,M} \subseteq \cP_p(\RR^3)$, the Wasserstein distance between $P$ and $Q$ is well defined. We will now construct $Q$ from $P$ by describing a coupling between the two distributions. 

% Siva commented this out temporarily
\begin{figure}
\centering
\includegraphics[scale=.4]{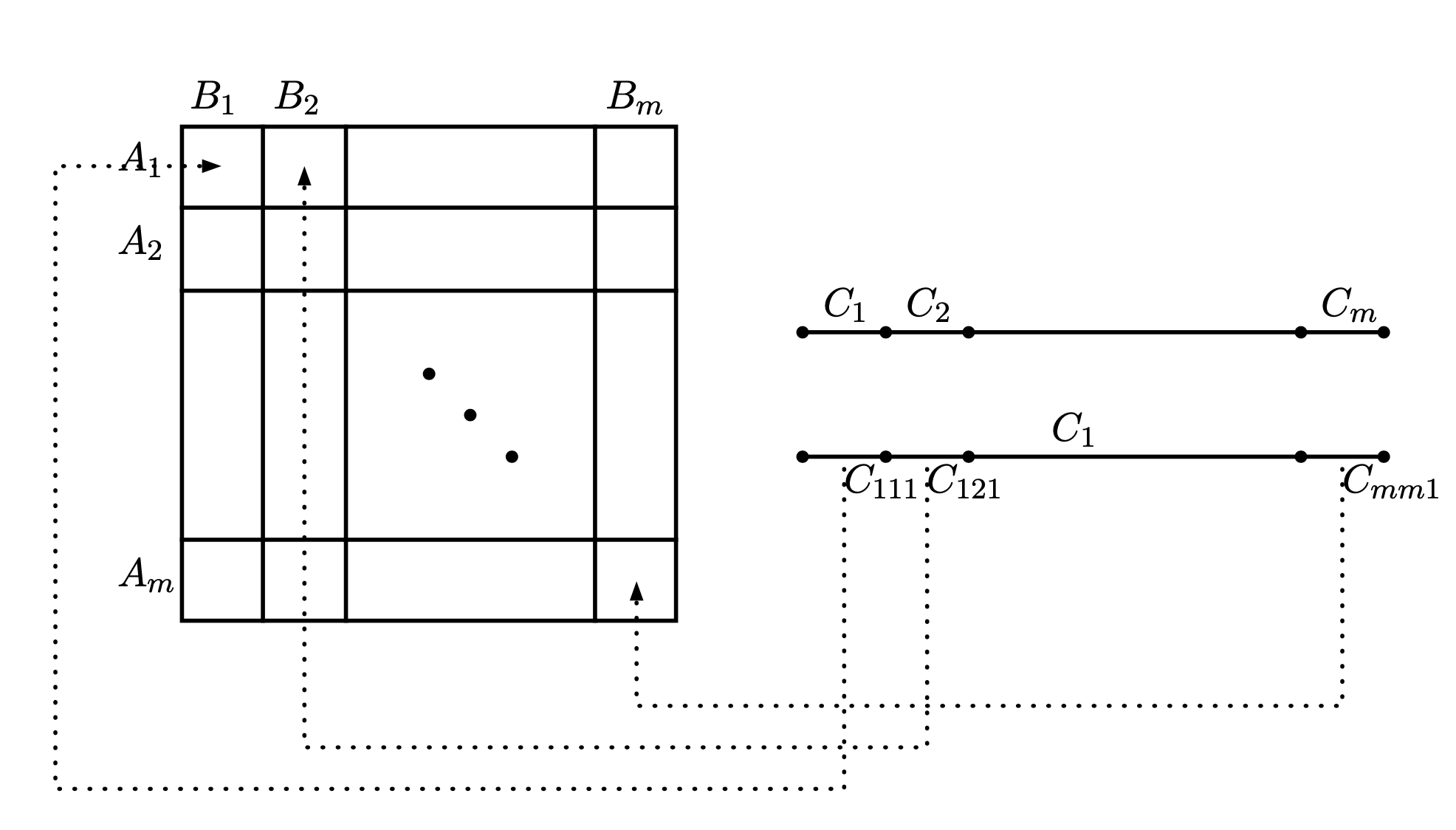}
\caption{This schematic describes the construction of $Q$ from $P$. $[-M,M]$ is divided in intervals $\{A_1, \ldots, A_m\}$, $\{B_1, \ldots,  B_m\}$ and $\{C_1, \ldots, C_m\}$. Next each interval $C_k$ is sub-divided into $m^2$ smaller sub-intervals. The interval $C_1$ is displayed along with its sub-divisions of $C_{ij1}$ for $i,j \in [m]$. Each little interval $C_{ij1}$ corresponds to a pair $(A_i, B_j)$ or equivalently to a cell $A_i \times B_j$ in $[-M,M]^2$.}\label{viz:smart:construction}
\end{figure}

Let $\{A_1, \ldots, A_{m}\}$ denote an equi-partition of $[-M,M]$ in intervals. Similarly let $\{B_1,\ldots, B_{m}\}$ and $\{C_1, \ldots, C_m\}$ be equi-partitions of $[-M,M]$. Divide each $C_k$ further in $m^2$ sub-intervals of equal length denoted by $C_{ijk}$, so that each of these small intervals corresponds to a pair $(A_i, B_j)$. Refer to Figure \ref{viz:smart:construction} for a visualization of this construction. The lengths of each interval $A_i$, $B_i$ or $C_i$ is $\frac{2M}{m}$, while the length of an interval $C_{ijk}$ is $\frac{2M}{m^3}$. Given a draw $(X,Y,Z) \sim P$ we construct $(\tilde X, \tilde Y, \tilde Z) \sim Q$ as follows. Suppose that $X \in A_i$, $Y \in B_j$ and $Z \in C_k$. Then we generate uniformly $\tilde Z \in C_{ijk}$ and $(\tilde X, \tilde Y)$ uniformly in $A_i \times B_j$. By definition then $\tilde X \independent \tilde Y | \tilde Z$, and further $\tilde X, \tilde Y, \tilde Z \in [-M,M]$. Hence $Q \in \cP_{0,M}$\footnote{For a precise expression of the density of $Q$ refer to Appendix \ref{hardness:appendix}.}. Furthermore we can bound the Wasserstein distance for this particular coupling as 
$$
W_p(P,Q)^p \leq \EE \EE_{(\tilde X, \tilde Y, \tilde Z) | (X,Y,Z)} \|(X,Y,Z) - (\tilde X,\tilde Y, \tilde Z)\|_2^p \leq \bigg(\sqrt{3}\frac{2  M}{m}\bigg)^p.
$$
Since $m$ can be selected arbitrarily large the above can be made smaller than $\varepsilon^p$. This completes the proof.
\end{proof}

%\begin{remark}
%In this remark we explain intuitively why the Wasserstein distance yields a result like Lemma \ref{Wasserstein:dense:lemma} and not when using TV or KL divergence. The Wasserstein distance that we used above is based on an $L_2$ metric on the underlying sample space, and as a consequence has the critical feature (unlike the KL divergence or TV distance) that it is robust to small perturbations in the sample space (on the other hand, metrics like the TV metric are typically stable to small perturbations in the probability space). Indeed, the heart of the construction of \citet{shah2018hardness} and of our own result, is the idea that given a sample from a conditionally dependent distribution, one can perturb it slightly (effectively ``encoding'' the value of $X$ in $Z$) to create a conditionally independent distribution. This operation, which is a small perturbation in the sample space, does not change the Wasserstein distance much, but can have a large effect on the TV distance or KL divergence.
%\end{remark}

The construction used to obtain $Q$ from $P$ in the above result captures intuitively the essence of the ``hardness'' of CI testing with continuous $Z$. The set $\cP_{0,M}$ contains distributions which allow the conditional distributions of $X,Y | Z = z$ to be ``wildly discontinuous'' as functions of $z$. This in turn allows for the existence of distributions in $\cP_{0,M}$ capable of approximating any distribution in $\cE_{0,M}$ in the Wasserstein metric. Later in this paper we will see that, if we disallow distributions in $\cE_{0,M}$ whose conditional distributions can be wildly variable in $z$, CI testing becomes possible.  We would also like to point out the intuition why the Wasserstein distance yields a result like Lemma \ref{Wasserstein:dense:lemma} in contrast to using TV distance or KL divergence. The Wasserstein distance is based on a metric (in our case the $L_2$ metric) on the underlying sample space, and as a consequence has the critical feature (unlike the KL divergence or TV distance) that it is robust to 
small perturbations in the sample space (on the other hand, metrics like the TV metric are typically stable to small perturbations in the probability space). Indeed, the heart of the construction of \citet{shah2018hardness} and of our own result, is the idea that given a sample
from a conditionally dependent distribution, one can perturb it slightly (effectively ``encoding'' the value of $X$ in $Z$) to create a conditionally independent distribution. This operation, effectively a small perturbation in the sample space, does not change the Wasserstein distance much, but can have a large effect on the TV distance or KL divergence. 

Lemma \ref{Wasserstein:dense:lemma} suggests, but does not imply that CI testing is ``hard''. Building on the construction of Lemma \ref{Wasserstein:dense:lemma}, we give a new simpler proof of the ``no-free-lunch'' theorem of Shah and Peters \citep[see Theorem 2]{shah2018hardness}. For convenience of the reader we restate the no-free-lunch theorem below, and  give a complete proof in Appendix~\ref{hardness:appendix}. Let $d = d_X + d_Y + d_Z$, and suppose that we observe $n$ observations $\cD_n = \{(X_1,Y_1, Z_1), \ldots, (X_n, Y_n, Z_n)\}$. 

\begin{theorem}[No-Free-Lunch]\label{no-free-lunch:thm} Given any $n \in \NN$, $\alpha \in (0,1)$, $M \in (0, \infty]$ and a potentially randomized test $\psi_n :\RR^{nd}\times[0,1] \mapsto \{0,1\}$, that has valid level $\alpha$ for the null hypothesis $\cP_{0,M}$, we have that $\PP_Q(\psi_n = 1) \leq \alpha $ for all $Q \in \cQ_{0,M}$. 
\end{theorem}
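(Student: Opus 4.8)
The plan is to push the coupling from Lemma~\ref{Wasserstein:dense:lemma} one step further --- adding a randomization that symmetrizes the thin intervals used there --- and then to invoke the Lebesgue differentiation theorem. As in the proof of Lemma~\ref{Wasserstein:dense:lemma} it suffices to treat $d_X=d_Y=d_Z=1$. Write the auxiliary randomness as $U\sim\mathrm{Unif}[0,1]$ independent of the data, and for $u\in[0,1]$ set $R_u=\{x\in\RR^{3n}:\psi_n(x,u)=1\}$; then $\PP_P(\psi_n=1)=\int_0^1 P^{\otimes n}(R_u)\,du$, and the hypothesis that $\psi_n$ has level $\alpha$ for $\cP_{0,M}$ reads $\int_0^1 P^{\otimes n}(R_u)\,du\le\alpha$ for every $P\in\cP_{0,M}$. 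I will prove the claim for $M<\infty$; the case $M=\infty$ then follows by restricting $Q$ to a large cube $[-M,M]^3$ and renormalizing --- the restriction lies in $\cP_{0,M}\cup\cQ_{0,M}=\cE_{0,M}$, hence is handled by the finite-$M$ case (or is directly covered by the level hypothesis if it is conditionally independent), and it converges to $Q$ in total variation, so $\PP_Q(\psi_n=1)\le\alpha$ upon letting $M\to\infty$.

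Fix $m\in\NN$ and equi-partition $[-M,M]$ into intervals $\{A_i\}_{i\in[m]}$, $\{B_j\}_{j\in[m]}$, $\{C_k\}_{k\in[m]}$, subdividing each $C_k$ into $m^2$ equal sub-intervals. Let $\pi=(\pi_k)_{k\in[m]}$ where each $\pi_k$ is a uniformly random bijection from $[m]^2$ onto the sub-intervals of $C_k$, independently over $k$. Given a draw from $Q$ falling in $A_i\times B_j\times C_k$, output a point drawn uniformly on $A_i\times B_j\times\pi_k(i,j)$, and let $P_{m,\pi}$ be the law of this output. Exactly as in Lemma~\ref{Wasserstein:dense:lemma} (conditioning on the third coordinate recovers $k$ and the sub-interval, hence the pair $(i,j)$), $P_{m,\pi}$ is absolutely continuous, supported in $[-M,M]^3$, and conditionally independent, so $P_{m,\pi}\in\cP_{0,M}$ for \emph{every} realization of $\pi$; in particular $\int_0^1 P_{m,\pi}^{\otimes n}(R_u)\,du\le\alpha$. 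Applying this map coordinatewise couples $\cD_n\sim Q^{\otimes n}$ with $\widetilde\cD_n\sim P_{m,\pi}^{\otimes n}$, and for any measurable $R$ one gets the marginalization identity $P_{m,\pi}^{\otimes n}(R)=\EE_{\cD_n\sim Q^{\otimes n}}\big[\EE[\mathbf 1_R(\widetilde\cD_n)\mid\cD_n,\pi]\big]=\int g^R_{m,\pi}\,dQ^{\otimes n}$, where $g^R_{m,\pi}(\cD_n)$ is the average of $\mathbf 1_R$ over the (anisotropic, since the third factor has length $2M/m^3$) random product cell $\prod_i\big(A_{a_i}\times B_{b_i}\times\pi_{c_i}(a_i,b_i)\big)$ determined by the cells $a_i,b_i,c_i$ of $(X_i,Y_i,Z_i)$.

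The random labelling pays off after averaging over $\pi$. For Lebesgue-a.e.\ $\cD_n$ the coordinates $Z_1,\dots,Z_n$ are distinct, hence lie in distinct coarse cells $C_{c_i}$ once $m$ is large; the sub-intervals $\pi_{c_i}(a_i,b_i)$ are then mutually independent and uniform over the $m^2$ sub-intervals of their respective $C_{c_i}$, so $\EE_\pi[g^R_{m,\pi}(\cD_n)]$ factorizes coordinatewise and equals the average of $\mathbf 1_R$ over the \emph{cube} of side $2M/m$ of the $m$-grid containing $\cD_n$. Since these cubes contain $\cD_n$ and shrink to it, the Lebesgue differentiation theorem gives $\EE_\pi[g^R_{m,\pi}(\cD_n)]\to\mathbf 1_R(\cD_n)$ for Lebesgue-a.e.\ $\cD_n$, hence $Q^{\otimes n}$-a.e.\ since $Q$ is absolutely continuous; as the $Q^{\otimes n}$-measure of the coincidence set (some $Z_i,Z_j$ in a common $C_k$) tends to $0$, bounded convergence yields $\EE_\pi\big[P_{m,\pi}^{\otimes n}(R)\big]=\int\EE_\pi[g^R_{m,\pi}]\,dQ^{\otimes n}\to Q^{\otimes n}(R)$ for every fixed measurable $R$, in particular for $R=R_u$ and each $u$.

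Combining these, with dominated convergence in $u$ and Fubini in $(\pi,u)$, $\PP_Q(\psi_n=1)=\int_0^1 Q^{\otimes n}(R_u)\,du=\int_0^1\lim_m\EE_\pi\big[P_{m,\pi}^{\otimes n}(R_u)\big]\,du=\lim_m\EE_\pi\!\int_0^1 P_{m,\pi}^{\otimes n}(R_u)\,du\le\alpha$, since each inner integral is $\le\alpha$. I expect the main obstacle to be precisely what the randomization is designed to remove: the coupling one reads off directly from Lemma~\ref{Wasserstein:dense:lemma} averages $\mathbf 1_{R_u}$ over product cells that are extremely thin in the $Z$-direction, so Lebesgue/Vitali differentiation does not apply to them, and for a generic measurable test and a heavy-tailed $Q$ this averaged quantity need not converge to $\mathbf 1_{R_u}$. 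Randomizing the labelling of sub-intervals turns the averaging kernel into an isotropic (cube) one for which a.e.\ convergence is classical; the technical heart is verifying that the $\pi$-average of the thin-cell kernel is exactly the cube average (the coordinatewise factorization, valid off a set of $Q^{\otimes n}$-measure tending to zero). The remaining ingredients --- membership $P_{m,\pi}\in\cP_{0,M}$, the marginalization identity, and the interchange of limits --- are routine.
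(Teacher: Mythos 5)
Your proposal is correct, but it takes a genuinely different route from the paper. The paper follows the architecture of Shah and Peters: it re-proves their key coupling lemma (Lemma~\ref{shah:peters:main:lemma}), namely that one can couple the data to a conditionally independent i.i.d.\ sequence that is $\varepsilon$-close with probability $1-\delta$ \emph{and} whose joint law (with the auxiliary uniform) is dominated by $C(\delta)\mu$; the paper obtains the domination by averaging over permutations of the $(i,j)\mapsto$ sub-interval assignment and extracting one good deterministic $\pi^*$, after first reducing to bounded support and bounded density, and it then delegates the passage from the lemma to the theorem (transferring the level bound from the nearby null law to $Q$ via $\varepsilon$-closeness and regularity of Lebesgue measure) to the original Shah--Peters argument. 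You instead keep the assignment random and integrate it out: since $P_{m,\pi}\in\cP_{0,M}$ for every realization, the level constraint holds $\pi$-wise, and the $\pi$-average turns the anisotropic thin-cell smoothing kernel of Lemma~\ref{Wasserstein:dense:lemma} into the isotropic grid-cube average, to which the ``nicely shrinking sets'' form of the Lebesgue differentiation theorem applies (the cubes contain the data point and shrink regularly, so a.e.\ convergence of the averages is standard); together with the factorization off the coincidence set, whose $Q^{\otimes n}$-measure vanishes, this gives setwise convergence $\EE_\pi[P_{m,\pi}^{\otimes n}(R_u)]\to Q^{\otimes n}(R_u)$ and hence the conclusion directly, with the $M=\infty$ case handled by a total-variation truncation of $Q$ rather than inside the lemma. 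Your diagnosis of why the deterministic coupling alone is insufficient (differentiation fails for rectangles of unbounded eccentricity) is exactly right, and your fix is sound. The trade-off: the paper's route yields the stronger, reusable statement of Lemma~\ref{shah:peters:main:lemma} (uniform domination by $C\mu$, matching Shah--Peters' Lemma~14) but is not self-contained here, whereas your argument is shorter and fully self-contained --- avoiding the bounded-density reduction, the constant $C(\delta)$, and the closeness/regularity transfer step --- at the price of proving only the setwise limit needed for this particular theorem; it is interesting that both proofs ultimately rest on the same averaging over assignments, used in the paper to select a good $\pi^*$ and in your proof to symmetrize the kernel.
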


As stated, Theorem \ref{no-free-lunch:thm} assumes that $(X,Y,Z)$ have a distribution which is continuous with respect to the Lebesgue measure.  Suppose that $\ell_1$ and $\ell_2$ are two fixed and finite integers. We assume that $X$ and $Y$ are supported on $[\ell_1]$ and $[\ell_2]$ respectively, and that $Z$ is supported on $[-M,M]^{d_Z}$ and has a continuous density with respect to the Lebesgue measure. The generating mechanism of the triple $(X,Y,Z)$ is as follows: first a $Z$ from the distribution $P_Z$ is generated. Next, $X$ and $Y$ are generated from the distribution $P_{X,Y|Z}$ which is supported on $[\ell_1] \times [\ell_2]$ for (almost) all $Z$. Denote the set of all such distributions with $\cE_{0,M}'$ (where we omit the dependence of $\cE_{0,M}'$ on $d_Z, \ell_1, \ell_2$ for simplicity). Let $\cP_{0,M}'\subset \cE_{0,M}'$ be the subset of $\cE_{0,M}'$ consisting of distributions such that $X \independent Y | Z$ and $\cQ_{0,M}' = \cE_{0,M}' \setminus \cP_{0,M}'$. Again as before we assume that we observe $n$ observations $\cD_n$. We have the following simple Corollary to Theorem \ref{no-free-lunch:thm} which was alluded to by Shah and Peters \cite{shah2018hardness}. 
\begin{corollary}[Discrete No-Free-Lunch]\label{counting:meas:XY:cont:Z} Given $n \in \NN$, $\alpha \in (0,1)$, $M \in (0, \infty)$ and a potentially randomized test $\psi_n$, that has a valid level $\alpha$ for the null hypothesis $\cP_{0,M}'$, we have that $\PP_Q(\psi_n = 1) \leq \alpha$ for all $Q \in \cQ_{0,M}'$. 
\end{corollary}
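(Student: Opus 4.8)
The plan is to deduce Corollary~\ref{counting:meas:XY:cont:Z} from Theorem~\ref{no-free-lunch:thm} via a \emph{jittering} reduction that turns the discrete coordinates $X,Y$ into absolutely continuous ones without disturbing the conditional (in)dependence structure. Fix affine injections $\phi_1:[1,\ell_1+1)\to(-M,M)$ and $\phi_2:[1,\ell_2+1)\to(-M,M)$, and ``quantizers'' $q_1:\RR\to[\ell_1]$, $q_2:\RR\to[\ell_2]$ that are left inverses of embedding-plus-jitter, i.e. $q_j(\phi_j(k+v))=k$ for every $k\in[\ell_j]$ and $v\in[0,1)$; concretely one may take $q_j(y)$ to be the floor of the affine inverse $\phi_j^{-1}(y)$, truncated into $[\ell_j]$, so that $q_j$ is Borel, is defined on all of $\RR$, and always returns a value in $[\ell_j]$. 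Given a level-$\alpha$ test $\psi_n$ for the discrete null $\cP_{0,M}'$, I would define a test $\widetilde\psi_n$ for the fully continuous problem in dimension $d=2+d_Z$ by quantizing the first two coordinates of each observation and passing the external randomization through unchanged: $\widetilde\psi_n\big((\tilde x_i,\tilde y_i,z_i)_{i=1}^n,u\big)=\psi_n\big((q_1(\tilde x_i),q_2(\tilde y_i),z_i)_{i=1}^n,u\big)$.

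I would then verify two things. First, $\widetilde\psi_n$ has level $\alpha$ for $\cP_{0,M}$: if $\widetilde P\in\cP_{0,M}$ and $(\widetilde X,\widetilde Y,\widetilde Z)\sim\widetilde P$, then the law $P$ of $(q_1(\widetilde X),q_2(\widetilde Y),\widetilde Z)$ has $X,Y$ supported on $[\ell_1]\times[\ell_2]$, has absolutely continuous $\widetilde Z$-marginal on $[-M,M]^{d_Z}$, and is conditionally independent, because the conditional law of $(q_1(\widetilde X),q_2(\widetilde Y))$ given $\widetilde Z=z$ is the pushforward under $(q_1,q_2)$ of the product $p_{\widetilde X\mid\widetilde Z=z}\otimes p_{\widetilde Y\mid\widetilde Z=z}$, hence again a product; thus $P\in\cP_{0,M}'$ and $\PP_{\widetilde P}(\widetilde\psi_n=1)=\PP_P(\psi_n=1)\le\alpha$. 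Second, Theorem~\ref{no-free-lunch:thm} then yields $\PP_{\widetilde Q}(\widetilde\psi_n=1)\le\alpha$ for all $\widetilde Q\in\cQ_{0,M}$.

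To finish, I would transfer back: given $Q\in\cQ_{0,M}'$, draw $(X,Y,Z)\sim Q$ with independent $V_X,V_Y\sim\mathrm{Unif}[0,1)$ and let $\widetilde Q=\mathrm{law}\big(\phi_1(X+V_X),\phi_2(Y+V_Y),Z\big)$. This $\widetilde Q$ is absolutely continuous with respect to Lebesgue measure (its conditional $(\widetilde X,\widetilde Y)$-density given $Z=z$ is a finite mixture of normalized indicators of the cells $\phi_1([k,k+1))\times\phi_2([l,l+1))$, and $Z$ already has a density), is supported in $[-M,M]^{2+d_Z}$, and is not conditionally independent: since $q_1(\widetilde X)=X$ and $q_2(\widetilde Y)=Y$ almost surely, conditional independence of $(\widetilde X,\widetilde Y)$ given $Z$ would push forward to conditional independence of $(X,Y)$ given $Z$, contradicting $Q\in\cQ_{0,M}'$. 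Hence $\widetilde Q\in\cQ_{0,M}$, and since the quantized data $(q_1(\widetilde X_i),q_2(\widetilde Y_i),Z_i)$ has law $Q^{\otimes n}$ under $\widetilde Q^{\otimes n}$, we obtain $\PP_Q(\psi_n=1)=\PP_{\widetilde Q}(\widetilde\psi_n=1)\le\alpha$ from the previous step, which is exactly the assertion of the corollary.

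The one genuinely delicate point, and the step I expect to need the most care, is the claim $P\in\cP_{0,M}'$ above: membership in $\cP_{0,M}'$ requires the $Z$-marginal to have a \emph{continuous} density, whereas the pushforward $P$ only inherits plain absolute continuity from $\widetilde P\in\cP_{0,M}$. I would resolve this by approximation: describe $P$ as ``draw $Z\sim P_Z$, then $(X,Y)\sim K(Z,\cdot)$'' with $K$ a fixed Markov kernel into $[\ell_1]\times[\ell_2]$, mollify (and rescale infinitesimally so the support stays inside $[-M,M]^{d_Z}$) the density of $P_Z$ into continuous densities with $L^1$-error tending to $0$, obtain $P^{(k)}\in\cP_{0,M}'$, and use the data-processing bound $d_{\TV}\big((P^{(k)})^{\otimes n},P^{\otimes n}\big)\le n\,d_{\TV}(P^{(k)}_Z,P_Z)\to 0$ to get $\PP_{P^{(k)}}(\psi_n=1)\to\PP_P(\psi_n=1)$, hence $\PP_P(\psi_n=1)\le\alpha$. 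The remaining verifications are routine: that the choice of the $\phi_j$ forces the jittered support into the prescribed $L_\infty$ ball, and that the floor-type quantizers simultaneously invert the jitter on jittered inputs and always emit a legal label in $[\ell_j]$ on arbitrary inputs.
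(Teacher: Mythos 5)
Your argument is essentially the paper's own proof of this corollary: quantize (bin) $X$ and $Y$ to convert the discrete-case test into a continuous-case test, whose validity over $\cP_{0,M}$ follows because binning preserves conditional independence, then jitter any discrete alternative $Q$ uniformly over its $(X,Y)$ cell to produce an absolutely continuous, still conditionally dependent alternative in $\cQ_{0,M}$, and invoke Theorem~\ref{no-free-lunch:thm} to conclude the test has no power there, hence none against $Q$. The only differences are presentational (you argue directly rather than by contradiction) and your added mollification step to meet the ``continuous density'' requirement on $Z$ in the definition of $\cP_{0,M}'$, a technicality the paper's short proof glosses over; your handling of it is sound.
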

Intuitively, Corollary \ref{counting:meas:XY:cont:Z} reveals that it is the continuity of $Z$ that makes CI testing ``hard'', and not the continuity of $X$ and $Y$.

\section{Minimax Lower Bounds}\label{minimax:lower:bounds:section}

In this section we present our minimax lower bounds on the critical radius for conditional independence testing in various settings. 
Our first main result (Theorem~\ref{first:lower:bound}) develops a lower bound on the critical radius 
in the case when $X$ and $Y$ are discrete and $Z$ is continuous. Our next main result (Theorem~\ref{lower:bound:continuous:case}) develops an analogous bound for the setting when $X$, $Y$ and $Z$ all have continuous distributions.
%Finally, in Section~\ref{sec:poisson} of the appendix we show that our lower bounds also hold under Poissonization. 

%For simplicity of the presentation we will only focus on the case where $Z\in [0,1]$ is a continuous random variable. We distinguish two different scenarios --- either both $X$ and $Y$ are discrete, i.e., both $X$ and $Y$ have distributions absolutely continuous with respect to the counting measure, or $(X,Y,Z)$ are absolutely continuous with respect to the Lebesgue measure on $[0,1]^3$. As we saw in Section \ref{hardness:section}, we need to remove some distributions from the null and alternative sets in order to make CI testing tractable. We will give several possible smoothness assumptions discarding certain distributions from the null and alternative hypothesis respectively. 

\subsection{$X$ and $Y$ Discrete, $Z$ Continuous Case}\label{discrete:case:lower:bound:section}

%Here we consider a type of distributions analogous to $\cE_{0,M}'$ from Section \ref{hardness:section}. We adjust the definition accordingly since here the random variable $Z$ will be supported on $[0,1]$. Define the set of distributions $\cE'_{0,[0,1]}$ as distributions whose generating mechanism of the triple $(X,Y,Z)$ supported on $\RR^3$ is as follows: first a $Z$ from the distribution $p_Z$ (which is absolutely continuous with respect to the Lebesgue measure) with support $[0,1]$ is generated. Next, $X$ and $Y$ are generated from the distribution $p_{X,Y|Z}$ which is supported on $[\ell_1] \times [\ell_2]$\footnote{It is not crucial here that X,Y $\vert$ Z is supported on $[\ell_1]\times [\ell_2]$. It could be supported on any set $\cX \times \cY$ with $|\cX| = \ell_1$ and $|\cY| = \ell_2$. Here for the sake of simplicity of presentation we focus only on the case $[\ell_1] \times [\ell_2]$. } for (almost) all $Z$. Denote by $\cP_{0,[0,1]}' \subset \cE'_{0,[0,1]}$ the set of null distributions (i.e. distributions such that $X \independent Y | Z$) and let $\cQ_{0, [0,1]}' = \cE'_{0,[0,1]} \setminus \cP_{0,[0,1]}'$. Below we state several definitions which remove pathological distributions from $\cP'_{0,[0,1]}$ and $\cQ'_{0,[0,1]}$ to make the problem of CI testing tractable. 
We begin by recalling the Lipschitzness classes $\cP_{0,[0,1], \TV}'(L), \cP_{0,[0,1], \TV^2}'(L)$ and $\cP_{0,[0,1], \chi^2}'(L)$ introduced in Definition~\ref{def:nullsmoothness}, 
and $\cQ_{0,[0,1], \TV}'(L)$ introduced in Definition~\ref{alternative:TV:smoothness:def}.
In this section we develop a lower bound on the critical radius for distinguishing the conditionally independent distributions in any one of the null classes $\cP_{0,[0,1], \TV}'(L), \cP_{0,[0,1], \TV^2}'(L)$ and $\cP_{0,[0,1], \chi^2}'(L)$
from the alternative class of conditionally dependent distributions $\cQ_{0,[0,1], \TV}'(L)$. Formally, we have the following result:

\begin{theorem}[Critical Radius Lower Bound]\label{first:lower:bound} Let $\overline \cH_0 = \cP_{0,[0,1]}'$. Suppose that $\cH_0$ is either of $\cP_{0,[0,1], \TV}'(L) $, $\cP_{0,[0,1], \TV^2}'(L) $ or $\cP_{0,[0,1], \chi^2}'(L)$, while $\cH_1 = \cQ_{0,[0,1], \TV}'(L)$ for some fixed $L \in \RR^+$. Then for some absolute constant $c_0 > 0$ the critical radius defined in \eqref{critical:radius} is bounded as
\begin{align*}
\varepsilon_n(\cH_0,\overline \cH_0, \cH_1) \geq c_0 \bigg(\frac{(\ell_1 \ell_2)^{1/5}}{n^{2/5}} \wedge 1\bigg).
\end{align*}
\end{theorem}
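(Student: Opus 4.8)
The plan is a standard Le Cam / Ingster--Suslina argument: exhibit a single ``central'' null distribution $p_0 \in \cH_0$ (which must simultaneously lie in all three of $\cP_{0,[0,1],\TV}'(L)$, $\cP_{0,[0,1],\TV^2}'(L)$, $\cP_{0,[0,1],\chi^2}'(L)$, so a single construction handles the theorem), and a mixture $\bar p_1 = \EE_M[p_1^{(M)}]$ over a family of alternatives all lying in $\cS_1(\varepsilon_n)$, and then bound $d_{\chi^2}(\bar p_1^{\otimes n}, p_0^{\otimes n})$ by a constant, forcing $R_n(\varepsilon_n) > 1/3$. One may assume $\ell_1,\ell_2 \ge 2$ (and, by discarding a symbol, even), and restrict to the regime $(\ell_1\ell_2)^{1/5} n^{-2/5} \le 1$, the complement following from the same construction with a single bin, where the separation can be made $\asymp 1$. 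Concretely I would take $p_Z = \mathrm{Unif}[0,1]$, partition $[0,1]$ into $m$ equal bins $I_1,\dots,I_m$ with $m \asymp (n^2/(\ell_1\ell_2))^{1/5}$, and on each $I_k$ fix a trapezoidal bump $\phi_k:[0,1]\to[0,1]$ supported on $I_k$, vanishing on $\partial I_k$, with $\mathrm{Lip}(\phi_k)\asymp m$ and $\int\phi_k \asymp \int\phi_k^2 \asymp 1/m$. After pairing the rows of $[\ell_1]$ and the columns of $[\ell_2]$, draw independent signs and place $\pm\tfrac{1}{\ell_1\ell_2}$ on the diagonal and anti-diagonal of the corresponding $2\times 2$ blocks (with a random overall sign), obtaining matrices $M_k$ with $|M_{k,xy}|=\tfrac{1}{\ell_1\ell_2}$, vanishing row and column sums, and $\|M_k\|_1 = 1$. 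With $\rho \asymp 1/m \asymp (\ell_1\ell_2)^{1/5}n^{-2/5}$ and $k(z)$ the index of the bin containing $z$, the null center is $p_0(x,y,z)=1/(\ell_1\ell_2)$ on $[0,1]\times[\ell_1]\times[\ell_2]$ and the alternatives are $p_1^{(M)}$ with $p_{X,Y|Z=z}(x,y) = \tfrac{1}{\ell_1\ell_2} + \rho\,\phi_{k(z)}(z)\,M_{k(z),xy}$.

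Because every $M_k$ has vanishing marginals, under each $p_1^{(M)}$ the conditionals $p_{X|Z=z}$ and $p_{Y|Z=z}$ are uniform, hence independent of $z$; this places $p_0$ in all three smooth null classes and in $\overline\cH_0 = \cP_{0,[0,1]}'$, makes each $p_1^{(M)}$ conditionally dependent, and gives $\|p_{X,Y|Z=z}-p_{X,Y|Z=z'}\|_1 \le \rho\,\mathrm{Lip}(\phi_k)\,\|M_k\|_1\,|z-z'| \lesssim \rho m\,|z-z'| \le L|z-z'|$ once $\rho \lesssim L/m$, so $p_1^{(M)}\in\cQ_{0,[0,1],\TV}'(L)$. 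For separation against all of $\overline\cH_0$ I would argue in two steps. First, for any fixed $z$ and any product law $R$ on $[\ell_1]\times[\ell_2]$, since $p_{X,Y|Z=z}$ has uniform marginals, $d_{\TV}(p_{X,Y|Z=z}, R) \ge \tfrac13\,d_{\TV}(p_{X,Y|Z=z}, \mathrm{Unif}^{\otimes 2}) = \tfrac16\rho\phi_{k(z)}(z)$ (if $R$ is far from uniform use the marginal projection; otherwise use the triangle inequality against $\rho\phi_k M_k$). Second, writing an arbitrary $q\in\overline\cH_0$ as $q_Z(z)\,q_{X|Z}(x|z)\,q_{Y|Z}(y|z)$ and using the elementary fiberwise bound $\int_{x,y}|aP - bR| \ge \tfrac12\,a\,\|P-R\|_1$ (with $a=p_Z(z)$, $b=q_Z(z)$), integration over $z$ gives $\|p_1^{(M)}-q\|_1 \gtrsim \rho\int_0^1 \phi_{k(z)}(z)\,dz = \rho\sum_k\int_{I_k}\phi_k \asymp \rho$, uniformly in $q$. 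Hence each $p_1^{(M)} \in \cS_1(\varepsilon_n)$ for $\varepsilon_n \asymp \rho$.

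For indistinguishability, use two independent copies $M,M'$ and the i.i.d.\ structure under $p_0$: the single-sample cross terms $\EE_{p_0}[\rho\,\phi_{k}(Z)\,\ell_1\ell_2\,M_{k,XY}]$ vanish (zero marginals), leaving $1 + d_{\chi^2}(\bar p_1^{\otimes n}, p_0^{\otimes n}) = \EE_{M,M'}\big[(1 + \tfrac{\rho^2}{T}N)^n\big]$, where $N$ is a sum of $T \asymp m\ell_1\ell_2$ independent Rademacher variables coming from the entrywise inner products $\langle M_k, M'_k\rangle$. The bounds $(1+u)^n \le e^{nu}$ and $\EE[e^{sN}] = \cosh(s)^T \le e^{Ts^2/2}$ then give $1 + d_{\chi^2} \le \exp\!\big(\Theta(n^2\rho^4/(m\ell_1\ell_2))\big)$, which is at most a constant once $\rho^4 \lesssim m\ell_1\ell_2/n^2$. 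Balancing this budget against $\rho \lesssim 1/m$ yields $m \asymp (n^2/(\ell_1\ell_2))^{1/5}$ and $\rho \asymp (\ell_1\ell_2)^{1/5}n^{-2/5}$. Finally $R_n(\varepsilon_n) \ge 1 - d_{\TV}(p_0^{\otimes n}, \bar p_1^{\otimes n}) \ge 1 - \tfrac12\sqrt{d_{\chi^2}(\bar p_1^{\otimes n}, p_0^{\otimes n})} > \tfrac13$ for a suitable choice of the constants, so the critical radius is $\ge c_0\,(\ell_1\ell_2)^{1/5}n^{-2/5}$, and combining with the single-bin regime gives the stated $\wedge 1$.

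I expect the real care to be in reconciling three competing demands in the construction. The conditional marginals must stay exactly fixed: this is what makes $p_0$ smooth in all three senses and, crucially, what allows the $L_1$-separation to be proved against the \emph{entire} conditionally independent class $\overline\cH_0$ rather than only against products of uniform marginals --- the subtlety being that averaging the conditionals over $z$ in general destroys independence, so one cannot simply reduce to a per-bin discrete independence-testing lower bound. The TV-Lipschitz budget must then be respected across bin boundaries, which forces $\mathrm{Lip}(\phi_k)\asymp m$ and hence $\rho \lesssim 1/m$. Meanwhile the $\chi^2$ moment must remain controllable, giving $\rho \lesssim (m\ell_1\ell_2/n^2)^{1/4}$; the exponent $n^{-2/5}$ is exactly the balance of these two budgets. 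Consequently the delicate bookkeeping is in choosing the bump functions and the block-sign matrices so that $\int\phi_k^2$ and $\langle M_k, M'_k\rangle$ scale as claimed, with all constants (which may depend on $L$) tracked.
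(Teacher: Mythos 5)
Your proposal is correct and follows the same high-level strategy as the paper's proof: a uniform null $p_0$, alternatives obtained by multiplying bin-localized Lipschitz bumps in $z$ against sign matrices in $(x,y)$ with vanishing row and column sums (so the conditional marginals stay exactly uniform), and a second-moment bound on the mixed likelihood ratio, with the same budget balance $\rho \lesssim 1/m$ versus $n^2\rho^4 \lesssim m\ell_1\ell_2$ yielding $\rho \asymp (\ell_1\ell_2)^{1/5}n^{-2/5}$. Two of your steps differ from the paper in ways worth noting. First, the paper uses a \emph{single} Rademacher matrix $\Delta$ shared across all bins, modulated by bin signs $\nu_j$; the resulting exponent involves the product $\langle \nu,\nu'\rangle\langle\Delta,\Delta'\rangle$ of two Rademacher sums, which is not a sum of independent signs, and the paper must then pass through a Gaussian-comparison step and the $\chi^2(1)$ moment generating function to control it. Your choice of fully independent per-bin block-sign matrices makes the exponent a genuine sum of $T \asymp m\ell_1\ell_2$ i.i.d.\ Rademachers, so the $\cosh(s)^T \le e^{Ts^2/2}$ bound closes the argument in one line; this is a legitimate simplification and gives the same scaling. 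Second, for the separation from all of $\overline\cH_0$, the paper proves and invokes a general projection lemma ($\|q - q_{X|Z}q_{Y|Z}q_Z\|_1 \le 6\inf_{p\in\cP'_{0,[0,1]}}\|q-p\|_1$, its Lemma on TV projections, which it reuses verbatim in the continuous case), whereas you argue fiberwise: the bound $\int_{x,y}|aP - bR| \ge \tfrac12 a\|P-R\|_1$ (valid by splitting on whether $|a-b|$ exceeds $\tfrac12 a\|P-R\|_1$) plus the triangle-inequality comparison of any product law to $\mathrm{Unif}\otimes\mathrm{Unif}$ via the marginals, which is correct but leans on the special structure of your alternatives (exactly uniform conditional marginals and uniform $p_Z$); the paper's lemma is the more general, reusable statement. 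Minor cosmetic differences (trapezoidal rather than smooth zero-mean bumps, per-block $2\times2$ sign patterns rather than the quadrant construction of $\tilde\Delta$, handling odd $\ell_1,\ell_2$ by dropping a symbol rather than padding) are harmless, since the vanishing of the linear cross-terms comes from the zero row/column sums rather than from $\int h = 0$. Your treatment of the $\wedge 1$ regime via a single bin with $\rho \asymp 1$ also matches the role of the paper's choice $1/d \asymp (\ell_1\ell_2)^{1/5}n^{-2/5}\wedge 1$.
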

\noindent {\bf Remarks: } 
\begin{itemize}
\item In the case when $\ell_1$ and $\ell_2$ are constant, our lower bound on the critical 
radius \begin{align*}
\varepsilon_n(\cH_0,\overline \cH_0, \cH_1) \geq c_0 n^{-2/5},
\end{align*} scales as the familiar rate for 
goodness-of-fit testing in the nonparametric setting of $\varepsilon_n \asymp n^{-2s/(4s + d)}$ \cite{ery2018remember, balakrishnan2017hypothesis,ingster2003nonparametric} (where in our setting we take $d = 1$ and $s = 1$, corresponding to the one-dimensional Lipschitz smooth component $Z$). 

We note that as is typical in hypothesis testing problems this rate is faster than the $n^{-1/3}$ rate that we would expect for estimating a univariate Lipschitz smooth density, highlighting the fact that in many cases, from a statistical perspective, hypothesis testing is easier than estimation.

\item On the other hand the scaling of $\varepsilon_n$ with $\ell_1$ and $\ell_2$ has a typical square-root dependence seen in \emph{parametric} hypothesis testing problems \cite{valiant2017automatic,ingster2003nonparametric}, where roughly we see that the critical radius shrinks provided that $\sqrt{\ell_1 \ell_2}/n \rightarrow 0$. Once again this is in contrast to the linear dependence we would expect in estimating a multinomial distribution on $\ell_1 \times \ell_2$ categories, which would require
$\ell_1 \ell_2/n \rightarrow 0$ for consistent estimation.

Thus we see that the lower bound we obtain for CI testing in the setting where $X$ and $Y$ are discrete, and $Z$ is continuous blends parametric and nonparametric hypothesis testing rates. In Section~\ref{upper:bound:section} we develop matching upper bounds in various settings. 
\item We note in passing that our lower bound applies when the null distribution is restricted to belong to any of the three Lipschitzness classes introduced in Definition~\ref{def:nullsmoothness}.
\item We give the proof of Theorem \ref{first:lower:bound} in Appendix \ref{app:lb}. We note that at a high-level we follow the strategy of \citet{ingster1982minimax} of creating a carefully chosen 
collection of possible densities under the alternative, and lower bounding the performance
of the (optimal) likelihood ratio test in distinguishing a fixed null distribution against a uniform mixture of the selected distributions under the alternative. However, in our setting additional care is needed when perturbing the $X$ and $Y$ components in order to ensure that they remain valid discrete distributions (see Figure~\ref{tilde:Delta:figure}, and the associated construction), and to characterize the distance of
our perturbed distributions from the manifold of conditionally independent distributions.
\end{itemize}

\subsection{$X$, $Y$ and $Z$ Continuous Case}\label{continuous:case:lower:bound:section}

%In this section we will derive a lower bound for the case where $(X,Y,Z)$ are absolutely continuous with respect to the Lebesgue measure. Let $\cE_{0, [0,1]^3}$ denote all such distributions. As we saw in Section \ref{hardness:section}, CI testing is an ill-posed problem over the whole class of distributions $\cE_{0, [0,1]^3}$. Let $\cP_{0,[0,1]^3} \subset \cE_{0, [0,1]^3}$ be the set of distributions for which $X \independent Y | Z$ and let $\cQ_{0,[0,1]^3} = \cE_{0, [0,1]^3} \setminus \cP_{0,[0,1]^3}$. Here we will remove certain pathological distributions from the null and alternative hypothesis to make the problem tractable. We state several definitions below; these definitions are analogous to the ones considered in Section \ref{discrete:case:lower:bound:section}.
We first recall 
the Lipschitzness classes $\cP_{0,[0,1]^3, \TV}(L)$ and $\cP_{0,[0,1]^3, \chi^2}(L)$ introduced in Definition~\ref{def:nullsmoothness}, 
and $\cQ_{0,[0,1]^3, \TV}(L,s)$ introduced in Definition~\ref{alternative:smoothness:holder}.
We derive a lower bound on the critical radius for distinguishing the conditionally independent distributions in either of the null classes $\cP_{0,[0,1]^3, \TV}(L)$ and $\cP_{0,[0,1]^3, \chi^2}(L)$
from the alternative class of conditionally dependent distributions $\cQ_{0,[0,1]^3, \TV}(L,s)$. Formally, we have the following result:

 \begin{theorem}[Critical Radius Lower Bound]\label{lower:bound:continuous:case} Let $\overline \cH_0 = \cP_{0,[0,1]^3}$. Suppose that $\cH_0$ is either $\cP_{0,[0,1]^3, \TV}(L)$ or $\cP_{0,[0,1]^3, \chi^2}(L)$, and $\cH_1 = \cQ_{0,[0,1]^3, \TV}(L,s)$ for some fixed $L \in \RR^+$. Then we have that for some absolute constant $c_0 > 0$,
 \begin{align*}
\varepsilon_n(\cH_0,\overline \cH_0, \cH_1) \geq \frac{c_0}{n^{2s/(5s + 2)}}.
\end{align*}
 \end{theorem}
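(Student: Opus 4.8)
The plan is to follow the classical Ingster-style two-point-mixture approach, but with a construction tailored to respect both the TV-Lipschitz-in-$z$ constraint and the H\"older smoothness of $p_{X,Y|Z}$ in $(x,y)$. First I would fix a convenient null distribution: take $Z \sim \mathrm{Unif}[0,1]$ and $X,Y|Z=z$ to be independent and uniform on $[0,1]^2$ regardless of $z$, so $p_0$ lies in $\cP_{0,[0,1]^3,\TV}(L)$ (trivially, since the conditional laws do not vary with $z$) and likewise in the $\chi^2$ class. The alternative family is obtained by perturbing the joint conditional density. I would partition $[0,1]$ (the $z$-axis) into $m_z$ bins of width $1/m_z$, partition $[0,1]^2$ (the $(x,y)$-domain) into $m_{xy}^2$ cells, and on each $z$-bin $\times$ each $(x,y)$-cell attach a $\pm$-signed, suitably scaled bump function $\psi$ supported on that cell. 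The bump should be a smooth ``tensor wavelet-like'' function with vanishing integrals along both $x$ and $y$ slices, so that adding $\rho \sum_{\text{cells}} \xi_{\text{cell}} \psi_{\text{cell}}(x,y)$ (with i.i.d.\ Rademacher signs $\xi$) to the uniform conditional density preserves the marginals $p_{X|Z=z}=p_{Y|Z=z}=\mathrm{Unif}[0,1]$ exactly; hence each perturbed distribution is maximally far (in the sense needed) from the conditionally independent manifold, and moreover $p_{X|Z=z}$ does not vary in $z$, keeping us inside the null-side Lipschitz classes for the marginals while violating conditional independence.

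The key constraints then pin down the parameters. Smoothness in $(x,y)$: a bump of height $h$ on a cell of side $1/m_{xy}$ has $\lfloor s\rfloor$-th derivatives of size $h\,m_{xy}^{\lfloor s\rfloor}$ with H\"older modulus $h\,m_{xy}^{s}$, so to stay in $\cH^{2,s}(L)$ we need $h \lesssim m_{xy}^{-s}$; take $h \asymp m_{xy}^{-s}$. The amplitude of the perturbation of $p_{X,Y|Z=z}$ relative to the uniform baseline is $\asymp h$, which is bounded, fine. TV-separation of the joint law from $\cH_0$: since the marginals are untouched, $\|p_{X,Y|Z=z}-p_{X|Z=z}p_{Y|Z=z}\|_1 \asymp h \cdot (\text{number of cells})\cdot(\text{cell area}) \asymp h$, and averaging over the $z$-bins this translates (after verifying it survives the integration over $z$, which it does because the sign pattern is chosen independently per $z$-bin and the lower bound on distance-to-null is via a marginal-based argument) to $\varepsilon_n \asymp h \asymp m_{xy}^{-s}$. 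The $z$-smoothness of $p_{X,Y|Z=\cdot}$: across a $z$-bin boundary the conditional density jumps by $\asymp h$ over a gap of width $1/m_z$; I would smooth the transition (multiply each bump by a Lipschitz-in-$z$ profile), requiring $h\,m_z \lesssim L$, i.e.\ $m_z \lesssim 1/h$ — I will take $m_z \asymp 1/h \asymp m_{xy}^{s}$, which is the largest admissible and makes the number of independent sign choices as large as possible.

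Next I would bound the $\chi^2$-distance (equivalently the second moment of the likelihood ratio) between $p_0^{\otimes n}$ and the mixture $\bar p = \EE_\xi p_\xi^{\otimes n}$. By the standard Ingster computation, $\EE_{p_0}\!\big[(d\bar p/dp_0^{\otimes n})^2\big]-1 = \EE_{\xi,\xi'}\big[(1+\langle\text{perturbations}\rangle)^n\big]-1$, and because the bumps on distinct $(z\text{-bin},xy\text{-cell})$ products are orthogonal in $L^2([0,1]^3)$ and signs are independent, this reduces to $\EE\big[\prod_{\text{bins}}(1 + (\text{i.i.d.\ }\pm) c)^{\,n_{\text{bin}}}\big]$ and, after the usual $\log(1+t)\le t$ / sub-Gaussian truncation, to roughly $\exp\!\big(C n^2 \rho^4 \cdot \#\text{cells}\big)$ where $\rho$ is the per-cell perturbation mass. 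Tracking the bookkeeping: there are $m_z$ $z$-bins each carrying probability $1/m_z$, and $m_{xy}^2$ $(x,y)$-cells each of area $m_{xy}^{-2}$ with density perturbation $h$, so the per-cell probability perturbation is $\rho \asymp (1/m_z)\cdot h\cdot m_{xy}^{-2}$ while the total cell count is $m_z m_{xy}^2$. Plugging in $m_z\asymp h^{-1}$ one finds the $\chi^2$ exponent is $\asymp n^2 h^{5}$ (the ``5'' arising as $2$ from $n^2$ canceling against counting, plus the net power of $h$ from $\rho^4\cdot\#\text{cells}$ with $h=m_{xy}^{-s}$ giving the extra $m_{xy}$ factors as $m_{xy}^{2}$ out of $m_{xy}^{\,?}$ — this is exactly the step to do carefully). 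Setting $n^2 h^5 \asymp 1$ gives $h \asymp n^{-2/5}$; but $h$ is also constrained by the requirement that the perturbed conditional density remain a valid (nonnegative) density and that $m_z$ be a positive integer at most... wait — the genuinely binding trade-off is $h\asymp m_{xy}^{-s}$ together with $m_z\asymp m_{xy}^{s}$ and the $\chi^2$ budget, which when combined with the $n$-dependence yields $\varepsilon_n = h \asymp n^{-2s/(5s+2)}$; I would present the algebra solving $n^2 m_{xy}^{-2s}\cdot m_{xy}^{-2s}\cdot m_z^{-3}\cdot(m_z m_{xy}^2)\asymp 1$ with $m_z=m_{xy}^{s}$ to extract the exponent $2s/(5s+2)$. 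Once the $\chi^2$ bound is below a small constant, Le Cam / Ingster's lemma gives $R_n \le 1/3$ is violated, i.e.\ the testing risk stays bounded away from the target, proving the claimed lower bound.

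\textbf{Main obstacle.} The delicate point is not the Ingster machinery per se but the \emph{simultaneous} satisfaction of three competing demands by a single bump construction: (i) tensor-vanishing integrals in $x$ and in $y$ so the conditional marginals stay exactly uniform (this is what lets the distance-to-$\cH_0$ be bounded below \emph{and} keeps us inside the null-side marginal-Lipschitz classes, finessing the ``averaging over $Z$ need not preserve independence'' subtlety flagged in the introduction); (ii) $\cH^{2,s}(L)$ membership, which couples the bump height to the cell size as $h\asymp m_{xy}^{-s}$ and also forces control of \emph{all} lower-order derivatives, so a single fixed smooth profile must be rescaled correctly; and (iii) Lipschitz-in-$z$ transitions between adjacent $z$-bins with opposite sign patterns, which caps $m_z$ at $\asymp 1/h$. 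Getting the exponent $2s/(5s+2)$ out requires that each of these be tight simultaneously, and verifying that the lower bound on $\inf_{q\in\overline\cH_0}\|p_\xi - q\|_1$ (distance to the \emph{full} conditionally-independent set, not just the smooth one) is indeed $\gtrsim h$ — rather than being killed by some cleverer conditionally independent approximant — is the step most in need of a careful argument; I would handle it by noting that any conditionally independent $q$ must have $q_{X,Y|Z=z}=q_{X|Z=z}q_{Y|Z=z}$ a.e.\ $z$, and testing $p_\xi$ against this product structure cell-by-cell forces an $L^1$ discrepancy of order $h$ on a constant fraction of $z$.
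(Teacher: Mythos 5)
Your construction is essentially the paper's: the same fixed uniform null, a perturbation of $p_{X,Y|Z}$ by tensor bumps in $(x,y)$ with vanishing slice integrals (so the conditional marginals stay exactly uniform), modulated by sign-randomized smooth bumps in $z$, with the three constraints (H\"older in $(x,y)$ giving $h\asymp m_{xy}^{-s}$, TV-Lipschitz in $z$ giving $m_z\asymp 1/h$, and separation $\asymp h$ via the marginal-preserving property) identified correctly and made simultaneously tight; using fully independent signs per $(z\text{-bin})\times(xy\text{-cell})$ instead of the paper's product signs $\nu_j\delta_{ij}$ is an immaterial variation. The one step you only sketch — that $\inf_{q\in\cP_{0,[0,1]^3}}\|p_\xi-q\|_1\gtrsim h$ and not just the distance to $q_{X|Z}q_{Y|Z}q_Z$ — cannot be handled purely ``cell-by-cell,'' since a competing conditionally independent $q$ need not share your marginals; this is exactly what the paper's Lemma~\ref{TV:proj:vs:all} supplies via triangle inequalities and subadditivity of TV on products, and you would need that (or an equivalent) argument.

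The genuine gap is in the decisive second-moment computation, where your bookkeeping as written does not produce the claimed exponent. In the Ingster bound the exponent is $n^2\sum_c\|\phi_c\|_2^4$, where $\|\phi_c\|_2^2\asymp h^2\cdot\mathrm{vol}(c)=h^2/(m_z m_{xy}^2)$ is the per-cell \emph{squared $L^2$ mass} of the density perturbation, so the correct scaling relation is $n^2 h^4/(m_z m_{xy}^2)\asymp 1$; with $h=m_{xy}^{-s}$ and $m_z=m_{xy}^{s}$ this reads $n^2 m_{xy}^{-(5s+2)}\asymp 1$ and yields $\varepsilon\asymp n^{-2s/(5s+2)}$ (and is exactly the paper's condition $(n\rho^6)^2 d (d')^2\lesssim 1$ after translating $h=\rho^3\sqrt{d}\,d'$, $m_z=d$, $m_{xy}=d'$). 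You instead take the per-cell \emph{$L^1$ mass} $(1/m_z)h\,m_{xy}^{-2}$, and your displayed equation $n^2 m_{xy}^{-2s}\cdot m_{xy}^{-2s}\cdot m_z^{-3}\cdot(m_z m_{xy}^2)\asymp 1$ with $m_z=m_{xy}^s$ actually gives $m_{xy}\asymp n^{1/(3s-1)}$, i.e.\ $\varepsilon\asymp n^{-s/(3s-1)}$ (for $s=1$ that is $n^{-1/2}$ rather than the correct $n^{-2/7}$); the earlier ``$n^2h^5$'' assertion is likewise inconsistent with it. So the claimed rate is not derived by the algebra you present — you flag this step yourself — and the proof is only complete once the exponent is computed with the $L^2$ (not $L^1$) per-cell mass, after which everything matches the paper's argument.
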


\noindent {\bf Remark: } 
\begin{itemize}
%\item In this setting, where $X$, $Y$ and $Z$ are each continuous our lower bound matches the rate for nonparametric density testing of $\varepsilon_n \asymp n^{-2s/(4s + d)}$ 
%(where in this case we take $s = 1$ and $d = 3$ corresponding to three Lipschitz smooth variables). Although this provides some intuition for the $n^{-2s/(5s + 2)}$ lower bound that we obtain, 
%we note in passing that in this setting 
%our Lipschitzness assumptions are qualitatively slightly different from 
%those imposed in the classical nonparametric setting of testing for a 3-dimensional, Lipschitz smooth density. 

\item We note that our lower bound applies when the null distribution is restricted to belong to either of the classes $\cP_{0,[0,1]^3, \TV}(L)$ and $\cP_{0,[0,1]^3, \chi^2}(L)$.
Our proof in this setting builds on that of Theorem~\ref{first:lower:bound}. In this case, to create a collection of distributions under the alternative 
we perturb the null distribution by smooth, infinitely differentiable bumps along all three coordinates
in a carefully constructed fashion. By an appropriate choice of 
various parameters, we ensure that the distributions we construct satisfy the Lipschitzness and H\"{o}lder smoothness conditions required by the class $\cQ_{0,[0,1]^3, \TV}(L,s)$, while still remaining
sufficiently far from the conditional independence manifold. We provide the details of our construction, as well as the subsequent analysis of the likelihood ratio test in Appendix~\ref{app:lb}.
\end{itemize}

\section{Minimax Upper Bounds}\label{upper:bound:section}
In this section we provide matching (in certain regimes) upper bounds to the lower bounds given in Section \ref{minimax:lower:bounds:section}. %To ease certain calculations we will use Poissonization, i.e. we will assume that the sample size $N$ is random and is drawn from a Poisson distribution with mean $n$, i.e., we assume $N \sim \operatorname{Poi}(n)$.
%The Poissonization trick has been used often in the recent hypothesis testing literature~\cite{valiant2017automatic, balakrishnan2017hypothesis, canonne2018testing}. Furthermore, since the Poisson distribution is tightly concentrated around its mean, Poissonization only affects constant factors in the analysis of the upper bounds relative to assuming a fixed sample size $n$ \cite{valiant2017automatic}.

\subsection{Upper Bound with Finite Discrete $X$ and $Y$}\label{fixed:l1:l2:section}

In this section we will suggest a conditional independence test to match the lower bound of Section \ref{discrete:case:lower:bound:section} when $\ell_1, \ell_2 = O(1)$ are not allowed to scale with $n$. In this case the bound of Theorem \ref{first:lower:bound} simply states that the critical radius is bounded from below by $c n^{-2/5}$, for some sufficiently small constant $c > 0$. To start the preparation for our test statistic we will first re-introduce certain unbiased estimators from the work of \citet{canonne2018testing}. Our exposition and treatment of their estimators is novel, and builds on classical work on U-statistics \cite{hoeffding1948class,Serfling2001Approximation}.

Suppose we observe $\sigma \geq 4$ observations of two discrete covariates $X'$ and $Y'$ taking values in $[\ell_1]$ and $[\ell_2]$\footnote{As in the lower bound, it is not crucial that the supports of $X'$ and $Y'$ are $[\ell_1]$ and $[\ell_2]$. We focus on this case simply for the sake of clarity.}. Denote the joint distribution of $(X',Y')$ by $p_{X',Y'}$. As usual we denote the marginals as $p_{X'}$ and $p_{Y'}$ (i.e., $p_{X'}(x) = \sum_{y \in [\ell_2]} p_{X',Y'}(x,y)$ and similarly for $p_{Y'}$). We are interested in finding an unbiased estimate of the following expression
\begin{align}\label{l2:distance:p:prod:marg}
\|p_{X',Y'} - p_{X'} p_{Y'}\|_2^2 = \sum_{x \in [\ell_1], y\in[\ell_2]} (p_{X',Y'}(x,y) - p_{X'}(x)p_{Y'}(y))^2. %= \sum_{x \in [\ell_1], y\in[\ell_2]}\bigg(p(x,y) \sum_{x'\neq x}\sum_{y' \neq y} p(x,y) - \sum_{x'\neq x}p(x',y)\sum_{y'\neq y}p(x,y')\bigg)^2.
\end{align}
The above expression is nothing but the $L^2_2$ distance between $p_{X',Y'}$ and the product of the marginals $p_{X'} p_{Y'}$. In order for us to unbiasedly estimate this quantity we will use a U-statistic, and at least $4$ observations. Before we define the U-statistic, let us define its kernel. Let $i, j \in [\sigma]$ be two observations. Define 
\begin{align}\label{phi:def}
\phi_{ij}(xy) & =  \mathbbm{1}(X'_i = x, Y'_i = y) - \mathbbm{1}(X'_i = x) \mathbbm{1}(Y'_j = y).%\mathbbm{1}(X'_i = x, Y'_i = y)\sum_{x' \neq x}\sum_{y' \neq y} \mathbbm{1}(X'_j = x', Y'_j = y') \\
%& - \sum_{x' \neq x}\mathbbm{1}(X'_i = x', Y'_i = y) \sum_{y' \neq y} \mathbbm{1}(X'_j = x, Y'_j = y').
\end{align}
Next, take $4$ distinct observations $i,j,k,l \in [\sigma]$, and define the kernel function
\begin{align*}
h_{ijkl} =  \frac{1}{4!} \sum_{\pi \in [4!]} \sum_{x \in [\ell_1], y \in [\ell_2]} \phi_{\pi_1\pi_2}(xy)\phi_{\pi_3 \pi_4}(xy),
\end{align*}
where $\pi$ is a permutation of $i,j,k,l$. Clearly since $i,j,k,l \in [\sigma]$ are distinct, the above is an unbiased estimate of \eqref{l2:distance:p:prod:marg}. Next, we construct the U-statistic
\begin{align}\label{U:stat}
U(\cD) := \frac{1}{{\sigma \choose 4}} \sum_{i < j < k < l : (i,j,k,l) \in  [\sigma]} h_{ijkl},
\end{align}
where we denoted $\cD = \{(X_1',Y_1'), \ldots, (X'_\sigma, Y'_{\sigma}) \}$. The U-statistic \eqref{U:stat} is an unbiased estimate of the $L_2^2$ distance in~\eqref{l2:distance:p:prod:marg}. It is not obvious that this estimator is the same as the one defined in equation (18) of \citet{canonne2018testing}. However, using Proposition 4.2 of \cite{canonne2018testing} and the fact that the U-statistic in~\eqref{U:stat} is a symmetric estimator, one can deduce that the two estimators must coincide. 

In order to analyze our hypothesis test we will appropriately bound the mean and variance of our test statistic under the null and under the alternative. Since our test is based on the U-statistic in~\eqref{U:stat} we will need to bound its variance. In principle, one can directly reuse the bound on the variance of the U-statistic in~\eqref{U:stat} given in~\cite{canonne2018testing}. Since the original derivation of this bound is complicated, we give a novel derivation starting from first principles, building on the extensive theory for U-statistics. We have the following result:
\begin{lemma}[Variance Upper Bound]\label{U:stat:variance:lemma}
There exists some absolute constant $C$ such that 
\begin{align*}
\MoveEqLeft \Var[ U(\cD)] \leq C\bigg(\frac{\EE[U(\cD)]\max(\|p_{X',Y'}\|_2, \|p_{X'}p_{Y'}\|_2)}{\sigma} \\
& + \frac{\max(\|p_{X',Y'}\|^2_2, \|p_{X'}p_{Y'}\|^2_2)}{\sigma^2}\bigg).
\end{align*}
%where $p_Xp_Y$ is the distribution with density $p(x,\cdot)p(\cdot,y)$ for $x \in [\ell_1], y \in [\ell_2]$. 
\end{lemma}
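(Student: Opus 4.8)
Here is the plan.

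\medskip

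The plan is to carry out the classical variance analysis of a U-statistic and then estimate the relevant H\'{a}jek projections. Write $W_i = (X_i', Y_i')$, let $\Delta(x,y) = p_{X',Y'}(x,y) - p_{X'}(x)p_{Y'}(y)$, and set $\theta := \EE[U(\cD)] = \sum_{x,y}\Delta(x,y)^2$. Since $U(\cD)$ from~\eqref{U:stat} is a symmetric U-statistic of order $4$ with kernel $h_{1234}$, Hoeffding's variance identity~\cite{hoeffding1948class,Serfling2001Approximation} gives $\Var[U(\cD)] = \binom{\sigma}{4}^{-1}\sum_{c=1}^{4}\binom{4}{c}\binom{\sigma-4}{4-c}\zeta_c$, where $\zeta_c = \Var(h_c(W_1,\dots,W_c))$ and $h_c$ is the $c$-th H\'{a}jek projection of $h_{1234}$. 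Using $\binom{\sigma-4}{4-c}/\binom{\sigma}{4}\lesssim \sigma^{-c}$ for $\sigma\ge 4$ together with the standard monotonicity $\zeta_1\le\zeta_2\le\zeta_3\le\zeta_4$, the terms $c=2,3,4$ are each $\lesssim \zeta_4/\sigma^2$, so it suffices to prove
\begin{align*}
\zeta_1 \lesssim \theta\,\max\big(\|p_{X',Y'}\|_2,\|p_{X'}p_{Y'}\|_2\big) \qquad\text{and}\qquad \zeta_4 \lesssim \max\big(\|p_{X',Y'}\|_2^2,\|p_{X'}p_{Y'}\|_2^2\big),
\end{align*}
after which substituting back recovers exactly the claimed two-term bound since $\theta = \EE[U(\cD)]$.

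\medskip

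For $\zeta_1$ I would compute the H\'{a}jek projection explicitly. Writing $\phi_{ab}(xy) = \mathbbm{1}(X_a'=x)(\mathbbm{1}(Y_a'=y)-\mathbbm{1}(Y_b'=y))$ and centering $\phi_{ab}(xy) = \Delta(x,y) + \bar\phi_{ab}(xy)$ with $\EE[\bar\phi_{ab}(xy)]=0$, the only pieces of $h_{1234}$ with nonzero conditional expectation given one observation are the constant $\theta$ and the ``$\Delta\bar\phi_{ab}$'' pieces whose index pair contains that observation; averaging over the $4!$ permutations yields, for $w=(x_1,y_1)$,
\begin{align*}
h_1(w) = \tfrac{\theta}{2} + \tfrac12\Big(\Delta(x_1,y_1) - \sum_{y}\Delta(x_1,y)p_{Y'}(y)\Big) - \tfrac12\big(f(y_1) - \EE[f(Y_1')]\big), \qquad f(y) := \sum_{x}\Delta(x,y)p_{X'}(x).
\end{align*}
Hence $\zeta_1 = \Var(h_1(W_1))$ is, up to an absolute constant, at most $\EE[\Delta(X_1',Y_1')^2] + \EE\big[(\sum_y \Delta(X_1',y)p_{Y'}(y))^2\big] + \EE[f(Y_1')^2]$. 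The first term is $\le (\max_{x,y}p_{X',Y'}(x,y))\,\theta \le \|p_{X',Y'}\|_2\,\theta$; for the other two, Cauchy--Schwarz in the inner sum together with $\max_x p_{X'}(x)\le\|p_{X'}\|_2$, $\max_y p_{Y'}(y)\le\|p_{Y'}\|_2$ and $\|p_{X'}\|_2,\|p_{Y'}\|_2\le 1$ gives $\le \|p_{X'}p_{Y'}\|_2\,\theta$ for each. This establishes the first inequality.

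\medskip

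The bound $\zeta_4 \le \EE[h_{1234}^2]$ is where the real work lies. The centered expansion writes $h_{1234} = \theta + (\text{linear part}) + (\text{higher-order part})$, where the linear part is $\tfrac16\sum_{a\ne b,\,a,b\in[4]}\sum_{xy}\Delta(x,y)\bar\phi_{ab}(xy)$ and the higher-order part is $\tfrac1{24}\sum_{\pi}\sum_{xy}\bar\phi_{\pi_1\pi_2}(xy)\bar\phi_{\pi_3\pi_4}(xy)$, so $\EE[h_{1234}^2]\le 3\theta^2 + 3\EE[(\text{linear})^2] + 3\EE[(\text{higher})^2]$. Here $\theta^2\lesssim \max(\|p_{X',Y'}\|_2^2,\|p_{X'}p_{Y'}\|_2^2)$, since $\theta\le 2\|p_{X',Y'}\|_2^2+2\|p_{X'}p_{Y'}\|_2^2\le 4$; the linear part is handled just as $\zeta_1$, using $\sum_{xy}\Delta(x,y)\bar\phi_{ab}(xy)=\Delta(X_a',Y_a')-\Delta(X_a',Y_b')-\theta$ and Cauchy--Schwarz over the twelve ordered index pairs, and contributes $\lesssim\max(\|p_{X',Y'}\|_2,\|p_{X'}p_{Y'}\|_2)\theta\lesssim\max(\cdots)^2$. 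For the higher-order part I would use the factorization $\sum_{xy}\phi_{ab}(xy)\phi_{cd}(xy) = \mathbbm{1}(X_a'=X_c')\sum_y(\mathbbm{1}(Y_a'=y)-\mathbbm{1}(Y_b'=y))(\mathbbm{1}(Y_c'=y)-\mathbbm{1}(Y_d'=y))$, which displays each summand as a collision indicator on the $X$-coordinates times a mean-zero statistic on the $Y$-coordinates; subtracting the $\Delta$-corrections (bounded as above) and squaring, $\EE[(\text{higher})^2]$ becomes a finite sum of expectations indexed by the coincidence pattern of $\{1,2,3,4\}$ under the two permutations, each of which collapses --- using Cauchy--Schwarz to decouple the $X$- from the $Y$-collisions, and the mean-zero structure of the $Y$-differences to prevent a lone uncontrolled $\|p_{X'}\|_2^2$ or $\|p_{Y'}\|_2^2$ from surviving --- to a product of collision probabilities dominated by $\max(\|p_{X',Y'}\|_2^2,\|p_{X'}p_{Y'}\|_2^2)$. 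This enumeration for the higher-order component is the main obstacle: the trivial bound $\zeta_c\le\EE[h_{1234}^2]\le 16$ is far too weak, and a crude Cauchy--Schwarz split of the higher-order term yields only $\|p_{X'}p_{Y'}\|_2$ rather than its square, so the cancellations built into $\bar\phi_{ab}$ must genuinely be exploited; by contrast the variance formula, the combinatorial reduction, and the $\zeta_1$ computation are routine once the H\'{a}jek projection is in hand.
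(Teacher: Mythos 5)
Your skeleton is sound and is in substance the same decomposition the paper uses: the Hoeffding variance formula with the reduction to $\zeta_1/\sigma+\zeta_4/\sigma^2$ is exactly the covariance expansion the paper quotes from van der Vaart, your explicit H\'{a}jek projection $h_1$ is computed correctly, and the resulting bound $\zeta_1\lesssim \EE[U(\cD)]\max(\|p_{X',Y'}\|_2,\|p_{X'}p_{Y'}\|_2)$ goes through as written. The gap is exactly where you flag ``the real work'': the claim $\EE[(\text{higher})^2]\lesssim\max(\|p_{X',Y'}\|_2^2,\|p_{X'}p_{Y'}\|_2^2)$ is asserted but the tools you name do not deliver it. Diagonal terms are indeed harmless (expand the squared $Y$-bracket into four collision indicators; each expectation is $\|p_{X',Y'}\|_2^2$, $\sum_{x,y}p_{X',Y'}(x,y)p_{X'}(x)p_{Y'}(y)$, or $\|p_{X'}p_{Y'}\|_2^2$), but cross terms between two different pairings of $\{1,2,3,4\}$ produce three-point $X$-collisions such as $\mathbbm{1}(X_1'=X_2')\mathbbm{1}(X_1'=X_3')$. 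Taking absolute values of the $Y$-brackets there leaves $\sum_x p_{X'}(x)^3$, which is not dominated by $\max(\|p_{X',Y'}\|_2^2,\|p_{X'}p_{Y'}\|_2^2)$ (take $Y'$ close to uniform on a large alphabet); your ``Cauchy--Schwarz to decouple the $X$- from the $Y$-collisions'' gives $(\sum_x p_{X'}(x)^3)^{1/2}$ times a $Y$-collision factor of order $\|p_{Y'}\|_2$, i.e.\ only the first power of the norm --- already for $X',Y'$ independent and uniform on $[\ell]$ this is of order $\ell^{-3/2}$ against the required $\ell^{-2}$; and the mean-zero structure of $\bar\phi$ that you want to exploit is not available once you condition on the $X$-coordinates in the dependent case, since $\EE[\mathbbm{1}(Y_a'=y)\mid X_a']$ equals $p_{Y'|X'}(y|X_a')$ rather than $p_{Y'}(y)$. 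So, as sketched, the enumeration does not close.

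The missing idea --- which is how the paper disposes of every overlap pattern with two or more shared indices in one stroke, with no cancellation needed --- is an asymmetric treatment of the two kernels: for $\{0,\pm1\}$-valued variables, $\EE[AB]\le\EE[|B|\,\mid\,|A|=1]\,\EE[|A|]$. Apply this with $A=\phi_{\pi_1\pi_2}(x,y)\phi_{\pi_3\pi_4}(x,y)$ and $B=\phi_{\pi_1'\pi_2'}(x',y')\phi_{\pi_3'\pi_4'}(x',y')$, keep the sum over $(x',y')$ inside the conditional expectation where the deterministic bound $\sum_{x',y'}|\phi_{\pi_1'\pi_2'}(x',y')|\le 2$ applies, and use that the two disjoint pairs inside a single kernel are independent, so that $\sum_{x,y}\EE[|A|]=\sum_{x,y}\EE|\phi_{\pi_1\pi_2}(x,y)|\,\EE|\phi_{\pi_3\pi_4}(x,y)|\le\sum_{x,y}(p_{X',Y'}(x,y)+p_{X'}(x)p_{Y'}(y))^2\le 2(\|p_{X',Y'}\|_2^2+\|p_{X'}p_{Y'}\|_2^2)$. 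This yields the squared norms uniformly over all coincidence patterns (and, combined with one Cauchy--Schwarz on the $\Delta$-factors, it also reproves your $\zeta_1$ bound); substituting it for your sketched enumeration, the rest of your plan is fine.
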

\noindent Now that we have defined the statistic $U$ and have bounded its variance, we are ready to introduce our test statistic. Before that we include a randomization device in the test:

Draw $N \sim Poi(\frac{n}{2})$. If $N > n$ accept the null hypothesis. If $N \leq n$ take arbitrary $N$ out of the $n$ samples and work with them. The next step is to discretize the variable $Z$ into $d$ bins of equal size. Denote those bins with $\{C_1, \ldots, C_d\}$, so that $\cup_{i \in [d]} C_i = [0,1]$, and each $C_i$ is an interval of length $\frac{1}{d}$. Next construct the datasets $\cD_{m} = \{(X_i, Y_i) : Z_i \in C_m, i \in [N]\}$. Let $\sigma_m = |\cD_m|$ be the sample size in each set $\cD_m$, so that $\sum_{m \in [d]} \sigma_m = N$. For bins $\cD_m$ with at least $\sigma_m \geq 4$ observations, let for brevity $U_m = U(\cD_m)$. Each $U_m$ can be thought of as a local test of independence within the bin $C_m$ --- if the value of $U_m$ is close to $0$ then intuitively independence holds within that bin, while if the value of $U_m$ is large, independence is potentially violated within that bin. In order to combine these different statistics we follow \citet{canonne2018testing} and consider the following test statistic
\begin{align}
\label{eqn:fixedstat}
T = \sum_{m \in [d]} \mathbbm{1}(\sigma_m \geq 4) \sigma_m U_m. 
\end{align}
We will prove that under the null hypothesis the value of $T$ is likely to be below a threshold $\tau$ (to be specified), while under the alternative hypothesis $T$ will likely exceed the value $\tau$. Define the test 
$$\psi_\tau(\cD_N) = \mathbbm{1}(T \geq \tau),$$ 
where $\cD_N = \{(X_1,Y_1,Z_1), \ldots, (X_N,Y_N,Z_N)\}$. Recall the definitions of the null Lipschitzness classes 
$\cP_{0,[0,1],\TV}'(L), \cP_{0,[0,1],\chi^2}'(L), \cP_{0,[0,1],\TV^2}'(L)$ and the alternative Lipschitzness classes $\cQ_{0,[0,1],\TV}'(L)$ (see Definitions \ref{def:nullsmoothness} and~\ref{alternative:TV:smoothness:def} in Section \ref{sec:smoothness}). We are now ready to state the main result of this section.
\begin{theorem}[Finite Discrete $X$, $Y$ Upper Bound] \label{main:theorem:finite:discrete:XY} Set $d = \lceil n^{2/5} \rceil$ and let $\tau = \zeta n^{1/5}$ for a sufficiently large absolute constant $\zeta$ (depending on $L$). 
Finally, suppose that $\varepsilon \geq c n^{-2/5}$, for a sufficiently large constant $c$ (depending on $\zeta$, $L$, $\ell_1,\ell_2$).
Then we have that 
\begin{align*}
\sup_{p \in \cP_{0,[0,1],\TV^2}'(L) \cup \cP_{0,[0,1],\TV}'(L) \cup \cP_{0,[0,1],\chi^2}'(L)} \EE_p[\psi_\tau(\cD_N)] & \leq \frac{1}{10},\\ 
\sup_{p \in \{p \in \cQ_{0,[0,1],\TV}'(L): \inf_{q \in \cP'_{0,[0,1]}} \|p - q\|_1 \geq \varepsilon\}} \EE_p[ 1- \psi_\tau(\cD_N)] & \leq \frac{1}{10} + \exp(-n/8).
\end{align*}
\end{theorem}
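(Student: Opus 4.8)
The plan is to Poissonize, reduce to a bin‑by‑bin analysis that exploits independence across the $Z$‑bins, and then control the first two moments of $T$ under the null and the alternative separately, closing each case with Chebyshev's inequality. First I would dispose of the randomization device: since $N\sim\mathrm{Poi}(n/2)$, a Chernoff bound gives $\PP(N>n)\le\exp(-n/8)$, which accounts precisely for the additive $\exp(-n/8)$ in the type II statement, so from now on I work on $\{N\le n\}$, where the test is run on $N\sim\mathrm{Poi}(n/2)$ i.i.d.\ samples. By Poissonization (thinning), if $w_m=\PP(Z\in C_m)$ and $\lambda_m=(n/2)w_m$, the bin counts $\sigma_m$ are independent $\mathrm{Poi}(\lambda_m)$, and conditionally on $\sigma_m$ the data in bin $m$ are i.i.d.\ from $p^{(m)}_{X,Y}:=p_{X,Y\mid Z\in C_m}$, with marginals $p^{(m)}_X,p^{(m)}_Y$. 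Set $\theta_m:=\|p^{(m)}_{X,Y}-p^{(m)}_Xp^{(m)}_Y\|_2^2$. Unbiasedness of $U(\cdot)$ on $\ge4$ samples gives $\EE[U_m\mid\sigma_m]=\theta_m$ on $\{\sigma_m\ge4\}$, hence $\EE[T]=\sum_{m}\theta_m\,\EE[\sigma_m\mathbbm{1}(\sigma_m\ge4)]\le\sum_m\theta_m\lambda_m$. For the variance I would use independence across bins, the law of total variance conditioning on $\sigma_m$, Lemma~\ref{U:stat:variance:lemma} applied with $\sigma=\sigma_m$, the bounds $\max(\|p^{(m)}_{X,Y}\|_2,\|p^{(m)}_Xp^{(m)}_Y\|_2)\le1$ and $\theta_m\le4$, and the fact that a truncated Poisson still has variance $O(\lambda_m)$ (not $O(\lambda_m^2)$), to get
\begin{align*}
\Var[T]\;\lesssim\;\sum_m\big(\theta_m\lambda_m+1\big)\;\lesssim\;\sum_m\theta_m\lambda_m+d .
\end{align*}

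For the type I bound, under any of the three null classes we have $X\independent Y\mid Z$, so for $z\in C_m$, $p_{X,Y\mid Z=z}=p_{X\mid Z=z}\,p_{Y\mid Z=z}$. Writing $p_{X\mid Z=z}=p^{(m)}_X+\delta_X(z)$ and $p_{Y\mid Z=z}=p^{(m)}_Y+\delta_Y(z)$, the bin‑averages of $\delta_X$ and $\delta_Y$ vanish, so the first‑order terms cancel and $p^{(m)}_{X,Y}-p^{(m)}_Xp^{(m)}_Y=\int\delta_X(z)\otimes\delta_Y(z)\,d\mu_m(z)$, where $\mu_m$ is the normalized restriction of $p_Z$ to $C_m$. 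Each of the three smoothness classes forces $\|\delta_X(z)\|_1,\|\delta_Y(z)\|_1\lesssim\sqrt{L/d}$ on $C_m$ ($\chi^2$‑Lipschitzness implies the $\sqrt{L|z-z'|}$ bound by Cauchy--Schwarz; TV‑Lipschitzness even gives $L/d$ once $d\ge L$), so $\theta_m\le\|p^{(m)}_{X,Y}-p^{(m)}_Xp^{(m)}_Y\|_1^2\lesssim L^2/d^2$. With $d=\lceil n^{2/5}\rceil$ this yields $\EE[T]\lesssim L^2 n^{1/5}$ and $\Var[T]\lesssim n^{2/5}$; taking $\zeta$ large enough in terms of $L$ makes $\EE[T]\le\tau/2$, and Chebyshev gives $\PP(T\ge\tau)\le 4\Var[T]/\tau^2\le1/10$.

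For the type II bound, the crux is a lower bound on $\EE[T]$ by the separation $\varepsilon$. Comparing $p$ to its conditionally independent projection $q$ ($q_Z=p_Z$, $q_{X,Y\mid Z=z}=p_{X\mid Z=z}p_{Y\mid Z=z}$) gives $\int\|p_{X,Y\mid Z=z}-p_{X\mid Z=z}p_{Y\mid Z=z}\|_1\,p_Z(z)\,dz=\|p-q\|_1\ge\varepsilon$. Using the alternative TV‑Lipschitzness to replace, within each bin, the conditionals and their marginals by their bin‑averages costs $O(L/d)=O(n^{-2/5})\le\varepsilon/2$ in total (as $\varepsilon\ge cn^{-2/5}$ with $c$ large), and the ``low‑count'' bins $\{\lambda_m<c_1\}$ carry total $Z$‑mass at most $d\cdot(2c_1/n)\asymp n^{-3/5}\ll\varepsilon$, so $\sum_{m:\lambda_m\ge c_1}w_m\|p^{(m)}_{X,Y}-p^{(m)}_Xp^{(m)}_Y\|_1\gtrsim\varepsilon$. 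Cauchy--Schwarz (support size $\ell_1\ell_2$, then over the retained bins using $\sum w_m\le1$) converts this to $\sum_m w_m\theta_m\gtrsim\varepsilon^2/(\ell_1\ell_2)$; since $\EE[\sigma_m\mathbbm{1}(\sigma_m\ge4)]=\lambda_m\PP(\mathrm{Poi}(\lambda_m)\ge3)\gtrsim\lambda_m$ on the retained bins, $\EE[T]\gtrsim n\varepsilon^2/(\ell_1\ell_2)\gtrsim c^2 n^{1/5}/(\ell_1\ell_2)$, which exceeds $2\tau$ once $c$ is large in terms of $\zeta,L,\ell_1,\ell_2$. Finally, reusing $\Var[T]\lesssim\sum_m\theta_m\lambda_m+d\lesssim\EE[T]+d$ (the low‑count bins add only $O(d)$ to $\sum_m\theta_m\lambda_m$), Chebyshev gives $\PP(T<\tau)\le4\Var[T]/(\EE[T]-\tau)^2\lesssim 1/\EE[T]+d/\EE[T]^2\le1/10$, so $\EE_p[1-\psi_\tau]\le1/10+\exp(-n/8)$.

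I expect the two genuinely delicate points to be the following. First, the null estimate $\theta_m\lesssim L^2/d^2$: a direct bound only gives $\theta_m\lesssim L/d$, which is too weak, and the extra factor $1/d$ comes precisely from the cancellation of the linear terms after averaging $X,Y$ over a bin — the very phenomenon flagged for the Gy\"{o}rfi--Walk proof in the introduction — so one must verify each of the three smoothness classes supplies the needed control on $\|\delta_X\|_1$. Second, in the alternative, controlling $\Var[T]$ against $\EE[T]^2$ requires both the $O(\lambda_m)$ (rather than $O(\lambda_m^2)$) bound on the Poisson fluctuations of the truncated counts and the observation that bins with $\lambda_m=O(1)$ carry a negligible fraction of the separation, and may therefore be discarded when lower bounding $\EE[T]$ at the cost of only an additive $O(d)$ in the variance.
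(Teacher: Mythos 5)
Your proposal is correct and, in its overall architecture, matches the paper's proof: Poissonize and absorb $\{N>n\}$ into the $\exp(-n/8)$ term; under the null bound $\EE[U_m\mid\sigma_m]\le L^2/d^2$ by exactly the cancellation-after-bin-averaging argument (the paper proves this for the $\TV^2$ class and covers the TV and $\chi^2$ classes through the same inclusions you invoke); bound $\Var[T]\lesssim \sum_m \theta_m np_m + d$ via Lemma~\ref{U:stat:variance:lemma}, the law of total variance, and the truncated-Poisson variance claim; and under the alternative compare the bin-averaged joint and marginals to the conditionals at a cost $O(L/d)$ using the alternative TV-Lipschitzness, then apply Cauchy--Schwarz and Chebyshev. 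The one genuine deviation is your treatment of low-count bins: the paper splits into the cases $np_m>1$ and $np_m\le 1$ and, in the latter, combines $\EE[\sigma_m\mathbbm{1}(\sigma_m\ge 4)]\gtrsim (np_m)^4$ with a fourth-power Jensen argument, yielding an extra (non-binding, for $d\asymp n^{2/5}$) condition on $\varepsilon$; you instead discard bins with $np_m=O(1)$ outright, observing that their total $Z$-mass is $O(d/n)=O(n^{-3/5})\ll\varepsilon$, so the retained bins carry a constant fraction of the separation while the discarded ones add only $O(d)$ to the variance. In the fixed $\ell_1,\ell_2$ regime this is a legitimate and slightly cleaner shortcut, and your use of bin averages in place of the paper's maximizer $z_m^*$ also removes the need for the small continuity argument the paper uses to justify that the maximum is attained; both routes land on the same moment bounds and the same choice of $\tau=\zeta n^{1/5}$.
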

%\sbcomment{I changed the statement to include all the null smoothness classes. I assume this is ok but please double check in case I missed something.} 
%\clearpage

\noindent {\bf Remarks: } 
\begin{itemize}
\item In the above theorem the constants $\frac{1}{10}$ are arbitrary and can be made smaller (or larger) by appropriately adjusting the constants $\zeta$ and $c$. In the case when $\ell_1$ and $\ell_2$ are of constant order the above test is optimal, in the sense that the critical radius rate $n^{-2/5}$ matches the lower bound given in Theorem \ref{first:lower:bound}. 
\item When $\ell_1$ and $\ell_2$ are allowed to scale with $n$ the test no longer results in the correct order for the critical radius (in particular, we can no longer treat the quantity $c$ as a constant and its dependence on $\ell_1$ and $\ell_2$ is not optimal). 
In the next section we provide more sophisticated test which is capable of matching the bound proved in Theorem \ref{first:lower:bound} for some regimes of $\ell_1$ and $\ell_2$. 
\item In order to show that our test has high power for sufficiently large $\varepsilon_n$, we follow a classical strategy of upper bounding the variance of our test statistic under the null and alternative, upper bounding its expectation under null, and lower bounding its expectation under the alternative. These bounds together with a careful choice of the threshold $\tau$, and an application of Chebyshev's inequality, are used to characterize the power of our proposed test.
We detail these calculations in Appendix~\ref{app:upper}.

\item A recurring complication, one that we need to address in the analysis of our tests in both the discrete  and continuous $X,Y$ setting is that our test statistic does not have expectation zero under the null. 
This is in sharp contrast to typical tests for goodness-of-fit and two-sample tests (for instance those analyzed in \cite{balakrishnan2017hypothesis,balakrishnan2018hypothesis,diakonikolas2016new,valiant2017automatic,ery2018remember}).
In more detail, under the null,
the binning operation used to discretize the $Z$ variable, moves us off the manifold of conditionally independent distributions (i.e. the discretized distribution need not satisfy conditional independence even if the original distribution does).

Exploiting the Lipschitzness assumptions in Definition~\ref{def:nullsmoothness}, we can argue that under the null, for sufficiently small bins, we do not move too far from the collection of conditionally independent distributions (say in the total variation sense). A naive reduction would yield an imprecise null hypothesis testing problem of attempting to distinguish distributions which are near-conditionally independent from those which are relatively far from conditionally independent. This imprecise null testing problem is however statistically challenging \cite{valiant2017estimating}, and this naive reduction fails to yield the optimal rates described in our upper bounds.

Instead, avoiding this indirect reduction, we take a more direct approach of uniformly upper bounding the expectation of our test statistic under the null. By directly using the Lipschitzness assumptions, and the factorization
structure of distributions under the null, we are able to obtain tighter bounds on the expected value of our test statistic under the null. This in turn yields near-optimal upper bounds on the critical radius.
\end{itemize}

\subsection{Upper Bound with Scaling Discrete $X$ and $Y$}\label{scaling:l1:l2:section}

In this section we present a more sophisticated test procedure which is capable of matching the bound of Theorem \ref{first:lower:bound} for some regimes of the sizes of the supports of $X$ and $Y$ --- $\ell_1$ and $\ell_2$. In contrast to the previous section, we now no longer assume that $\ell_1, \ell_2 = O(1)$. 
We note that throughout this section, without any loss of generality, we focus on the case when $\sqrt{\ell_1 \ell_2}/n \lesssim 1.$ When this condition is not satisfied, the lower bound in Theorem \ref{first:lower:bound} shows that the critical radius must be at least a constant, and in this regime upper bounds are trivial. Since we only characterize the critical radius up to constants, when we choose the separation between the null and alternate 
$\varepsilon$ to be a sufficiently large constant (say 2), there are no longer any distributions in the alternate, and the CI testing problem is trivial.

The key idea of this section is to use a weighted U-statistic in place of the (unweighted) U-statistic from Section \ref{fixed:l1:l2:section}. This weighting is sometimes referred to as ``flattening'' see, e.g., \cite{diakonikolas2016new, canonne2018testing}. A careful choice of the weighting yields a U-statistic with smaller variance (see Lemma~\ref{second:variance:upper:bound}), and the resulting test has higher power. 

To describe the weighting consider again the same scenario as in Section \ref{fixed:l1:l2:section}. Suppose we observe $\sigma \geq 4$ samples of two discrete covariates $(X', Y')$ supported on $[\ell_1]\times [\ell_2]$. Let $\cD = \{(X_1',Y_1'), \ldots, (X_\sigma', Y_\sigma')\}$ and $p_{X',Y'}$ be the distribution of $(X',Y')$. By losing at most three samples we may assume that $\sigma = 4 + 4t$ for some $t \in \NN$. Define $t_1 := \min(t, \ell_1)$ and $t_2 := \min(t, \ell_2)$. Next we split $\cD$ into three datasets of sizes $t_1$, $t_2$ and $2t + 4$ respectively: $\cD_{X'} = \{X'_i: i \in [t_1]\}$, $\cD_{Y'} = \{Y'_i : t_1 + 1 \leq i\leq t_1 + t_2 \}$, and $\cD_{X',Y'} = \{(X_i', Y'_i): 2t + 1 \leq i \leq \sigma\}$. The idea behind defining those three datasets is that the first two datasets --- $\cD_{X'}$ and $\cD_{Y'}$, will be used to calculate weights, while the last dataset $\cD_{X',Y'}$, which has at least $4$ observations, will be used to calculate the U-statistic. Construct the integers
\begin{align*}%\label{axy:def}
1 + a_{xy} = (1 + a_x)(1 + a'_y),
\end{align*}
where $a_x$ are the number of occurrences of $x$ in $\cD_{X'}$ and $a'_y$ is the number of occurrences of $y$ in~$\cD_{Y'}$. 

Next, take $4$ distinct observations indexed by $i,j,k,l$ from the dataset $\cD_{X',Y'}$, and define the (weighted) kernel function
\begin{align*}
h^{\ba}_{ijkl} =  \frac{1}{4!} \sum_{\pi \in [4!]} \sum_{x \in [\ell_1], y \in [\ell_2]} \frac{\phi_{\pi_1\pi_2}(xy)\phi_{\pi_3 \pi_4}(xy)}{1 + a_{xy}},
\end{align*}
where $\pi$ is a permutation of $i,j,k,l$ and recall the definition of $\phi_{ij}(xy)$ \eqref{phi:def}. Here the super-indexing with $\ba$ of $h^{\ba}_{ijkl}$, indicates that the statistic is weighted by the numbers $1 + a_{xy}$ for $x \in [\ell_1], y\in [\ell_2]$. Notice that the idea of this weighting is similar to the weighting in a Pearson's $\chi^2$ test of independence. Indeed the quantity $a_{xy}$ is in expectation proportional to the product $p_{X'}(x)p_{Y'}(y)$. On the other hand, the expression $\phi_{\pi_1\pi_2}(xy)\phi_{\pi_3\pi_4}(xy)$ is unbiased for $(p_{X',Y'}(x,y) - p_{X'}(x)p_{Y'}(y))^2$. Next, to reduce the variance of $h_{ijkl}^{\ba}$, we construct the (weighted) U-statistic
\begin{align}\label{weighted:U:stat}
U_{W}(\cD) := \frac{1}{{2t + 4 \choose 4}} \sum_{i < j < k < l : (i,j,k,l) \in \cD_{X',Y'}} h^{\ba}_{ijkl},
\end{align}
where we abused notation slightly for $(i,j,k,l) \in \cD_{X',Y'}$ to mean taking four observations from the dataset $\cD_{X',Y'}$. 
For convenience of the notation we now give a definition from \cite{diakonikolas2016new}. 
\begin{definition}[Split Distribution]\label{split:distribution:main:text} Given a discrete distribution $p$ over $[d_1]\times[d_2]$ and a multi-set $S$ of elements of $[d_1]\times[d_2]$ we now define the split distribution $p_S$. Let $b_{xy} = \sum_{(x',y') \in S}\mathbbm{1}((x,y) = (x',y'))$. Thus $\sum_{(x,y) \in [d_1]\times[d_2]} 1 + b_{xy}  = d_1d_2 + |S|$. Define the set $B_S = \{(x,y,i) | (x,y) \in [d_1]\times[d_2], 1 \leq i \leq 1 + b_{xy}\}$. The split distribution $p_S$ is supported on $B_S$ and is obtained by sampling $(x,y)$ from $p$ and $i$ uniformly from the set $[1 + b_{xy}]$. 
\end{definition}
Given $S$ and $b_{xy}$ as in Definition \ref{split:distribution:main:text}, for any two discrete distributions $p$ and $q$ over $[d_1]\times[d_2]$ it follows that 
\begin{align*}%\label{remark:flattening:reweighted:stat}
\|p_S - q_S\|_2^2  = \sum_{(x,y) \in [d_1]\times[d_2]} \frac{(p(x,y)- q(x,y))^2}{1 + b_{xy}}.
\end{align*}
Similarly for the split distribution $p_S$ we have that 
\begin{align*}
\|p_S\|_2^2 = \sum_{(x,y)\in [d_1]\times[d_2]} \frac{p^2(x,y)}{1 + b_{xy}}.
\end{align*}
Construct a multi-set $A$ by adding $a_{xy}$ occurrences of the pair $(x,y)$ to $A$. Using this notation it now follows that
\begin{align*}
\MoveEqLeft \EE[U_{W}(\cD) | \cD_{X'}, \cD_{Y'}] = \|p_{X',Y',A} - p^\Pi_{X',Y',A}\|_2^2 \\
& = \sum_{(x,y) \in [\ell_1]\times[\ell_2]} \frac{(p_{X',Y'}(x,y) - p_{X'}(x)p_{Y'}(y))^2}{ 1 + a_{xy}},
\end{align*}
where $p_{X',Y',A}$ is the $A$-split distribution $p_{X',Y'}$, and $p^\Pi_{X',Y',A}$ is the $A$-split distribution $p^\Pi_{X',Y'}$ where $p^\Pi_{X',Y'} = p_{X'}p_{Y'}$.
We will now show an analogous variance bound to the one in Lemma \ref{U:stat:variance:lemma}. We have the following
\begin{lemma}[Variance Upper Bound]\label{second:variance:upper:bound} For some absolute constant $C$, the following holds
\begin{align*}
\MoveEqLeft \Var[U_{W}(\cD) | \cD_{X'}, \cD_{Y'}] \leq  C\bigg(\frac{ \EE[U_{W}(\cD) | \cD_{X'}, \cD_{Y'}] \|p^\Pi_{X',Y',A}\|_2}{\sigma} \\
&+ \frac{\EE[U_{W}(\cD) | \cD_{X'}, \cD_{Y'}]^{3/2}}{\sigma} + \frac{ \|p^\Pi_{X',Y',A}\|^2_2}{\sigma^2} +  \frac{\EE[U_{W}(\cD) | \cD_{X'}, \cD_{Y'}] }{\sigma^2}\bigg).\nonumber
\end{align*}
\end{lemma}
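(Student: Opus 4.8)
The plan is to prove Lemma~\ref{second:variance:upper:bound} by conditioning on $\cD_{X'}$ and $\cD_{Y'}$ and then applying the classical Hoeffding decomposition to the (conditionally) U-statistic $U_W(\cD)$, which has a symmetric kernel $h^{\ba}_{ijkl}$ of degree $4$ based on the $2t+4$ observations in $\cD_{X',Y'}$. Throughout, all expectations and variances are taken conditionally on $(\cD_{X'}, \cD_{Y'})$, so the weights $1+a_{xy}$ are fixed, and the relevant underlying distribution is the $A$-split distribution $p_{X',Y',A}$ and its independent counterpart $p^\Pi_{X',Y',A}$. The first step is to recall that for a U-statistic of degree $m=4$ based on $\sigma' := 2t+4$ i.i.d.\ draws, $\var[U_W] = \binom{\sigma'}{4}^{-1}\sum_{c=1}^{4}\binom{4}{c}\binom{\sigma'-4}{4-c}\zeta_c$, where $\zeta_c = \var(h_c)$ and $h_c$ is the projection of $h^{\ba}_{1234}$ onto the first $c$ coordinates. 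Since $\binom{\sigma'}{4}^{-1}\binom{4}{c}\binom{\sigma'-4}{4-c} \asymp \sigma'^{-c}$, the bound reduces to controlling $\zeta_1/\sigma'$ and $\zeta_2/\sigma'^2$ (the $\zeta_3,\zeta_4$ terms are lower order after one observes $\zeta_c \le \binom{4}{c}^{?}\zeta_4$-type monotonicity and $\zeta_4 = \var(h^{\ba}_{1234}) \lesssim \|p^\Pi_{X',Y',A}\|_2^2 + \EE[U_W]^{3/2}\sigma'/\ldots$, i.e., they are dominated by the listed terms once divided by $\sigma'^3,\sigma'^4$).

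The core computation is therefore to bound $\zeta_1 = \var\big(\EE[h^{\ba}_{1234} \mid (X_1',Y_1')]\big)$ and $\zeta_2 = \var\big(\EE[h^{\ba}_{1234}\mid (X_1',Y_1'),(X_2',Y_2')]\big)$. Expanding $h^{\ba}_{ijkl}$ as the average over permutations of $\sum_{x,y}\phi_{\pi_1\pi_2}(xy)\phi_{\pi_3\pi_4}(xy)/(1+a_{xy})$ and writing $\Delta(x,y) := p_{X',Y'}(x,y) - p_{X'}(x)p_{Y'}(y)$ and $w_{xy} := 1/(1+a_{xy})$, one finds that the degree-$0$ projection is $\sum_{x,y} w_{xy}\Delta(x,y)^2 = \EE[U_W \mid \cD_{X'},\cD_{Y'}]$, and the degree-$1$ projection, after taking conditional expectations over the three free indices, is a linear functional of the indicator variables $\mathbbm 1(X_1'=x,Y_1'=y)$, $\mathbbm 1(X_1'=x)$, $\mathbbm 1(Y_1'=y)$ with coefficients of the form $w_{xy}\Delta(x,y)$ (times marginal/joint probabilities). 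Bounding its variance then amounts to Cauchy--Schwarz-type manipulations: $\zeta_1 \lesssim \sum_{x,y} w_{xy}\Delta(x,y)^2 \cdot \big(\text{a max of }w_{xy}p_{X',Y'}(x,y), w_{xy}p_{X'}(x)p_{Y'}(y)\big)$ plus cross terms, which one organizes as $\EE[U_W]\cdot\|p^\Pi_{X',Y',A}\|_2$ (using $\sum_{x,y} w_{xy}^2 p_{X'}(x)^2p_{Y'}(y)^2 = \|p^\Pi_{X',Y',A}\|_2^2$) together with an $\EE[U_W]^{3/2}$ term arising when one bounds $\sum w_{xy}\Delta(x,y)^2\sqrt{w_{xy}}|\Delta(x,y)|$-style quantities via Hölder, using $\sqrt{w_{xy}}|\Delta(x,y)| \le (\sum w_{xy}\Delta^2)^{1/2} = \EE[U_W]^{1/2}$ pointwise-ish. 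For $\zeta_2$, the degree-$2$ projection is (up to averaging over permutations) $\sum_{x,y} w_{xy}\phi_{12}(xy)\phi_{?}(xy)$-type with two free indices integrated out, whose variance is dominated by $\sum_{x,y} w_{xy}^2 \big(p_{X',Y'}(x,y) + p_{X'}(x)p_{Y'}(y)\big)^2 \le 2\|p_{X',Y',A}\|_2^2 + 2\|p^\Pi_{X',Y',A}\|_2^2$, and one further bounds $\|p_{X',Y',A}\|_2^2 \lesssim \|p^\Pi_{X',Y',A}\|_2^2 + \EE[U_W \mid \cdots]$ by writing $p_{X',Y'} = p_{X'}p_{Y'} + \Delta$ and expanding the square (the cross term is controlled by Cauchy--Schwarz), which is exactly why the final term $\EE[U_W]/\sigma^2$ appears in the statement.

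Assembling these pieces, $\var[U_W \mid \cD_{X'},\cD_{Y'}] \lesssim \zeta_1/\sigma' + \zeta_2/\sigma'^2 + \text{lower order}$, and substituting the bounds on $\zeta_1,\zeta_2$ and using $\sigma' = 2t+4 \asymp \sigma$ yields precisely the four terms in the lemma. I would also record the analogue of the elementary identities used in Lemma~\ref{U:stat:variance:lemma} (the expressions for $\|p_S\|_2^2$ and $\|p_S - q_S\|_2^2$ already stated in the excerpt right before the lemma), since these translate the split-distribution norms into the weighted sums over $[\ell_1]\times[\ell_2]$ that appear naturally in the Hoeffding projections. The main obstacle I anticipate is the bookkeeping in the degree-$1$ projection $\zeta_1$: the kernel $\phi_{ij}(xy)$ is \emph{not} symmetric in its two indices (the first index carries the joint indicator, the second only a marginal), so the symmetrization over the $4!$ permutations produces several structurally different terms, and one must carefully track which indicator gets conditioned on and apply the right Cauchy--Schwarz/Hölder split to each so that every resulting piece collapses into one of $\EE[U_W]\|p^\Pi_{X',Y',A}\|_2$, $\EE[U_W]^{3/2}$, or (after division by a further $\sigma$) into the $\sigma^{-2}$ terms — getting the powers of $\EE[U_W]$ and of $\|p^\Pi_{X',Y',A}\|_2$ to match the claimed exponents exactly is the delicate part, whereas the $\zeta_2$ bound and the reduction from the full Hoeffding sum to the two leading terms are comparatively routine.
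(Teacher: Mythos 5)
Your proposal is correct and follows essentially the same route as the paper: the paper also works conditionally on $(\cD_{X'},\cD_{Y'})$, applies the standard U-statistic variance decomposition (in the equivalent covariance-by-shared-indices form, which is exactly your Hoeffding $\zeta_c$ formulation), bounds the one-shared-index term via Cauchy--Schwarz together with the $\{\pm1,0\}$-valued structure of $\phi$ and the bound $\sum_{x',y'}|\phi_{\pi_1'\pi_2'}(x'y')|/(1+a_{x'y'})\leq 2$ to get $\EE[U_W]\max(\|p_{X',Y',A}\|_2,\|p^\Pi_{X',Y',A}\|_2)/\sigma$, and bounds all higher-order terms by $\max(\|p_{X',Y',A}\|_2^2,\|p^\Pi_{X',Y',A}\|_2^2)/\sigma^2$. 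The final step in the paper is precisely your triangle-inequality conversion $\|p_{X',Y',A}\|_2\leq\|p^\Pi_{X',Y',A}\|_2+\EE[U_W\mid\cD_{X'},\cD_{Y'}]^{1/2}$, which produces the $\EE[U_W]^{3/2}/\sigma$ and $\EE[U_W]/\sigma^2$ terms, so your sketch matches the paper's argument in both structure and the source of each term.
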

%\sbcomment{Matey I changed something minor in this Lemma to make it match more the previous variance bound. Please double check.}
\noindent In comparing to the result of Lemma~\ref{U:stat:variance:lemma} we see roughly that the variance bound now depends on the (typically much smaller) 
$L_2$-norm of the flattened or split distribution $p^\Pi_{X',Y',A}$, instead of the $L_2$ norm of the original distribution $p_{X',Y'}$. As emphasized in \cite{diakonikolas2016new,canonne2018testing}
this variance reduction achieved through flattening is critical for designing minimax optimal tests (particularly when $\ell_1$ and $\ell_2$ are allowed to grow with the sample-size $n$).
%\sbcomment{Matey please check the above sentences and modify if necessary.}
%For a distribution $q_{X',Y'}$ on $[\ell_1]\times [\ell_2]$ let $\|q_{X',Y'}^{\ba}\|_2^2$ denote the following weighted $L_2^2$ norm
%\begin{align*}
%\|q_{X',Y'}^{\ba}\|_2^2 = \sum_{x,y} \frac{q_{X',Y'}(x,y)^2}{1 + a_{xy}}.
%\end{align*} 
%Conditionally on the data in the sets $\cD_{X'}$ and $\cD_{Y'}$ we have that 
%\begin{align*}
%\EE U_{W}(\cD) = \sum_{x,y} \frac{(p_{X',Y'}(xy) - p_{X'}(x) p_{Y'}(y))^2}{1 + a_{xy}}.
%\end{align*}

Now we are ready to define our test statistic. As before, the first step is to draw a random sample size $N \sim Poi(\frac{n}{2})$ and take $N$ subsamples of the $n$ observations, with the convention that if $N > n$ we accept the null hypothesis. Next, bin the support of the variable $Z$ into $d$ bins of equal size. Denote those bins with $\{C_1, \ldots, C_d\}$, so that $\cup_{i\in[d]} C_i = [0,1]$, and each $C_i$ is an interval of length $\frac{1}{d}$. Construct the datasets $\cD_{m} = \{(X_i, Y_i) : Z_i \in C_m, i \in [N]\}$. Let $\sigma_m = |\cD_m|$ be the sample size in each set $\cD_m$, so that $\sum_{m \in [d]} \sigma_m = N$. Recall that each set $\cD_m$ will be further separated into three sets $\cD_{m,X}$, $\cD_{m,Y}$ and $\cD_{m,X,Y}$, the first two of which are used for calculating weights, while the last one is used for the calculation of the weighted U-statistic.  For bins $\cD_m$ with at least $\sigma_m \geq 4$ observations, let for brevity $U_m = U_W(\cD_m)$. We now combine these different independence testing statistics into one CI testing statistic as follows. Let 
\begin{align}\label{T:stat:def}
T = \sum_{m \in [d]} \mathbbm{1}(\sigma_m \geq 4) \sigma_m \omega_m U_m,
\end{align}
where $\omega_m = \sqrt{\min(\sigma_m, \ell_1)\min(\sigma_m, \ell_2)}$ is a weighting factor, which further weights the statistics $U_m$. The presence of $\omega_m$ is necessitated by the weighting of the U-statistic \eqref{weighted:U:stat}. In order to show that the test based on the statistic $T$ has high power (and low type 1 error) 
we will prove that under the null hypothesis the value of $T$ is likely to be below a threshold $\tau$ (to be specified), while under the alternative hypothesis $T$ will likely exceed the value $\tau$. Define the test 
\begin{align}\label{general:discretization:test}
\psi_\tau(\cD_N) = \mathbbm{1}(T \geq \tau),
\end{align}
where $\cD_N = \{(X_1,Y_1,Z_1), \ldots, (X_N,Y_N,Z_N)\}$. We have the following result
\begin{theorem}[Scaling Discrete $X$, $Y$ Upper Bound]\label{main:theorem:scaling:discrete:XY} Set $d = \lceil \frac{n^{2/5}}{(\ell_1 \ell_2)^{1/5}}\rceil$ and set the threshold $\tau = \sqrt{\zeta d}$ for a sufficiently large absolute constant $\zeta$ (depending on $L$). Suppose that $\ell_1 \geq \ell_2$ satisfy the condition that 
$d \ell_1 \lesssim n$.
%(where $\lesssim$ indicates $\leq$ up to an absolute constant) 
%and that further $\frac{\sqrt{\ell_1 \ell_2}}{n} \lesssim 1$. \sbcomment{Isn't this condition $\frac{\sqrt{\ell_1 \ell_2}}{n} \lesssim 1$ always satisfied? i.e. otherwise the claim is trivial? Remove?} 
Then 
when $\varepsilon \geq c \frac{(\ell_1 \ell_2)^{1/5}}{n^{2/5}}$, for a sufficiently large absolute constant $c$ (depending on $\zeta$, $L$), %\sbcomment{I don't know what $C$ is. Remove?}
we have that 
\begin{align*}
\sup_{p \in \cP_{0,[0,1],\chi^2}'(L)}\EE_p[\psi_\tau(\cD_k)] & \leq \frac{1}{10},\\ 
\sup_{p \in \{p \in \cQ_{0,[0,1],\TV}'(L): \inf_{q \in \cP'_{0,[0,1]}} \|p - q\|_1 \geq \varepsilon\}}\EE_p[1 - \psi_\tau(\cD_k)]& \leq \frac{1}{10} + \exp(-n/8).
\end{align*}
%when $\varepsilon \geq c \frac{(\ell_1 \ell_2)^{1/5}}{n^{2/5}}$, for a sufficiently large absolute constant $c$ (depending on $\zeta$, $L$, $C$)
\end{theorem}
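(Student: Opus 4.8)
The plan is to control the type I and type II errors separately, following the classical template of bounding the mean and variance of $T$ under the null and alternative, and applying Chebyshev's inequality with the stated threshold $\tau = \sqrt{\zeta d}$. Throughout, Poissonization decouples the bin sizes: conditionally on the $Z$-values, the $\sigma_m$ are independent $\mathrm{Poi}(n/(2d))$ variables (truncated to $N \le n$), so the datasets $\cD_m$ behave like independent samples, and within each bin the subsampled datasets $\cD_{m,X}, \cD_{m,Y}, \cD_{m,X,Y}$ are independent. The key building block is Lemma~\ref{second:variance:upper:bound}, which controls $\Var[U_m \mid \cD_{m,X}, \cD_{m,Y}]$ in terms of $\EE[U_m \mid \cdots]$ and $\|p^\Pi_{X,Y|Z \in C_m, A}\|_2$; since $\|p^\Pi_{S}\|_2^2 \le \frac{1}{1+|S|/(\ell_1\ell_2)}$-type bounds hold for flattened distributions, the weighting $\omega_m = \sqrt{\min(\sigma_m,\ell_1)\min(\sigma_m,\ell_2)}$ is exactly calibrated so that $\sigma_m \omega_m \|p^\Pi_{\cdot,A}\|_2 \lesssim \sigma_m$ and the per-bin variance contributions sum to something of order $d$.

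For the \textbf{type I error}, the main obstacle — flagged in the remarks — is that $\EE_p[T] \ne 0$ under the null: binning $Z$ destroys exact conditional independence, so within bin $C_m$ the conditional law $p_{X,Y|Z\in C_m}$ is only \emph{close} to a product. I would use the $\chi^2$-Lipschitzness of $\cP_{0,[0,1],\chi^2}'(L)$ to quantify this: for $z, z' \in C_m$ one has $d_{\chi^2}(p_{X|Z=z}, p_{X|Z=z'}) \le L/d$, and an analogous statement for $Y$; integrating over the bin shows that the mixture $p_{X,Y|Z \in C_m}$ differs from $p_{X|Z\in C_m}\,p_{Y|Z\in C_m}$ by a quantity whose (weighted) $L_2^2$ norm is $O(L/d)$ — crucially, using the product structure $p_{X,Y|Z=z} = p_{X|Z=z}p_{Y|Z=z}$ that holds pointwise under $H_0$ before averaging. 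Hence $\EE[U_m \mid \cdots] \lesssim L/d$ and, after multiplying by $\sigma_m \omega_m$ and summing, $\EE_p[T] \lesssim L\, \EE[\sum_m \sigma_m \omega_m]/d$; with $\sigma_m \approx n/(2d)$ and $\omega_m \le \sqrt{\ell_1\ell_2}$ this is $\lesssim L n \sqrt{\ell_1\ell_2}/(2d^2)$, which for $d = \lceil n^{2/5}/(\ell_1\ell_2)^{1/5}\rceil$ is $\lesssim L \sqrt{d}$ — comfortably below $\tau = \sqrt{\zeta d}$ once $\zeta$ is large. Combining this mean bound with the variance bound (which is $O(d)$ as above, so the standard deviation is $O(\sqrt d)$) and Chebyshev gives $\PP_p(T \ge \tau) \le 1/10$ for $\zeta$ large enough; the extra $\exp(-n/8)$ does not appear here since if $N > n$ we simply accept.

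For the \textbf{type II error}, fix $p \in \cQ_{0,[0,1],\TV}'(L)$ with $\inf_{q \in \cP'_{0,[0,1]}}\|p-q\|_1 \ge \varepsilon$. The first step is to pass from the global TV separation to a lower bound on $\sum_m (n/d)\, \omega_m\, \|p_{X,Y|Z\in C_m} - p_{X|Z\in C_m}p_{Y|Z\in C_m}\|_2^2$: the natural conditionally-independent competitor is $q$ with $q_{X,Y|Z=z} = p_{X|Z=z}p_{Y|Z=z}$, and a Cauchy–Schwarz / Pinsker-type argument, together with the alternative TV-Lipschitzness to transfer the within-bin $L_1$ discrepancy to the bin-averaged conditionals, shows that $\varepsilon^2 \lesssim \sum_m \frac{1}{d}\|p_{X,Y|Z\in C_m} - p_{X|Z\in C_m}p_{Y|Z\in C_m}\|_1^2$, hence after relating $L_1$ to weighted $L_2^2$ via the flattening (the $\omega_m$ weights convert $L_1^2 \le \ell_1\ell_2 \|\cdot\|_2^2$ into a sharper $\omega_m$-weighted bound) one gets $\EE_p[T] \gtrsim (n/d)\,\varepsilon^2$ up to lower-order corrections from the Lipschitz slack ($\lesssim \sqrt d$, absorbed when $\varepsilon \ge c(\ell_1\ell_2)^{1/5}/n^{2/5}$ with $c$ large). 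Since $(n/d)\varepsilon^2 \gtrsim c^2 (n/d)(\ell_1\ell_2)^{2/5}/n^{4/5} \asymp c^2 \sqrt d \cdot (\text{const}) \gg \tau$ for $c$ large, the signal dominates the threshold. One then bounds $\Var_p[T]$: by the law of total variance it splits into the conditional variance (controlled by Lemma~\ref{second:variance:upper:bound}, giving terms $\lesssim \EE_p[T]/\sigma_m$-type contributions that sum to $O(\EE_p[T])$ plus $O(d)$) and the variance of the conditional mean over the weight-generating data (a standard multinomial-flattening fluctuation bound, also $\lesssim \EE_p[T] + d$). Thus $\mathrm{sd}_p[T] \lesssim \sqrt{\EE_p[T]} + \sqrt d \ll \EE_p[T]$, and Chebyshev gives $\PP_p(T < \tau) \le 1/10$; the $\exp(-n/8)$ accounts for the event $N > n$ on which we (wrongly) accept, which has probability at most $e^{-n/8}$ by a Poisson tail bound.

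The step I expect to be the \textbf{main obstacle} is the lower bound on $\EE_p[T]$ under the alternative: translating the global $L_1$ separation $\inf_{q}\|p-q\|_1 \ge \varepsilon$ — an infimum over \emph{all} smooth conditionally independent distributions — into a per-bin weighted $L_2^2$ lower bound requires carefully choosing the competitor, handling the fact that the bin-averaged product $p_{X|Z\in C_m}p_{Y|Z\in C_m}$ is not itself the $Z\in C_m$-conditional of any globally conditionally independent distribution (so one must invoke the TV-Lipschitzness to certify that replacing it by a genuine $q$ costs only $O(L/d)$ in $L_1$), and then reconciling the $L_1$-to-$L_2$ conversion with the $\omega_m$-weighting so that no spurious factor of $\ell_1\ell_2$ survives. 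The condition $d\ell_1 \lesssim n$ enters precisely here and in the variance bound, ensuring $\min(\sigma_m,\ell_1) = \ell_1$ with high probability so that $\omega_m = \sqrt{\ell_1\ell_2}$ deterministically, which is what makes the bookkeeping close.
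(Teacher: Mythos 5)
There is a genuine gap in your Type I analysis, and it sits exactly where the flattening is supposed to do its work. You bound the per-bin expectation by a weighted $L_2^2$ discrepancy of order $L/d$, multiply by $\sigma_m\omega_m$, and conclude $\EE_p[T]\lesssim L n\sqrt{\ell_1\ell_2}/(2d^2)\lesssim L\sqrt d$. But with $d=\lceil n^{2/5}/(\ell_1\ell_2)^{1/5}\rceil$ one has $n\sqrt{\ell_1\ell_2}/d^2\asymp \sqrt d\,\ell_1\ell_2$, so your claimed null mean exceeds the threshold $\sqrt{\zeta d}$ by a factor $\ell_1\ell_2$, which is not a constant in the scaling regime; the Chebyshev step for the Type I error then fails. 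The point of the $\chi^2$-Lipschitz null class and of dividing by $1+a^m_{xy}$ is precisely to gain a factor $1/\omega_m^2$ in the null expectation: one shows $\EE[1/(1+a^m_{xy})]\lesssim \big((1+t_{1,m})(1+t_{2,m})\,q_{x\cdot}(m)q_{\cdot y}(m)\big)^{-1}$, so that the per-bin mean is controlled by $\omega_m^{-2}\sum_{x,y}(q_{xy}(m)-q_{x\cdot}(m)q_{\cdot y}(m))^2/\big(q_{x\cdot}(m)q_{\cdot y}(m)\big)$, and the $\chi^2$-Lipschitzness (not a plain TV-type bound) gives that this chi-square-type discretization error is at most $L^2/d^2$ under $H_0$. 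Then $\EE[T\mid\sigma]\lesssim (L^2/d^2)\sum_m\sigma_m/\omega_m$, and the hypothesis $d\ell_1\lesssim n$ is used exactly here, to bound $\sum_m\sigma_m/\omega_m\lesssim d\sqrt{\ell_1/\ell_2}+N/\sqrt{\ell_1\ell_2}\lesssim n/\sqrt{\ell_1\ell_2}$, yielding $\EE_p[T]\lesssim L^2 n/(d^2\sqrt{\ell_1\ell_2})\asymp\sqrt d$. Your alternative reading of $d\ell_1\lesssim n$ (forcing $\omega_m=\sqrt{\ell_1\ell_2}$ deterministically) is not how the condition can be used, since $p_Z$ is arbitrary and bins need not be balanced.

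This points to the second weak spot: you treat the bin counts as $\mathrm{Poi}(n/(2d))$, i.e.\ implicitly assume $\PP(Z\in C_m)\approx 1/d$ for every $m$. The classes only constrain $p_{X,Y\mid Z=z}$ in $z$, not $p_Z$, so under the alternative the $L_1$ signal may be carried by bins with very small or very large mass. Your claim $\EE_p[T]\gtrsim (n/d)\varepsilon^2$ does not follow without handling this; the argument needs a lower bound of the form $\EE[\sigma_m\omega_m\mathbbm{1}(\sigma_m\geq 4)]\gtrsim\min(\alpha_m\beta_m,\alpha_m^4)$ with $\alpha_m=n\,\PP(Z\in C_m)$, followed by a case analysis over bins with $\alpha_m^3\geq\beta_m$ versus $\alpha_m^3<\beta_m$ (and over the relative sizes of $\alpha_m,\ell_1,\ell_2$), each case converting the bin-wise $L_1$ separation into a lower bound on $\sum_m\varepsilon_m^2\min(\alpha_m\beta_m,\alpha_m^4)/(\ell_1\ell_2)$ via Cauchy--Schwarz or Jensen with the appropriate exponent (the fourth-power term for bins with $\alpha_m\leq 1$ is where the $d^{3}$ in the small-count regime comes from). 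Your high-level template (mean/variance bounds plus Chebyshev, Lemma~\ref{second:variance:upper:bound} for the conditional variance, reduction of the separation via the competitor $p_{X\mid Z}p_{Y\mid Z}p_Z$) matches the paper, but without the chi-square-to-flattening transfer under the null and the unbalanced-bin case analysis under the alternative, the stated rate $\varepsilon\asymp(\ell_1\ell_2)^{1/5}n^{-2/5}$ is not reached by the argument as written.
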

\noindent {\bf Remarks: } 
\begin{itemize} 
\item Some remarks regarding this result are in order. First, when $d \ell_1 \lesssim n$, the bound on the critical radius 
we obtain matches the information-theoretic limit derived in Theorem \ref{first:lower:bound}. 
An important special case (that we will use in our tests in the continuous $X$ and $Y$ setting) when this condition is automatically implied is when $\ell_1 \asymp \ell_2$.
%is that 
%We will now argue that $d \ell_1 \lesssim n$ is automatically implied when $\ell_1 \asymp \ell_2$. 
To see this, observe that when $\ell_1 \asymp \ell_2$ we have that $d \ell_1 \lesssim n$ is equivalent to $\big(\frac{n}{\ell_1}\big)^{2/5} \lesssim \frac{n}{\ell_1}$ (for our choice of $d$) which is implied by the condition that $\frac{\ell_1}{n} \lesssim 1$. When this latter condition is not satisfied the lower bound on the critical radius in Theorem \ref{first:lower:bound} is a universal constant (and the upper bound is trivial).

We also note in passing that for our choice of $d$, the condition that $d \ell_1 \lesssim n$ is equivalent to the condition that $\frac{\ell_1^{4}}{\ell_2} \lesssim n^{3}$, which yields the claim in Section~\ref{sec:summary} that
our test is minimax optimal when $\frac{\ell_1^{4}}{\ell_2} \lesssim n^{3}$.

\item In contrast to Theorem \ref{main:theorem:finite:discrete:XY}, here we choose the 
null set of distributions as $\cP'_{0,[0,1], \chi^2}(L)$. As we discussed following Theorem~\ref{main:theorem:finite:discrete:XY}, one of the key difficulties is to 
characterize the effect of discretization of the $Z$ variable, in order to
upper bound the expectation of our test statistic under the null, over the appropriate Lipschitzness class. When $\ell_1$ and $\ell_2$ are allowed to scale, 
we show an upper bound on this expectation in terms of the $\chi^2$-divergence between the discretized null distribution and
the product of its marginals (see equation~\eqref{intermediate:bound:null:hypothesis:lemma} in Appendix~\ref{app:upper}). We in turn 
show that this discretization
error due to binning is appropriately small when the null distribution satisfies the $\chi^2$ Lipschitzness condition, i.e. belongs to $\cP_{0,[0,1],\chi^2}'(L)$.
%\sbcomment{Please check previous sentences.}

\item As we detail further in Appendix~\ref{app:upper}, when the condition that $d \ell_1 \lesssim n$ is not satisfied we still provide upper bounds on the critical radius but 
these upper bounds do not match the lower bound in Theorem~\ref{first:lower:bound}. As we discuss further in Section~\ref{discussion:section} we believe that sharpening
either the lower or upper bound is challenging, requiring substantially different ideas, and we defer this to future work.

\item From a technical standpoint, analyzing the power of the test statistic in~\eqref{T:stat:def} is substantially more involved than the analysis of its fixed $\ell_1, \ell_2$ counterpart in~\eqref{eqn:fixedstat}.
Several complications are introduced in ensuring that the flattening weights (the terms $a_{xy}$ in the definition of our U-statistic in~\eqref{weighted:U:stat}) are well-behaved. In a classical fixed dimensional
setup (where $\ell_1,\ell_2$ and the number of bins $d$ are all held fixed) 
it would be relatively straightforward to argue that the flattening weights concentrate tightly around their expected values. In the high-dimensional setting that 
we consider these weights can have high variance and substantial work is needed to tightly bound the mean and variance of our test statistic.

This also highlights an important difference from the goodness-of-fit problem considered in~\cite{balakrishnan2017hypothesis,valiant2017automatic,blais2019distribution}. In the goodness-of-fit problem, where we test fit of the data to a \emph{known} 
distribution $p_0$ the corresponding weights in 
the Pearson $\chi^2$ statistic are fixed
and known to the statistician. In conditional independence testing these weights are estimated from data.

%We will argue in Section \ref{examples:section}, that this class of distributions is a subset of the one considered in Theorem \ref{main:theorem:finite:discrete:XY}. 

%\item The bound on the critical radius matches the lower bound derived in Corollary \ref{first:lower:bound:poi} (when $\frac{\sqrt{\ell_1\ell_2}}{n} \lesssim 1$ and $d\ell_1 \lesssim n$). 
\end{itemize}

\subsection{Upper Bound in the Continuous Case}\label{cont:case:upper:bounds:section}

In this section consider testing for CI when $(X,Y,Z)$ are supported on $[0,1]^3$ and have a distribution which is absolutely continuously with respect to the Lebesgue measure. In view of the notation in Section \ref{continuous:case:lower:bound:section} this is equivalent to assuming that $p_{X,Y,Z} \in \cE_{0,[0,1]^3}$. We begin our discussion with formally describing the test.

The testing strategy is related to the test described in Section \ref{scaling:l1:l2:section}. First draw $N\sim Poi(\frac{n}{2})$, and take arbitrary $N$ out of the $n$ observations in the case when $N \leq n$, and accept the null hypothesis if $N > n$. Next, we bin the support $[0,1]$ with bins  $\{C_1, C_2, \ldots, C_d\}$, where the sizes of those bins are equal and $\cup_{i \in [d]} C_i = [0,1]$. These bins will be used to discretize $Z$. In addition we create a second rougher (in the case $s \geq 1$) partition of $[0,1]$ into $d' := \lceil d^{1/s} \rceil$ intervals $\cup_{i \in [d']} C'_i = [0,1]$. These second bins will be used to discretize $X$ and $Y$. Specifically, we use these two sets of bins to discretize the observations $\cD_N = \{(X_i, Y_i, Z_i)\}_{i \in [N]}$ as follows. First define the discretization function $g:[0,1] \mapsto [d']$ by $g(x) = j$ iff $x \in C'_j$. Next consider the set of observations $\cD_N' = \{(g(X_i), g(Y_j), Z_i)\}_{i \in [N]}$. We can now use the test defined in \eqref{general:discretization:test}: $\psi_\tau(\cD'_N)$ with an appropriately selected threshold $\tau$ and the bins $\{C_1, C_2, \ldots, C_d\}$ to discretize $Z$ with in order to test for CI. We have the following result.

\begin{theorem}[Continuous $X,Y,Z$ Upper Bound]\label{continuous:case:upper:bound:theorem} Set $d = \lceil n^{2s/(5s + 2)} \rceil$ and set the threshold $\tau = \sqrt{\zeta d}$ for a sufficiently large $\zeta$ (depending on $L$). Let $\cH_0(s) = \cP_{0,[0,1]^3,\TV}(L) \cup \cP_{0,[0,1]^3,\chi^2}(L)$ when $s \geq 1$ and $\cH_0(s) = \cP_{0,[0,1]^3,\chi^2}(L)$ when $s < 1$. Then, for a sufficiently
 large absolute constant $c$ (depending on $\zeta, L$),
 when
 $\varepsilon \geq c n^{-2s/(5s + 2)}$,
we have that
\begin{align*}
\sup_{p \in \cH_0(s)} \EE_p[\psi_\tau(\cD'_k)] & \leq \frac{1}{10},\\ 
\sup_{p \in \{p \in \cQ_{0,[0,1]^3, \TV}(L,s): \inf_{q \in \cP_{0,[0,1]^3}} \|p - q\|_1 \geq \varepsilon\}} \EE_p[1 - \psi_\tau(\cD'_k)] & \leq \frac{1}{10} + \exp(-n/8).
\end{align*}
\end{theorem}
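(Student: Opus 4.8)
The plan is to reduce the continuous $X,Y,Z$ problem to the scaling discrete case already handled in Theorem~\ref{main:theorem:scaling:discrete:XY}, via the discretization $g$ that bins $X$ and $Y$ into $d' = \lceil d^{1/s}\rceil$ cells each. Writing $(X',Y') := (g(X), g(Y))$, the pushforward distribution $p'_{X',Y',Z}$ lives on $[d']\times[d']\times[0,1]$, so we are exactly in the setting of Section~\ref{scaling:l1:l2:section} with $\ell_1 = \ell_2 = d'$; since $\ell_1 \asymp \ell_2$, the side condition $d\ell_1 \lesssim n$ needed there is automatically implied (as noted in the remarks following that theorem) provided $d^{1+1/s}\lesssim n$, which for $d \asymp n^{2s/(5s+2)}$ reduces to $n^{2s/(5s+2)}\cdot n^{2/(5s+2)} = n^{2/5}\cdot n^{\text{something}}\le n$; this arithmetic needs to be checked but is routine. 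The critical radius guaranteed by Theorem~\ref{main:theorem:scaling:discrete:XY} applied at alphabet size $d'$ and with $d$ bins for $Z$ is of order $\tau/\sqrt{d}$-type, and plugging $d'\asymp d^{1/s}$, $d\asymp n^{2s/(5s+2)}$ gives separation $\asymp (d'^2)^{1/5}/n^{2/5} \asymp d^{2/(5s)}/n^{2/5} \asymp n^{-2s/(5s+2)}$, matching the claimed rate and the lower bound of Theorem~\ref{lower:bound:continuous:case}.

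The two things that do not come for free, and that constitute the real work, are: \textbf{(i) the null must map to the right class}, and \textbf{(ii) the alternative must remain separated after discretization}. For (i): if $p_{X,Y,Z}\in\cH_0(s)$ then $X\independent Y\mid Z$ implies $g(X)\independent g(Y)\mid Z$ (discretizing each coordinate separately preserves conditional independence), so the pushforward is conditionally independent; one then needs that the $\chi^2$-Lipschitzness of $z\mapsto p_{X|Z=z}$ (resp.\ TV-Lipschitzness when $s\ge 1$) is inherited by $z\mapsto p'_{X'|Z=z}$. This holds because pushing forward under a fixed map is a contraction for both the $\chi^2$-divergence and the $L_1$ distance (data-processing), so $p'_{X',Y',Z}\in \cP'_{0,[0,1],\chi^2}(L)$ (resp.\ the TV class), with the \emph{same} constant $L$. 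This is the step where it matters that we use $\cP_{0,[0,1]^3,\chi^2}$ (rather than only TV) when $s<1$, exactly as in Theorem~\ref{main:theorem:scaling:discrete:XY}.

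For (ii), which I expect to be \textbf{the main obstacle}: we must show that if $\inf_{q\in\cP_{0,[0,1]^3}}\|p-q\|_1 \ge \varepsilon$ then $\inf_{q'\in\cP'_{0,[0,1]}}\|p' - q'\|_1 \gtrsim \varepsilon$, i.e.\ discretization does not destroy the separation. Discretization is a contraction in TV, so the naive bound goes the wrong way; the correct statement must use the H\"older smoothness of $p_{X,Y|Z}$ from $\cQ_{0,[0,1]^3,\TV}(L,s)$. The argument: first bound the distance from $p$ to the CI manifold below in terms of $\int \|p_{X,Y|Z=z} - p_{X|Z=z}p_{Y|Z=z}\|_1\, dp_Z(z)$ (up to constants, since the nearest CI distribution can be taken to share the conditionals' product), then show that the discretized analog $\int \|p'_{X',Y'|Z=z} - p'_{X'|Z=z}p'_{Y'|Z=z}\|_1\, dp_Z(z)$ is within a constant factor: the discretization error in approximating $\|f - \bar f\|_1$ by its cell-averaged version is controlled by the modulus of continuity of $p_{X,Y|Z=z}$ over cells of width $1/d' \asymp d^{-1/s}$, which by $\cH^{2,s}(L)$ membership is $\lesssim L (d^{-1/s})^{s} = L/d$, and summing over $d'^2$ cells and integrating over $z$ this aggregate error is $\lesssim L\, d'^2/d \cdot (1/d') = L d'/d = L d^{1/s - 1}$, which for $s\ge 1$ is $o(1)$ and in any case $\ll \varepsilon$ once $\varepsilon\gtrsim n^{-2s/(5s+2)}$ and $d$ is chosen as above — here again the precise bookkeeping of exponents is routine but must be done carefully, and the case $s<1$ needs its own accounting since the rougher $X,Y$-partition has $d' > d$. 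Once (i) and (ii) are in place, the type~I and type~II bounds are immediate from Theorem~\ref{main:theorem:scaling:discrete:XY} applied to $\cD_N'$, and the $\exp(-n/8)$ term is the usual Poissonization tail for the event $N>n$.
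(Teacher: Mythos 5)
Your overall architecture matches the paper's: apply the weighted, flattened test of Section~\ref{scaling:l1:l2:section} to the discretized data $(g(X),g(Y),Z)$ with $\ell_1=\ell_2=d'=\lceil d^{1/s}\rceil$, check that the null discretizes into a smooth conditionally independent class, and show the alternative stays separated. However, your treatment of step (ii) — which you correctly flag as the main obstacle — contains a genuine error for $s>1$. You bound the within-cell discretization error by ``the modulus of continuity of $p_{X,Y|Z=z}$ over cells of width $d^{-1/s}$, which by $\cH^{2,s}(L)$ membership is $\lesssim L(d^{-1/s})^s = L/d$.'' Membership in $\cH^{2,s}(L)$ with $s>1$ does \emph{not} give increments of order $\delta^s$ over distance $\delta$; it only gives the Lipschitz-scale modulus $O(L\delta)$ (the $s$-H\"{o}lder condition is imposed on the $\lfloor s\rfloor$-th derivatives, not on the function). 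With the correct modulus, the naive cell-average comparison loses $O(d^{-1/s})$ in $L_1$, which for $s>1$ is much larger than the target separation $\varepsilon\asymp 1/d\asymp n^{-2s/(5s+2)}$; optimizing your argument would cap the achievable rate at the $s=1$ rate $n^{-2/7}$. This is precisely why the paper does not argue via the modulus of continuity but instead invokes a nontrivial binning lemma (Lemma~\ref{super:crucial:smoothing:lemma}, taken from \cite{ery2018remember} and Proposition~2.16 of \cite{ingster2012nonparametric}): for $h\in\cH^{2,s}(L)$ the cell-averaged function satisfies $\|W_\kappa h\|_1\ge b_1\|h\|_1-b_2\kappa^{-s}$, a statement whose proof uses the full higher-order smoothness. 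It is applied to $h=p_{X,Y|Z=z^*}-p_{X|Z=z^*}p_{Y|Z=z^*}$, which requires the additional (paper-supplied) lemma that $p_{X|Z}p_{Y|Z}\in\cH^{2,s}(C)$ so that $h$ lies in a H\"{o}lder class; you assume this implicitly. (Your own bookkeeping is also internally inconsistent: even granting your per-cell bound, the aggregate $Ld^{1/s-1}$ you obtain is not $\ll 1/d$ for $s>1$.)

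A secondary gap: you propose to use Theorem~\ref{main:theorem:scaling:discrete:XY} as a black box, but its type~I guarantee covers only the discrete $\chi^2$-Lipschitz null class. Data processing does transfer the continuous $\chi^2$ condition to the discretized distribution, so that half of the null is fine; but the class $\cP_{0,[0,1]^3,\TV}(L)$, which the theorem includes in $\cH_0(s)$ when $s\ge 1$, is not covered by the black-box application. The paper handles it by re-entering the proof and establishing a separate, stronger bias bound under the TV null (Lemma~\ref{lem:aa}, giving $\|q_{A_m}-q_{\Pi,A_m}\|_2^2\le (L/d)^4$), which suffices exactly because $\sqrt{\ell_1\ell_2}=d^{1/s}\le d$ when $s\ge 1$; Lemma~\ref{lem:bb} plays the analogous role for the $\chi^2$ null. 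So to complete your proof you would need to (a) replace the modulus-of-continuity step by the binning lemma (or an equivalent argument exploiting derivatives up to order $\lfloor s\rfloor$), together with the H\"{o}lder-smoothness of the product of conditional marginals, and (b) either restrict the null to the $\chi^2$ class or supply the analogue of Lemma~\ref{lem:aa} for the TV null.
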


%Next, we bin the support $[0,1]$ with bins  $\{C_1, C_2, \ldots, C_d\}$, where the sizes of those bins are equal and $\cup_{i \in [d]} C_i = [0,1]$. We use the bins to discretize the observations $\cD_N = \{(X_i, Y_i, Z_i)\}_{i \in [N]}$ as follows. First define the discretization function $g:[0,1] \mapsto [d]$ by $g(x) = j$ iff $x \in C_j$. Next consider the set of observations $\cD_N' = \{(g(X_i), g(Y_j), Z_i)\}_{i \in [N]}$. We can now use the test defined in \eqref{general:discretization:test}: $\psi_\tau(\cD'_N)$ with an appropriately selected threshold $\tau$ and number of bins $d$ to test for CI. We have the following result.

%\begin{theorem}[Continuous $X,Y,Z$ Upper Bound]\label{continuous:case:upper:bound:theorem} Set $d = \lceil n^{2/7} \rceil$ and set the threshold $\tau = \sqrt{\zeta d}$ for a sufficiently large $\zeta$ (depending on $L$). Then, for a sufficiently
% large absolute constant $c$ (depending on $\zeta, L$),
% when
% $\varepsilon \geq c n^{-2/7}$,
%we have that
%\begin{align*}
%\sup_{p \in \cP_{0,[0,1]^3,\TV}(L) \cup \cP_{0,[0,1]^3,\chi^2}(L)}\EE_p[\psi_\tau(\cD'_k)] & \leq \frac{1}{10},\\ 
%\sup_{p \in \{p \in \cQ_{0,[0,1]^3, \TV}(L,s): \inf_{q \in \cP_{0,[0,1]^3}} \|p - q\|_1 \geq \varepsilon\}} \EE_p[1 - \psi_\tau(\cD'_k)] & \leq \frac{1}{10} + \exp(-n/8).
%\end{align*}
%\end{theorem}
\noindent {\bf Remarks: }
\begin{itemize}
%\item In Appendix \ref{holder:smoothness:appendix} we present a more general theorem than Theorem \ref{continuous:case:upper:bound:theorem} which provides an upper bound on the minimax rate under H\"{o}lder smoothness assumptions on the alternative density $p_{X,Y|Z}(x,y|z)$. The rate turns out to be $n^{-2s/(5s + 2)}$ where $s$ denotes the H\"{o}lder smoothness parameter. However, we note that the analysis requires an additional assumption on the smoothness of the alternative density for $s > 1$. Hence to keep the presentation in the main text simple we only present the Lipschitz case above. 

\item Theorem \ref{continuous:case:upper:bound:theorem} shows that the test $\psi_\tau(\cD'_N)$ matches the lower bound derived in Theorem~\ref{lower:bound:continuous:case}, showing that
under appropriate Lipschitzness conditions our test is a minimax 
optimal nonparametric test for conditional independence. 
%\item After discretizing the $X$ and $Y$ variables into $\lceil n^{2/(5s+2)} \rceil$ categories each we apply the test described in~\eqref{general:discretization:test}.
%As we remark following Theorem~\ref{main:theorem:scaling:discrete:XY} this corresponds to a setting where $\ell_1 = \ell_2$ and as a consequence the conditions on $\ell_1, \ell_2$ required for Theorem~\ref{main:theorem:scaling:discrete:XY} to apply are met.

\item We note that in this setting, a careful analysis of the expectation of our statistic under the null shows that the null set of distributions can be taken as $\cP_{0,[0,1]^3,\TV}(L) \cup \cP_{0,[0,1]^3,\chi^2}(L)$ which is a larger set of distributions in comparison to that of Theorem \ref{main:theorem:scaling:discrete:XY}.

\item Finally, the analysis in the continuous setting builds extensively on our analysis for the test in~\eqref{general:discretization:test}. However, as we detail in Appendix~\ref{app:upper} (see Lemmas~\ref{lem:aa},~\ref{lem:bb} and~\ref{lem:cc}),
careful analysis is needed to show 
that the additional discretization error of the $X$ and $Y$ variables does not change the mean and variance of our test statistic too much (under both the null and alternative).

\item In addition, in Appendix \ref{multivariate:Z:extension} we derive another upper bound for the case where $Z \in [0,1]^{d_Z}$ and $s \geq 1$ when $d_Z \leq 2$. It turns out that the minimax rate in this case is $n^{-2s/((4+ d_Z)s + 2)}$, which generalizes the above result (for the case $s \geq 1$). In addition we derive a matching lower bound. 
\end{itemize}

\section{Investigating Lipschitzness Conditions}
\label{examples:section}
In our upper and lower bounds, in order to tractably test conditional independence in the nonparametric setting, we impose various Lipschitzness conditions on the distributions under consideration. In order to build further intuition for these
conditions, in this section we derive several inclusions which relate the Lipschitzness classes defined in Sections \ref{discrete:case:lower:bound:section} and \ref{continuous:case:lower:bound:section}. 
We then give examples of natural classes of distributions which satisfy our various Lipschitzness conditions.
%We also give examples about how these conditions can be achieved in practice. %We start with a result that relates 

\subsection{Relationships between the Lipschitzness classes}

Recall the definitions of the null Lipschitzness classes in Definition~\ref{def:nullsmoothness}.
Our first result shows that the class of H\"{o}lder smooth distributions
contains the class of TV smooth distributions and $\chi^2$ smooth distributions.

%======================================================================
% LIPSCHITZNESS IS CHANGED UP TO HERE!

\begin{lemma} 
We have the following inclusions 
\begin{align}
\label{eqn:first_inc} \cP_{0,[0,1],\chi^2}'(L) &\subseteq \cP'_{0,[0,1],\TV^2}(L), \\
\cP'_{0,[0,1],\TV}(\sqrt{L}) &\subseteq \cP'_{0,[0,1],\TV^2}(L).
\end{align}
\end{lemma}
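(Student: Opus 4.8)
The plan is to verify both inclusions directly from Definition~\ref{def:nullsmoothness}. Since $\cP_{0,[0,1],\chi^2}'(L)$, $\cP'_{0,[0,1],\TV}(\sqrt L)$ and $\cP'_{0,[0,1],\TV^2}(L)$ are all defined as subsets of the same class $\cP'_{0,[0,1]}$, it suffices in each case to show that membership in the left-hand class forces the pair of Lipschitz-in-$z$ bounds on the conditional marginals $p_{X|Z=z}$ and $p_{Y|Z=z}$ that define $\cP'_{0,[0,1],\TV^2}(L)$.

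For the first inclusion the one ingredient I need is the pointwise comparison $\|p-q\|_1 \le \sqrt{d_{\chi^2}(p,q)}$ between the $L_1$ distance and the $\chi^2$-divergence. When $p\ll q$ this is immediate from Cauchy--Schwarz, since $\|p-q\|_1 = \int \big|\tfrac{dp}{dq}-1\big|\,dq \le \big(\int (\tfrac{dp}{dq}-1)^2\,dq\big)^{1/2}\big(\int dq\big)^{1/2} = \sqrt{d_{\chi^2}(p,q)}$, and when $p\ll q$ fails the right-hand side is $+\infty$ by convention so the bound holds trivially. Applying this with $(p,q) = (p_{X|Z=z}, p_{X|Z=z'})$ and invoking the defining bound $d_{\chi^2}(p_{X|Z=z}, p_{X|Z=z'}) \le L|z-z'|$ of $\cP_{0,[0,1],\chi^2}'(L)$ gives $\|p_{X|Z=z}-p_{X|Z=z'}\|_1 \le \sqrt{L|z-z'|}$, and the identical argument works for $Y$; together these are precisely the conditions defining $\cP'_{0,[0,1],\TV^2}(L)$.

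For the second inclusion the only observation needed is that $z,z'\in[0,1]$ implies $|z-z'|\le 1$, hence $|z-z'|\le\sqrt{|z-z'|}$. Thus if $p_{X,Y,Z}\in\cP'_{0,[0,1],\TV}(\sqrt L)$ then $\|p_{X|Z=z}-p_{X|Z=z'}\|_1 \le \sqrt L\,|z-z'| \le \sqrt L\,\sqrt{|z-z'|} = \sqrt{L|z-z'|}$, and likewise for $Y$, which is exactly what $\cP'_{0,[0,1],\TV^2}(L)$ requires.

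Neither step presents a genuine obstacle; the only items deserving a word of care are the $d_{\chi^2}=+\infty$ convention in the degenerate case of the first part, and the fact that the second part crucially uses the bounded support of $Z$ (this is why a radius of $\sqrt L$ on the left, rather than $L$, is what maps into a radius of $L$ on the right).
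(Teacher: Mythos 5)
Your proof is correct and follows essentially the same route as the paper: both inclusions are obtained by Cauchy--Schwarz (the paper phrases it as the T2 Lemma) to get $\|p-q\|_1 \le \sqrt{d_{\chi^2}(p,q)}$, and by the elementary bound $|z-z'|\le\sqrt{|z-z'|}$ on $[0,1]$. Your extra remark about the $d_{\chi^2}=\infty$ convention is a harmless refinement of the same argument.
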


\begin{proof} To prove this result 
we state a simple but useful direct corollary of the Cauchy-Schwarz inequality, which is also known in the literature as the T2 Lemma.
\begin{lemma}[T2 Lemma] For positive reals $\{u_i\}_{i \in [k]}$ and $\{v_i\}_{i \in [k]}$ we have
\begin{align*}
\frac{(\sum_{i \in [k]} u_i)^2}{\sum_{i \in [k]} v_i} \leq \sum_{i \in [k]} \frac{u_i^2}{v_i}.
\end{align*}
\end{lemma} 
\noindent By the T2 Lemma it is simple to see that
\begin{align*}
\MoveEqLeft d_{\chi^2}(p_{X | Z = z}, p_{X|Z = z'})  = \sum_{x} \frac{(p_{X | Z}(x |z) - p_{X | Z}(x | z'))^2}{p_{X | Z}(x | z')} \\
& \geq  \frac{(\sum_{x}|p_{X | Z}(x |z) - p_{X | Z}(x | z')|)^2}{\sum_x p_{X | Z}(x | z')} = \|p_{X|Z = z} - p_{X|Z= z'}\|_1^2.
\end{align*}
Hence we have that 
\begin{align*}
d_{\chi^2}(p_{X | Z = z}, p_{X|Z = z'}) \leq L |z - z'|~\implies~\|p_{X|Z = z} - p_{X|Z= z'}\|_1 \leq \sqrt{L |z - z'|},
\end{align*} 
and therefore we obtain the inclusion in~\eqref{eqn:first_inc}.
%$\cP_{0,[0,1],\chi^2}'(L) \subseteq \cP'_{0,[0,1],\TV^2}(L)$. 
To derive the second inclusion note that when $z, z' \in [0,1]$ we have $|z - z'| \leq \sqrt{|z - z'|}$, and therefore 
\begin{align*}
\|p_{X|Z = z} - p_{X | Z = z'}\|_1 \leq \sqrt{L} |z - z'|~\implies~\|p_{X|Z = z} - p_{X | Z = z'}\|_1 \leq \sqrt{L |z - z'|}. 
\end{align*}
\end{proof}

\noindent In Definition~\ref{def:nullsmoothness} we assume that the marginal distributions of $X$ and $Y$ conditional on $Z$ are
each smooth. Our next result shows that up to a factor of $2$ this is equivalent to assuming TV Lipschitzness on the joint distribution of $(X,Y)$ conditional on 
$Z$.

\begin{lemma}\label{lemma:TV:inclusions} Define the class of distributions $\cP''_{0,[0,1], \TV}(L) \subset \cP'_{0,[0,1]}$ such that for each $p_{X,Y,Z} \in \cP''_{0,[0,1], \TV}(L)$ and all $z, z' \in [0,1]$:
\begin{align*}
\|p_{X,Y|Z = z} - p_{X,Y| Z = z'}\|_1 \leq L |z - z'|.
\end{align*}
Then \begin{align*}
\cP''_{0,[0,1], \TV}(L) \subseteq \cP'_{0,[0,1], \TV}(L),~~~\text{and}~~~ 
\cP'_{0,[0,1], \TV}(L) \subseteq \cP''_{0,[0,1], \TV}(2L). 
\end{align*}
\end{lemma}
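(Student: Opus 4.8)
The plan is to treat the two inclusions separately. The first, $\cP''_{0,[0,1],\TV}(L) \subseteq \cP'_{0,[0,1],\TV}(L)$, is immediate from the fact that marginalization is a contraction for the $L_1$ norm: for any $p_{X,Y,Z} \in \cP''_{0,[0,1],\TV}(L)$ and any $z,z' \in [0,1]$,
\begin{align*}
\|p_{X|Z=z} - p_{X|Z=z'}\|_1 = \sum_{x}\Big|\sum_y \big(p_{X,Y|Z}(x,y|z) - p_{X,Y|Z}(x,y|z')\big)\Big| \le \|p_{X,Y|Z=z} - p_{X,Y|Z=z'}\|_1 \le L|z-z'|,
\end{align*}
and symmetrically for $Y$. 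Since distributions in $\cP''_{0,[0,1],\TV}(L)$ already satisfy $X \independent Y \mid Z$, the two marginal bounds place them in $\cP'_{0,[0,1],\TV}(L)$.

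For the reverse inclusion $\cP'_{0,[0,1],\TV}(L) \subseteq \cP''_{0,[0,1],\TV}(2L)$ I would exploit the conditional-independence factorization $p_{X,Y|Z}(x,y|z) = p_{X|Z}(x|z)\,p_{Y|Z}(y|z)$, which is available precisely because we are inside the null class $\cP'_{0,[0,1]}$. The standard device is to insert the mixed term $p_{X|Z}(x|z)\,p_{Y|Z}(y|z')$, apply the triangle inequality, and then use that $\sum_x p_{X|Z}(x|z) = \sum_y p_{Y|Z}(y|z') = 1$; this yields
\begin{align*}
\|p_{X,Y|Z=z} - p_{X,Y|Z=z'}\|_1 \le \|p_{X|Z=z} - p_{X|Z=z'}\|_1 + \|p_{Y|Z=z} - p_{Y|Z=z'}\|_1 \le 2L|z-z'|,
\end{align*}
so $p_{X,Y,Z} \in \cP''_{0,[0,1],\TV}(2L)$.

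Neither step is genuinely hard; the only point worth flagging is that the second inclusion relies essentially on $X \independent Y \mid Z$ — for an arbitrary conditionally dependent distribution the joint conditional $p_{X,Y\mid Z=z}$ can fail to be Lipschitz in $z$ even when both conditional marginals are — so I would be sure to invoke the null structure explicitly rather than treating it as a purely "soft" measure-theoretic fact. (The same two displays carry over verbatim, with sums replaced by Lebesgue integrals over $[0,1]$ and $[0,1]^2$, to the continuous-$X,Y$ setting, should that variant be needed.)
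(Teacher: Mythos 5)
Your proposal is correct and follows essentially the same route as the paper: the first inclusion is the triangle inequality (marginalization contracts the $L_1$ norm), and the second uses the factorization $p_{X,Y|Z=z}=p_{X|Z=z}\,p_{Y|Z=z}$ together with sub-additivity of total variation on product distributions, which you simply reprove via the cross-term $p_{X|Z}(x|z)p_{Y|Z}(y|z')$ rather than citing it. Your remark that the second inclusion genuinely needs the null (conditional-independence) structure is accurate and consistent with the paper's argument.
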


\begin{proof} The first inclusion is a consequence of the triangle inequality:
\begin{align*}
\max(\|p_{X|Z = z} - p_{X|Z = z'}\|_1, \|p_{Y|Z = z} - p_{Y|Z = z'}\|_1) \leq \|p_{X,Y|Z = z} - p_{X,Y| Z = z'}\|_1.
\end{align*}
To obtain the second inclusion we note that $p_{X,Y|Z = z} = p_{X | Z = z} p_{Y | Z = z}$ and $p_{X,Y|Z = z'} = p_{X | Z = z'} p_{Y | Z = z'}$, and that $d_{\TV}$ is sub-additive on product distributions \cite{valiant2017automatic} so that
\begin{align*}
\|p_{X,Y|Z = z} - p_{X,Y| Z = z'}\|_1 \leq \|p_{X|Z = z} - p_{X|Z = z'}\|_1 + \|p_{Y|Z = z} - p_{Y|Z = z'}\|_1.
\end{align*}
\end{proof}

\noindent Similar statements to Lemma \ref{lemma:TV:inclusions} hold for the classes $\cP'_{0,[0,1], \TV^2}(L)$ and $\cP_{0,[0,1]^3,\TV}$. 
For brevity we do not state them here. We now state another similar result for the Lipschitzness class $\cP'_{0,[0,1], \chi^2}$. % introduced in Definition~\ref{null:chisq:smoothness}.

\begin{lemma} Define the class of distributions $\cP''_{0,[0,1], \chi^2}(L) \subset \cP'_{0,[0,1]}$, such that for each $p_{X,Y,Z} \in \cP''_{0,[0,1], \chi^2}(L)$ and all $z, z' \in [0,1]$ we have
\begin{align*}
d_{\chi^2}(p_{X,Y | Z = z}, p_{X,Y | Z = z'}) \leq L |z - z'|. 
\end{align*}
Then 
\begin{align*} 
\cP''_{0,[0,1], \chi^2}(L) \subseteq  \cP'_{0,[0,1], \chi^2}(L)~~~\text{and}~~~
\cP'_{0,[0,1], \chi^2}(L) \subseteq \cP''_{0,[0,1], \chi^2}(2L + L^2).
\end{align*}
\end{lemma}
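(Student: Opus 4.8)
The plan is to prove the two inclusions separately. The first inclusion, $\cP''_{0,[0,1], \chi^2}(L) \subseteq \cP'_{0,[0,1], \chi^2}(L)$, is a data-processing statement, while the second, $\cP'_{0,[0,1], \chi^2}(L) \subseteq \cP''_{0,[0,1], \chi^2}(2L + L^2)$, follows from the tensorization identity for the $\chi^2$-divergence together with the fact that $|z - z'| \leq 1$ on $[0,1]$.

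For the first inclusion I would fix $p_{X,Y,Z} \in \cP''_{0,[0,1], \chi^2}(L)$ and $z \neq z'$ in $[0,1]$ (the case $z = z'$ being trivial). Finiteness of $d_{\chi^2}(p_{X,Y|Z = z}, p_{X,Y|Z = z'}) \leq L|z-z'|$ forces $p_{X,Y|Z = z} \ll p_{X,Y|Z = z'}$, and hence $p_{X|Z=z} \ll p_{X|Z=z'}$. Since marginalizing out $Y$ amounts to applying a deterministic Markov kernel and the $\chi^2$-divergence obeys the data-processing inequality, $d_{\chi^2}(p_{X|Z=z}, p_{X|Z=z'}) \leq d_{\chi^2}(p_{X,Y|Z=z}, p_{X,Y|Z=z'}) \leq L|z-z'|$, and the same bound holds for $Y$ by symmetry, so $p_{X,Y,Z} \in \cP'_{0,[0,1], \chi^2}(L)$. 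In the discrete setting considered here one can alternatively avoid invoking the data-processing inequality: writing $d_{\chi^2}(p_{X,Y|Z=z}, p_{X,Y|Z=z'})$ as a double sum over $(x,y)$, grouping the terms sharing a fixed value of $x$, and applying the T2 Lemma together with the triangle inequality within each group yields precisely $d_{\chi^2}(p_{X|Z=z}, p_{X|Z=z'}) \leq d_{\chi^2}(p_{X,Y|Z=z}, p_{X,Y|Z=z'})$.

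For the second inclusion I would fix $p_{X,Y,Z} \in \cP'_{0,[0,1], \chi^2}(L)$. Because this is a null (conditionally independent) distribution, $p_{X,Y|Z=z} = p_{X|Z=z}\, p_{Y|Z=z}$ and likewise at $z'$; moreover the marginal divergences $a := d_{\chi^2}(p_{X|Z=z}, p_{X|Z=z'})$ and $b := d_{\chi^2}(p_{Y|Z=z}, p_{Y|Z=z'})$ are finite and at most $L|z-z'|$. The key fact is that the $\chi^2$-divergence tensorizes over products: $1 + d_{\chi^2}(\mu_1 \otimes \mu_2, \nu_1 \otimes \nu_2) = (1 + d_{\chi^2}(\mu_1,\nu_1))(1 + d_{\chi^2}(\mu_2,\nu_2))$ whenever $\mu_i \ll \nu_i$, which follows immediately from $\int (d\mu/d\nu)^2\, d\nu = 1 + d_{\chi^2}(\mu,\nu)$ and Fubini's theorem. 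Consequently
\begin{align*}
d_{\chi^2}(p_{X,Y|Z=z}, p_{X,Y|Z=z'}) &= (1+a)(1+b) - 1 = a + b + ab \\
&\leq 2L|z-z'| + L^2|z-z'|^2 \leq (2L + L^2)\,|z-z'|,
\end{align*}
where the last inequality uses $|z - z'| \leq 1$, so $p_{X,Y,Z} \in \cP''_{0,[0,1], \chi^2}(2L + L^2)$.

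The argument is short and I do not expect a genuine obstacle. The only points needing a little care are the bookkeeping of the absolute-continuity conventions (ensuring that all the divergences appearing are finite, so that the data-processing inequality and the tensorization identity both apply) and recalling the tensorization identity itself; both are standard.
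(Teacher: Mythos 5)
Your proposal is correct and follows essentially the same route as the paper: the first inclusion is the marginalization inequality (which the paper obtains via the T2 Lemma, exactly as in your alternative argument, the data-processing inequality being just a more abstract phrasing of the same step), and the second inclusion uses the same product/tensorization identity $d_{\chi^2} = a + b + ab$ followed by the bound $|z-z'|^2 \leq |z-z'|$ on $[0,1]$.
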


\begin{proof} We start by showing the first inclusion. Note that by the T2 Lemma  
\begin{align*}
\MoveEqLeft d_{\chi^2}(p_{X,Y | Z = z}, p_{X,Y | Z = z'}) = \sum_{x,y} \frac{p^2_{X,Y|Z}(x,y|z)}{p_{X,Y|Z}(x,y|z')} - 1 \\
& \geq \sum_x \frac{(\sum_y p_{X,Y|Z}(x,y|z))^2}{\sum_y p_{X,Y|Z}(x,y|z')} - 1 = d_{\chi^2}(p_{X | Z = z}, p_{X| Z = z'}).
\end{align*}
By symmetry it also follows that $d_{\chi^2}(p_{X,Y | Z = z}, p_{X,Y | Z = z'}) \geq d_{\chi^2}(p_{Y | Z = z}, p_{Y| Z = z'})$ which shows the first inclusion. For the second inclusion using the fact that $p_{X,Y|Z = z} = p_{X | Z = z} p_{Y | Z = z}$ and $p_{X,Y|Z = z'} = p_{X | Z = z'} p_{Y | Z = z'}$, it is simple to verify that
\begin{align*}
d_{\chi^2}(p_{X,Y|Z= z}, p_{X,Y|Z= z'}) & = d_{\chi^2}(p_{X|Z= z}, p_{X|Z= z'}) + d_{\chi^2}(p_{Y|Z= z}, p_{Y|Z= z'}) \\
& + d_{\chi^2}(p_{X|Z= z}, p_{X|Z= z'})d_{\chi^2}(p_{Y|Z= z}, p_{Y|Z= z'}),
\end{align*}
which yields the desired conclusion by noting that this expression in turn is smaller than $2L|z - z'| + L^2 |z - z'|^2 \leq 2L|z - z'| + L^2 |z - z'|$, when $p_{X,Y,Z} \in \cP'_{0,[0,1], \chi^2}(L)$. 
\end{proof}
\noindent A similar result also holds for the set $\cP_{0,[0,1]^3,\chi^2}(L)$ but once again we do not state the result here for brevity. 

\subsection{Distribution Families in our Lipschitzness Classes}
Next we give some concrete examples of distributions which belong to the different Lipschitzness classes. We begin by showing that 
smoothness of the log-conditional density is sufficient to ensure that the distribution belongs to both the TV and $\chi^2$ Lipschitzness classes. We then
show that a broad subset of exponential family distributions have a smooth log-conditional distributions.

\begin{lemma} \label{log:Lipschitz:functions:lemma} Take a distribution $p_{X,Y,Z} \in \cP'_{0,[0,1]}$. Suppose that the functions $\log p_{X|Z}(x|z)$, $\log p_{Y|Z}(y |z)$ are $L$-Lipschitz in $z$ for all values of $x$ and $y$. Then the distribution $p_{X,Y,Z}$ belongs to $\cP'_{0,[0,1], \TV}(e^L-1) \cap \cP'_{0,[0,1], \chi^2}(e^L-1)$.  
\end{lemma}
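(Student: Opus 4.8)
The plan is to reduce both inclusions to a single estimate on the conditional likelihood ratio, and then turn exponential-in-$|z-z'|$ bounds into linear ones by convexity. First I would fix an arbitrary pair $z, z' \in [0,1]$ and set $a := L|z-z'| \in [0,L]$. For every $x$, the Lipschitz hypothesis on $z \mapsto \log p_{X|Z}(x|z)$ gives
\[
r(x) := \frac{p_{X|Z}(x|z)}{p_{X|Z}(x|z')} = \exp\!\big(\log p_{X|Z}(x|z) - \log p_{X|Z}(x|z')\big) \in [e^{-a}, e^{a}],
\]
and, since $p_{X|Z}(\cdot\,|z)$ sums to one, $\EE_{X \sim p_{X|Z=z'}}[r(X)] = 1$. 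The same two facts hold with $Y$ in place of $X$. I will also record the elementary fact that if $f$ is convex on $[0,L]$ with $f(0)=0$, then $f(a) \le \frac{a}{L}f(L)$ for $a \in [0,L]$; this is the device that converts the bounds below into Lipschitz bounds in $|z-z'|$.

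For the TV inclusion I would write $\|p_{X|Z=z} - p_{X|Z=z'}\|_1 = \sum_x p_{X|Z}(x|z')\,|r(x)-1| \le (e^{a}-1)\sum_x p_{X|Z}(x|z') = e^{a}-1$, using $|r(x)-1| = |e^{t}-1| \le e^{a}-1$ for the exponent $t$ with $|t|\le a$. Applying the convexity fact to $a \mapsto e^{a}-1$ gives $e^{a}-1 \le \frac{a}{L}(e^{L}-1) = (e^{L}-1)|z-z'|$, and the identical argument for $Y$ shows $p_{X,Y,Z} \in \cP'_{0,[0,1],\TV}(e^{L}-1)$.

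For the $\chi^2$ inclusion I would use the mean-one property to rewrite $d_{\chi^2}(p_{X|Z=z}, p_{X|Z=z'}) = \EE_{X\sim p_{X|Z=z'}}[r(X)^2] - 1 = \var_{X\sim p_{X|Z=z'}}(r(X))$. Since $r(X)$ takes values in $[e^{-a},e^{a}]$ with mean $1$, taking expectations in the inequality $(r(X)-e^{-a})(e^{a}-r(X))\ge 0$ yields $\var(r(X)) \le (e^{a}-1)(1-e^{-a}) = e^{a}+e^{-a}-2 = 2(\cosh a - 1)$. Convexity of $a \mapsto 2(\cosh a - 1)$ then gives $2(\cosh a-1) \le \frac{a}{L}\big(e^{L}+e^{-L}-2\big) \le \frac{a}{L}(e^{L}-1) = (e^{L}-1)|z-z'|$, where the last step uses $e^{-L}\le 1$. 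The same holds for $Y$, so $p_{X,Y,Z}\in\cP'_{0,[0,1],\chi^2}(e^{L}-1)$, completing the proof.

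The one point that is not completely routine is obtaining the \emph{sharp} constant in the $\chi^2$ bound: a crude estimate $\EE[r(X)^2]\le e^{2a}$ only gives $d_{\chi^2}\le e^{2a}-1 \le \frac{a}{L}(e^{2L}-1)$, whose Lipschitz constant is too large. Exploiting that $r$ has mean exactly $1$ to pass from the $\chi^2$ divergence to a variance, and then applying the Popoviciu/Bhatia--Davis-type variance bound for a bounded random variable, is precisely what recovers the advertised constant $e^{L}-1$; everything else is bookkeeping.
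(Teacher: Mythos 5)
Your proof is correct. The total-variation half is essentially the paper's argument: both control the conditional likelihood ratio $r(x)=p_{X|Z}(x|z)/p_{X|Z}(x|z')$ through the Lipschitz log-density and then linearize $e^{L|z-z'|}-1\le (e^L-1)|z-z'|$ (you via the chord/convexity inequality, the paper via a power-series expansion — the same fact). The $\chi^2$ half is where you take a genuinely different route: the paper simply rewrites $d_{\chi^2}(p_{X|Z=z},p_{X|Z=z'})=\sum_x \big(r(x)-1\big)\,p_{X|Z}(x|z)$ and applies the one-sided pointwise bound $r(x)-1\le e^{L|z-z'|}-1$, which, since $\sum_x p_{X|Z}(x|z)=1$, already yields the constant $e^L-1$; you instead use $\EE_{p_{X|Z=z'}}[r]=1$ to identify the divergence with $\var(r)$ and invoke a Popoviciu/Bhatia--Davis-type bound for a mean-one variable in $[e^{-a},e^a]$, obtaining the slightly sharper intermediate bound $e^a+e^{-a}-2$ before relaxing to $e^a-1$. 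So your closing remark that the variance trick is what recovers the advertised constant overstates the point — the weighted rewrite above sidesteps the crude $\EE[r^2]\le e^{2a}$ estimate with no variance argument at all — but your version is valid, symmetric in $X$ and $Y$ as required, and even tracks a marginally better constant along the way.
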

\begin{proof}
We begin by showing that $p_{X,Y,Z} \in \cP'_{0,[0,1], \chi^2}(e^L-1)$. Note that
\begin{align*}
\sum_{x} \frac{p^2_{X|Z}(x| z)}{p_{X|Z}(x|z')} - 1 = \sum_{x} \bigg(\frac{p_{X|Z}(x| z)}{p_{X|Z}(x|z')} - 1\bigg)p_{X|Z}(x| z). 
\end{align*}
As a consequence it suffices to show that,
\begin{align*}
\frac{p_{X|Z}(x| z)}{p_{X|Z}(x|z')} - 1 \leq (e^L-1) |z - z'|,
\end{align*}
for all $z,z'\in[0,1]$ and all $x$ (and the analogous claim for $p_{Y|Z}$) in order to conclude that $p_{X,Y,Z} \in \cP'_{0,[0,1], \chi^2}(e^L-1)$. 
Since $\log p_{X|Z}(x | z)$ is $L$-Lipschitz in $z$ it follows that for values of $|z - z'| \leq 1$: 
\begin{align*}
\MoveEqLeft \frac{p_{X|Z}(x| z)}{p_{X|Z}(x|z')} - 1 \leq \exp(L|z - z'|) - 1 = L|z-z'| + \sum_{k \geq 2}(L|z - z'|)^k/k! \\
& \leq L|z-z'| + L|z-z'| \sum_{k \geq 2} L^{k-1}/k! = L|z-z'| + L|z-z'| (e^L - 1 - L)/L \\
& = (e^L - 1) |z- z'|.
\end{align*}
This, together with an identical claim for $p_{Y|Z}$, proves the first claim, i.e., $p_{X,Y,Z} \in \cP'_{0,[0,1], \chi^2}(e^L-1)$. To establish the second claim note that 
\begin{align*}
\MoveEqLeft \sum_{x} |p_{X|Z}(x|z) - p_{X|Z}(x|z')|  \\
& = \sum_{x} \bigg(\frac{\max(p_{X|Z}(x|z), p_{X|Z}(x|z'))}{\min(p_{X|Z}(x|z), p_{X|Z}(x|z'))} - 1\bigg) \min(p_{X|Z}(x|z), p_{X|Z}(x|z')) \\
& \leq   \sum_{x} \bigg(\frac{\max(p_{X|Z}(x|z), p_{X|Z}(x|z'))}{\min(p_{X|Z}(x|z), p_{X|Z}(x|z'))} - 1\bigg) p_{X|Z}(x|z).
\end{align*}
Hence the same proof as above applies. This completes the proof.
\end{proof}

\noindent We now state several similar and related results without proof, noting that their proofs are nearly identical to the proof of Lemma \ref{log:Lipschitz:functions:lemma}. %\sbcomment{Lemma~\ref{continuous:lemma:lipschitz:TV:smooth}
%seems to be a bit different from the others, since it is for the continuous case (although I tried to prove it and it seems straightforward). Should we include the proof (maybe in the Appendix)?}
\begin{lemma}\label{continuous:lemma:lipschitz:TV:smooth:null} Take a distribution $p_{X,Y,Z} \in \cQ'_{0,[0,1]}$. Suppose that the function $\log p_{X,Y|Z}(x,y|z)$ is $L$-Lipschitz in $z$ for all values of $x$ and $y$. Then $p_{X,Y,Z} \in \cQ'_{0,[0,1],\TV}(e^L-1)$.
\end{lemma}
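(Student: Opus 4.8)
The plan is to reduce the statement to the elementary Lipschitz bound on the total variation distance between the conditional laws, following essentially verbatim the argument used for the total-variation half of Lemma~\ref{log:Lipschitz:functions:lemma}, but applied to the joint conditional density $p_{X,Y|Z}$ instead of to the marginals. Since $p_{X,Y,Z}\in\cQ'_{0,[0,1]}$ is already assumed, by Definition~\ref{alternative:TV:smoothness:def} it suffices to verify that $\|p_{X,Y|Z = z} - p_{X,Y|Z = z'}\|_1 \leq (e^L-1)|z-z'|$ for all $z,z'\in[0,1]$.

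First I would observe that $L$-Lipschitzness of $z\mapsto \log p_{X,Y|Z}(x,y|z)$ for every $(x,y)$ forces $p_{X,Y|Z}(x,y|z)>0$ everywhere, so all the ratios below are well defined. Writing $a = p_{X,Y|Z}(x,y|z)$ and $b = p_{X,Y|Z}(x,y|z')$, I would use the identity $|a-b| = (\frac{\max(a,b)}{\min(a,b)}-1)\min(a,b) \leq (\frac{\max(a,b)}{\min(a,b)}-1)\,p_{X,Y|Z}(x,y|z)$, together with $\frac{\max(a,b)}{\min(a,b)} = \exp(|\log p_{X,Y|Z}(x,y|z) - \log p_{X,Y|Z}(x,y|z')|) \leq \exp(L|z-z'|)$. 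Summing over $x\in[\ell_1]$, $y\in[\ell_2]$ and using $\sum_{x,y} p_{X,Y|Z}(x,y|z) = 1$ then yields $\|p_{X,Y|Z = z} - p_{X,Y|Z = z'}\|_1 \leq \exp(L|z-z'|) - 1$.

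It then remains to pass to the linear-in-$|z-z'|$ bound. Since $z,z'\in[0,1]$ we have $|z-z'|\leq 1$, and the same Taylor estimate as in the proof of Lemma~\ref{log:Lipschitz:functions:lemma} gives $\exp(L|z-z'|) - 1 = L|z-z'| + \sum_{k\geq 2}(L|z-z'|)^k/k! \leq L|z-z'| + L|z-z'|\sum_{k\geq 2}L^{k-1}/k! = (e^L-1)|z-z'|$, which is exactly the required inequality; hence $p_{X,Y,Z}\in\cQ'_{0,[0,1],\TV}(e^L-1)$.

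There is essentially no real obstacle here: the argument is a direct transcription of the total-variation half of Lemma~\ref{log:Lipschitz:functions:lemma} with the joint conditional density in place of the marginal ones. The only point deserving a moment's care is the implicit positivity of $p_{X,Y|Z}$ (needed both for $\log$ to be finite and for the $\min/\max$ manipulation), and this is supplied automatically by the Lipschitz hypothesis itself. The same scheme, with sums replaced by integrals, also proves the analogous statement for $\cQ_{0,[0,1]^3,\TV}$ in the fully continuous setting.
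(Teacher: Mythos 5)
Your proof is correct and follows exactly the argument the paper intends: the paper states this lemma without proof, noting it is nearly identical to the total-variation half of Lemma~\ref{log:Lipschitz:functions:lemma} (and its appendix proof of Lemma~\ref{continuous:lemma:lipschitz:TV:smooth} is the same max/min-ratio plus Taylor-estimate argument, with integrals in place of sums). Your transcription to the joint conditional density, including the remark on positivity and the $|z-z'|\leq 1$ step, matches that route, so there is nothing to add.
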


\begin{lemma}\label{discrete:lemma:lipschtiz:alternative} Let $p_{X,Y,Z} \in \cP_{0,[0,1]^3}$. Suppose that the functions $\log p_{X|Z}(x|z)$, $\log p_{Y|Z}(y |z)$ are $L$-Lipschitz in $z$ for all values of $x$ and $y$. Then the distribution $p_{X,Y,Z}$ also belongs to $p_{X,Y,Z} \in \cP_{0,[0,1]^3, \TV}(e^L-1) \cap \cP_{0,[0,1]^3, \chi^2}(e^L-1)$.  
\end{lemma}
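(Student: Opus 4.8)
The plan is to mimic the proof of Lemma~\ref{log:Lipschitz:functions:lemma} essentially verbatim, replacing the sums over the discrete supports of $X$ and $Y$ by integrals over $[0,1]$, since here $X$, $Y$ and $Z$ are all continuous. Since membership in $\cP_{0,[0,1]^3}$ already encodes $X \independent Y | Z$, it remains only to verify the two Lipschitzness estimates on the marginal conditionals $p_{X | Z = z}$ and $p_{Y | Z = z}$; by symmetry it is enough to treat $X$.

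For the $\chi^2$ bound I would start from the identity
\begin{align*}
d_{\chi^2}(p_{X | Z = z}, p_{X | Z = z'}) = \int_0^1 \bigg(\frac{p_{X|Z}(x|z)}{p_{X|Z}(x|z')} - 1\bigg) p_{X|Z}(x|z)\, dx,
\end{align*}
and then use that $L$-Lipschitzness of $z \mapsto \log p_{X|Z}(x|z)$ gives $p_{X|Z}(x|z) / p_{X|Z}(x|z') \leq \exp(L|z-z'|)$ for (almost) every $x$, followed by the same Taylor-series manipulation as in Lemma~\ref{log:Lipschitz:functions:lemma} to get $\exp(L|z-z'|) - 1 \leq (e^L - 1)|z - z'|$ whenever $|z - z'| \leq 1$. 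Integrating this constant bound against the probability density $p_{X|Z}(\cdot|z)$ yields $d_{\chi^2}(p_{X | Z = z}, p_{X | Z = z'}) \leq (e^L - 1)|z - z'|$.

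For the TV bound I would use the pointwise identity $|a - b| = (\max(a,b)/\min(a,b) - 1)\min(a,b)$ with $a = p_{X|Z}(x|z)$ and $b = p_{X|Z}(x|z')$, note that $\max(a,b)/\min(a,b) \leq \exp(L|z-z'|)$ by log-Lipschitzness and $\min(a,b) \leq p_{X|Z}(x|z)$, and integrate over $x \in [0,1]$ to conclude $\|p_{X|Z = z} - p_{X|Z = z'}\|_1 \leq (e^L - 1)|z - z'|$. Running the identical argument for $Y$ completes the proof.

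The only points that need a little care — and all that I would flag as mild obstacles — are measure-theoretic bookkeeping: one must fix versions of the conditional densities for which the log-Lipschitz hypothesis holds for almost every $x$; observe that this hypothesis forces $p_{X|Z = z} \ll p_{X|Z = z'}$, so the ratio and the $\chi^2$-divergence are well defined and finite; and justify passing the pointwise inequalities through the integral, for which monotonicity of the Lebesgue integral suffices. None of these is substantive, which is presumably why the paper omits the proof.
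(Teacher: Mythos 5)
Your proposal is correct and is exactly the argument the paper intends: it omits the proof of this lemma precisely because it is the proof of Lemma~\ref{log:Lipschitz:functions:lemma} with sums over the discrete supports replaced by integrals over $[0,1]$, which is what you carry out. The measure-theoretic caveats you flag (fixing versions of the conditional densities, absolute continuity of $p_{X|Z=z}$ with respect to $p_{X|Z=z'}$, and monotonicity of the integral) are indeed harmless and do not change the substance.
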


\begin{lemma}\label{continuous:lemma:lipschitz:TV:smooth} Let $p_{X,Y,Z} \in \cQ_{0,[0,1]^3}$. Suppose that the function $\log p_{X,Y|Z}(x,y|z)$ is $L$-Lipschitz in $z$ for all $x$ and $y$, and further that the function $p_{X,Y|Z}(x,y|z)$ is jointly $C$-Lipschitz in $x$ and $y$, for all $z$, i.e.
\begin{align}\label{xy:Lipschitzness:condition}
|p_{X,Y|Z}(x,y|z) - p_{X,Y|Z}(x',y'|z)| \leq C(|x - x'| + |y - y'|).
\end{align}
Then $p_{X,Y,Z} \in \cQ_{0,[0,1]^3,\TV}((e^L-1) \vee \sqrt{2} C, 1)$.
\end{lemma}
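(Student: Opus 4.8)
The plan is to verify directly that $p_{X,Y,Z}$ satisfies the two conditions in Definition~\ref{alternative:smoothness:holder} that define membership in $\cQ_{0,[0,1]^3,\TV}(L',1)$ with $L' := (e^L - 1)\vee\sqrt{2}C$: first, the TV-Lipschitzness of $z\mapsto p_{X,Y|Z=z}$ with constant $L'$, and second, that $(x,y)\mapsto p_{X,Y|Z}(x,y|z)$ lies in $\cH^{2,1}(L')$ for every fixed $z$. These two checks are essentially independent of one another, the first using only the $z$-Lipschitzness of $\log p_{X,Y|Z}$ and the second using only~\eqref{xy:Lipschitzness:condition}.

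For the TV-Lipschitzness in $z$ I would run the continuous analogue of the computation behind Lemma~\ref{continuous:lemma:lipschitz:TV:smooth:null} (itself a copy of the argument in Lemma~\ref{log:Lipschitz:functions:lemma}). For fixed $z,z'$ one writes
\[
\big|p_{X,Y|Z}(x,y|z) - p_{X,Y|Z}(x,y|z')\big| = \Big(\tfrac{\max(p_{X,Y|Z}(x,y|z),\,p_{X,Y|Z}(x,y|z'))}{\min(p_{X,Y|Z}(x,y|z),\,p_{X,Y|Z}(x,y|z'))} - 1\Big)\min\big(p_{X,Y|Z}(x,y|z),\,p_{X,Y|Z}(x,y|z')\big),
\]
integrates in $(x,y)$ over $[0,1]^2$, bounds $\int_{[0,1]^2}\min(\cdots)\,dx\,dy \le \int_{[0,1]^2} p_{X,Y|Z}(x,y|z')\,dx\,dy = 1$, and uses that $L$-Lipschitzness of $\log p_{X,Y|Z}(x,y|\cdot)$ forces the ratio $\max(\cdot)/\min(\cdot) - 1$ to be at most $e^{L|z-z'|} - 1 \le (e^L - 1)|z-z'|$ for $|z-z'|\le 1$ (the same convexity estimate used in Lemma~\ref{log:Lipschitz:functions:lemma}). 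This yields $\|p_{X,Y|Z=z} - p_{X,Y|Z=z'}\|_1 \le (e^L-1)|z-z'| \le L'|z-z'|$, as required.

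For the H\"older requirement, since $s=1$ we have $\lfloor s\rfloor = 0$, so — as noted in the remark following the H\"older Smoothness definition — $\cH^{2,1}(L')$ is simply the class of functions on $[0,1]^2$ that are $L'$-Lipschitz in the Euclidean metric (together with a uniform bound by $L'$). The Lipschitz part comes from the hypothesis~\eqref{xy:Lipschitzness:condition} and Cauchy–Schwarz, $|x-x'| + |y-y'| \le \sqrt{2}\,\big((x-x')^2 + (y-y')^2\big)^{1/2}$, giving $\big|p_{X,Y|Z}(x,y|z) - p_{X,Y|Z}(x',y'|z)\big| \le \sqrt{2}C\,\big((x-x')^2+(y-y')^2\big)^{1/2} \le L'\big((x-x')^2+(y-y')^2\big)^{1/2}$, which is exactly~\eqref{holder:class} with exponent $\tfrac{s-\lfloor s\rfloor}{2} = \tfrac12$. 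The uniform bound on $p_{X,Y|Z}(\cdot,\cdot|z)$, if invoked, follows because it is a probability density on $[0,1]^2$ and hence takes some value $\le 1$, so by its $C$-Lipschitzness it is bounded by $1 + 2C$; this is standard and is absorbed into the constant.

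There is no genuine obstacle here — the authors themselves flag this lemma as having a proof "nearly identical" to that of Lemma~\ref{log:Lipschitz:functions:lemma}. The only points that need a moment's attention are (a) replacing the sums over $x,y$ in the $\log$-Lipschitz estimate by integrals, which is purely cosmetic, and (b) the metric conversion in the H\"older step, where moving from the $\ell_1$-type modulus in~\eqref{xy:Lipschitzness:condition} to the Euclidean distance used in $\cH^{2,s}$ costs precisely the factor $\sqrt{2}$ that appears in the stated constant $(e^L-1)\vee\sqrt{2}C$. Everything else is routine bookkeeping.
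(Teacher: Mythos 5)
Your proof is correct and follows essentially the same route as the paper's: the same $(\max/\min-1)\cdot\min$ decomposition with the bound $e^{L|z-z'|}-1\le(e^L-1)|z-z'|$ inherited from Lemma~\ref{log:Lipschitz:functions:lemma} for the TV-Lipschitzness in $z$, and the same $\sqrt{2}$ Cauchy--Schwarz conversion from the $\ell_1$ modulus in~\eqref{xy:Lipschitzness:condition} to the Euclidean H\"{o}lder condition with $s=1$. Your extra remark on the uniform bound on the density (bounded by $1+2C$ via Lipschitzness and integrability) is a point the paper's proof silently skips, so noting it is fine.
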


\noindent Lemmas \ref{continuous:lemma:lipschitz:TV:smooth:null} and \ref{continuous:lemma:lipschitz:TV:smooth} are regarding the continuous case, and are therefore slightly different from Lemmas \ref{log:Lipschitz:functions:lemma} and \ref{discrete:lemma:lipschtiz:alternative}. Hence for completeness we give the proof of Lemma \ref{continuous:lemma:lipschitz:TV:smooth} in the appendix. Roughly, these results taken together show that Lipschitzness of the log conditional density imply the various Lipschitzness conditions we impose. Our next set of results shows that 
a broad class of natural exponential family type distributions, in fact, have smooth log conditional densities.

\begin{lemma}\label{log:Lipschitz:densities} Consider the density $p_{W|Z}(w|z) \varpropto \exp(g(w,z))$, where $g(w,z)$ is an $L$-Lipschitz function in $z \in [0,1]$ for all values of $w$. Then the function $\log p_{W|Z}(w|z)$ is $2L$-Lipschitz. 
\end{lemma}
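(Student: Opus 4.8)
The plan is to write the log-density as the sum of $g(w,z)$ and a normalizing term, and then show the normalizing term inherits the same Lipschitz constant $L$, so that the triangle inequality yields the factor $2L$. Concretely, let $\mu$ denote the dominating base measure (counting measure in the discrete case, Lebesgue measure in the continuous case), and set the log-partition function
\begin{align*}
A(z) = \log \int \exp(g(w,z))\, d\mu(w),
\end{align*}
so that $\log p_{W|Z}(w|z) = g(w,z) - A(z)$. Then for any $z, z' \in [0,1]$,
\begin{align*}
|\log p_{W|Z}(w|z) - \log p_{W|Z}(w|z')| \leq |g(w,z) - g(w,z')| + |A(z) - A(z')| \leq L|z-z'| + |A(z) - A(z')|,
\end{align*}
and it remains only to bound $|A(z) - A(z')|$ by $L|z-z'|$.

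For this last step I would avoid differentiability assumptions and argue directly from the Lipschitz bound on $g$: since $g(w,z) \leq g(w,z') + L|z-z'|$ for all $w$, we get $\exp(g(w,z)) \leq e^{L|z-z'|}\exp(g(w,z'))$ pointwise, hence after integrating $\int \exp(g(w,z))\, d\mu(w) \leq e^{L|z-z'|}\int \exp(g(w,z'))\, d\mu(w)$, i.e. $A(z) - A(z') \leq L|z-z'|$. Swapping the roles of $z$ and $z'$ gives the reverse inequality, so $|A(z)-A(z')| \leq L|z-z'|$. Substituting back yields $|\log p_{W|Z}(w|z) - \log p_{W|Z}(w|z')| \leq 2L|z-z'|$, which is the claim.

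There is no real obstacle here; the only points to be slightly careful about are (i) that the integral defining $A(z)$ is finite, which is implicit in $p_{W|Z}(\cdot|z)$ being a well-defined density, and (ii) that the bound $g(w,z)\leq g(w,z')+L|z-z'|$ holds for (almost) every $w$ with a single constant $L$, which is exactly the hypothesis. If one prefers a differentiation-based argument, one can instead note $A'(z) = \mathbb{E}_{p_{W|Z=z}}[\partial_z g(W,z)]$ and use $|\partial_z g|\leq L$ a.e., but the monotone comparison above is cleaner and needs no regularity beyond Lipschitzness.
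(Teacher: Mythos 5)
Your proof is correct, and it follows the same overall decomposition as the paper: write $\log p_{W|Z}(w|z) = g(w,z) - A(z)$ and show that the log-normalizer $A$ is itself $L$-Lipschitz, so the triangle inequality gives $2L$. The only place you diverge is in how you bound $|A(z)-A(z')|$: the paper bounds $-\log\bigl(\sum_w e^{g(w,z)}/\sum_w e^{g(w,z')}\bigr)$ by Jensen's inequality (convexity of $-\log$ applied to the distribution proportional to $e^{g(\cdot,z')}$), whereas you use the pointwise domination $e^{g(w,z)}\leq e^{L|z-z'|}e^{g(w,z')}$ and integrate. Your monotone-comparison step is a bit more elementary, needs no convexity argument, and works verbatim for any dominating measure $\mu$ (the paper handles the continuous case by the remark that sums become integrals); the paper's Jensen step yields exactly the same constant, so neither approach buys a sharper bound. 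Your cautionary points (finiteness of the integral defining $A(z)$, and the Lipschitz bound holding for a.e.\ $w$ with one constant) are the right ones and are indeed implicit in the hypothesis, so there is no gap.
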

\noindent 
We note that in the lemma above, $W$ can be taken as a vector of any dimension 
so the lemma applies to $p_{X|Z}(x|z)$ and $p_{Y|Z}(y|z)$ as well as to $p_{X,Y|Z}(x,y|z)$. The lemma also applies in both discrete $W$ as well as continuous $W$ cases. 
%Here I will jot down some thoughts on examples satisfying the $\chi^2$ smoothness condition which we assume under the null hypothesis. 

%Examples of function such that $\log p_{x|z}$ is Lipschitz is $p_{x | z} = \frac{\exp(g_x(z))}{\sum_{x} \exp(g_x(z))} $ for a $g_x$ being any $C$-Lipschitz function of $z \in [0,1]$ for all $x$. %Checking the derivative we have
%\begin{align*}
%\bigg|\frac{d}{dz} \log \frac{\exp(g_x(z))}{\sum_{x} \exp(g_x(z))} \bigg| & \leq |g'(z)|\bigg|\beta_x - \frac{\sum_x \beta_x \exp(g_x(z))}{\sum_x \exp(g_x(z))} \bigg| \\
%& \leq \sup_z |g'(z)|(\beta_x - \min_x \beta_x) \vee (\max_x \beta_x - \beta_x) \leq 2\sup_z |g'(z)|\max_x |\beta_x|.
%\end{align*} 
\begin{proof}
We consider the differences
\begin{align*}
\MoveEqLeft \log \frac{\exp (g(w, z))}{\sum_w \exp (g(w, z))} - \log \frac{\exp (g(w, z'))}{\sum_w \exp (g(w, z'))} \\
& \leq  (g(w,z) - g(w,z')) -\log \frac{\sum_w \exp (g(w,z))}{\sum_w \exp (g(w,z'))}.
\end{align*}
Next we use Jensen's inequality and the fact that $-\log$ is a convex function to show that
\begin{align*}
-\log \frac{\sum_w \exp (g(w,z))}{\sum_w \exp (g(w,z'))} & = -\log \frac{\sum_w \exp(g(w,z'))\exp (g(w,z) - g(w,z'))}{\sum_w \exp (g(w,z'))} \\
& \leq \sum_{w} \frac{\exp(g(w,z'))}{\sum_w \exp (g(w,z'))} (g(w,z') - g(w,z))\\
& \leq \sum_{w} \frac{\exp(g(w,z'))}{\sum_w \exp (g(w,z'))}  |g(w,z') - g(w,z)| \\
& \leq L |z - z'|.
\end{align*}
Putting things together we get 
\begin{align*}
\MoveEqLeft \log \frac{\exp (g(w, z))}{\sum_w \exp (g(w,z))} - \log \frac{\exp (g(w,z'))}{\sum_w \exp (g(w,z'))} \\
& \leq |g(w,z) - g(w,z')| +L |z - z'| \leq 2 L |z - z'|.
\end{align*}
Reversing the roles of $z$ and $z'$ we conclude. The same proof goes through in the continuous case, where summations have to be substituted with integrals.
\end{proof}

\noindent Finally, in the continuous case we provide a family of distributions for which 
$\log p_{X,Y|Z}(x,y|z)$ is $L$-Lipschitz in $z$ and $p_{X,Y|Z}(x,y|z)$ is $C$-Lipschitz in $x$ and $y$ as required in Lemma~\ref{continuous:lemma:lipschitz:TV:smooth}.

\begin{lemma}
Suppose that $g(x,y,z) : [0,1]^3 \mapsto [-M,M]$ is a bounded $L$-Lipschitz function, i.e., $|g(x,y,z) - g(x',y',z')| \leq L(|x - x'| + |y-y'| + |z - z'|)$. Take $p_{X,Y,Z}(x,y,z) \varpropto \exp(g(x,y,z))$. Then 
\begin{align*}
p_{X,Y|Z}(x,y|z) = \frac{\exp(g(x,y,z))}{\int_{[0,1]^2} \exp(g(x,y,z)) dx dy},
\end{align*} 
satisfies \eqref{xy:Lipschitzness:condition} with a constant $C = Le^{2M}$ and furthermore $\|p_{X,Y|Z= z} - p_{X,Y|Z = z'}\|_1 \leq (e^{2L} -1) |z - z'|$. 
\end{lemma}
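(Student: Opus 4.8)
The plan is to first record the explicit form of the conditional density and then handle the two claims separately, the second by reducing to the Lipschitzness machinery already developed in Lemmas~\ref{log:Lipschitz:densities} and~\ref{log:Lipschitz:functions:lemma}. Writing $\mathcal{Z}(z) := \int_{[0,1]^2}\exp(g(x,y,z))\,dx\,dy$ for the partial normalizer, dividing the joint density $p_{X,Y,Z}(x,y,z) = \exp(g(x,y,z))/\int_{[0,1]^3}\exp(g)$ by the marginal $p_Z(z) = \mathcal{Z}(z)/\int_{[0,1]^3}\exp(g)$ yields the claimed identity $p_{X,Y|Z}(x,y|z) = \exp(g(x,y,z))/\mathcal{Z}(z)$. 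Since $g$ takes values in $[-M,M]$ and $[0,1]^2$ has unit Lebesgue measure, I would immediately note the two-sided bound $e^{-M}\le \mathcal{Z}(z)\le e^{M}$, so that $\mathcal{Z}(z)^{-1}\le e^{M}$ for every $z$.

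For the first claim, fix $z$ and compare $p_{X,Y|Z}(x,y|z)$ with $p_{X,Y|Z}(x',y'|z)$. Their difference equals $\bigl(\exp(g(x,y,z)) - \exp(g(x',y',z))\bigr)/\mathcal{Z}(z)$. By the mean value theorem applied to $\exp$ on the interval $[-M,M]$, the numerator is at most $e^{M}\,|g(x,y,z) - g(x',y',z)| \le e^{M} L(|x-x'| + |y-y'|)$, using that $g$ is $L$-Lipschitz. Dividing by $\mathcal{Z}(z)\ge e^{-M}$ gives exactly the bound in~\eqref{xy:Lipschitzness:condition} with $C = Le^{2M}$.

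For the second claim I would apply Lemma~\ref{log:Lipschitz:densities} with $W=(X,Y)$: since $z\mapsto g(x,y,z)$ is $L$-Lipschitz for every fixed $(x,y)$, the lemma gives that $\log p_{X,Y|Z}(x,y|z)$ is $2L$-Lipschitz in $z$. From here the argument in the proof of Lemma~\ref{log:Lipschitz:functions:lemma} transfers verbatim, with an integral over $(x,y)\in[0,1]^2$ in place of the sum: for $z,z'\in[0,1]$ the pointwise ratio $\max\bigl(p_{X,Y|Z}(x,y|z),p_{X,Y|Z}(x,y|z')\bigr)/\min\bigl(p_{X,Y|Z}(x,y|z),p_{X,Y|Z}(x,y|z')\bigr)$ is at most $\exp(2L|z-z'|)$, so writing $|p_{X,Y|Z}(x,y|z)-p_{X,Y|Z}(x,y|z')|$ as $(\text{ratio}-1)$ times the smaller of the two densities and integrating, I obtain $\|p_{X,Y|Z=z}-p_{X,Y|Z=z'}\|_1 \le \exp(2L|z-z'|)-1 \le (e^{2L}-1)|z-z'|$, where the last inequality uses $|z-z'|\le 1$ together with $e^{ta}-1 \le a(e^{t}-1)$ for $a\in[0,1]$.

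None of these steps is a genuine obstacle; this lemma is essentially a combination of the earlier results specialized to the exponential form $\exp(g)$. The only points requiring minor care are tracking the constants through the normalizer sandwich $e^{-M}\le\mathcal{Z}(z)\le e^{M}$ in the first part, and confirming that the passage from the discrete proof of Lemma~\ref{log:Lipschitz:functions:lemma} to the continuous setting is legitimate — which it is, since that argument only used $\sum_x p_{X|Z}(x|z)=1$, and the analogue here is simply $\int_{[0,1]^2}p_{X,Y|Z}(x,y|z)\,dx\,dy=1$.
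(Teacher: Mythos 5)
Your proof is correct and follows essentially the same route as the paper: the $x,y$-Lipschitz bound comes from $|e^{g}-e^{g'}|\le e^{M}|g-g'|$ together with the normalizer lower bound $\mathcal{Z}(z)\ge e^{-M}$ (the paper gets the same estimate via a Taylor expansion where you use the mean value theorem), and the TV-in-$z$ bound comes from Lemma~\ref{log:Lipschitz:densities} followed by the max/min ratio argument, which is exactly the content of Lemma~\ref{continuous:lemma:lipschitz:TV:smooth} that the paper cites rather than re-derives. The only cosmetic difference is that you write out the continuous analogue of the argument in Lemma~\ref{log:Lipschitz:functions:lemma} explicitly instead of invoking the cited lemma.
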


\begin{proof} By Lemmas \ref{continuous:lemma:lipschitz:TV:smooth} and \ref{log:Lipschitz:densities}, since $g$ is $L$-Lipschitz in $z$ for all $x,y$ we have that $\|p_{X,Y|Z= z} - p_{X,Y|Z = z'}\|_1 \leq (e^{2L} -1) |z - z'|$. 
%\sbcomment{I don't follow the logic here. I think you don't mean by Lemma~\ref{continuous:lemma:lipschitz:TV:smooth} but rather Lemma~\ref{log:Lipschitz:functions:lemma} and I think the $(\exp(L) - 1)$ should be $(\exp(2L) - 1)$?}
It remains to show that \eqref{xy:Lipschitzness:condition} holds with the appropriate constant $C$. By definition we have

\begin{align*}
\frac{|\exp(g(x,y,z)) - \exp(g(x',y',z))|}{\int_{[0,1]^2} \exp(g(x,y,z)) dx dy} \leq \exp(M) |\exp(g(x,y,z)) - \exp(g(x',y',z))|.
\end{align*}
Denote for brevity $g = g(x,y,z)$ and $g' = g(x',y',z)$ and note that $|g|, |g'| \leq M$. By a Taylor expansion
\begin{align*}
\MoveEqLeft |e^g - e^{g'}| \leq  |g - g'|\sum_{k = 1}^{\infty} \frac{\sum_{i = 0}^{k-1} |g|^i |g'|^{(k-1-i)}}{k!} \leq  |g - g'| \exp(M) \\
& \leq L\exp(M) (|x - x'| + |y-y'|).%\sum_{k = 1}^{\infty} \frac{M^{k-1}}{(k-1)!}
\end{align*}
We conclude that
\begin{align*}
\frac{|\exp(g(x,y,z)) - \exp(g(x',y',z))|}{\int_{[0,1]^2} \exp(g(x,y,z)) dx dy} \leq L\exp(2M) (|x - x'| + |y-y'|),
\end{align*}
which is our desired result.
\end{proof}

%On the other hand observe that the same condition holds for 
%\begin{align*}
%\sum_{x,y} |p_{x,y|z'} - p_{x,y|z}| = \sum_{x,y} \bigg (\frac{\max(p_{x,y|z'}, p_{x,y|z})}{\min(p_{x,y|z'}, p_{x,y|z})} - 1\bigg) \min(p_{x,y|z'}, p_{x,y|z}) \leq  \sum_{x,y} \bigg (\frac{\max(p_{x,y|z'}, p_{x,y|z})}{\min(p_{x,y|z'}, p_{x,y|z})} - 1\bigg) p_{x,y|z}.
%\end{align*}
%Thus the same condition $\log p_{x,y|z}$ being Lipschitz suffices. Examples of such functions include $p_{x,y|z} = \frac{\exp(g_{xy}(z))}{\sum_{x,y} \exp(g_{xy}(z))}$.

%Suppose the following conditions hold under the null hypothesis
%\begin{align}\label{chi:square:smoothness:assumption}
%d_{\chi^2}(p_{X|z}, p_{X|z'}) & \leq L|z - z'|\\
%d_{\chi^2}(p_{Y|z}, p_{Y|z'}) & \leq L|z - z'|\nonumber.
%\end{align}
%Before we begin we remark that the above assumptions are implied by 
%\begin{align*}
%d_{\chi^2}(p_{x,y|z}, p_{x,y|z'}) \leq L|z - z'|,
%\end{align*}
%which follows by the T2 Lemma and the following simple calculation:
%\begin{align*}
%d_{\chi^2}(p_{x,y|z}, p_{x,y|z'}) = \sum_{x,y} \frac{p_{x,y|z}^2}{p_{x,y|z'}} - 1 = \sum_{x} \sum_{y}\frac{p_{x,y|z}^2}{p_{x,y|z'}} - 1 \geq \sum_{x} \frac{(\sum_{y} p_{x,y|z})^2}{\sum_y p_{x,y|z'}} - 1 = d_{\chi^2}(p_{X|z}, p_{X|z'}),
%\end{align*}
%and similarly for $Y$.

\section{Simulations} \label{numerical:experiments:section}

In this section we report some numerical results on synthetic data to validate some of our theoretical predictions.

We note that all of our procedures require specifying a rejection 
threshold $\tau$ for the different tests. While we know the precise order of $\tau$ we do not know the appropriate constant. 
In order to handle this in practice we use a permutation approach which is often used in practice (see for instance \cite{zhang2012kernel}). 
In more details, we calculate the statistic $T$, and perform a permutation to obtain a reference distribution for the test statistic $T$ under the null hypothesis. 
Recall that we construct the datasets $\cD_m = \{(X_i, Y_i) : Z_i \in C_m, i \in [N]\}$ for each of the $d$ bins $C_m$. For each $\cD_m$ we permute the $X_i$ and $Y_i$ values to simulate independently drawn values. Suppose
that $\sigma_m$ samples fall in the bin $C_m$, then for a permutation $\pi: [\sigma_m] \mapsto [\sigma_m]$ we consider $\cD_m^\pi = \{(X_{\pi(i)}, Y_i) : Z_i \in C_m, i \in [N]\}$. %\sbcomment{I changed something minor here. Pls double check.} 
We recalculate the statistic $T$ over different sets $\cD_m^\pi$ (using different permutations $\pi$ for each set), and we repeat this $M$ times, each time denoting the value permuted statistic with $T_i$ for $i \in [M]$. Finally we compare our statistic $T$ with the values of the statistics in the set $\{T_1, \ldots, T_M\}$ and return the value 
$M^{-1} \sum_{i \in [M]}\mathbbm{1}(T_i > T)$. We would then reject the null hypothesis if this value is smaller than some pre-specified cutoff (say $0.05$).

This procedure is motivated by the intuition that permuting indexes within bins $Z_i \in C_m$ generates approximately conditionally independent samples. While this intuition is apparent, in contrast to the settings of two-sample testing and independence testing, it is not straightforward to show that this procedure correctly controls the Type I error. We note that this permutation 
procedure works remarkably well in practice. However, rigorously proving the validity of this permutation procedure, and studying its power,
warrants further research and is delegated to future work. 

\subsection{Finite Discrete $X$ and $Y$}

In this subsection we consider finite discrete $X$ and $Y$ with fixed number of categories $\ell_1 = 2$ and $\ell_2 = 3$. In order for us to construct examples that satisfy the conditions of Theorems \ref{main:theorem:finite:discrete:XY} or \ref{main:theorem:scaling:discrete:XY}, we rely on the examples studied in Section \ref{examples:section}. Under the null hypothesis we consider the following probabilities
\begin{align*}
p_{X,Y|Z}(1,1|z) & \varpropto \exp (z + \tanh(z)), ~~~~~~~~~~~\,~~~~ p_{X,Y|Z}(1,2|z) \varpropto \exp (z + \cos(z)), \\
p_{X,Y|Z}(1,3|z) & \varpropto  \exp (z + \sin(z)), ~~~~~~~~~~~~~~~~~~ p_{X,Y|Z}(2,1|z) \varpropto \exp (\cos(z) - 1+ \tanh(z)),\\
p_{X,Y|Z}(2,2|z) & \varpropto \exp (\cos(z) - 1 + \cos(z)),  ~~~~~~ p_{X,Y|Z}(2,3|z) \varpropto  \exp (\cos(z) - 1 + \sin(z)).
\end{align*}
In this setting all of the exponents are Lipschitz, and can be decomposed so that the random variables are conditionally independent. Under the alternative we consider the following distribution
\begin{align*}
p_{X,Y|Z}(1,1|z) & \varpropto \exp (z), ~~~~~~~~~~~~~~~~ p_{X,Y|Z}(1,2|z) \varpropto \exp (\tanh(z)), \\
p_{X,Y|Z}(1,3|z) & \varpropto  \exp (\sin(z)), ~~~~~~~~~~ p_{X,Y|Z}(2,1|z) \varpropto \exp (\cos(z)),\\
p_{X,Y|Z}(2,2|z) & \varpropto \exp (z + 1),  ~~~~~~~~~~~ p_{X,Y|Z}(2,3|z) \varpropto  \exp (\tanh(z) - 1).
\end{align*}
In the example above the probabilities do not factor as products so the variables are not conditionally independent, however all functions in the exponents are still 
Lipschitz so that the distribution is TV smooth by Lemma \ref{log:Lipschitz:densities}. Figure \ref{discrete:results:figure} shows the results of running the weighted test of Section \ref{scaling:l1:l2:section} on the above examples. For each sample size of $N = 100, 200, \ldots, 1000$, we perform $100$ simulations. %\sbcomment{I changed $n$ to $N$, you were using both before in this part. Can switch it to $n$ if you prefer since you are not Poissonizing in the experiments.}
Within each simulation we permute $M = 100$ times and compute the value
$M^{-1} \sum_{i \in [M]}\mathbbm{1}(T_i > T)$. The final size and power are calculated based on how many (out of the $100$) values were smaller than or equal to $0.05$. 

%% Siva temporarily commented this out
\begin{figure}
\centering
\begin{subfigure}{.5\textwidth}
\includegraphics[width=.75\linewidth]{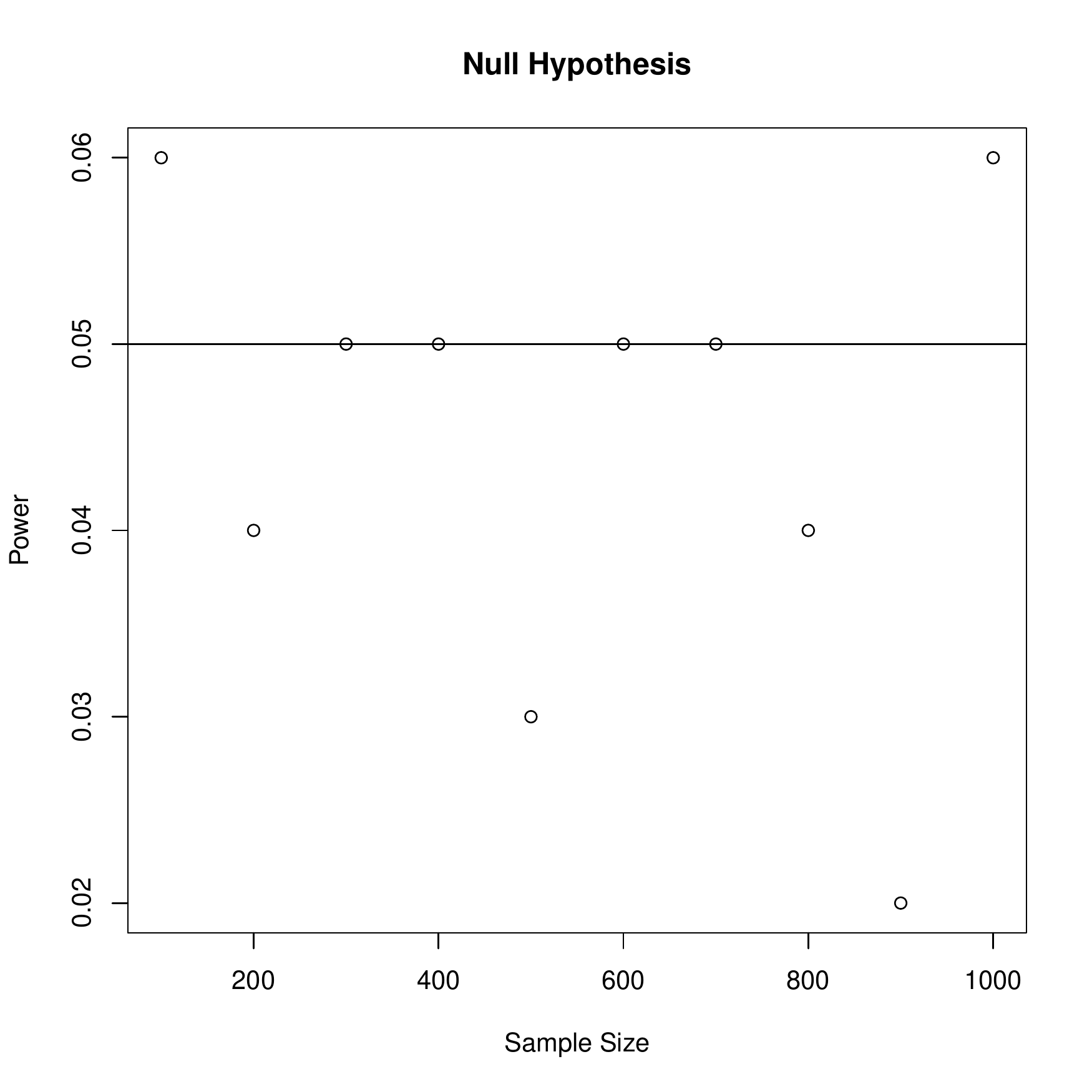}
\end{subfigure}\begin{subfigure}{.5\textwidth}
\includegraphics[width=.75\linewidth]{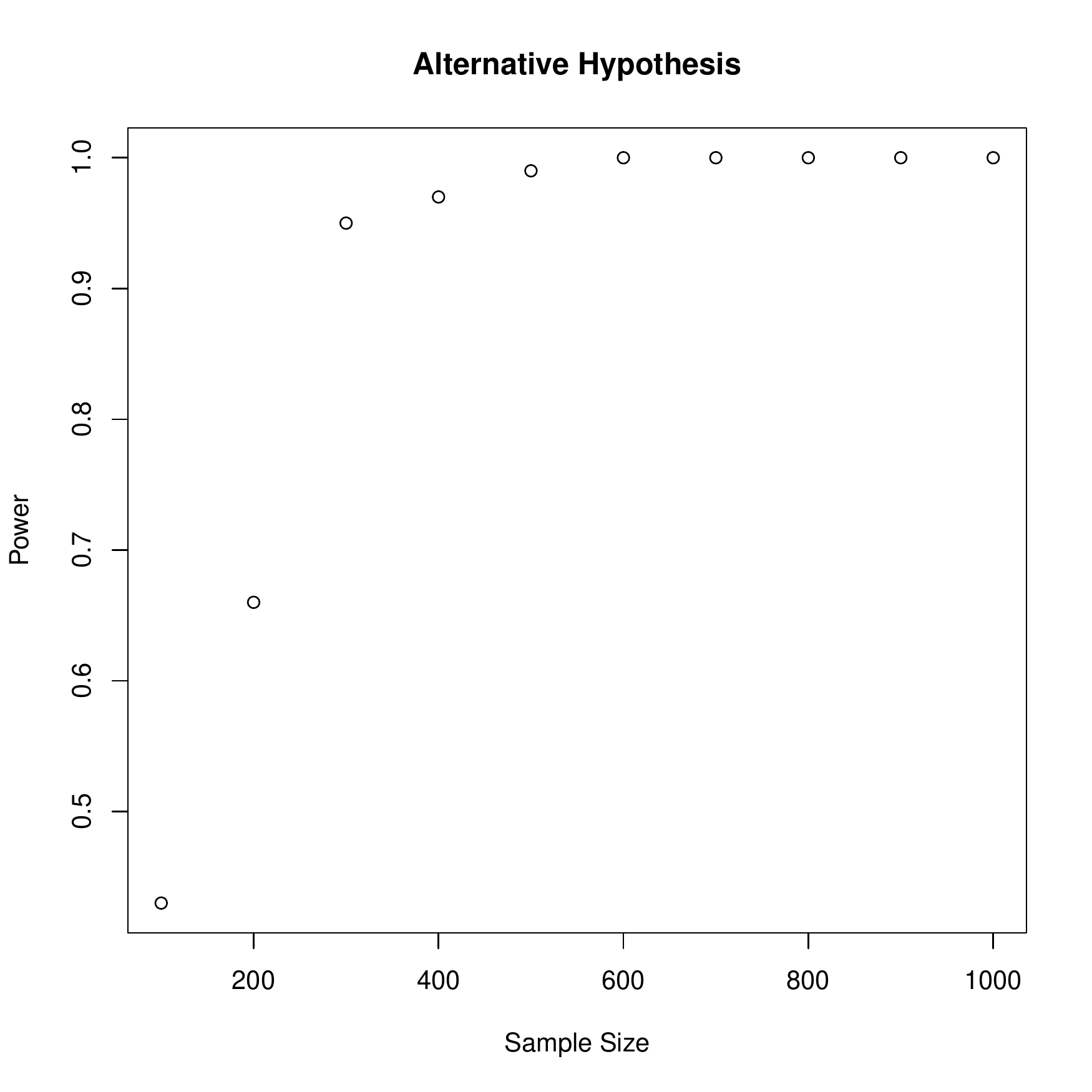}
\end{subfigure}\caption{This figure displays the size and power of the test in the discrete $X,Y$ and continuous $Z$ example. We see that under the null hypothesis the size is gravitating around $0.05$ which is also the most common size across all simulations. The power of the test increases steadily with the increase of the sample size, and reaches $1$ when the sample size is $1000$. }\label{discrete:results:figure}
\end{figure}

\subsection{Continuous $X,Y$ and $Z$}

In this subsection we consider the following examples. Under $H_0$ we generate 
\begin{align*}
X = \frac{U_1 + Z}{2} \mbox{ and } Y = \frac{U_2 + Z}{2},
\end{align*} 
where $U_1, U_2, Z \sim U([0,1])$ are independent. Under the alternative, $H_1$, we generate 
\begin{align*}
X = \frac{U_1 + U + Z}{3} \mbox{ and } Y = \frac{U_2 + U + Z}{3},
\end{align*} 
where $U, U_1, U_2, Z \sim U([0,1])$ are independent. A straightforward calculation (see Appendix~\ref{app:exp}) shows that these distributions belong to the classes 
$\cP_{0,[0,1]^3,\TV}(L)$ and $\cQ_{0,[0,1]^3, \TV}(L,1)$ (respectively) 
for appropriately chosen constants $L$, so that the conditions of Theorem \ref{continuous:case:upper:bound:theorem} hold. 

Figure \ref{continuous:results:figure} shows the results of running the weighted continuous test described in Section \ref{cont:case:upper:bounds:section} for these examples. For each sample size of $N = 100, 200, \ldots, 1000$, we perform $100$ simulations. %\sbcomment{Same change.} 
Within each simulation we permute $M = 100$ times
and compute the value
$M^{-1} \sum_{i \in [M]}\mathbbm{1}(T_i > T)$. The final size and power are calculated based on how many (out of the $100$) values were smaller than or equal to $0.05$. 

%% Siva temporarily commented this out
\begin{figure}
\centering
\begin{subfigure}{.5\textwidth}
\includegraphics[width=.75\linewidth]{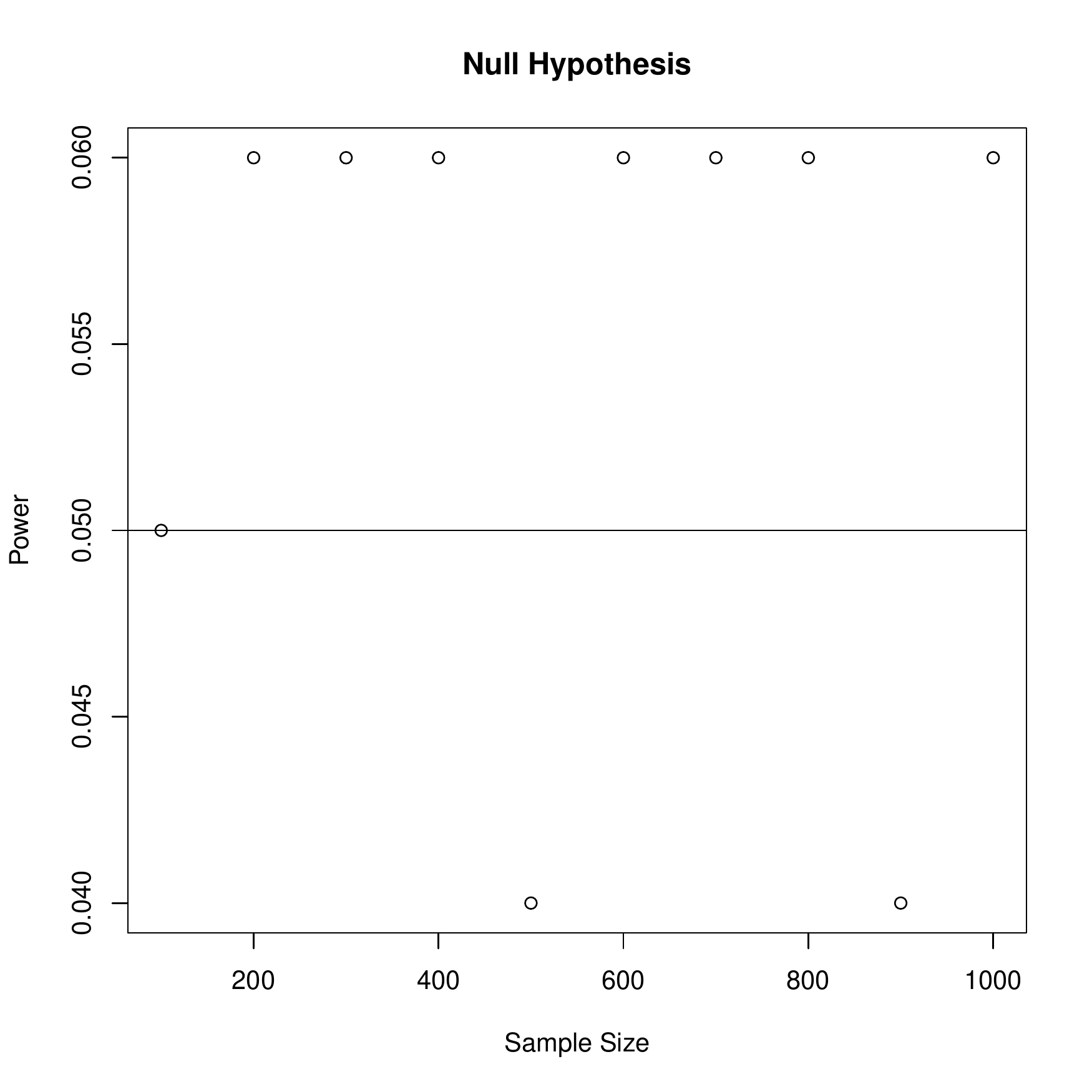}
\end{subfigure}\begin{subfigure}{.5\textwidth}
\includegraphics[width=.75\linewidth]{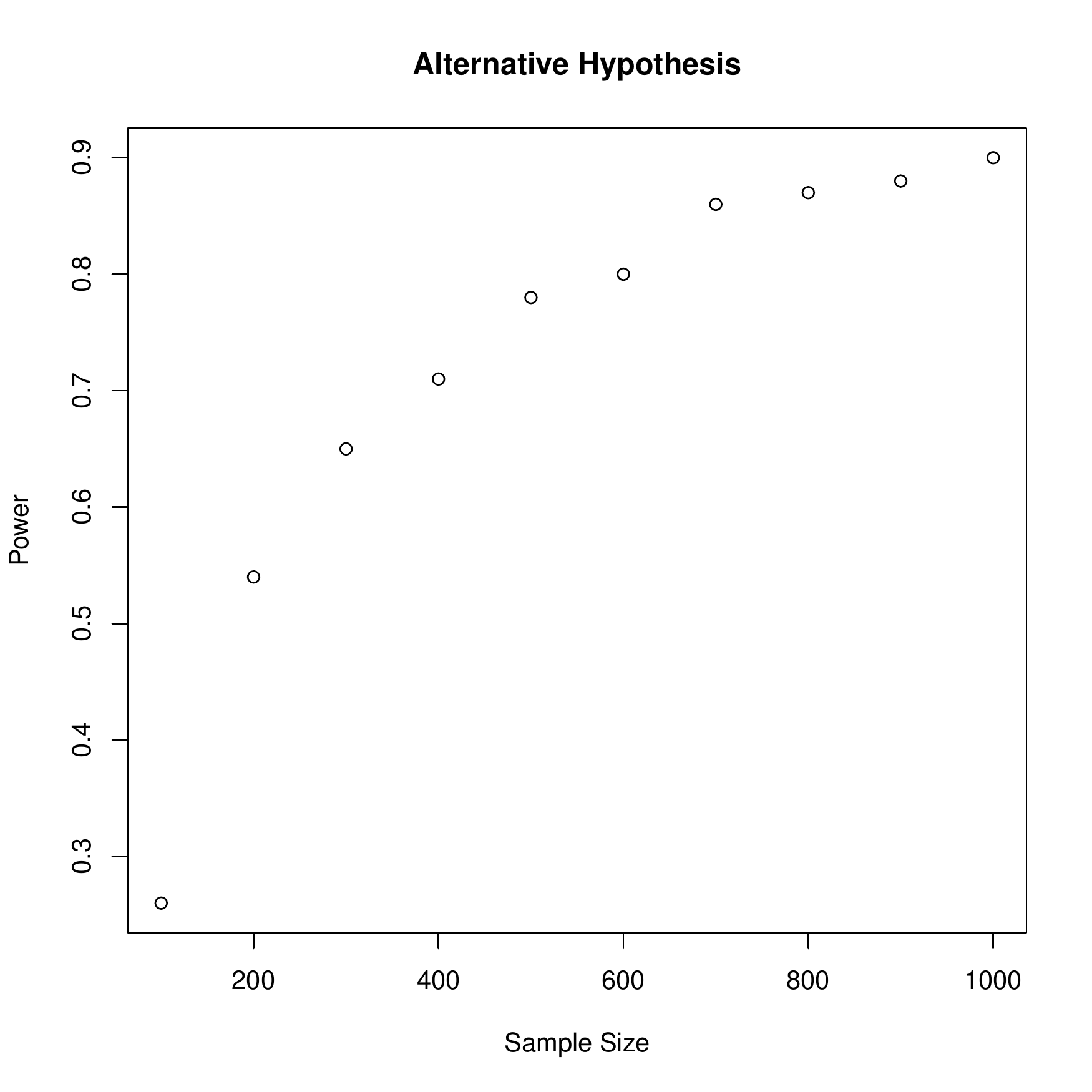}
\end{subfigure}\caption{This figure displays the size and power of the test in the continuous $X,Y,Z$ example. We see that under the null hypothesis the size is very slightly inflated at $0.06$ for most of the simulations, which may be due to the limited number of replications of each simulation and also due to the limited number of permutations within each simulation. The power of the test increases steadily with the increase of the sample size, and reaches $0.9$ when the sample size is $1000$. }\label{continuous:results:figure}
\end{figure}

\section{Discussion}\label{discussion:section}
In this paper, we have studied nonparametric CI testing from a minimax perspective. We derived upper and lower 
bounds on the minimax critical radius in three main settings --- (1) $X,Y$ discrete and supported on a fixed number of 
categories, $Z$ continuous on $[0,1]$, (2) $X,Y$ discrete on a growing number of categories $Z$ continuous on $[0,1]$ and (3) $X,Y,Z$ absolutely continuous and supported on $[0,1]^3$. 
In order to develop interesting minimax bounds, we introduced and studied several natural Lipschitzness conditions for conditional distributions.
%Along the way of developing the minimax lower and upper bounds we introduced novel classes of distributions which could be interesting in their own right. 
In addition we provided a novel construction of a coupling between a conditionally independent distribution and an arbitrary distribution of bounded support, leading to a new proof of the hardness result of \citet{shah2018hardness}. Finally, the CI tests that we developed are implementable and perform well in practice as evidenced by our simulation study in Section \ref{numerical:experiments:section}.

There are several open questions which we intend to investigate in our future work. Moving beyond the total variation metric, a natural challenge is to derive 
minimax rates for the critical radius in other metrics. Another technical challenge is to move beyond the requirement that 
%$\ell_1^4/\ell_2 \lesssim n^{3}$, 
$\frac{\ell_1^{4}}{\ell_2} \lesssim n^{3}$ (where $\ell_1 \geq \ell_2$), 
%\sbcomment{I changed this to match what we actually impose -- instead of setting $d$ explicitly to obtain the condition we had here before.} 
which we impose 
in the scaling $\ell_1, \ell_2$ case. We believe that the analysis in this case is challenging and 
would require designing new tests, or deriving new lower bound techniques, and is left for future research.
%We tend to think that this requirement does not come from loose bounds and it is more likely that either new more powerful tests have to be developed, or perhaps a stronger lower bound might exists for this case. 
Identifying conditions under which the natural permutation procedure of Section \ref{numerical:experiments:section} correctly controls the Type I error and has high power is also a challenging direction that we hope to pursue.
%Finally, it is not obvious whether our results extend to cases where one has random vectors $X,Y,Z$, and providing such an extension will certainly be practically relevant. 

\section{Acknowledgments} The authors would like to thank Ilmun Kim for helpful discussions on the topic.

\bibliographystyle{plainnat}
\bibliography{condindeptesting}

\appendix

\newpage

\section{Proofs from Section \ref{hardness:section}}\label{hardness:appendix}

\begin{proof}[Proof of Theorem \ref{no-free-lunch:thm}] For the sake of simplicity we only consider the case $d_X = d_Y = d_Z = 1$. The more general case follows the same strategy of proof, with some minor modifications. Following the proof of Shah and Peters \cite{shah2018hardness}, it suffices to re-prove the following key lemma in their argument. 

\begin{lemma}\label{shah:peters:main:lemma} Suppose $(X,Y,Z) \in \RR^3$ have a distribution supported either on $[-M,M]^3$ for some $M \in (0,\infty)$, or on $(-\infty, \infty)^3$. Let $(X_i,Y_i,Z_i)_{i \in [n]}$ be $n$ i.i.d. copies of $(X,Y,Z)$. Given $\delta > 0$ there exists $C := C(\delta)$ such that for all $\varepsilon > 0$ and all Borel sets $D \subseteq \RR^{3n} \times [0,1]$, it's possible to construct an i.i.d. sequence $(\tilde X_i, \tilde Y_i, \tilde Z_i)_{i \in [n]}$ such that $\tilde X_i \independent \tilde Y_i | \tilde Z_i$ for all $i$ and 
\begin{enumerate}
\item[i.] $\PP(\max_{i \in [n]}\|(\tilde X_i, \tilde Y_i, \tilde Z_i) - ( X_i,  Y_i,  Z_i)\|_\infty < \varepsilon) > 1 - \delta$,
\item[ii.] If $U$ is uniform on $[0,1]$ independently of $(\tilde X_i, \tilde Y_i, \tilde Z_i)_{i \in [n]}$ then
$$
\PP(((\tilde X_i, \tilde Y_i, \tilde Z_i)_{i \in [n]}, U) \in D) \leq C \mu(D),
$$
\end{enumerate} 
where $\mu$ is the Lebesgue measure. 
\end{lemma}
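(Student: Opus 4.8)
The plan is to re-prove this lemma by writing down, for each fixed $\varepsilon$, an explicit coupling of the type used in the proof of Lemma~\ref{Wasserstein:dense:lemma} (and of Lemma~14 of \citet{shah2018hardness}), and then reading off the Lebesgue density of the resulting law by a direct computation. First I would reduce to the stated one-dimensional case $d_X=d_Y=d_Z=1$, and then to bounded support: when the law of $(X,Y,Z)$ is supported on all of $\RR^3$, choose $M_0=M_0(\delta)$ large enough that $\PP\big(\max_{i\in[n]}\|(X_i,Y_i,Z_i)\|_\infty>M_0\big)<\delta/2$, run the construction below conditionally on the event $E$ that all $n$ samples lie in $[-M_0,M_0]^3$, and on $E^{c}$ set $(\tilde X_i,\tilde Y_i,\tilde Z_i)$ to any fixed configuration with $\tilde X_i\independent\tilde Y_i\mid\tilde Z_i$ (for instance a deterministic point). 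Since $\PP(E^{c})<\delta/2$ this preserves (i), and since the density can be bounded on $E$ and on $E^{c}$ separately it does not spoil (ii) either, so it suffices to treat a law supported on a fixed cube $[-M_0,M_0]^3$.

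On the cube I would mimic the construction of Lemma~\ref{Wasserstein:dense:lemma}: partition $[-M_0,M_0]$ into $m$ equal intervals $A_1,\dots,A_m$ (for the $X$-coordinate) and $C_1,\dots,C_m$ (for the $Z$-coordinate), with $m$ chosen so that the common length $h:=2M_0/m$ satisfies $h<\varepsilon$, and subdivide each $C_k$ into $m$ equal sub-intervals $C_{1k},\dots,C_{mk}$, one for each $X$-cell. Given the i.i.d.\ draw, for a sample falling in $A_i\times\RR\times C_k$ I would draw, conditionally independently given the triple of cell indices, $\tilde Z_\ell$ uniform on $C_{ik}$, $\tilde X_\ell$ uniform on $A_i$, and $\tilde Y_\ell$ uniform on $[Y_\ell-h/2,Y_\ell+h/2]$. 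The sub-interval $C_{ik}$ of $\tilde Z_\ell$ reveals the index $i$, so conditionally on $\tilde Z_\ell$ the pair $(\tilde X_\ell,\tilde Y_\ell)$ is distributed as $\mathrm{Unif}(A_i)$ times an independent law (the conditional law of $Y_\ell$ given $\{X_\ell\in A_i,\,Z_\ell\in C_k\}$, perturbed by an independent $\mathrm{Unif}([-h/2,h/2])$), and hence $\tilde X_\ell\independent\tilde Y_\ell\mid\tilde Z_\ell$ for every $\ell$. Moreover every coordinate moves by at most $h<\varepsilon$, so on $E$ one has $\max_{\ell}\|(\tilde X_\ell,\tilde Y_\ell,\tilde Z_\ell)-(X_\ell,Y_\ell,Z_\ell)\|_\infty<\varepsilon$ deterministically, which gives (i). Perturbing $\tilde Y$ rather than setting $\tilde Y_\ell=Y_\ell$ is what makes the constructed law absolutely continuous even when the original law is not.

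For (ii) I would use that the $n$ transformed triples are i.i.d., so the law of $(\tilde X_i,\tilde Y_i,\tilde Z_i)_{i\in[n]}$ has Lebesgue density equal to the $n$-fold product of the single-triple density, and---since $U$ is taken independent of the constructed sequence---the joint density of $((\tilde X_i,\tilde Y_i,\tilde Z_i)_{i\in[n]},U)$ is this product times $\mathbbm{1}_{[0,1]}$. I would then write the single-triple density at $(\tilde x,\tilde y,\tilde z)$ with $\tilde z\in C_{ik}$ as the single nonzero term $\PP(X_1\in A_i,Z_1\in C_k)\cdot|A_i|^{-1}\mathbbm{1}(\tilde x\in A_i)\cdot|C_{ik}|^{-1}$ times the value at $\tilde y$ of the perturbed conditional density of $Y_1$; the latter contributes a factor with numerator $\PP\big(X_1\in A_i,\,Z_1\in C_k,\,Y_1\in[\tilde y-h/2,\tilde y+h/2]\big)$ and denominator $\PP(X_1\in A_i,Z_1\in C_k)$, so the cell probability cancels and what remains is a bounded combinatorial factor in $m$ times the probability of a box with sides of order $h$. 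Bounding this quantity uniformly over $(\tilde x,\tilde y,\tilde z)$, and combining with the separate estimate on $E^{c}$, produces a finite constant $C$ for which $\PP(((\tilde X_i,\tilde Y_i,\tilde Z_i)_{i\in[n]},U)\in D)=\int_{D}(\text{joint density})\,d\mu\le C\,\mu(D)$ for every Borel $D$, which is exactly (ii). I expect this last step to be the main obstacle: the conditional-independence property and part (i) fall straight out of the construction, whereas carrying out the density computation carefully---so that the cancellation is exploited and no regularity of the arbitrary underlying law is used, and so that the dependence of $C$ on the discretization is pinned down---is where the real work lies.
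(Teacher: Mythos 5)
Your Step II is essentially the paper's: you bin $X$ and $Z$, encode the $X$-cell index into a sub-interval of the $Z$-cell, and redraw the coordinates so that conditional independence and the $\varepsilon$-displacement in part (i) are immediate (the paper encodes both the $X$- and $Y$-cells via sub-intervals $C_{ijk}$ and redraws $(\tilde X,\tilde Y)$ uniformly on $A_i\times B_j$, while you encode only the $X$-cell and smooth $Y$; either variant is fine for those two points). The genuine gap is in part (ii), and it is exactly the step you wave at with ``a bounded combinatorial factor in $m$.'' After your cancellation, the single-triple density at $(\tilde x,\tilde y,\tilde z)$ with $\tilde z\in C_{ik}$ is $|A_i|^{-1}|C_{ik}|^{-1}h^{-1}\,\PP\big(X_1\in A_i,\,Z_1\in C_k,\,Y_1\in[\tilde y-h/2,\tilde y+h/2]\big)=(m/h^{3})\,\PP(\text{box of side }h)$, and even after reducing to an original density bounded by $L(\delta)$ (a reduction your box bound needs but which you never perform), this is only $\leq Lm$: the construction necessarily compresses the mass of each $Z$-cell into a sub-interval shorter by the factor $m$ that encodes the $X$-cell, so the constructed density is of order $m$ on a support of relative volume $1/m$ (for the uniform law it equals exactly $m/(2M_0)^3$ there). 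Consequently the uniform pointwise bound you propose gives $C\asymp(Lm)^n$, which depends on $m$ and hence on $\varepsilon$, whereas the lemma requires $C=C(\delta)$ to be fixed \emph{before} $\varepsilon$; this uniformity is precisely what the no-free-lunch argument exploits when it sends $\varepsilon\to0$ with $D$ and $C$ held fixed. No pointwise bound on the $n$-fold product density that is uniform in $m$ exists for this kind of construction, so your route to (ii) cannot be completed as described.

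Overcoming this is the entire content of the paper's Step III, and it uses two ideas absent from your sketch: (a) restrict to the event $S$ that the $n$ samples occupy pairwise distinct $(X,Y)$-cells, whose complement has probability $O\big((L(2M)^3)^n/m^2\big)\to0$ as $m\to\infty$; and (b) exploit that the lemma allows the constructed law to depend on the Borel set $D$: averaging over all permutations $\pi$ assigning cells to sub-intervals $C_{\pi_{ij}k}$, the expected mass of $D\cap(S\times[0,1])$ under the constructed law is at most $\frac{(Lm^2)^n(m^2-n)!}{(m^2)!}\mu(D)\approx L^n\mu(D)$, so some permutation $\pi^*$ (chosen after seeing $D$) achieves this bound, and $C$ can then be taken to depend on $\delta$ only, uniformly in $m$ and hence in $\varepsilon$. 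Separately, your treatment of the unbounded-support case is flawed: conditioning the construction on the joint event $E$ and placing the samples at a deterministic point on $E^c$ destroys the i.i.d.\ structure required of $(\tilde X_i,\tilde Y_i,\tilde Z_i)_{i\in[n]}$ (since $E$ involves all $n$ samples) and creates an atom, so (ii) fails outright for a Lebesgue-null $D$ containing that point; the paper instead modifies each out-of-range sample separately, replacing it by an independent uniform draw on the cube, which preserves both the i.i.d.\ property and absolute continuity.
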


\begin{remark}The lemma is stated and proved assuming $(X,Y,Z) \in \RR^3$, but the proof trivially extends to any $(X,Y,Z)\in \RR^{d_X + d_Y + d_Z}$ for $d_X, d_Y, d_Z \in \NN$. 
\end{remark}
We prove this result below. With this the proof is complete. 
\end{proof}

\begin{proof}[Proof of Lemma \ref{shah:peters:main:lemma}]

\noindent {\bf Step I (preparation).} First consider the case that the support is $(-\infty, \infty)^3$, i.e., $M = \infty$. We can always find an $M' := M'(\delta) < \infty$ such that $\PP(\|(X,Y,Z)\|_\infty > M') < \delta/2n$. Construct $\bar X, \bar Y, \bar Z$ which coincide with $(X,Y,Z)$ if $\|(X,Y,Z)\|_\infty \leq M'$ and are uniform on $[-M', M']^3$ otherwise. By the union bound $\PP(\forall i\in [n] : (\bar X_i, \bar Y_i, \bar Z_i) = (X_i,Y_i,Z_i)) > 1 -\delta/2$. We henceforth work with $(\bar X_i, \bar Y_i, \bar Z_i)_{i \in [n]}$ (denoted with $(X_i,Y_i,Z_i)_{i \in [n]}$ for convenience), and will show that there exist $(\tilde X_i, \tilde Y_i, \tilde Z_i)_{i \in [n]}$ satisfying
$\PP(\max_{i \in [n]}\|(\tilde X_i, \tilde Y_i, \tilde Z_i) - ( \bar X_i,  \bar Y_i,  \bar Z_i)\|_\infty < \varepsilon) = 1$
which implies i., and we will show that ii. is also satisfied. Hence we will assume $M < \infty$ from now on.

Second we note that (without loss of generality) we may assume that the density $p_{X,Y,Z}(x,y,z)$ is bounded by some constant $L := L(\delta)$. This is so since each distribution of (potentially) unbounded density can be well approximated by a distribution of bounded density with high probability. To see this note that the set $S_{\bar L} := \{(x,y,z) | p_{X,Y,Z}(x,y,z)> \bar L\} \downarrow \varnothing$ when $\bar L \rightarrow \infty$. Therefore for any $\delta$ we can take $\bar L(\delta)$ large enough so that $\PP((X,Y,Z) \in S^c_{\bar L(\delta)}) > 1 - \delta/2n$. Thus we can construct $\bar X, \bar Y, \bar Z$ as $X,Y,Z$ if $(X,Y,Z)\in S^c_{\bar L(\delta)}$ and $\bar X, \bar Y, \bar Z$ being uniform on $[-M,M]^3$ otherwise. This distribution has density bounded by $ L(\delta) := \bar L(\delta) + \delta/(2n(2M)^3)$ and satisfies $\PP((\bar X, \bar Y, \bar Z) = (X,Y,Z)) > 1 -\delta/2n$, and therefore by the union bound $\PP(\forall i\in [n] : (\bar X_i, \bar Y_i, \bar Z_i) = (X_i,Y_i,Z_i)) > 1 -\delta/2$. As before, we will henceforth work with $(\bar X_i, \bar Y_i, \bar Z_i)_{i \in [n]}$ (denoted with $(X_i,Y_i,Z_i)_{i \in [n]}$ for convenience), and will show that there exist $(\tilde X_i, \tilde Y_i, \tilde Z_i)_{i \in [n]}$ satisfying
$\PP(\max_{i \in [n]}\|(\tilde X_i, \tilde Y_i, \tilde Z_i) - ( \bar X_i,  \bar Y_i,  \bar Z_i)\|_\infty < \varepsilon) = 1$
which implies i., and we will show that ii. is also satisfied. 

\noindent {\bf Step II (construction).}  Let $\{A_1, \ldots, A_{m}\}$ denote an equi-partition of $[-M,M]$ in intervals. Similarly let $\{B_1,\ldots, B_{m}\}$ and $\{C_1, \ldots, C_m\}$ be equi-partitions of $[-M,M]$. Divide each $C_k$ further in $m^2$ sub-intervals of equal length denoted by $C_{ijk}$, so that each of these small intervals corresponds to a pair $(A_i, B_j)$. The lengths of each interval $A_i$, $B_i$ or $C_i$ is $\frac{2M}{m}$, while the length of an interval $C_{ijk}$ is $\frac{2M}{m^3}$. Given a draw $(X,Y,Z)$ we construct $(\tilde X, \tilde Y, \tilde Z)$ as follows. Suppose that $X \in A_i$, $Y \in B_j$ and $Z \in C_k$. Then we generate uniformly $\tilde Z \in C_{ijk}$ and $(\tilde X, \tilde Y)$ uniformly in $A_i \times B_j$. By definition then $\tilde X \independent \tilde Y | \tilde Z$. We refer to Figure \ref{viz:smart:construction} for a visualization of this construction. In addition it is clear that by construction $\PP(\max_{i \in [n]}\|(\tilde X_i, \tilde Y_i, \tilde Z_i) - ( X_i,   Y_i,   Z_i)\|_\infty < \frac{2M}{m}) = 1$. Hence if we take $m$ large enough so that $\frac{2M}{m} < \varepsilon$ we guarantee that i. is satisfied. What is more we may write out the density of $(\tilde X, \tilde Y, \tilde Z)$ as 
$$
p_{\tilde X, \tilde Y, \tilde Z}(\tilde x, \tilde y, \tilde z) = \sum_{i,j,k} \frac{m^5}{(2M)^3} \mathbbm{1}(\tilde x \in A_i, \tilde y \in B_j, \tilde z \in C_{ijk}) \PP(X \in A_i, Y \in B_j, Z \in C_k)
$$

\noindent {\bf Step III (showing part ii.).}  Recall that we are assuming that the distribution $p_{X, Y, Z}(x,y,z) \leq L$ for some constant $L > 0$. It is simple to see that the probability that $(X,Y) \in A_i \times B_j$ is bounded as
$$
\int_{A_i \times B_j \times [-M,M]} p_{X, Y, Z}(x,y,z) dx dy dz\leq \frac{L(2M)^3}{m^2}. 
$$
It follows that if we have $n$ observations $(X_i, Y_i, Z_i)_{i \in [n]}$ the probability to have at least two points $(X_k, Y_k)$ and $(X_l, Y_l)$ in one set $A_i \times B_j$ for some $i$ and $j$ is bounded by
$$
\bigg(\frac{L(2M)^3}{m^2}\bigg)^{n} \bigg((m^2)^n - m^2 (m^2 - 1) \ldots (m^2 - n + 1)\bigg) = O\bigg(\frac{(L(2M)^3)^n}{m^2}\bigg),
$$
since the number of all possible arrangements with points belonging to different sets $A_i \times B_j$ is $ m^2 (m^2 - 1) \ldots (m^2 - n + 1)$ while the total number of possible arrangements for the $n$ points is $(m^2)^n$. Denote by $S$ the complement of this event. Note that when $S$ happens all $(\tilde X_i, \tilde Y_i, \tilde Z_i)_{i \in [n]}$ have points $(\tilde X_i, \tilde Y_i)$ in different rectangles and vice versa. 

Next, suppose that $D$ is an arbitrary fixed Borel set. We have
\begin{align*}
\PP( ((\tilde X_i, \tilde Y_i, \tilde Z_i)_{i \in [n]}, U) \in D) & \leq \PP( ((\tilde X_i, \tilde Y_i, \tilde Z_i)_{i \in [n]}, U) \in D \cap (S \times [0,1])) \\
& + \PP( ((\tilde X_i, \tilde Y_i, \tilde Z_i)_{i \in [n]}, U) \not \in S \times [0,1]) 
\end{align*}
We already have a bound on the second term on the RHS above:
\begin{align*}
\PP( ((\tilde X_i, \tilde Y_i, \tilde Z_i)_{i \in [n]}, U) \not \in S \times [0,1]) = O\bigg(\frac{(L(2M)^3)^n}{m^2}\bigg).
\end{align*}

Suppose now that we randomize the assignment on the set $C_{ijk}$. In other words there is a permutation $\pi : [m^2] \mapsto [m^2]$\footnote{Here we use $\pi : [m^2] \mapsto [m^2]$ with a slight abuse of notation. We mean $\pi$ permuting from all ordered pairs of indices $(i,j)$ where $i,j \in [m]$ to all ordered pairs of indices $(k,l)$ where $k,l \in [m]$.} that assigns each pair $A_i, B_j$ to an interval $C_{\pi_{ij}k}$. Denote by $(\tilde X^\pi, \tilde Y^\pi, \tilde Z^\pi)$ the vectors generated in such manner. Clearly all properties described above hold for $(\tilde X_i^\pi, \tilde Y_i^\pi, \tilde Z_i^\pi)_{i \in [n]}$ for any permutation $\pi$. We have that
\begin{align*}
\MoveEqLeft \frac{1}{(m^2)!}\sum_{\pi \in [(m^2)!]} \PP( ((\tilde X^\pi_i, \tilde Y^\pi_i, \tilde Z^\pi_i)_{i \in [n]}, U) \in D \cap (S \times [0,1])) \\
& = \frac{1}{(m^2)!}\sum_{\pi \in [(m^2)!]} \int_{D \cap (S \times [0,1])} \prod_{l \in [n]} \sum_{i,j,k} \frac{m^5}{(2M)^3} \mathbbm{1}(\tilde x_l \in A_i, \tilde y_l \in B_j, \tilde z_l \in C_{\pi_{ij}k}) \PP(X \in A_i, Y \in B_j, Z \in C_k)\\
& \leq \frac{(L m^2)^n}{(m^2)!}\sum_{\pi \in [(m^2)!]} \int_{D \cap (S \times [0,1])} \prod_{l \in [n]} \sum_{i,j,k} \mathbbm{1}(\tilde x_l \in A_i, \tilde y_l \in B_j, \tilde z_l \in C_{\pi_{ij}k})\\
& = \frac{(L m^2)^n}{(m^2)!}\sum_{\{i_l\}_{l \in [n]},\{j_l\}_{l \in [n]},\{k_l\}_{l \in [n]}} \int_{D \cap (S \times [0,1])} \sum_{\pi \in [(m^2)!]} \mathbbm{1}\bigg(\tilde \bx \in \prod_{l \in [n]} A_{i_l}, \tilde \by \in \prod_{l \in [n]} B_{j_l}, \tilde \bz \in \prod_{l \in [n]} C_{\pi_{i_lj_l}k_l}\bigg),
\end{align*}
where in the above summation we have $\{i_l\}_{l \in [n]},\{j_l\}_{l \in [n]},\{k_l\}_{l \in [n]}$ are sequences of $n$ numbers from $[m]$. Since the integration is over the set $D \cap (S \times [0,1])$ all pairs of $(i_l, j_l)$ need to be unique otherwise the integral is $0$. Hence, the summation is over all  $\{i_l\}_{l \in [n]},\{j_l\}_{l \in [n]},\{k_l\}_{l \in [n]}$ sequences of $n$ numbers from $[m]$ for which no two pairs $(i_l, j_l)$ and $(i_k, j_k)$ are the same. Thus to fix $\pi_{i_lj_l}$ for all $(i_l, j_l)_{l \in [n]}$ and permute all others there are $(m^2 - n)!$ permutations. Next note that $\prod_{l \in [n]} C_{k_l} = \prod_{l \in [n]} \sum_{ij} C_{ijk_l}$ contains all unique permutations of $n$ elements (and more) and therefore the summation above is bounded as
\begin{align*}
\MoveEqLeft \frac{(L m^2)^n}{(m^2)!}\sum_{i_l,j_l,k_l} \int_{D \cap (S \times [0,1])} \sum_{\pi \in [(m^2)!]} \mathbbm{1}\bigg(\tilde \bx \in \prod_{l \in [n]} A_{i_l}, \tilde \by \in \prod_{l \in [n]} B_{j_l}, \tilde \bz \in \prod_{l \in [n]} C_{\pi_{i_lj_l}k_l}\bigg) \\
&\leq \frac{(L m^2)^n (m^2 - n)!}{(m^2)!}\sum_{i_l,j_l,k_l} \int_{D \cap (S \times [0,1])} \mathbbm{1}\bigg(\tilde \bx \in \prod_{l \in [n]} A_{i_l}, \tilde \by \in \prod_{l \in [n]} B_{j_l}, \tilde \bz \in \prod_{l \in [n]} C_{k_l}\bigg)\\
& \leq  \frac{(L m^2)^n (m^2 - n)!}{(m^2)!}\mu(D \cap (S \times [0,1])) \leq \frac{(L m^2)^n (m^2 - n)!}{(m^2)!}\mu(D).
\end{align*}
Therefore there exists a permutation $\pi^*$ such that 
$$
\PP( ((\tilde X^{\pi^*}_i, \tilde Y^{\pi^*}_i, \tilde Z^{\pi^*}_i)_{i \in [n]}, U) \in D \cap (S \times [0,1])) \leq \frac{(L m^2)^n (m^2 - n)!}{(m^2)!}\mu(D).
$$
This finishes the proof, by taking $m$ sufficiently large so that 
$$
\PP( ((\tilde X^{\pi^*}_i, \tilde Y^{\pi^*}_i, \tilde Z^{\pi^*}_i)_{i \in [n]}, U) \in D) \leq \frac{(L m^2)^n (m^2 - n)!}{(m^2)!}\mu(D) + O\bigg(\frac{(L(2M)^3)^n}{m^2}\bigg) \leq C \mu(D). 
$$
\end{proof}

\begin{proof}[Proof of Corollary \ref{counting:meas:XY:cont:Z}] The following proof was suggested to us by the AE. Let $d_Z = 1$ for simplicity (the proof obviously extends to the more general case). Suppose there exists a valid test $\psi_n$ for the discrete case and an alternative distribution $Q$ where it has nontrivial power. $\psi_n$ can be converted into a test for the continuous case by applying it to binned versions of $X$ and $Y$. Since independence is preserved under binning, this test continues to be valid (i.e. controls the Type I error for all absolutely continuous null distributions). Next one can create a continuous alternative related to $Q$ by first generating $W \sim Q$ and then reporting a random variable uniformly distributed on the $(X, Y)$ cell associated with $W$. Clearly, such a distribution is absolutely continuous with respect to the Lebesgue measure on $\RR^3$. The modified test will have power against this alternative --- a contradiction to Theorem 3.3.

\end{proof}

\section{Proofs from Section \ref{minimax:lower:bounds:section}}
\label{app:lb}

\subsection{Poissonization}
\label{sec:poisson}
In this section we will demonstrate that the results of the Section \ref{minimax:lower:bounds:section} remain valid under the assumption of Poissonization. Concretely, suppose that instead of a fixed sample size $n$, we are given a random sample of size $N \sim \operatorname{Poi}(n)$. In order for us to redefine the minimax risk, suppose that we are given now a sequence of tests $\{\psi_k\}_{k = 0}^\infty$ indexed by the sample size, where as before each $\psi_k$ is a Borel measurable function such that $\psi_k : \supp(\cD_k) \mapsto [0,1]$. We define the Poissonized minimax risk as 
\begin{align}\label{minimax:risk:poi}
\MoveEqLeft \overline R_n(\cH_0,\overline \cH_0, \cH_1, \varepsilon) = \inf_{\{\psi_k\}_{k = 0}^\infty}\bigg\{\sup_{p \in \cH_0} \sum_{k = 0}^\infty \PP(N = k) \EE_p[\psi_k(\cD_k)] \nonumber \\
& + \sup_{p \in \{p \in \cH_1: \inf_{q \in \overline \cH_0} \|p - q\|_1 \geq \varepsilon\}} \sum_{k = 0}^\infty \PP(N = k) \EE_p [1 - \psi_k(\cD_k)]\bigg\},
\end{align}
As before we also define the corresponding critical radius 
\begin{align}\label{critical:radius:poi}
\overline \varepsilon_n(\cH_0,\overline \cH_0, \cH_1) = \inf\bigg\{\varepsilon : \overline R_n(\cH_0, \overline \cH_0, \cH_1, \varepsilon) \leq \frac{1}{3}\bigg\}. 
\end{align}

We will now state a lemma which relates the minimax risk \eqref{minimax:risk:poi} to the minimax risk \eqref{minimax:risk}. Our arguments are based on the proof of equation (11) in \cite{wu2016minimax}, but for completeness we provide them in 
Appendix~\ref{app:lb}.
\begin{lemma}\label{minimax:risk:to:poi:risk} Suppose that $\cH_0, \cH_1$ are dominated sets of measures by some common $\sigma$-finite measure. We have that
\begin{align*}
\overline R_{2n}(\cH_0,\overline \cH_0, \cH_1, \varepsilon) -  \exp(-(1-\log 2)n)  \leq R_n(\cH_0, \overline \cH_0, \cH_1, \varepsilon) \leq 2 \overline R_{n/2}(\cH_0, \overline \cH_0, \cH_1, \varepsilon).
\end{align*}
\end{lemma}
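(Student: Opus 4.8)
I would prove the two inequalities by a pair of one-sided reductions between deterministic-sample-size tests and Poissonized test families, following the argument behind equation~(11) of \cite{wu2016minimax}. In each direction one takes an (almost) optimal test for one of the two problems and builds a test for the other; the mismatch between a deterministic sample size and a Poisson one is absorbed either into a small additive error coming from a Chernoff bound on the lower tail of $\operatorname{Poi}(2n)$, or into a multiplicative factor of $2$ coming from the fact that $n$ lies above the median of $\operatorname{Poi}(n/2)$. Throughout, the hypothesis that $\cH_0,\cH_1$ are dominated by a common $\sigma$-finite measure is used exactly as in \cite{wu2016minimax}, to keep the measurability and the suprema in \eqref{minimax:risk} and \eqref{minimax:risk:poi} well behaved.

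\textbf{Right-hand inequality $R_n(\cdots)\leq 2\,\overline R_{n/2}(\cdots)$.} Fix $\eta>0$ and pick a family $\{\psi_k\}_{k\geq 0}$ whose Poissonized risk at rate $n/2$ is within $\eta$ of $\overline R_{n/2}$. I would build a fixed-$n$ test as follows: given $n$ i.i.d.\ observations, draw an auxiliary integer $N$ from the law of $\operatorname{Poi}(n/2)$ conditioned on $\{N\leq n\}$ (so $N\leq n$, and the first $N$ observations are i.i.d.\ from the underlying $p$), and output $\psi_N$ applied to those $N$ observations. Writing $\rho:=\PP(\operatorname{Poi}(n/2)\leq n)$, for $p\in\cH_0$ one has $\EE_p[\psi_N(\cD_N)]=\tfrac1\rho\sum_{k\leq n}\PP(\operatorname{Poi}(n/2)=k)\,\EE_p[\psi_k(\cD_k)]$, and similarly for the type II term with $p\in\cS_1(\varepsilon)$; dropping the truncation $k\leq n$ (all summands are nonnegative) shows that the fixed-$n$ risk of this test is at most $\tfrac1\rho$ times the Poissonized risk of $\{\psi_k\}$, hence at most $\tfrac1\rho(\overline R_{n/2}+\eta)$. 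Since $n$ is at least the median of $\operatorname{Poi}(n/2)$ for every $n\geq 1$, we have $\rho\geq\tfrac12$; letting $\eta\downarrow 0$ gives the claim.

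\textbf{Left-hand inequality $\overline R_{2n}(\cdots)-e^{-(1-\log 2)n}\leq R_n(\cdots)$.} Fix $\eta>0$ and pick a fixed-$n$ test $\psi^{*}$ with $\sup_{p\in\cH_0}\EE_p[\psi^{*}]+\sup_{p\in\cS_1(\varepsilon)}\EE_p[1-\psi^{*}]\leq R_n+\eta$. I would build the test family $\{\phi_k\}$ at rate $2n$ by letting $\phi_k$ be $\psi^{*}$ evaluated on the first $n$ observations when $k\geq n$, and $\phi_k\equiv 1$ (reject) when $k<n$. Rejecting on $\{N<n\}$ contributes nothing to the type II error, and inflates the type I error by at most $\PP(N<n)\leq\PP(\operatorname{Poi}(2n)\leq n)$, which by the Chernoff bound for the Poisson lower tail (optimizing $e^{st}\EE[e^{-sN}]$ at $s=\log 2$) is at most $e^{-n(1-\log 2)}$. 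Hence the Poissonized risk of $\{\phi_k\}$ at rate $2n$ is at most $R_n+\eta+e^{-(1-\log 2)n}$; taking $\eta\downarrow 0$ and rearranging gives the stated bound.

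\textbf{Where the care is needed.} The argument is short; the points that need attention are (i) checking that the conditioning/truncation of the Poisson sample size does not change the law of the observations fed to the tests, so that the error expressions coincide with those in \eqref{minimax:risk} and \eqref{minimax:risk:poi}; (ii) getting the two constants exactly right --- the $e^{-(1-\log 2)n}$ term from $\PP(\operatorname{Poi}(2n)\leq n)$ and the factor $2$ from $\PP(\operatorname{Poi}(n/2)\leq n)\geq\tfrac12$; and (iii) the measurability bookkeeping for the randomized tests constructed above, which is precisely why the domination hypothesis is assumed. I expect (i) to be the only genuinely delicate part, since it is where an incautious coupling could silently distort the sampling distribution.
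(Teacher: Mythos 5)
Your proposal is correct, and for the harder direction it takes a genuinely different route from the paper. For the left inequality both arguments are essentially the same reduction (extend a near-optimal test to the Poissonized setting, pay $\PP(\operatorname{Poi}(2n)\leq n)\leq e^{-(1-\log 2)n}$ via Chernoff); the paper mixes per-$k$ near-optimal tests and uses monotonicity of $R_k$ in $k$, whereas you extend a single fixed-$n$ test by rejecting when fewer than $n$ samples arrive — both are fine. The real divergence is in the right inequality $R_n\leq 2\overline R_{n/2}$: the paper invokes Le Cam's minimax theorem to rewrite $R_n$ as a supremum over priors of Bayes risks, then monotonizes the sequence of Bayes risks $\alpha_k$ of an arbitrary Poissonized test family so that, on the event $\{N\leq n\}$ (probability $\geq 1/2$ by Markov), the randomized-sample-size test is no worse than a fixed-$n$ test; the domination hypothesis is used precisely to justify the minimax theorem. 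You instead construct, from a near-optimal Poissonized family $\{\psi_k\}$, an explicit randomized fixed-$n$ test by drawing the sample size from $\operatorname{Poi}(n/2)$ conditioned on $\{N\leq n\}$ and running $\psi_N$ on the first $N$ observations; since the induced test $\bar\psi(\cD_n)=\rho^{-1}\sum_{k\leq n}\PP(\operatorname{Poi}(n/2)=k)\psi_k(\cD_k)$ is a valid $[0,1]$-valued test and $\rho\geq 1/2$, dropping the truncation bounds its risk by $\rho^{-1}$ times the Poissonized risk. This direct reduction avoids priors, the monotonization device, and the minimax theorem altogether — in particular the domination hypothesis is not actually needed for your argument (your remark that it is needed for "measurability bookkeeping" slightly misplaces its role, which in the paper is to license Le Cam's theorem), so your proof is both more elementary and marginally more general, at the cost of requiring the small check (which you flag, and which holds) that the auxiliary sample-size randomization does not alter the law of the data fed to each $\psi_k$.
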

Using Theorems \ref{first:lower:bound} and \ref{lower:bound:continuous:case} and the right inequality of Lemma \ref{minimax:risk:to:poi:risk} we arrive at the following corollaries which are stated without proof. 
These results show that the lower bounds in Theorems~\ref{first:lower:bound} and \ref{lower:bound:continuous:case}, which were developed for a fixed sample size $n$, continue to hold under Poissonization.

\begin{corollary}\label{first:lower:bound:poi}Let $\overline \cH_0 = \cP_{0,[0,1]}'$. Suppose that $\cH_0$ is either of $\cP_{0,[0,1], \TV}'(L) $, $\cP_{0,[0,1], \TV^2}'(L) $ or $\cP_{0,[0,1], \chi^2}'(L)$, while $\cH_1 = \cQ_{0,[0,1], \TV}'(L)$ for some fixed $L \in \RR^+$. Then we have that, for some absolute constant $c_0 > 0$, the critical radius defined in \eqref{critical:radius:poi} is bounded as
\begin{align*}
\overline \varepsilon_n(\cH_0,\overline \cH_0, \cH_1) \geq c_0 \bigg(\frac{(\ell_1 \ell_2)^{1/5}}{n^{2/5}} \wedge 1\bigg).
\end{align*}
\end{corollary}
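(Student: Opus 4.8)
The plan is to obtain Corollary~\ref{first:lower:bound:poi} as a direct consequence of Theorem~\ref{first:lower:bound} together with the right-hand inequality of Lemma~\ref{minimax:risk:to:poi:risk}, with essentially no new work beyond a chase of absolute constants. First I would check that Lemma~\ref{minimax:risk:to:poi:risk} is applicable: the classes $\cH_0 \in \{\cP'_{0,[0,1],\TV}(L),\, \cP'_{0,[0,1],\TV^2}(L),\, \cP'_{0,[0,1],\chi^2}(L)\}$ and $\cH_1 = \cQ'_{0,[0,1],\TV}(L)$ consist of distributions on $[\ell_1]\times[\ell_2]\times[0,1]$, all dominated by the product of counting measure on $[\ell_1]\times[\ell_2]$ with Lebesgue measure on $[0,1]$, so the dominated-family hypothesis of the lemma holds.

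Next I would invoke the right inequality of Lemma~\ref{minimax:risk:to:poi:risk} with $n$ replaced by $2n$, which yields
\begin{align*}
\overline R_n(\cH_0,\overline\cH_0,\cH_1,\varepsilon) \;\geq\; \tfrac12\, R_{2n}(\cH_0,\overline\cH_0,\cH_1,\varepsilon)
\end{align*}
for every $\varepsilon$. The factor $\tfrac12$ is the only quantity lost in the reduction, and it is absorbed by the observation that the construction used to prove Theorem~\ref{first:lower:bound} (an Ingster-type mixture against a fixed null, analyzed through the likelihood-ratio test) in fact bounds the fixed-sample risk below by $1 - \tfrac12\sqrt{d_{\chi^2}(\bar P_1, P_0^{\otimes 2n})}$, where the $\chi^2$-divergence between the alternative mixture $\bar P_1$ and the null is controlled by an absolute constant times $\varepsilon^5 (2n)^2/(\ell_1\ell_2)$. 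Hence, by shrinking the amplitude of the perturbation — i.e.\ taking the separation constant smaller — one gets $R_{2n}(\cH_0,\overline\cH_0,\cH_1,\varepsilon) > 2/3$ for all $\varepsilon \leq c_1\big((\ell_1\ell_2)^{1/5}/(2n)^{2/5} \wedge 1\big)$, for a suitable absolute constant $c_1 > 0$.

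Combining the two facts, $\overline R_n(\cH_0,\overline\cH_0,\cH_1,\varepsilon) > 1/3$ whenever $\varepsilon \leq c_1\big((\ell_1\ell_2)^{1/5}/(2n)^{2/5} \wedge 1\big)$. Since $(2n)^{-2/5} = 2^{-2/5} n^{-2/5}$, a short case split on whether the minimum is attained at $1$ shows $c_0\big((\ell_1\ell_2)^{1/5}/n^{2/5}\wedge 1\big) \leq c_1\big((\ell_1\ell_2)^{1/5}/(2n)^{2/5}\wedge 1\big)$ with $c_0 := 2^{-2/5}c_1$. By the definition~\eqref{critical:radius:poi} of the Poissonized critical radius, this gives $\overline\varepsilon_n(\cH_0,\overline\cH_0,\cH_1) \geq c_0\big((\ell_1\ell_2)^{1/5}/n^{2/5}\wedge 1\big)$, as claimed.

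The only real obstacle is the step ``$R_{2n} > 2/3$'': the statement of Theorem~\ref{first:lower:bound} as written only certifies that the fixed-sample risk exceeds $1/3$, which is not by itself enough to survive the factor $\tfrac12$ in Lemma~\ref{minimax:risk:to:poi:risk}. One therefore has to look inside the proof of Theorem~\ref{first:lower:bound} in Appendix~\ref{app:lb} and note that its risk lower bound is of the form $1 - o(1)$ in the relevant regime and can be pushed above $2/3$ (indeed above $1-\eta$ for any fixed $\eta>0$) by taking a smaller, but still absolute, separation constant. Everything else is the elementary constant chase above, and the identical argument — using Theorem~\ref{lower:bound:continuous:case} in place of Theorem~\ref{first:lower:bound} — handles the analogous Poissonized corollary in the continuous case.
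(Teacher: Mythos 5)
Your proposal is correct and follows essentially the same route the paper intends: the corollary is stated without proof precisely because it is meant to follow from Theorem~\ref{first:lower:bound} combined with the right-hand inequality of Lemma~\ref{minimax:risk:to:poi:risk}, which is exactly your reduction. Your additional observation — that the stated ``risk $>1/3$'' form of Theorem~\ref{first:lower:bound} does not by itself survive the factor $\tfrac12$, and that one must use the fact that the likelihood-ratio analysis (equivalently, the arbitrariness of the constant $\tfrac13$ in \eqref{critical:radius}) pushes the fixed-sample risk above $2/3$ after shrinking the separation constant — is a valid and welcome tightening of a step the paper leaves implicit.
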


\begin{corollary}\label{lower:bound:continuous:case:poi}Let $\overline \cH_0 = \cP_{0,[0,1]^3}$. Suppose that $\cH_0$ is either $\cP_{0,[0,1]^3, \TV}(L)$ or $\cP_{0,[0,1]^3, \chi^2}(L)$, and $\cH_1 = \cQ_{0,[0,1]^3, \TV}(L,s)$ for some fixed $L \in \RR^+$. Then we have that for some absolute constant $c_0 > 0$,
 \begin{align*}
\overline \varepsilon_n(\cH_0,\overline \cH_0, \cH_1) \geq \frac{c_0}{n^{2/7}}.
\end{align*}
\end{corollary}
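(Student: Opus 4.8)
The plan is to obtain the Poissonized bound as a direct consequence of the fixed-sample-size lower bound in Theorem~\ref{lower:bound:continuous:case}, via the right-hand inequality of Lemma~\ref{minimax:risk:to:poi:risk}. The key observation is that the inequality $R_n(\cH_0,\overline\cH_0,\cH_1,\varepsilon) \leq 2\,\overline R_{n/2}(\cH_0,\overline\cH_0,\cH_1,\varepsilon)$ transfers a lower bound on the deterministic-sample-size testing risk into a lower bound on the Poissonized risk, at the cost only of a constant factor in the risk and a factor of $2$ in the sample size; since the critical radius depends polynomially on the sample size, such constant factors do not affect the rate. Lemma~\ref{minimax:risk:to:poi:risk} is applicable because, in the fully continuous setting, all distributions in $\cH_0$ and $\cH_1$ are dominated by Lebesgue measure on $[0,1]^3$.

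Concretely, I would first revisit the proof of Theorem~\ref{lower:bound:continuous:case}. That proof is of Ingster type --- a fixed null distribution is pitted against a uniform mixture over a carefully perturbed family of alternatives --- and it proceeds by bounding the $\chi^2$-divergence (hence the total-variation distance) between the null and this mixture. Consequently, by shrinking the absolute constant multiplying the separation, it yields the strengthened conclusion that there is a fixed $\beta > 2/3$ with $R_m(\cH_0,\overline\cH_0,\cH_1,\varepsilon) \geq \beta$ for all $m$ and all $\varepsilon \leq c_1\, m^{-2s/(5s+2)}$, where $c_1$ is an absolute constant. (The constant $1/3$ appearing in~\eqref{critical:radius:poi} is arbitrary, so reading off a risk bound above $2/3$ from that proof is immediate.) Setting $m = 2n$ in the inequality $R_{2n}(\cdots) \leq 2\,\overline R_n(\cdots)$ of Lemma~\ref{minimax:risk:to:poi:risk} then gives, for every $\varepsilon \leq c_1(2n)^{-2s/(5s+2)}$, that $\overline R_n(\cH_0,\overline\cH_0,\cH_1,\varepsilon) \geq \tfrac12 R_{2n}(\cH_0,\overline\cH_0,\cH_1,\varepsilon) \geq \beta/2 > 1/3$. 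By the definition~\eqref{critical:radius:poi} of the Poissonized critical radius this means $\overline\varepsilon_n(\cH_0,\overline\cH_0,\cH_1) \geq c_1\, 2^{-2s/(5s+2)}\, n^{-2s/(5s+2)} =: c_0\, n^{-2s/(5s+2)}$; for $s=1$ this is exactly the stated bound $\overline\varepsilon_n \geq c_0\, n^{-2/7}$, and tracking $s$ throughout gives the sharper $n^{-2s/(5s+2)}$ rate matching Theorem~\ref{lower:bound:continuous:case}.

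There is no genuinely difficult step here. The one point requiring a little care is that the factor of $2$ lost in Lemma~\ref{minimax:risk:to:poi:risk} forces the use of a fixed-sample-size risk bound strictly above $2/3$, rather than the nominal $1/3$ from the definition of the critical radius; this is why the derivation must pass through the strengthened, Ingster-style conclusion of Theorem~\ref{lower:bound:continuous:case} instead of its bare critical-radius statement. An entirely analogous argument, using the right-hand inequality of Lemma~\ref{minimax:risk:to:poi:risk} together with Theorem~\ref{first:lower:bound}, yields Corollary~\ref{first:lower:bound:poi}.
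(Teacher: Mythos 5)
Your proposal is correct and follows the same route the paper intends: the corollary is stated without proof precisely because it is the right-hand inequality of Lemma~\ref{minimax:risk:to:poi:risk} combined with Theorem~\ref{lower:bound:continuous:case}, and you correctly supply the one needed refinement, namely that the Ingster-style argument gives fixed-sample risk above $2/3$ (not just $1/3$) at a constant-factor-smaller separation, so the factor $2$ in the lemma is harmless. Your observation that the stated exponent $2/7$ is the $s=1$ case of the general $n^{-2s/(5s+2)}$ rate is also consistent with the paper.
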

%With these lower bounds in place, we now turn our attention to designing hypothesis tests which match our lower bounds in various settings

\begin{proof}[Proof of Lemma \ref{minimax:risk:to:poi:risk}] We first observe that $R_n(\cH_0, \overline \cH_0, \cH_1, \varepsilon) \leq 1$ and furthermore it is clear that $R_n(\cH_0, \overline \cH_0, \cH_1, \varepsilon)$ is a decreasing function in $n$. Hence for $N \sim \operatorname{Poi}(2n)$
\begin{align*}
\overline R_{2n}(\cH_0,\overline \cH_0, \cH_1, \varepsilon) & \leq \sum_{0 \leq k \leq n} \PP(N = k) R_k(\cH_0, \overline \cH_0, \cH_1, \varepsilon) +  \sum_{ k > n} \PP(N = k) R_k(\cH_0, \overline \cH_0, \cH_1, \varepsilon) \\
& \leq \PP(\operatorname{Poi}(2n) \leq n) + R_n(\cH_0, \overline \cH_0, \cH_1, \varepsilon)\\
& \leq \exp(-(1-\log 2)n) + R_n(\cH_0, \overline \cH_0, \cH_1, \varepsilon),
\end{align*}
where in the last inequality we used a Chernoff bound \cite{wu2016minimax}. By Lemma 1 on page 476 of \cite{le2012asymptotic}, we know that 
\begin{align*}
R_n(\cH_0, \overline \cH_0, \cH_1, \varepsilon) = \sup_{\pi_0, \pi_1} \inf_{\psi}\bigg\{ \EE_{p\sim \pi_0} [\psi(\cD_n)] + \EE_{q\sim \pi_1} [1 - \psi(\cD_n)]\bigg\},
\end{align*}
where $\pi_0$ and $\pi_1$ range over prior distributions over the sets $\cH_0$ and $\{p \in \cH_1: \inf_{q \in \overline \cH_0} \|p - q\|_1 \geq \varepsilon\}$. For the second inequality first fix two prior distributions $\pi_0$ and $\pi_1$, and take an arbitrary sequence of tests $\psi_0, \psi_1, \psi_2, \ldots$ indexed by the sample size.

It is unclear whether the sequence 
\begin{align*}
\alpha_k = \EE_{p \sim \pi_0}[\psi_k(\cD_k)] + \EE_{q \sim \pi_1} [1 - \psi_k(\cD_k)],
\end{align*}
is decreasing with $k$, but we can make this sequence monotone in the following way. Define $\{\tilde \alpha_k\}$ recursively as $\tilde \alpha_k = \tilde \alpha_{k-1} \wedge \alpha_k$, and define the corresponding sequence of tests
\begin{align*}
\tilde \psi_k(\cD_k)  = \begin{cases}\tilde \psi_{k-1}(\cD_{k-1}), \mbox{ if }  \tilde \alpha_k = \tilde \alpha_{k-1} \\ \psi_k(\cD_k),  \mbox{ otherwise}\end{cases}
\end{align*}
Take $N \sim \operatorname{Poi}(n/2)$. This sequence of estimates satisfies 
\begin{align*}
\MoveEqLeft \sum_{k = 0}^\infty \PP(N = k) \{\EE_{p \sim \pi_0} [\psi_k(\cD_k)] +  \EE_{q\sim \pi_1} [1 - \psi_k(\cD_k)]\} \\
& \geq \sum_{k = 0}^\infty \PP(N = k) \{\EE_{p \sim \pi_0} [\tilde \psi_k(\cD_k)] +  \EE_{q\sim \pi_1} [1 - \tilde \psi_k(\cD_k)]\}\\
&\geq \frac{1}{2}\bigg\{\EE_{p \sim \pi_0} [\tilde \psi_n(\cD_n)] +  \EE_{q\sim \pi_1} [1 - \tilde \psi_n(\cD_n)]\bigg\}\\
& \geq\frac{1}{2} \inf_{\tilde \psi_n}\bigg\{ \EE_{p \sim \pi_0}   [\tilde \psi_n(\cD_n)] +  \EE_{q\sim \pi_1} [1 - \tilde \psi_n(\cD_n)]\bigg\},
\end{align*}
where we used that $\PP(\operatorname{Poi}(n/2) \geq n) \leq \frac{1}{2}$ by Markov's inequality. Taking a supremum over $\pi_0$ and $\pi_1$ we obtain $R_n(\cH_0, \overline \cH_0, \cH_1, \varepsilon) $ on the right hand side. On the left hand side using that the Bayes risk is upper bounded by the $\sup_{p}$ and $\sup_{q}$ and taking an infimum over all sequences $\psi_0, \psi_1, \psi_2, \ldots$ concludes that
\begin{align*}
\overline R_{n/2}(\cH_0, \overline \cH_0, \cH_1, \varepsilon)  \geq \frac{1}{2} R_n(\cH_0, \overline \cH_0, \cH_1, \varepsilon). 
\end{align*}
\end{proof}

\begin{proof}[Proof of Theorem \ref{first:lower:bound}]
To derive a lower bound we will first show how to obtain multiple distributions which are far from independent by perturbing the uniform discrete distribution. Suppose for simplicity that $\ell_1 = 2 \ell_1'$ and $\ell_2 = 2\ell_2'$ for some integers $\ell_1'$ and $\ell_2'$ (although this simplifies our calculation we will remark how to fix the calculation for the odd case as well).  
 
We first construct a single null hypothesis distribution. Suppose that $Z \sim U[0,1]$. Let the basic null distribution be given by the density $p_{X,Y | Z} (x,y|z) = \frac{1}{\ell_1 \ell_2}$ for each $x, y \in [\ell_1] \times [\ell_2]$ and $z \in [0,1]$. Clearly, this distribution belongs to all three sets of null distributions $\cP_{0,[0,1], \TV}'(L) $, $\cP_{0,[0,1], \TV^2}'(L)$ and $\cP_{0,[0,1], \chi^2}'(L)$.  

Next we will perturb this null distribution $p$ to obtain alternative distributions. Let $\Delta = (\delta_{xy})_{x \in [\ell_1'], y \in [\ell_2']}$ be a matrix of $\pm 1$ numbers $\delta_{xy}$. We create the $\ell_1 \times \ell_2$ matrix $\tilde \Delta$ so that 
\begin{align*}
\tilde \delta_{xy}  & = \delta_{xy} \mbox { for } x,y \in [\ell_1']\times[\ell_2'], \\
\tilde \delta_{xy} & = -\delta_{(x -\ell_1')y} \mbox { for } x > \ell_1', y \in [\ell_2'], \\
\tilde \delta_{xy} & = -\delta_{x(y -\ell_2')} \mbox { for } x \in [\ell_1'], y  > \ell_2', \\
\tilde \delta_{xy} & = \delta_{(x-\ell_1')(y -\ell_2')} \mbox { for } x  > \ell_1', y  > \ell_2'.
\end{align*}

% Siva commented this out temporarily
\begin{figure}
\centering
\includegraphics[scale=.2]{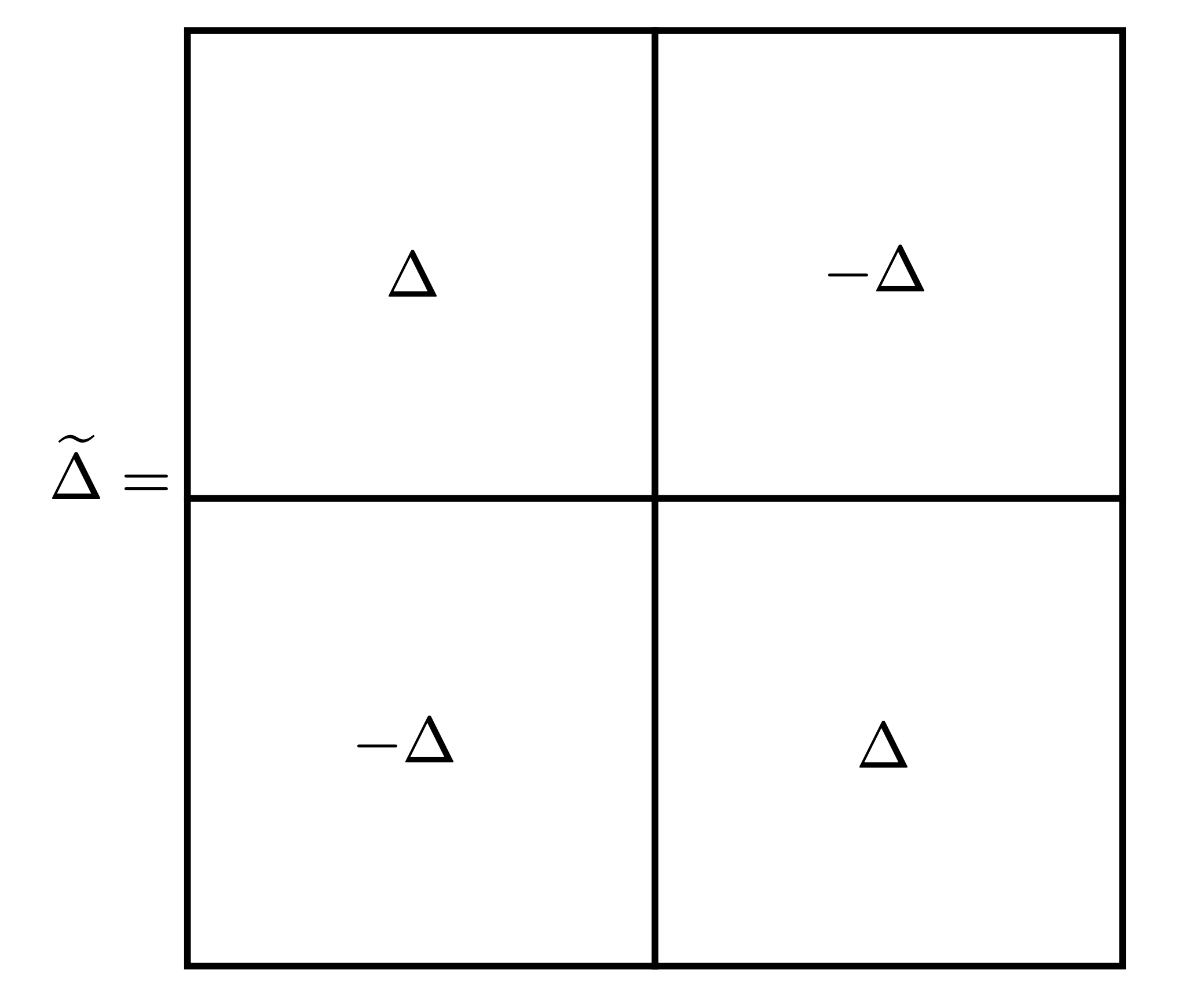}
%\psscalebox{.75 .75} % Change this value to rescale the drawing.
%{
%\begin{pspicture}(0,-2.8)(6.4,2.8)
%\psframe[linecolor=black, linewidth=0.04, dimen=outer](6.4,2.8)(0.8,-2.8)
%\psline[linecolor=black, linewidth=0.04](3.6,2.8)(3.6,-2.8)
%\psline[linecolor=black, linewidth=0.04](0.8,0.0)(6.4,0.0)
%\rput[bl](2.0,1.2){$\Delta$}
%\rput[bl](4.6,1.2){$-\Delta$}
%\rput[bl](4.8,-1.6){$\Delta$}
%\rput[bl](1.8,-1.6){$-\Delta$}
%\rput[bl](0.0,0.0){$\tilde \Delta =$}
%\end{pspicture}
%}
\caption{The construction of the matrix $\tilde\Delta$ from $\Delta$.}\label{tilde:Delta:figure}
\end{figure}

It is simple to check that all row sums and column sums of the matrix $\tilde \Delta$ are 0 (see also Figure \ref{tilde:Delta:figure}). We perturb the null distribution $p$ using the following procedure: for $x,y \in [\ell_1] \times [\ell_2]$ we take $q_{X,Y|Z}(x,y|z) = \frac{1}{\ell_1 \ell_2} + \tilde \delta_{xy} \eta_\nu(z)$, where 
$$
\eta_\nu(z) = \rho \sum_{j \in [d]} \nu_j h_{j, d}(z),
$$
where $\rho > 0$ is a constant, $d \in \NN$, $\nu_i \in \{-1,+1\}$, and $h_{j, d}(z) = \sqrt{d}h(d z - j + 1)$ for $z \in [(j-1)/d, j/d]$, and $h$ is an infinitely differentiable function supported on $[0,1]$ 
%(and it's zero outside of $[0,1]$) 
such that $\int h(z) dz = 0$ and $\int h^2(z) dz = 1$. Since the row sums and column sums of $\tilde \Delta$ are $0$, it is simple to verify that the marginals of the distribution remain unchanged under this perturbation, i.e. $q_{X | Z}(x|z) = \sum_{y} q_{X,Y| Z}(x,y|z) = p_{X|Z}(x|z) = \frac{1}{\ell_1}$ and similarly $q_{Y | Z}(y|z) =\sum_{x} q_{X,Y| Z}(x,y|z) = p_{Y|Z}(y|z) = \frac{1}{\ell_2}$. 
We note that in the case that one of $\ell_1$ or $\ell_2$ or both is odd, the fix is to add one row and/or column to the matrix $\Delta$ to be fixed, and reason as in the even case. 

When perturbing, in order to ensure that we create valid probability distributions, we need to satisfy the conditions that 
$$
\frac{1}{\ell_1 \ell_2} - \rho \sqrt{d} \|h\|_{\infty} \geq 0,
$$
and 
$$
\frac{1}{\ell_1 \ell_2} + \rho \sqrt{d} \|h\|_{\infty} \leq 1.
$$
We will ensure this by our choices of $\rho$ and $d$. Next, we need to verify that $q_{X,Y,Z} \in \cQ_{0,[0,1], \TV}'(L)$. We start by showing that $\|q_{X,Y |Z = z} - q_{X,Y|Z = z'}\|_1 \leq L |z - z'|$. We have%Let $k,l \in [d]$ be such that $z \in [(k-1)/d, k/d]$ and $z' \in [(l-1)/d, l/d]$. We have that 
\begin{align*}
\|q_{X,Y | Z= z} - q_{X,Y| Z= z'}\|_1 & = \ell_1\ell_2 |\eta_{\nu}(z) - \eta_\nu(z')|. %= %\ell_1\ell_2 \rho \sqrt{d} \bigg|\sum_{j \in [d]} \nu_j[h(d z - j + 1) - h(d z' - j + 1)]\bigg|\\
%& \leq \ell_1\ell_2 \rho \sqrt{d} \bigg(|h(d z - k + 1) - h(d z' - k + 1)| + |h(d z - l + 1) - h(d z' - l + 1)| \bigg)\\
%& \leq \ell_1\ell_2  \rho d^{3/2}  2\|h'\|_{\infty} |z - z'|\\
%& \leq L |z - z'|.
\end{align*}
Now the derivative of $\eta_{\nu}(z)$, $|\frac{d}{dz} \eta_{\nu}(z)|$ is bounded by $d^{3/2}\rho \|h'\|_{\infty}$. Thus the above holds when 
\begin{align*}
\|q_{X,Y |Z= z} - q_{X,Y | Z=z'}\|_1  \leq \ell_1\ell_2 \rho d^{3/2} \|h'\|_{\infty} \leq L.
\end{align*}

We let $Z \sim U([0,1])$. Next we will show that the constructed distributions $q_{X,Y,Z}$ are $\varepsilon$ far from being independent, that is we will show that 
\begin{align*}
\inf_{p \in \cP_{0,[0,1]}'} \|q_{X,Y,Z} - p\|_1 \geq \varepsilon,
\end{align*}
for some $\varepsilon > 0$. %The above will show that the distributions $q_{X,Y,Z}$ are $\varepsilon$ far from any of the smoothness sets $\cP_{0,[0,1], \TV}'(L) $, $\cP_{0,[0,1], \TV^2}'(L)$ and $\cP_{0,[0,1], \chi^2}'(L)$ (since they are all subsets of $\cP_{0,[0,1]}'$). 
To this end we need the following result which is essentially proved in \cite{canonne2018testing} for the discrete case, here we prove it for continuous $Z$:

\begin{lemma}\label{TV:proj:vs:all} Suppose that a distribution $q$ satisfies
\begin{align*}
\varepsilon = \inf_{p \in \cP_{0,[0,1]}'}\|q_{X,Y,Z} - p\|_1.
\end{align*} 
Then 
\begin{align*}
\|q_{X,Y,Z} - q_{X |Z} q_{Y|Z}q_Z\|_1 \leq 6 \varepsilon.
\end{align*}
%it's at most $6\varepsilon$ from the distribution $q_{X |Z} q_{Y|Z}q_Z$.%, where $q_{x\cdot |Z=z} = \sum_y q_{x,y| Z = z}$ and similarly for $q_{\cdot y |Z = z}$. 
\end{lemma}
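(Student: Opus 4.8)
The plan is to exploit the fact that the conditionally independent distribution $q_{X|Z}q_{Y|Z}q_Z$ is a natural ``candidate'' projection of $q_{X,Y,Z}$ onto $\cP'_{0,[0,1]}$, and to control how far it is from $q$ in terms of how far the \emph{optimal} $p$ is. Let $p^* \in \cP'_{0,[0,1]}$ be a (near-)minimizer so that $\|q_{X,Y,Z} - p^*\|_1 \le \varepsilon + o(1)$ (or handle the infimum via a minimizing sequence). Since $p^*$ is conditionally independent, it factors as $p^* = p^*_{X|Z} p^*_{Y|Z} p^*_Z$. The idea is to compare $q_{X|Z}q_{Y|Z}q_Z$ to $q$ by passing through $p^*$: first argue the $Z$-marginals are close, then the conditional marginals $q_{X|Z}, q_{Y|Z}$ are close (in an averaged-over-$Z$ sense) to $p^*_{X|Z}, p^*_{Y|Z}$, and finally reassemble.

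Concretely, the key steps in order would be: (i) Note $\|q_Z - p^*_Z\|_1 \le \|q_{X,Y,Z} - p^*\|_1 \le \varepsilon$ since marginalization is a contraction in $L_1$. (ii) Similarly $\|q_{X,Z} - p^*_{X,Z}\|_1 \le \varepsilon$ and $\|q_{Y,Z} - p^*_{Y,Z}\|_1 \le \varepsilon$. Writing $q_{X,Z} = q_{X|Z} q_Z$ and $p^*_{X,Z} = p^*_{X|Z} p^*_Z$, and combining with (i), one gets that $\int \|q_{X|Z=z} - p^*_{X|Z=z}\|_1 \, dq_Z(z) \lesssim \varepsilon$ (up to the mismatch between $q_Z$ and $p^*_Z$, which is itself $\le \varepsilon$ by (i)); the same for $Y$. (iii) Now apply the triangle inequality along the telescoping chain
\begin{align*}
q_{X|Z}q_{Y|Z}q_Z \;\to\; p^*_{X|Z} q_{Y|Z} q_Z \;\to\; p^*_{X|Z} p^*_{Y|Z} q_Z \;\to\; p^*_{X|Z} p^*_{Y|Z} p^*_Z = p^*,
\end{align*}
bounding each of the three increments by a constant times $\varepsilon$ using the estimates from (ii) and (i) (the first two increments are integrals over $z$ of $L_1$ distances between conditionals, weighted by $q_Z$; the last is $\|q_Z - p^*_Z\|_1$ after noting $p^*_{X|Z}p^*_{Y|Z}$ integrates to $1$ in $(x,y)$ for each $z$). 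This gives $\|q_{X|Z}q_{Y|Z}q_Z - p^*\|_1 \le c\varepsilon$ for an explicit small constant $c$, and then one more triangle inequality $\|q_{X,Y,Z} - q_{X|Z}q_{Y|Z}q_Z\|_1 \le \|q_{X,Y,Z} - p^*\|_1 + \|p^* - q_{X|Z}q_{Y|Z}q_Z\|_1 \le (1+c)\varepsilon$ finishes it, with the bookkeeping arranged so that $1 + c \le 6$.

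The main obstacle is step (ii): turning the bound on $\|q_{X,Z} - p^*_{X,Z}\|_1$ into a usable bound on the $q_Z$-averaged conditional discrepancy $\int \|q_{X|Z=z} - p^*_{X|Z=z}\|_1\,dq_Z(z)$, because $q_{X,Z} - p^*_{X,Z} = q_Z(q_{X|Z} - p^*_{X|Z}) + (q_Z - p^*_Z)p^*_{X|Z}$, so one must carefully separate the contribution of the conditional mismatch from that of the marginal mismatch on $Z$ — and the constant $6$ in the statement is exactly what makes this bookkeeping slightly delicate. Everything else is routine triangle-inequality manipulation plus the contraction property of marginalization; the continuous-$Z$ nature of the problem causes no real difficulty since all the inequalities are stated in terms of $L_1$ distances that behave identically for densities and pmfs.
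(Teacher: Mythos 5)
Your proposal is correct and is essentially the paper's own argument: both proofs rest on the contraction of marginalization, a decomposition separating the conditional mismatch from the $Z$-marginal mismatch (giving the $2\varepsilon$ bounds on the $q_Z$-averaged distances $\int\|q_{X|Z=z}-p^*_{X|Z=z}\|_1\,dq_Z(z)$ and its $Y$ analogue), subadditivity of the $L_1$ distance on product conditionals (your three-step telescope), and a final triangle inequality, yielding the same constant $6$. The only cosmetic difference is that you extract the conditional bounds from the $(X,Z)$ and $(Y,Z)$ marginals directly, while the paper first bounds $\EE_Z\|q_{X,Y|Z}-p^*_{X,Y|Z}\|_1$ and then marginalizes; the bookkeeping is otherwise identical.
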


The proof of Lemma \ref{TV:proj:vs:all} can be found in Appendix~\ref{app:lb}. Suppose that $h$ satisfies $\int |h(z)|dz = c$ for some $0 < c < 1$. We then have that the $L_1$ distance between $q_{X,Y,Z}$ and $q_{X|Z}q_{Y |Z}q_Z$ satisfies
$$
\varepsilon := \EE_Z \ell_1\ell_2 |\eta_\nu(Z)| = \ell_1\ell_2 \sum_{j \in [d]} \int \rho |\nu_j h_{j,k}(z)| dz = \ell_1 \ell_2 \rho \sqrt{d}c,
$$
where in the above we used that $h_{j,k}$ have disjoint support. By Lemma \ref{TV:proj:vs:all} this shows that $q_{X,Y,Z}$ is at least $\varepsilon/6$ from any conditionally independent distribution in $L_1$ distance.

Next, we put uniform priors over $\nu$ and $\Delta$, i.e., the random variables $(\nu_i)_{i \in [d]}$ and $(\delta_{xy})_{x,y \in [\ell_1]\times[\ell_2]}$ are taken as i.i.d. Rademachers. The likelihood ratio is 
\begin{align*}
W = \EE_{\nu, \Delta} \prod_{i = 1}^n (1 + \tilde \delta_{X_i, Y_i} \bar \eta_\nu(Z_i))
%W = 2^{-\ell_1'\ell_2'} 2^{-d}\sum_{\nu \in \{\pm 1\}^d, \Delta \in \{\pm 1\}^{\ell_1'\times \ell_2'}} \prod_{i = 1}^n (1 + \tilde \delta_{X_i, Y_i} \bar \eta_\nu(Z_i))
%(1 + \bar \eta_\nu(Z_i))^{\mathbbm{1}(\delta_{X_i - \ell_1', Y_i-\ell_2'} = 1)}(1 - \bar \eta_\nu(Z_i))^{\mathbbm{1}(\delta_{X_i - \ell_1', Y_i-\ell_2'} = -1)} \mathbbm{1}((X_i > \ell_1', Y_i > \ell_2') +\\
%(1 + \bar \eta_\nu(Z_i))^{\mathbbm{1}(\delta_{X_i - \ell_1', Y_i} = -1)}(1 - \bar \eta_\nu(Z_i))^{\mathbbm{1}(\delta_{X_i - \ell_1', Y_i} = 1)} \mathbbm{1}((X_i > \ell_1', Y_i \in [\ell_2']) +\\
%(1 + \bar \eta_\nu(Z_i))^{\mathbbm{1}(\delta_{X_i, Y_i-\ell_2'} = -1)}(1 - \bar \eta_\nu(Z_i))^{\mathbbm{1}(\delta_{X_i, Y_i-\ell_2'} = 1)} \mathbbm{1}((X_i \in [\ell_1'], Y_i > \ell_2') 
% \bigg\}
\end{align*}
where $\bar \eta_\nu(z) = \ell_1 \ell_2 \eta_\nu(z)$ and the expectation is taken over all Rademacher sequences $\nu$ and $\Delta$.  By a standard argument \cite{ery2018remember, balakrishnan2017hypothesis,ingster2003nonparametric}  the risk of the likelihood ratio (which is the optimal test by Neyman-Pearson's Lemma) is bounded from below by $1 - \frac{1}{2}\sqrt{\Var_0 W}$. Hence it suffices to study $\EE_0(W^2) - 1$ (here $\EE_0$ is the expectation under the null hypothesis). We have
\begin{align*}
\EE_0 W^2 & = \EE_{\nu,\nu',\Delta,\Delta'} \prod_{i = 1}^n \EE_0 (1 + \tilde \delta_{X_i, Y_i}\bar \eta_\nu(Z_i))(1 + \tilde \delta'_{X_i, Y_i}\bar \eta_{\nu'}(Z_i))\\
& =\EE_{\nu,\nu',\Delta,\Delta'}\prod_{i = 1}^n \EE_{Z_i} \sum_{(x,y) \in [\ell_1]\times [\ell_2]}  \frac{(1 + \tilde \delta_{xy}\bar \eta_\nu(Z_i))(1 + \tilde \delta'_{xy}\bar \eta_{\nu'}(Z_i))}{\ell_1\ell_2}\\
& =  \EE_{\nu,\nu',\Delta,\Delta'}\prod_{i = 1}^n  \bigg(  1 + \frac{\EE_{Z_i} \bar \eta_\nu(Z_i)\bar \eta_{\nu'}(Z_i)}{\ell_1 \ell_2}\sum_{(x,y) \in [\ell_1]\times [\ell_2]} \tilde \delta_{xy}\tilde \delta'_{xy} \bigg)\\
&= \EE_{\nu,\nu',\Delta,\Delta'}\prod_{i = 1}^n (1 + 4\ell_1 \ell_2 \rho^2\langle\nu, \nu'\rangle \langle\Delta, \Delta' \rangle)\\
& \leq\EE_{\nu,\nu',\Delta,\Delta'} \exp(4n \ell_1 \ell_2 \rho^2\langle\nu, \nu'\rangle \langle\Delta, \Delta' \rangle)%\bigg[ \frac{(1 + \bar \eta_\nu(Z_i))(1 + \bar \eta_{\nu'}(Z_i))}{2} +  \frac{(1 - \bar \eta_\nu(Z_i))(1 - \bar \eta_{\nu'}(Z_i))}{2}\bigg]\\
\end{align*}
In the above $\langle \Delta, \Delta'\rangle = \Tr(\Delta\T \Delta')$ is the standard matrix dot product, while $\langle \nu, \nu' \rangle$ is the standard vector dot product and $\EE_{\nu,\nu',\Delta,\Delta'}$ is the expectation with respect to independent Rademacher draws of $\nu, \nu', \Delta, \Delta'$. Thus,
\begin{align*}
\EE W^2 & \leq \EE_{\Delta, \Delta'} \EE_{\nu, \nu'} [\exp(4n \ell_1 \ell_2 \rho^2\langle\nu, \nu'\rangle \langle\Delta, \Delta' \rangle)] = \EE_{\Delta, \Delta'} \cosh(4n \ell_1 \ell_2 \rho^2 \langle\Delta, \Delta' \rangle)^d \\
& \leq \EE_{\Delta, \Delta'} \exp((4n \ell_1 \ell_2 \rho^2 \langle\Delta, \Delta' \rangle)^2d/2), %(1 + (16n\rho^2)^2)^d \leq \exp(d (16n\rho^2)^2),
\end{align*}
where we used the inequality $\cosh(x) \leq \exp(x^2/2)$, which can be verified by a Taylor expansion. Next, since when we condition on one value of $\Delta'$ all values of $\langle\Delta, \Delta'\rangle$ happen with the same probability as if we conditioned on any other value of $\Delta'$ we have the identity
\begin{align}
 \EE_{\Delta, \Delta'} \exp((4n \ell_1 \ell_2 \rho^2 \langle\Delta, \Delta' \rangle)^2d/2) =  \EE_{\Delta} \exp((4n \ell_1 \ell_2 \rho^2 \sum_{xy \in [\ell_1']\times[\ell_2']}\delta_{xy})^2d/2)\label{exponential:identity:lower:bound}
\end{align}
Note that if one has i.i.d. Rademacher random variables $\delta_{xy}$ and i.i.d. standard normal variables $W_{xy}$ for any nonnegative integers $a_{xy}$ for  $ x,y \in [\ell_1']\times [\ell_2']$ one has
$$
\EE \prod_{xy \in [\ell_1']\times [\ell_2']} \delta^{a_{xy}}_{xy}\leq \EE \prod_{xy \in [\ell_1']\times [\ell_2']} W^{a_{xy}}_{xy}
$$
Expanding the exponential function in \eqref{exponential:identity:lower:bound} one can control all moments of $\delta_{xy}$ using the inequality above with corresponding moments of $W_{xy}$. Thus we conclude
\begin{align}
\EE_{\Delta} \exp((4n \ell_1 \ell_2 \rho^2 \sum_{xy \in [\ell_1']\times[\ell_2']}\delta_{xy})^2d/2)\leq \EE_{\bW} \exp((4n \ell_1 \ell_2 \rho^2 \sum_{xy \in [\ell_1']\times[\ell_2']}W_{xy})^2d/2)
\end{align}
The random variable $\sum_{xy \in [\ell_1']\times[\ell_2']}W_{xy} \sim N(0, \ell_1'\ell_2')$ and therefore \\$(\sum_{xy \in [\ell_1']\times[\ell_2']}W_{xy})^2/\ell_1'\ell_2' := \chi^2$ has a $\chi^2(1)$ distribution. We have 
\begin{align}
 \EE_{\bW} \exp((4n \ell_1 \ell_2 \rho^2 \sum_{xy \in [\ell_1']\times[\ell_2']}W_{xy})^2d/2) \leq  \EE_{\chi^2} \exp((4n \ell_1 \ell_2 \rho^2)^2\ell_1'\ell_2'\chi^2d/2).
\end{align}
Suppose now that $(4n \ell_1 \ell_2 \rho^2)^2d\ell_1'\ell_2' < 1$. The above is the mgf of a chi-squared random variable and hence equals to
\begin{align*}
\sqrt{\frac{1}{1 - (4n \ell_1 \ell_2 \rho^2)^2d\ell_1'\ell_2'}}.
\end{align*}
This quantity can be made arbitrarily close to $1$ provided that $(4n \ell_1 \ell_2 \rho^2)^2d\ell_1'\ell_2' $ is small. Based on this select $\frac{1}{d} \asymp \frac{(\ell_1\ell_2)^{1/5}}{n^{2/5}} \wedge 1$, $\rho \asymp \frac{1}{\ell_1 \ell_2 d^{3/2}}$ for some sufficiently small constants. This ensures $\frac{1}{\ell_1 \ell_2} - \rho \sqrt{d} \|h\|_{\infty} \geq 0$ and $\ell_1 \ell_2 \rho d^{3/2} \|h'\|_{\infty} \leq L$. With these choices we obtain that the critical radius is bounded from below by a constant times $\frac{1}{d} \asymp \frac{(\ell_1\ell_2)^{1/5}}{n^{2/5}} \wedge 1$. 
\end{proof}

\begin{proof}[Proof of Lemma \ref{TV:proj:vs:all}] We first start by showing several bounds on the $L_1$ norm between two arbitrary distributions $p_{}$ and $p'_{}$ from $\cE_{0,[0,1]}'$. We have that the $L_1$ norm between the distributions is 
\begin{align*}
\|p - p'\|_1 = \int_{z \in [0,1]} \sum_{x,y \in [\ell_1]\times[\ell_2]} |p_{Z}(z) p_{X,Y| Z}(x,y| z) - p'_{Z}(z) p'_{X,Y| Z}(x,y|z)|dz.
\end{align*}
Using the triangle inequality we conclude that 
\begin{align*}
\|p - p'\|_1 & \geq \int_{z \in [0,1]}  \bigg|\sum_{x,y \in [\ell_1]\times[\ell_2]}p_{Z}(z)p_{X,Y| Z}(x,y| z) - p'_{Z}(z) p'_{X,Y| Z}(x,y| z)\bigg| dz \\
& = \int_{z \in [0,1]}  |p_{Z}(z) - p'_{Z}(z)| dz = \|p_Z- p'_Z\|_1.
\end{align*}
Next we observe the following identities
\begin{align*}
\MoveEqLeft \|p - p'\|_1 \\
& = \int_{z \in [0,1]} \sum_{x,y \in [\ell_1]\times[\ell_2]} |p_{Z}(z) (p_{X,Y| Z}(x,y| z) - p'_{X,Y| Z}(x,y| z)) + (p_Z(z) - p'_{Z}(z)) p'_{X,Y| Z}(x,y| z)|dz\\
& \geq \int_{z \in [0,1]} \sum_{x,y \in [\ell_1]\times[\ell_2]} p_{Z}(z) |p_{X,Y| Z}(x,y| z) - p'_{X,Y| Z}(x,y| z)| - |p_Z(z) - p'_{Z}(z)| p'_{X,Y| Z}(x,y| z)dz\\
& = \EE_Z \|p_{X,Y|Z} - p'_{X,Y|Z}\|_1  - \|p_Z - p'_Z\|_1. 
\end{align*}
Combining the last two identities we obtain that 
\begin{align}
\max(\EE_Z \|p_{X  |Z}- p'_{X |Z}\|_1,  \EE_Z \|p_{ Y|Z}- p'_{ Y|Z}\|_1) & \leq \EE_Z \|p_{X,Y|Z} - p'_{X,Y|Z}\|_1   \nonumber \\
&\leq \|p_Z - p'_Z\|_1 + \|p - p'\|_1 \leq 2\|p - p'\|_1,\label{tv:bound:marginalized:dists}
\end{align}
where the first inequality follows by the triangle inequality. %and we have defined $p_{x\cdot | Z = z} = \sum_y p_{x,y|Z = z}$ and analogously for the others. 
Next, reversing the triangle inequality from before we obtain, 
\begin{align*}
\MoveEqLeft \|p - p'\|_1 \\
& = \int_{z \in [0,1]} \sum_{x,y \in [\ell_1]\times[\ell_2]} |p_{Z}(z) (p_{X,Y| Z}(x,y| z) - p'_{X,Y| Z}(x,y| z)) + (p_Z(z) - p'_{Z}(z)) p'_{X,Y| Z}(x,y| z)|dz\\
& \leq \int_{z \in [0,1]} \sum_{x,y \in [\ell_1]\times[\ell_2]} p_{Z}(z) |p_{X,Y| Z}(x,y| z) - p'_{X,Y| Z}(x,y| z)| + |p_Z(z) - p'_{Z}(z)| p_{X,Y| Z}(x,y| z)dz\\
& = \EE_Z \|p_{X,Y|Z} - p'_{X,Y|Z}\|_1  + \|p_Z-  p'_Z\|_1.
\end{align*}
Next, suppose that $q_{}$ is a distribution which is $\varepsilon$ far from being conditionally independent. We will denote the distribution $q_{X | Z} q_{Y | Z} q_Z$ with $\tilde q$. Let $p'$ be a distribution which is $\varepsilon$ away from $q$ in $\|\cdot \|_1$ and $p'$ is conditionally independent (if such a distribution does not exist we can take a sequence that approximates the infimum). We have
\begin{align*}
\|q- \tilde q\|_1 \leq \|q- p'\|_1 + \|\tilde q - p'\|_1 \leq \varepsilon + \|\tilde q - p'\|_1
\end{align*}
We now handle the second term
\begin{align*}
\|\tilde q - p'\|_1 & \leq  \EE_Z \|\tilde q_{X,Y|Z} - p'_{X,Y|Z}\|_1  + \|q_Z- p'_Z\|_1\\
&\leq \EE_Z \|\tilde q_{X,Y|Z} - p'_{X,Y|Z}\|_1 + \varepsilon 
\end{align*}
Using that TV is sub-additive on product distributions we now have
\begin{align*}
\EE_Z \|\tilde q_{X,Y|Z}- p'_{X,Y|Z}\|_1 & \leq \EE_Z \|\tilde q_{X|Z}- p'_{X|Z}\|_1 + \EE_Z \|\tilde q_{Y|Z} - p'_{Y|Z}\|_1\\
& = \EE_Z \|q_{X|Z} - p'_{X|Z}\|_1 + \EE_Z \|q_{Y |Z}- p'_{Y|Z}\|_1\\
& \leq 4 \varepsilon,
\end{align*}
where we used \eqref{tv:bound:marginalized:dists} in the last bound. We conclude that 
$$
\|q- \tilde q\|_1 \leq 6 \varepsilon,
$$
which completes the proof. 
\end{proof}

\begin{proof}[Proof of Theorem \ref{lower:bound:continuous:case}]  Suppose $(X,Y,Z) \in [0,1]^3$ are three variables with a joint density with respect to the Lebesgue measure in $[0,1]^3$. Under the null hypothesis we specify the distribution as $p_{X,Y,Z}(x,y,z) = 1$ for all $(x,y,z) \in [0,1]^3$, or in other words the three variables have independent uniform distributions on $[0,1]$. Clearly this distribution belongs to the sets $\cP_{0,[0,1]^3,\TV}(L)$ and $\cP_{0,[0,1]^3,\chi^2}(L)$. Under the alternative hypothesis we specify the distribution as
\begin{align*}
q_{X,Y|Z}(x,y|z) = 1 + \gamma_{\Delta}(x,y)\eta_{\nu}(z),
\end{align*}
where as in the proof of Theorem \ref{first:lower:bound} 
$$
\eta_{\nu}(z) = \rho \sum_{j \in [d]} \nu_j h_{j, d}(z),
$$
where $\rho > 0$ is a constant, $d \in \NN$, $\nu_i \in \{-1,+1\}$ ,and $h_{j, d}(z) = \sqrt{d}h(d z - j + 1)$ for $z \in [(j-1)/d, j/d]$, and $h$ is an infinitely differentiable function supported on $[0,1]$ 
%(and it's zero outside of $[0,1]$) 
such that $\int h(z) dz = 0$ and $\int h^2(z) dz = 1$. Furthermore we take
$$
\gamma_{\Delta}(x,y) =  \rho^2 \sum_{j \in [d']}\sum_{i \in [d']} \delta_{ij} h_{i, d'}(x)h_{j, d'}(y),\footnotemark
$$
\footnotetext{Here $\Delta = \{\delta_{ij}\}_{i,j\in[d]}$.}and we let the marginal distribution of $Z$ be uniform on $[0,1]$ (i.e. we let $q_Z = p_Z \equiv 1$). In order for this perturbation to be meaningful we need that $1 \geq \sqrt{(d')^2 d} \|h\|_{\infty}^3 \rho^3$. It is simple to check that $\int_{[0,1]^2} q_{X,Y|Z}(x,y|z) dx dy = 1$. Let us now check what are the marginals of such a distribution conditioned on $z$. We have
$$
q_{X|Z}(x|z) = \int_{[0,1]} q_{X,Y | Z}(x,y|z) dy = 1 +  \eta_{\nu}(z)\int_{[0,1]} \gamma_{\Delta}(x,y) dy = 1.
$$
Similarly $\int_{[0,1]} q_{X,Y|Z}(x,y|z) dx = 1$. It is therefore clear that $q_{X|Z}(x|z) q_{Y|Z}(y|z)  \in \cH^{2,s}(L)$ for any $L$. We now check how far away is the distribution $q_{X,Y,Z}(x,y,z)= q_{X,Y|Z}(x,y|z)q_Z(z) = q_{X,Y|Z}(x,y|z)p_Z(z) = q_{X,Y|Z}(x,y|z)$ with respect to $p_{X,Y,Z}(x,y,z)$ (note that $p_{X,Y,Z} = q_{X | Z} q_{Y|Z} q_Z$) in total variation. We have
\begin{align*}
\|q_{X,Y|Z}q_Z - p_{X,Y,Z}\|_1 &= \int_{[0,1]^3} |\gamma_{\Delta}(x,y)\eta_{\nu}(z)| dx dy dz \\
&= \int_{[0,1]}  \rho \sum_{j \in [d']} |h_{j,d'}(x)| dx \int_{[0,1]}\rho \sum_{j \in [d']} |h_{j,d'}(y)|  dy \int_{[0,1]} \rho \sum_{j \in [d]}|\eta_{j,d}(z)| dz.
\end{align*} 
Calculating each of the above integrals and multiplying them yields 
\begin{align*}
\|q_{X,Y|Z}p_Z - p_{X,Y,Z}\|_1 = \|q_{X,Y,Z} - q_{X | Z} q_{Y|Z} q_Z\|_1 = \sqrt{d (d')^2} \rho^3 c^3,
\end{align*}
where $c = \int_{[0,1]} |h(x)| dx$. Using Lemma \ref{TV:proj:vs:all} (here we use this lemma with a slight abuse of notation since the lemma is only valid for discrete $X,Y$ and continuous $Z$, but the same proof extends to the continuous case) we have that 
$$\inf_{p \in \cP_{0,[0,1]^3}}\|q - p\|_1 \geq \frac{\sqrt{d (d')^2} \rho^3 c^3}{6}.$$ Next we check that the TV between the distributions $(X,Y | Z = z)$ and $(X,Y| Z = z')$ is Lipschitz.
\begin{align*}
\int_{[0,1]^2}|\gamma_{\Delta}(x,y)| |\eta_{\nu}(z) - \eta_{\nu}(z')| dx dy = \sqrt{(d')^2} \rho^2 c^2  |\eta_{\nu}(z) - \eta_{\nu}(z')|. 
\end{align*}
We now observe that the derivative of $\eta_{\nu}(z)$ is bounded by $\sqrt{d}d\rho \|h'\|_{\infty}$, therefore the above is bounded by
$$
\sqrt{(d')^2} \rho^2 c^2 \sqrt{d}d\rho \|h'\|_{\infty} |z - z'|. 
$$
Next we check that $q(x, y | z)$ belongs to the H\"{o}lder class in $x$ and $y$. We have that 
\begin{align*}
%\bigg|\frac{d}{dx} \gamma_{\Delta}(x,y) \eta_{\nu}(z)\bigg| \leq \sqrt{d} d\rho \|h'\|_{\infty}(\sqrt{d}\rho)^2 \|h\|^2_{\infty},
\MoveEqLeft \bigg|\frac{\partial^k}{\partial x^k} \frac{\partial^{\lfloor s \rfloor - k}}{\partial y^{\lfloor s \rfloor - k}}\gamma_{\Delta}(x,y)\eta_{\nu}(z) - \frac{\partial^k}{\partial x^k} \frac{\partial^{\lfloor s \rfloor - k}}{\partial y^{\lfloor s \rfloor - k}}\gamma_{\Delta}(x',y')\eta_{\nu}(z)\bigg| \\
& \leq \sqrt{d}\rho \|h\|_{\infty} \bigg|\frac{\partial^k}{\partial x^k} \frac{\partial^{\lfloor s \rfloor - k}}{\partial y^{\lfloor s \rfloor - k}}\gamma_{\Delta}(x,y) - \frac{\partial^k}{\partial x^k} \frac{\partial^{\lfloor s \rfloor - k}}{\partial y^{\lfloor s \rfloor - k}}\gamma_{\Delta}(x',y')\bigg|\\
& = \sqrt{d}\rho \|h\|_{\infty}\rho^2 (\sqrt{d'})^2 (d')^{\lfloor s \rfloor} \bigg| \sum_{i,j \in [d']} \delta_{ij}\bigg[ h^{(k)}(d'x - i + 1) h^{(\lfloor s \rfloor -k)}(d'y - j + 1) \\
& -h^{(k)}(d'x' - i + 1) h^{(\lfloor s \rfloor -k)}(d'y' - j + 1)  \bigg]\bigg|.
%& \leq 
\end{align*}
Suppose now that $x \in \bigg[\frac{i_x - 1}{d'}, \frac{i_x}{d'}\bigg], y \in \bigg[\frac{j_y - 1}{d'}, \frac{j_y}{d'}\bigg]$, and $x' \in \bigg[\frac{i_{x'} - 1}{d'}, \frac{i_{x'}}{d'}\bigg], y' \in \bigg[\frac{j_{y'} - 1}{d'}, \frac{j_{y'}}{d'}\bigg]$. Therefore the above summation can be bounded as
\begin{align*}
& \sqrt{d}\rho \|h\|_{\infty}\rho^2 (\sqrt{d'})^2 (d')^{\lfloor s \rfloor} \bigg[\bigg| h^{(k)}(d'x - i_x + 1) h^{(\lfloor s \rfloor -k)}(d'y - j_y + 1) -h^{(k)}(d'x' - i_x + 1) h^{(\lfloor s \rfloor -k)}(d'y' - j_y + 1)\bigg|\\
&+  \bigg| h^{(k)}(d'x - i_{x'} + 1) h^{(\lfloor s \rfloor -k)}(d'y - j_{y'} + 1) -h^{(k)}(d'x' - i_{x'} + 1) h^{(\lfloor s \rfloor -k)}(d'y' - j_{y'} + 1)\bigg|  \bigg].
\end{align*}
Next we will handle the first expression in the bracket above:
\begin{align*}
& \leq |h^{(k)}(d'x - i_x + 1) -h^{(k)}(d'x' - i_x + 1)| |h^{(\lfloor s \rfloor -k)}(d'y - j_y + 1)| \\
& +  |h^{(k)}(d'x' - i_x + 1)| |h^{(\lfloor s \rfloor -k)}(d'y - j_y + 1) - h^{(\lfloor s \rfloor -k)}(d'y' - j_y + 1)|\\
& \leq d' \|h^{(k + 1)}\|_\infty |x - x'| \|h^{(\lfloor s \rfloor -k)}\|_{\infty} \wedge (2 \|h^{(k)}\|_\infty\|h^{(\lfloor s \rfloor -k)}\|_{\infty})  \\
& + d' \|h^{(\lfloor s \rfloor -k) + 1}\|_\infty |y - y'| \|h^{k}\|_{\infty} \wedge (2\|h^{(\lfloor s \rfloor -k)}\|_\infty \|h^{k}\|_{\infty})\\
& \leq C(1 \wedge d' \sqrt{(x - x')^2 + (y - y')^2})\\
& \leq C (d' \sqrt{(x - x')^2 + (y - y')^2})^{s - \lfloor s \rfloor},
\end{align*}
where in the last inequality we used that $(1 \wedge u)^a \leq u^a$ for $u > 0$ and $0 \leq a \leq 1$. We can handle the second expression in the bracket above in a similar way. 

In addition it is clear that any lower order $k \leq \lfloor s \rfloor$ partial derivatives with respect to $x$ and $y$ of $q(x,y|z)$ are bounded by $\sqrt{d}\rho^3 \|h\|_{\infty} \sqrt{d'}^2 (d')^{k} C$ for some constant $C$ which will depend on the function $h$.

It therefore suffices that $\sqrt{d}\rho^3 (\sqrt{d'})^2 (d')^s$ to be smaller than a constant and we will have both conditions satisfied. Now we write down the likelihood ratio between the null and the alternative mixing over all choices of Rademacher vector and matrix $\nu, \Delta$:
\begin{align*}
W = \EE_{\nu,\Delta} \prod_{i = 1}^n (1 + \gamma_{\Delta}(X_i, Y_i) \eta_{\nu}(Z_i)).
\end{align*}

The second moment of $W$ is 
\begin{align*}
\EE W^2 & = \EE_{\nu, \nu',\Delta,\Delta'}  \prod_{i = 1}^n \EE_0(1 + \gamma_{\Delta}(X_i, Y_i) \eta_{\nu}(Z_i))(1 +\gamma_{\delta'}(X_i, Y_i) \eta_{\nu'}(Z_i))\\
& = \EE_{\nu, \nu',\Delta,\Delta'}  \prod_{i = 1}^n (1 + \EE_0 \gamma_{\Delta}(X_i, Y_i) \eta_{\nu}(Z_i)\gamma_{\delta'}(X_i, Y_i) \eta_{\nu'}(Z_i)),
\end{align*}
where the above follows from the fact that $\EE_0 \eta_{\nu}(Z_i) = 0$ (and that $X_i$ and $Y_i$ are independent of $Z_i$ under the null hypothesis). Continuing the identities yields
\begin{align*}
\EE W^2 & =  \EE_{\nu, \nu',\Delta,\Delta'}  \prod_{i = 1}^n (1 + \EE_0 \gamma_{\Delta}(X_i, Y_i) \gamma_{\delta'}(X_i, Y_i) \EE_0\eta_{\nu}(Z_i) \eta_{\nu'}(Z_i))\\
& = \EE_{\nu, \nu',\Delta,\Delta'}  \prod_{i = 1}^n (1 + \rho^6 \langle\Delta, \Delta'\rangle\langle \nu,\nu'\rangle),
\end{align*}
where $\langle\Delta,\Delta' \rangle= \Tr(\Delta\T \Delta')$. From here the proof can continue as in Theorem \ref{first:lower:bound}. The final expression that needs to be smaller than a constant is $(n \rho^6)^2 d (d')^2$. Set $d \asymp n^{2s/(5s + 2)}$, $d' = d^{1/s}$, $\rho^3 \asymp d^{-(3/2 + 1/s)}$. This results in a rate $\asymp 1/d \asymp n^{-2s/(5s + 2)}$. 
\end{proof}

\section{Proofs from Section \ref{upper:bound:section}}
\label{app:upper}

This section contains the proofs of Sections \ref{fixed:l1:l2:section}, \ref{scaling:l1:l2:section} and \ref{cont:case:upper:bounds:section}.

\subsection{Proofs from Section \ref{fixed:l1:l2:section}}

\begin{proof}[Proof of Lemma \ref{U:stat:variance:lemma}] According to Section 12 of \cite{van2000asymptotic} the variance of the U-statistic \eqref{U:stat} equals to 
\begin{align*}
\MoveEqLeft O\bigg(\frac{1}{\sigma}\bigg) \Cov(h_{ijkl}, h_{ij'k'l'}) + O\bigg(\frac{1}{\sigma^2}\bigg)\Cov(h_{ijkl}, h_{ijk'l'}) \\
&+ O\bigg(\frac{1}{\sigma^3}\bigg)\Cov(h_{ijkl}, h_{ijkl'}) + O\bigg(\frac{1}{\sigma^4}\bigg)\Var(h_{ijkl}),
\end{align*}
where $i, j, k, l, j', k', l' \in [\sigma]$ are distinct indices of observations (it is ok if the sample size $\sigma$ is smaller than $7$, then the first terms simply do not contribute). We will now argue that 
\begin{align*}
\Cov(h_{ijkl}, h_{ij'k'l'}) \leq C \EE[U(\cD)] \max(\|p_{X',Y'}\|_2, \|p_{X'}p_{Y'}\|_2),
\end{align*}
and that all other covariances are bounded by $C \max(\|p_{X',Y'}\|^2_2, \|p_{X'}p_{Y'}\|^2_2)$ which will complete the proof.
%We will now show that the above is bounded as
%$$
% C\bigg(\frac{\EE[U_m|\sigma_m]\max(\|q(m)\|_2, \|q_\Pi(m)\|_2)}{\sigma_m} + \frac{\max(\|q(m)\|^2_2, \|q_\Pi(m)\|^2_2)}{\sigma_m^2}\bigg),
%$$
%for some absolute constant $C$, where $\|q_\Pi(m)\|_2^2 = \sum_{x,y}(q_{x\cdot}(m) q_{\cdot y}(m))^2$ and $\|q(m)\|_2^2 = \sum_{x,y}q_{xy}(m)^2$.
In order to bound the first term from above it suffices to control the following expression
\begin{align*}
\MoveEqLeft \EE \sum_{x, y} \phi_{\pi_1\pi_2}(xy)\phi_{\pi_3 \pi_4}(xy)  \sum_{x',y'} \phi_{\pi_1'\pi_2'}(x'y')\phi_{\pi_3' \pi_4'}(x'y') \\
&- \EE \sum_{x, y} \phi_{\pi_1\pi_2}(xy)\phi_{\pi_3 \pi_4}(xy)  \EE \sum_{x',y'} \phi_{\pi_1'\pi_2'}(x'y')\phi_{\pi_3' \pi_4'}(x'y'),
\end{align*}
where $\pi$ is a permutation of $i,j,k,l$ and $\pi'$ is a permutation of $i,j',k',l'$. We can rewrite the above as
\begin{align}
%& \sum_{x,y,x',y'} \bigg\{\EE  \phi_{\pi_1\pi_2}(xy)\phi_{\pi_3 \pi_4}(xy) \phi_{\pi_1'\pi_2'}(x'y')\phi_{\pi_3' \pi_4'}(x'y') - \EE \phi_{\pi_1\pi_2}(xy)\phi_{\pi_3 \pi_4}(xy)  \EE \phi_{\pi_1'\pi_2'}(x'y')\phi_{\pi_3' \pi_4'}(x'y') \bigg\}\\
%\sum_{x,y,x',y'} \bigg\{\EE  \phi_{\pi_1\pi_2}(xy)\phi_{\pi_3 \pi_4}(xy) \phi_{\pi_1'\pi_2'}(x'y')\phi_{\pi_3' \pi_4'}(x'y') - \EE \phi_{\pi_1\pi_2}(xy)\EE \phi_{\pi_3 \pi_4}(xy)  \EE \phi_{\pi_1'\pi_2'}(x'y') \EE\phi_{\pi_3' \pi_4'}(x'y') \bigg\},
\sum_{x,y,x',y'} \bigg\{\EE  \phi_{\pi_1\pi_2}(xy)\phi_{\pi_3 \pi_4}(xy) \phi_{\pi_1'\pi_2'}(x'y')\phi_{\pi_3' \pi_4'}(x'y') - (\EE \phi_{\pi_1\pi_2}(xy))^2(\EE \phi_{\pi_1'\pi_2'}(x'y'))^2 \bigg\}\label{equation:going:back:to}
\end{align}
where we used that by independence $\EE \phi_{\pi_1\pi_2}(xy)\phi_{\pi_3 \pi_4}(xy) = \EE \phi_{\pi_1\pi_2}(xy)\EE\phi_{\pi_3 \pi_4}(xy) = (\EE \phi_{\pi_1\pi_2}(xy))^2$ and similarly for $\pi'$. Without loss of generality suppose that $i$ is some of $\pi_1,\pi_2$ and $\pi_1', \pi_2'$.
Going back to equation \eqref{equation:going:back:to} we have
\begin{align*}
%\MoveEqLeft %\sum_{x,y,x',y'} \bigg\{\EE  \phi_{\pi_1\pi_2}(xy)\phi_{\pi_3 \pi_4}(xy) \phi_{\pi_1'\pi_2'}(x'y')\phi_{\pi_3' \pi_4'}(x'y') - \EE \phi_{\pi_1\pi_2}(xy)\EE \phi_{\pi_3 \pi_4}(xy)  \EE \phi_{\pi_1'\pi_2'}(x'y') \EE\phi_{\pi_3' \pi_4'}(x'y') \bigg\} \\
\MoveEqLeft \sum_{x,y,x',y'} \bigg\{\EE  \phi_{\pi_1\pi_2}(xy)\phi_{\pi_3 \pi_4}(xy) \phi_{\pi_1'\pi_2'}(x'y')\phi_{\pi_3' \pi_4'}(x'y') - (\EE \phi_{\pi_1\pi_2}(xy))^2(\EE \phi_{\pi_1'\pi_2'}(x'y'))^2 \bigg\}\\
& \leq \sum_{x,y,x',y'} \bigg\{\EE  \phi_{\pi_1\pi_2}(xy)\phi_{\pi_3 \pi_4}(xy) \phi_{\pi_1'\pi_2'}(x'y')\phi_{\pi_3' \pi_4'}(x'y')\bigg\}\\
& = \sum_{x,y,x',y'} \bigg\{\EE (\phi_{\pi_1\pi_2}(xy)\phi_{\pi_1'\pi_2'}(x'y'))\EE \phi_{\pi_3 \pi_4}(xy) \EE\phi_{\pi_3' \pi_4'}(x'y') \bigg\}\\
&\leq \sqrt{\sum_{x,y,x',y'} \{\EE (\phi_{\pi_1\pi_2}(xy) \phi_{\pi_1'\pi_2'}(x'y'))\}^2} \sqrt{\sum_{x,y,x',y'} \{\EE \phi_{\pi_3 \pi_4}(xy)\}^2 \{\EE\phi_{\pi_3' \pi_4'}(x'y')\}^2}\\
& = \sqrt{\sum_{x,y,x',y'} \{\EE (\phi_{\pi_1\pi_2}(xy) \phi_{\pi_1'\pi_2'}(x'y'))\}^2} \EE[U(\cD)]
\end{align*}
where the next to last step follows from Cauchy-Schwarz. To this end we formalize the following lemma.
\begin{lemma}\label{simple:lemma:discrete} For two random variables $A,B \in \{\pm1,0\}$ we have 
$$
(\EE A B)^2 \leq \EE[|B| | |A| = 1] (\EE |A|)^2.
$$
\end{lemma}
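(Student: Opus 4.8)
The plan is to reduce the claim to a single application of the Cauchy--Schwarz inequality, after isolating the event on which $AB$ is nonzero. Since $A$ takes values in $\{\pm 1, 0\}$, we have $A^2 = |A| = \mathbbm{1}(|A| = 1)$, and in particular $AB = 0$ whenever $|A| \neq 1$. Hence I would write $AB = A \cdot \big(\mathbbm{1}(|A| = 1)\, B\big)$ and apply Cauchy--Schwarz to this product.

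First I would record the two second-moment computations that feed the bound. On one hand $\EE[A^2] = \EE[|A|]$ by the observation above. On the other hand, since $B^2 = |B|$ and $\mathbbm{1}(|A|=1)^2 = \mathbbm{1}(|A|=1)$,
\begin{align*}
\EE\big[(\mathbbm{1}(|A|=1)\, B)^2\big] = \EE\big[\mathbbm{1}(|A|=1)\,|B|\big] = \PP(|A|=1)\,\EE[\,|B| \mid |A| = 1\,] = \EE[|A|]\,\EE[\,|B| \mid |A|=1\,],
\end{align*}
where in the last step I used $\PP(|A|=1) = \EE[\mathbbm{1}(|A|=1)] = \EE[|A|]$. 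Then Cauchy--Schwarz gives
\begin{align*}
(\EE[AB])^2 = \big(\EE\big[A \cdot \mathbbm{1}(|A|=1)\, B\big]\big)^2 \leq \EE[A^2]\,\EE\big[(\mathbbm{1}(|A|=1)\,B)^2\big] = \EE[|A|]^2\,\EE[\,|B| \mid |A|=1\,],
\end{align*}
which is exactly the asserted inequality.

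There is essentially no hard step here: the only things to be careful about are the bookkeeping relating $\PP(|A|=1)$, $\EE[|A|]$, and the conditional expectation $\EE[\,|B| \mid |A|=1\,]$, and the degenerate case $\PP(|A|=1)=0$, in which $A=0$ almost surely so both sides vanish and the statement holds trivially (otherwise the conditioning is well defined).
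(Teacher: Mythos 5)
Your proof is correct. It differs from the paper's argument, which is even more direct: the paper bounds $(\EE AB)^2 \leq (\EE|AB|)^2$, notes that $|AB| = |A|\,|B|$ with $|A| = \mathbbm{1}(|A|=1)$ so that $\EE|AB| = \EE[\,|B| \mid |A|=1\,]\,\EE|A|$ exactly, and then drops one factor using $\EE[\,|B| \mid |A|=1\,] \leq 1$ (which holds since $|B| \leq 1$). You instead factor $AB = A\cdot\big(\mathbbm{1}(|A|=1)B\big)$ and apply Cauchy--Schwarz, using $A^2 = |A|$ and $B^2 = |B|$ to evaluate the two second moments. Both routes exploit the $\{\pm 1, 0\}$ structure, just at different points: the paper's chain is an exact computation followed by the crude bound $\EE[\,|B|\mid |A|=1\,]\le 1$, whereas your Cauchy--Schwarz step is where the slack enters in your version; the two are equally elementary and yield the same constant, and your explicit treatment of the degenerate case $\PP(|A|=1)=0$ is a harmless extra that the paper leaves implicit.
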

Now we apply Lemma \ref{simple:lemma:discrete} to the first term on the RHS with $A = \phi_{\pi_1\pi_2}(xy)$ and $B = \phi_{\pi_1'\pi_2'}(x'y')$ noting that $\phi$ can only take values $\{\pm1, 0\}$. We obtain
\begin{align*}
\sum_{x,y,x',y'} \{\EE (\phi_{\pi_1\pi_2}(xy) \phi_{\pi_1'\pi_2'}(x'y'))\}^2 \leq \sum_{x,y,x',y'} \EE [|\phi_{\pi_1'\pi_2'}(x'y'))| | |\phi_{\pi_1\pi_2}(xy)| = 1] (\EE |\phi_{\pi_1\pi_2}(xy)|)^2
\end{align*}
Note that 
\begin{align*}|\phi_{\pi_1\pi_2}(xy)| %\mathbbm{1}(X'_{\pi_1} = x, Y'_{\pi_1} = y)\sum_{x' \neq x}\sum_{y' \neq y} \mathbbm{1}(X'_{\pi_2} = x', Y'_{\pi_2} = y') \\
%& + \sum_{x' \neq x}\mathbbm{1}(X'_{\pi_1} = x', Y'_{\pi_1} = y) \sum_{y' \neq y} \mathbbm{1}(X'_{\pi_2} = x, Y'_{\pi_2} = y')\\
%& \leq \mathbbm{1}(X'_{\pi_1} = x, Y'_{\pi_1} = y) + \sum_{x'}\mathbbm{1}(X'_{\pi_1} = x', Y'_{\pi_1} = y) \sum_{y'} \mathbbm{1}(X'_{\pi_2} = x, Y'_{\pi_2} = y')\\
& \leq \mathbbm{1}(X'_{\pi_1} = x, Y'_{\pi_1} = y) + \mathbbm{1}(Y'_{\pi_2} = y) \mathbbm{1}(X'_{\pi_1} = x),
\end{align*}
and a similar inequality holds for $|\phi_{\pi_1'\pi_2'}(x'y')|$. Thus 
\begin{align*}
\sum_{x',y'} |\phi_{\pi_1'\pi_2'}(x'y')| \leq \sum_{x',y'}\mathbbm{1}(X'_{\pi_1'} = x', Y'_{\pi_1'} = y') + \mathbbm{1}(Y'_{\pi_2'} = y') \mathbbm{1}(X'_{\pi_1'} = x') \leq 2.
\end{align*}
Hence 
\begin{align*}
\MoveEqLeft \sum_{x, y} \EE [\sum_{x',y'}|\phi_{\pi_1'\pi_2'}(x'y'))| | |\phi_{\pi_1\pi_2}(xy)| = 1] (\EE |\phi_{\pi_1\pi_2}(xy)|)^2 \leq 2\sum_{x, y}(\EE |\phi_{\pi_1\pi_2}(xy)|)^2 \\
& \leq 2 \sum_{x, y} (p_{X',Y'}(x,y) + p_{X'}(x)p_{Y'}(y))^2 \leq 4 (\|p_{X',Y'}\|^2_2 + \|p_{X'}p_{Y'}\|_2^2),
\end{align*}
which is what we wanted to show.  Now it remains to prove Lemma \ref{simple:lemma:discrete}.

\begin{proof}[Proof of Lemma \ref{simple:lemma:discrete}]
We have
\begin{align*}
(\EE AB)^2 \leq (\EE |AB|)^2 = (\EE[|B| | |A| = 1] \EE |A|)^2 \leq \EE[|B| | |A| = 1] (\EE |A|)^2,
%\MoveEqLeft (\EE AB)^2  = (\sum_{i \in \{\pm 1\}}\EE(B | A = i) \sqrt{\PP(A = i)}\sqrt{\PP(A = i)})^2 \stackrel{\mbox{\tiny CS}}{\leq} \bigg(\sum_{i \in \{\pm 1\}}\EE^2(B | A = i) \PP(A = i)\bigg)\EE|A|\\
%& =  \bigg(\sum_{i \in \{\pm 1, 0\}}\EE^2(B |A| | A = i) \PP(A = i)\bigg)\EE|A|\\
%& \stackrel{\mbox{\tiny Jensen}}{\leq} \EE [A^2 B^2] \EE |A| = \EE [|A| |B|] \EE |A| = \EE [|B| | |A| = 1] (\EE |A|)^2.
\end{align*}
where in the last step we used that $\EE [|B| | |A| = 1] \leq 1$.
\end{proof}

Now we will show how to bound any higher order terms: $\Cov( h_{ijkl}, h_{ijk'l'})$ where $i, j, k,\allowbreak l, k', l' \in [\sigma]$ (and it's possible for $k', l'$ to be equal to $k$ or $l$). To bound these terms, following the same strategy as before, it suffices to control the quantity
\begin{align*}
\sum_{x,y,x',y'} \bigg\{\EE  \phi_{\pi_1\pi_2}(xy)\phi_{\pi_3 \pi_4}(xy) \phi_{\pi_1'\pi_2'}(x'y')\phi_{\pi_3' \pi_4'}(x'y')\bigg\}
\end{align*}
We will now use the fact that for random variables $A,B \in \{\pm 1,0\}$
$$
\EE AB \leq \EE|AB| = \EE[|B| | |A| = 1] \EE |A|,
$$
where $A = \phi_{\pi_1\pi_2}(xy)\phi_{\pi_3 \pi_4}(xy)$ and $B = \phi_{\pi_1'\pi_2'}(x'y')\phi_{\pi_3' \pi_4'}(x'y')$. We have
\begin{align*}
\MoveEqLeft \sum_{x,y,x',y'} \bigg\{\EE  \phi_{\pi_1\pi_2}(xy)\phi_{\pi_3 \pi_4}(xy) \phi_{\pi_1'\pi_2'}(x'y')\phi_{\pi_3' \pi_4'}(x'y')\bigg\}  \\
&\leq \sum_{x,y,x',y'} \EE[|\phi_{\pi_1'\pi_2'}(x'y')\phi_{\pi_3' \pi_4'}(x'y')| | |\phi_{\pi_1\pi_2}(xy)\phi_{\pi_3 \pi_4}(xy)| = 1] \EE |\phi_{\pi_1\pi_2}(xy)\phi_{\pi_3 \pi_4}(xy)|
\end{align*}
We now use that as we saw before
$$
|\phi_{\pi_1\pi_2}(xy)| \leq \mathbbm{1}(X'_{\pi_1} = x, Y'_{\pi_1} = y) + \mathbbm{1}(Y'_{\pi_2} = y) \mathbbm{1}(X'_{\pi_1} = x),
$$
and analogously for the others. Furthermore $|\phi_{\pi_1\pi_2}(xy)| \leq 1$. We have %Denote the RHS with $\kappa_{\pi_1 \pi_2}(xy)$ for brevity. Thus
\begin{align*}
\MoveEqLeft \sum_{x',y'}\EE[|\phi_{\pi_1'\pi_2'}(x'y')\phi_{\pi_3' \pi_4'}(x'y')| | |\phi_{\pi_1\pi_2}(xy)\phi_{\pi_3 \pi_4}(xy)| = 1] \leq \sum_{x',y'}\EE[|\phi_{\pi_1'\pi_2'}(x'y')| |\phi_{\pi_1\pi_2}(xy)\phi_{\pi_3 \pi_4}(xy)| = 1]\\
& \leq  2.
\end{align*}
Next by independence, 
\begin{align*}
\MoveEqLeft\sum_{x, y}\EE |\phi_{\pi_1\pi_2}(xy)\phi_{\pi_3 \pi_4}(xy)| = \sum_{x, y} \EE |\phi_{\pi_1\pi_2}(xy)| \EE|\phi_{\pi_3 \pi_4}(xy)| \\
& \leq \sum_{x, y} (p_{X',Y'}(x,y) + p_{X'}(x)p_{Y'}(y))^2 \\
& \leq \sum_{x, y}2(p_{X',Y'}^2(x,y) + p_{X'}^2(x)p_{Y'}^2(y)) = 2 (\|p_{X',Y'}\|_2^2 + \|p_{X'}p_{Y'}\|_2^2).
\end{align*}
This completes the proof.
\end{proof}

Below we will prove the following version of Theorem \ref{main:theorem:finite:discrete:XY}
\begin{theorem}[Finite Discrete $X$, $Y$ Upper Bound] \label{main:theorem:finite:discrete:XY:Poi} Set $d = \lceil n^{2/5} \rceil$ and let $\tau = \zeta n^{1/5}$ for a sufficiently large absolute constant $\zeta$ (depending on $L$). 
Finally, suppose that $\varepsilon \geq c n^{-2/5}$, for a sufficiently large constant $c$ (depending on $\zeta$, $L$, $\ell_1,\ell_2$).
Then we have that 
\begin{align*}
\sup_{p \in \cP_{0,[0,1],\TV^2}'(L) \cup \cP_{0,[0,1],\TV}'(L) \cup \cP_{0,[0,1],\chi^2}'(L)} \sum_{k = 0}^\infty \PP(N = k)\EE_p[\psi_\tau(\cD_k)] & \leq \frac{1}{10},\\ 
\sup_{p \in \{p \in \cQ_{0,[0,1],\TV}'(L): \inf_{q \in \cP'_{0,[0,1]}} \|p - q\|_1 \geq \varepsilon\}} \sum_{k = 0}^\infty \PP(N = k)\EE_p[ 1- \psi_\tau(\cD_k)] & \leq \frac{1}{10}.
\end{align*}
\end{theorem}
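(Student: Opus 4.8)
The plan is to establish the two displayed bounds directly under Poissonization; the fixed-sample statement of Theorem~\ref{main:theorem:finite:discrete:XY} then follows at once, because under the ``accept if $N>n$'' convention the type~I error only decreases while the type~II error grows by at most $\PP(\operatorname{Poi}(n/2)>n)\le\exp(-n/8)$. The first step is to record the structure induced by $N\sim\operatorname{Poi}(n/2)$: the bin counts $\sigma_m=|\cD_m|$ are mutually independent with $\sigma_m\sim\operatorname{Poi}(\lambda_m)$, $\lambda_m=(n/2)\,\PP(Z\in C_m)$, and conditionally on $(\sigma_m)_{m\in[d]}$ the observations in bin $C_m$ are i.i.d.\ from $q^{(m)}:=p_{X,Y\mid Z\in C_m}$. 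Hence $T=\sum_m\mathbbm{1}(\sigma_m\ge4)\,\sigma_m U_m$ is a sum of terms independent across $m$, with $\EE[U_m\mid\sigma_m]=\theta_m:=\|q^{(m)}_{X,Y}-q^{(m)}_Xq^{(m)}_Y\|_2^2$ on $\{\sigma_m\ge4\}$ and $\operatorname{Var}[U_m\mid\sigma_m]$ governed by Lemma~\ref{U:stat:variance:lemma}; since $\ell_1,\ell_2$ are fixed, every $L_2$ norm there is at most $1$, so $\operatorname{Var}[U_m\mid\sigma_m]\lesssim\theta_m/\sigma_m+1/\sigma_m^2$. Finally, by the inclusions of Section~\ref{examples:section} it suffices to handle the null over $\cP_{0,[0,1],\TV^2}'(L')$ for a suitable $L'=L'(L)$, as this class contains the other two.

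For the type~I bound the key point (as flagged in the remarks following Theorem~\ref{main:theorem:finite:discrete:XY}) is that $T$ is \emph{not} mean-zero under the null, since binning $Z$ moves $q^{(m)}$ off the product manifold; I would bound $\EE_p[T]$ directly rather than reduce to an imprecise null. Using conditional independence, $q^{(m)}_{X,Y}(x,y)-q^{(m)}_X(x)q^{(m)}_Y(y)=\operatorname{Cov}_{Z\sim\mu_m}\!\big(p_{X\mid Z}(x\mid Z),\,p_{Y\mid Z}(y\mid Z)\big)$, where $\mu_m$ is $p_Z$ restricted to $C_m$; writing the covariance over two i.i.d.\ copies of $Z$ and summing over $(x,y)$ factorizes the double sum, so the $\TV^2$-Lipschitz bound $\sum_x|p_{X\mid Z}(x\mid z)-p_{X\mid Z}(x\mid z')|\le\sqrt{L'|z-z'|}\le\sqrt{L'/d}$ gives $\|q^{(m)}_{X,Y}-q^{(m)}_Xq^{(m)}_Y\|_1\lesssim L'/d$, hence $\theta_m\lesssim(L'/d)^2$. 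Summing, $\EE_p[T]\le\sum_m\EE[\sigma_m]\theta_m\lesssim_{L'}n/d^2\asymp n^{1/5}$, which is $\le\tau/2$ for $\zeta$ large. For the fluctuation I would invoke independence across bins, the law of total variance, Lemma~\ref{U:stat:variance:lemma}, and elementary Poisson moment estimates ($\EE[\sigma_m^2]=\lambda_m+\lambda_m^2$, $\operatorname{Var}(\sigma_m\mathbbm{1}(\sigma_m\ge4))\lesssim\lambda_m$ when $\lambda_m\ge1$, $\sum_m\mathbbm{1}(\sigma_m\ge4)\le d$, $\sum_m\lambda_m=n/2$) to get $\operatorname{Var}_p[T]\lesssim_{L'}n^{2/5}=O_{L'}(\tau^2/\zeta^2)$; Chebyshev's inequality then yields $\PP_p(T\ge\tau)\le1/10$ for $\zeta$ large.

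For the type~II bound, fix $p\in\cQ_{0,[0,1],\TV}'(L)$ with $\inf_{q\in\cP_{0,[0,1]}'}\|p-q\|_1\ge\varepsilon\ge cn^{-2/5}$. The heart is a lower bound on $\EE_p[T]$ in terms of $\varepsilon$, via a variant of Lemma~\ref{TV:proj:vs:all}: let $q^\dagger$ be the conditionally independent law with $q^\dagger_Z=p_Z$ and conditionals frozen on each bin ($q^\dagger_{X\mid Z=z}=q^{(m)}_X$, $q^\dagger_{Y\mid Z=z}=q^{(m)}_Y$ for $z\in C_m$). The alternative TV-Lipschitz assumption gives $\|p_{X,Y\mid Z=z}-q^{(m)}_{X,Y}\|_1\le L/d$ for $z\in C_m$, so $\varepsilon\le\|p-q^\dagger\|_1\le L/d+\sum_m\PP(Z\in C_m)\,r_m$ with $r_m:=\|q^{(m)}_{X,Y}-q^{(m)}_Xq^{(m)}_Y\|_1$; since $L/d\le\varepsilon/2$ for $c$ large, $\sum_m\PP(Z\in C_m)\,r_m\ge\varepsilon/2$. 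Bins with $\lambda_m<1$ carry total mass $<2d/n\ll\varepsilon$, so the ``heavy'' bins ($\lambda_m\ge1$) alone contribute at least $\varepsilon/4$; on those bins $\EE[\sigma_m\mathbbm{1}(\sigma_m\ge4)]\ge c'\lambda_m$ and $\theta_m\ge r_m^2/(\ell_1\ell_2)$ by Cauchy--Schwarz, so two further Cauchy--Schwarz/Jensen steps give $\EE_p[T]\gtrsim_{\ell_1,\ell_2}n\varepsilon^2\gtrsim_{\ell_1,\ell_2}c^2n^{1/5}\ge2\tau$, with $\EE_p[T]\to\infty$, once $c$ is large. A variance bound analogous to the type~I one, again through Lemma~\ref{U:stat:variance:lemma} together with $\theta_m\le r_m^2$ and $\EE_p[T]\to\infty$, shows $\operatorname{Var}_p[T]=o(\EE_p[T]^2)$, and Chebyshev gives $\PP_p(T<\tau)\le4\operatorname{Var}_p[T]/\EE_p[T]^2\le1/10$.

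I expect the main obstacle to be the type~II expectation lower bound: certifying that the per-bin $L_2^2$ dependence signals $\theta_m$, weighted by the effective sample sizes $\lambda_m$, aggregate to order $n\varepsilon^2$. This needs (i) the alternative Lipschitz assumption to show that passing to the binned model costs only $O(L/d)$ in total variation from the conditional-independence manifold (the comparison with $q^\dagger$, an analogue of Lemma~\ref{TV:proj:vs:all}), (ii) discarding the light bins, whose aggregate mass is negligible against $\varepsilon$, and (iii) moving from $L_1$ to $L_2^2$ and from per-bin to global quantities by Cauchy--Schwarz/Jensen without losing more than $\ell_1,\ell_2$-dependent factors. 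A secondary, more bookkeeping-heavy difficulty is controlling $\operatorname{Var}_p[T]$ tightly enough for Chebyshev when a single heavy bin carries the entire signal; there the precise form of Lemma~\ref{U:stat:variance:lemma} and the $O(1)$ bounds on all relevant $L_2$ norms for fixed $\ell_1,\ell_2$ are essential.
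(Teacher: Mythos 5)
Your proposal is correct and follows the same skeleton as the paper's proof: Poissonize, bound $\EE[U_m\mid\sigma_m]$ under the null via the covariance representation $q_{xy}(m)-q_{x\cdot}(m)q_{\cdot y}(m)=\operatorname{Cov}_{Z\sim\tilde P_m}\big(p_{X\mid Z}(x\mid Z),p_{Y\mid Z}(y\mid Z)\big)$ together with the $\TV^2$-Lipschitz condition (this is exactly Lemma~\ref{expectation:upper:bound:lemma}, and the reduction of the other two null classes to $\cP'_{0,[0,1],\TV^2}$ via the inclusions of Section~\ref{examples:section} is how the paper covers them as well), control $\Var[T]\lesssim \EE[T]+d$ through the law of total variance, Lemma~\ref{U:stat:variance:lemma} and the Poisson facts from \cite{canonne2018testing}, and finish with Chebyshev on both sides.

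Where you deviate is in the type~II expectation lower bound, and the differences are worth noting. First, you compare $p$ to the surrogate $q^\dagger$ with conditionals frozen per bin; the paper instead evaluates the conditional distributions at a per-bin maximizer $z_m^*$ and uses that $p_{X\mid Z}p_{Y\mid Z}p_Z$ is itself in $\overline\cH_0$ (Lemma~\ref{lemma:lower:bound:expectation}). Both cost $O(L/d)$ in total variation and are essentially interchangeable. Second, and more substantively, you dispose of the light bins ($\lambda_m<1$) by a total-mass argument: at most $d$ such bins, each with mass $<2/n$, so their contribution to $\sum_m p_m r_m$ is $O(d/n)=O(n^{-3/5})\ll\varepsilon$, after which a single Cauchy--Schwarz over the heavy bins gives $\EE_p[T]\gtrsim n\varepsilon^2/(\ell_1\ell_2)$. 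The paper instead runs a two-case analysis, using $\EE[\sigma_m\mathbbm{1}(\sigma_m\ge4)]\ge\gamma\min(np_m,(np_m)^4)$ and a Jensen step to extract a contribution of order $(n\eta)^4/(d^3\ell_1^2\ell_2^2)$ even when the signal sits in the light bins. Your shortcut is valid here precisely because $\ell_1,\ell_2$ are fixed and $d\asymp n^{2/5}$ makes $d/n$ negligible against $\varepsilon\ge cn^{-2/5}$, so it buys a cleaner argument for this theorem; the paper's case analysis is the version that carries over to the scaling-$(\ell_1,\ell_2)$ setting of Theorem~\ref{main:theorem:scaling:discrete:XY}, where the light bins cannot be written off by mass alone. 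Two cosmetic points: phrase the final Chebyshev steps with explicit constants (e.g.\ $4\Var_p[T]/\EE_p[T]^2\le 4/\EE_p[T]+4d/\EE_p[T]^2\le 1/10$ for $c,\zeta$ large) rather than ``$\Var_p[T]=o(\EE_p[T]^2)$'', since the claim must hold for every $n$, and note that the bound $\Var(\sigma_m\mathbbm{1}(\sigma_m\ge4))\lesssim\EE[\sigma_m\mathbbm{1}(\sigma_m\ge4)]$ you need is the one from Claim~2.1 of \cite{canonne2018testing}, valid for all $\lambda_m$, not only $\lambda_m\ge1$.
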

\begin{proof}[Proof of Theorem \ref{main:theorem:finite:discrete:XY}] We now derive Theorem \ref{main:theorem:finite:discrete:XY} from Theorem \ref{main:theorem:finite:discrete:XY:Poi}. Note that the test is equivalent to $\mathbbm{1}(T \geq \tau)\mathbbm{1}(N \leq n)$. Hence under the null hypothesis we have
\begin{align*}
\EE\mathbbm{1}(T \geq \tau)\mathbbm{1}(N \leq n) \leq \EE \mathbbm{1}(T \geq \tau) \leq \frac{1}{10},
\end{align*}
where the above expectation is with respect to the randomness of the samples, as well as the randomness of $N$. On the other hand under the alternative we have 
\begin{align*}
\EE (1- \mathbbm{1}(T \geq \tau)\mathbbm{1}(N \leq n)) \leq \EE (1 - \mathbbm{1}(T \geq \tau)) + \EE \mathbbm{1}(N \geq n) \leq  \frac{1}{10}+ \exp(-n/8),
\end{align*}
where we used the bound provided in the following note \cite{canonne2017short}. 

\end{proof}

\begin{proof}[Proof of Theorem \ref{main:theorem:finite:discrete:XY:Poi}] 
To prove this theorem we will assume that $N \sim Poi(n)$ instead of $Poi(n/2)$ for convenience. Since this changes only constant factors, we can do this WLOG. We define 
\begin{align}\label{qxy:def}
q_{xy}(m) = \frac{\int_{C_m} p_{X,Y|Z}(x,y|z) d P_Z(z)}{\PP(Z \in C_m)} = \int_{C_m} p_{X,Y|Z}(x,y|z) d \tilde P_Z(z),
\end{align}
where $p_{X,Y|Z}(x,y|z)$ is the conditional distribution of $X,Y | Z = z$, and $P_Z(z)$ is the distribution of $Z$ which is absolutely continuous with respect to the Lebesgue measure, and
\begin{align*}
d \tilde P_Z(z) = \frac{ d P_Z(z)}{\PP(Z \in C_m)},
\end{align*}
is the conditional distribution of $Z | Z \in C_m$. Further define
\begin{align}\label{qxcdot:qcdoty:def}
q_{x\cdot}(m) = \sum_{y \in [\ell_2]} q_{xy}(m) = \int_{C_m} p_{X|Z}(x | z)d \tilde P(z),  ~~~ q_{\cdot y}(m) = \sum_{x \in [\ell_1]} q_{xy}(m) =  \int_{C_m} p_{Y|Z}(y | z)d \tilde P(z), 
\end{align}

%In this section we will assume that the following property holds for the distribution $p$:
%\begin{align}\label{lipschitz:condition}
%2d_{\TV}(p_{x,y|z}, p_{x,y|z'}) = \|p_{x,y|z} - p_{x,y|z'}\|_1 = \sum_{x, y} |p_{x,y|z} - p_{x,y|z'}| \leq L |z - z'|.
%\end{align}
%We immediately state the following implication
%\begin{lemma} \eqref{lipschitz:condition} implies 
%\begin{align}\label{other:l1:lipschitz:ineq}
%\|p_{X \cdot | z} - p_{X\cdot | z'}\|_1 \leq L |z - z'|, ~~~ \|p_{\cdot Y | z} - p_{\cdot Y | z'}\|_1 \leq L |z - z'|.
%\end{align}
%\end{lemma}
%The proof is obvious by the triangle inequality since $\max(\|p_{X \cdot| z} - p_{X \cdot| z'}\|_1, \|p_{\cdot Y | z} - p_{\cdot Y | z'}\|_1) \leq \|p_{x,y|z} - p_{x,y|z'}\|_1$. Following CDKS we will calculate the mean and variance of $A$, where $A$ is given by
%\begin{align*}
%A = \sum_{m} \mathbbm{1}(\sigma_m \geq 4) U_m \sigma_m.
%\end{align*}
\noindent \textbf{Analysis of the Expectation of $T$.} Conditioning on $\sigma = (\sigma_m)_{m \in [d]}$, and using the fact that $\EE[U_m | \sigma_m] =  \sum_{x,y} (q_{xy}(m) - q_{x\cdot}(m)q_{\cdot y}(m))^2$ is independent of $\sigma_m$, we have

$$
\EE[T] = \EE [\EE[T | \sigma]] = \sum_{m \in [d]} \EE[U_m | \sigma_m] \EE[\sigma_m \mathbbm{1}(\sigma_m \geq 4)] 
$$
Let $p_m = \PP(Z \in C_m)$. Since $\sigma_m \sim \operatorname{Poi}(n p_m)$, Lemma 3.1. of \cite{canonne2018testing} shows that 
$$\EE[\sigma_m \mathbbm{1}(\sigma_m \geq 4)]  \geq \gamma \min(n p_m, (n p_m)^4),$$
where $\gamma = 1 - \frac{5}{2e}$. %By using the same method as in CDKS one can also show that 
%$$
%{\color{red} \EE[\sigma_m \mathbbm{1}(\sigma_m \geq 4)]  \leq \min(n p_m, (n p_m)^4).}
%$$
Observe that even under the null $\EE[U_m|\sigma_m] = \sum_{x,y} (q_{xy}(m) - q_{x\cdot}(m)q_{\cdot y}(m))^2 \neq 0$ in general. 
%By the mean value theorem we have that $q_{xy}(m) = p_{x,y|z_{m,xy}}$ for some $z_{m,xy} \in A_m$. Let $z_m \in A_m$ be an arbitrary point. Due to our Lipschitz assumption we have that $q_{xy}(m) = p_{x,y|z_m} + \varepsilon_{m,xy}$ where
%$$
%|\varepsilon_{m,xy}| = |p_{x,y|z_{m,xy}} - p_{x,y|z_m}| \leq L |z_{m,xy} - z_m| \leq L \diam(A_m).
%$$
%Therefore after a bit of algebra using the fact that $p_{00|z_m}p_{11|z_m} - p_{01|z_m}p_{10|z_m} = 0$, we have 
We will now prove that under the null hypothesis we have:
\begin{align}\label{expectation:under:H0}
\EE[U_m|\sigma_m]  \leq \frac{L^2}{d^2}. %(L \diam(C_m))^4. 
\end{align}
\begin{lemma}\label{expectation:upper:bound:lemma} Since the distribution of $(X,Y,Z)$ belongs to the class $\cP_{0,[0,1], \TV^2}'$ of Definition \ref{def:nullsmoothness} we have \eqref{expectation:under:H0}.
\end{lemma}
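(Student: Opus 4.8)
The plan is to exploit the conditional independence structure of the null together with the $\TV^2$-Lipschitzness to control the residual covariance that binning fails to annihilate. First I would observe that since $p_{X,Y,Z}\in \cP'_{0,[0,1],\TV^2}(L)\subseteq \cP'_{0,[0,1]}$, the factorization $p_{X,Y|Z}(x,y|z)=p_{X|Z}(x|z)\,p_{Y|Z}(y|z)$ holds for (almost) every $z$. Writing $\tilde P_Z$ for the law of $Z$ conditioned on $Z\in C_m$ (the measure $d\tilde P_Z$ from \eqref{qxy:def}), the defining expressions \eqref{qxy:def} and \eqref{qxcdot:qcdoty:def} become $q_{xy}(m)=\EE_{\tilde P_Z}[p_{X|Z}(x|Z)p_{Y|Z}(y|Z)]$, $q_{x\cdot}(m)=\EE_{\tilde P_Z}[p_{X|Z}(x|Z)]$ and $q_{\cdot y}(m)=\EE_{\tilde P_Z}[p_{Y|Z}(y|Z)]$, so that
\begin{align*}
q_{xy}(m)-q_{x\cdot}(m)q_{\cdot y}(m)=\operatorname{Cov}_{\tilde P_Z}\big(p_{X|Z}(x|Z),\,p_{Y|Z}(y|Z)\big),
\end{align*}
a covariance which need not vanish, since averaging over $z\in C_m$ destroys conditional independence.

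Next I would invoke the standard identity $\operatorname{Cov}(A,B)=\tfrac12\EE[(A-A')(B-B')]$, valid when $(A',B')$ is an independent copy of $(A,B)$, applied with $Z,Z'$ i.i.d.\ from $\tilde P_Z$; this writes $q_{xy}(m)-q_{x\cdot}(m)q_{\cdot y}(m)$ as $\tfrac12\EE_{Z,Z'}[(p_{X|Z}(x|Z)-p_{X|Z}(x|Z'))(p_{Y|Z}(y|Z)-p_{Y|Z}(y|Z'))]$. Squaring, passing the absolute value inside the expectation, and applying Cauchy--Schwarz in $\EE_{Z,Z'}$ gives
\begin{align*}
\big(q_{xy}(m)-q_{x\cdot}(m)q_{\cdot y}(m)\big)^2\le \tfrac14\,\EE_{Z,Z'}\big[(p_{X|Z}(x|Z)-p_{X|Z}(x|Z'))^2\big]\;\EE_{Z,Z'}\big[(p_{Y|Z}(y|Z)-p_{Y|Z}(y|Z'))^2\big].
\end{align*}
This is the key step, because it decouples the $x$- and $y$-dependence; summing over $(x,y)\in[\ell_1]\times[\ell_2]$ and exchanging sum and expectation yields
\begin{align*}
\EE[U_m\mid\sigma_m]=\sum_{x,y}\big(q_{xy}(m)-q_{x\cdot}(m)q_{\cdot y}(m)\big)^2\le \tfrac14\,\big(\EE_{Z,Z'}\|p_{X|Z=Z}-p_{X|Z=Z'}\|_2^2\big)\big(\EE_{Z,Z'}\|p_{Y|Z=Z}-p_{Y|Z=Z'}\|_2^2\big).
\end{align*}

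Finally I would bound each factor using the elementary inequality $\|v\|_2\le\|v\|_1$ (valid for any vector $v$), together with the $\TV^2$-Lipschitz hypothesis and $\diam(C_m)=1/d$, so that for $Z,Z'\in C_m$ one has $\|p_{X|Z=Z}-p_{X|Z=Z'}\|_2^2\le \|p_{X|Z=Z}-p_{X|Z=Z'}\|_1^2\le L|Z-Z'|\le L/d$, hence $\EE_{Z,Z'}\|p_{X|Z=Z}-p_{X|Z=Z'}\|_2^2\le L/d$, and identically for $Y$. Multiplying the two bounds gives $\EE[U_m\mid\sigma_m]\le L^2/(4d^2)\le L^2/d^2$, which is \eqref{expectation:under:H0}. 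I do not expect a substantive obstacle here: the only point that needs care is arranging the Cauchy--Schwarz application so that the double sum over $x$ and $y$ factors into a product, rather than producing a cross term that would cost a spurious factor of $\ell_1\ell_2$.
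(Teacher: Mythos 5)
Your proof is correct. It rests on the same two pillars as the paper's argument: under the null the factorization $p_{X,Y|Z}=p_{X|Z}\,p_{Y|Z}$ turns $q_{xy}(m)-q_{x\cdot}(m)q_{\cdot y}(m)$ into a covariance of $p_{X|Z}(x|Z)$ and $p_{Y|Z}(y|Z)$ under the bin-conditional law $\tilde P_Z$, and the $\TV^2$-Lipschitzness then controls how much the conditionals can move across a bin of width $1/d$. Where you diverge is in the middle: the paper first bounds $\sum_{x,y}(\cdot)^2$ by $\bigl(\sum_{x,y}|\cdot|\bigr)^2$ and then works entirely with $L_1$ quantities, using the triangle inequality to factor the $x$- and $y$-sums and Jensen to replace the deviation from the bin average by an average deviation over a second point $z'$; you instead keep the squares, symmetrize the covariance with an independent copy $Z'$ (the identity $\operatorname{Cov}(A,B)=\tfrac12\EE[(A-A')(B-B')]$), apply Cauchy--Schwarz cell by cell so that the double sum over $(x,y)$ factors exactly, and only at the end pass from $\ell_2$ to $\ell_1$ via $\|v\|_2\le\|v\|_1$ before invoking $\TV^2$-smoothness. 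Both routes are equally short; yours even yields the marginally better constant $L^2/(4d^2)$, and your worry about a spurious $\ell_1\ell_2$ factor is indeed the one thing the per-cell Cauchy--Schwarz (or, in the paper, the product structure of the $L_1$ bound) is there to prevent — your factorization handles it correctly.
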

\begin{proof} We have
\begin{align*}
\sum_{x,y} (q_{xy}(m) - q_{x\cdot}(m)q_{\cdot y}(m))^2 \leq \bigg(\sum_{x,y} |q_{xy}(m) - q_{x\cdot}(m)q_{\cdot y}(m)|\bigg)^2 
\end{align*}
Furthermore, the following chain of identities holds
\begin{align*}
\MoveEqLeft \sum_{x,y} |q_{xy}(m) - q_{x\cdot}(m)q_{\cdot y}(m)| \\
& = \sum_{x,y} \bigg|\int (p_{X|Z}(x|z) - \int p_{X|Z}(x|z) d\tilde P_Z(z)) (p_{Y|Z}(y|z) - \int p_{Y|Z}(y|z) d\tilde P_Z(z))d\tilde P_Z(z)\bigg|\\
& \leq \int\sum_{x} \bigg|p_{X|Z}(x|z) - \int p_{X|Z} (x | z) d\tilde P_Z(z)\bigg|\sum_{y}\bigg|p_{Y|Z}(y|z) - \int p_{Y|Z}(y|z) d\tilde P_Z(z)\bigg|d\tilde P_Z(z)\\
&\stackrel{(i)}{\leq}  \int \int \sum_{x}|p_{X|Z}(x|z) -  p_{X|Z}(x|z') |d\tilde P_Z(z') \int \sum_{y}|p_{Y|Z}(y|z) -  p_{Y|Z}(y|z') |d\tilde P_Z(z')d\tilde P_Z(z)\\
& = \int \int \|p_{X|Z = z} - p_{X | Z = z'}\|_1d\tilde P_Z(z') \int \|p_{Y|Z = z} - p_{Y | Z = z'}\|_1 d\tilde P_Z(z') d\tilde P_Z(z)\\
& \stackrel{(ii)}{\leq} \int \int \sqrt{L |z - z'|} d\tilde P_Z(z')\int \sqrt{L |z - z'|} d\tilde P_Z(z')d\tilde P_Z(z)\\
&\leq \frac{L}{d},
\end{align*}
where $(i)$ follows by Jensen's inequality, and $(ii)$ follows by the fact that $p \in \cP_{0,[0,1], \TV^2}'$. This completes the proof. 

%Let $z^* \in A_m$ be an arbitrary point. By the triangle inequality (using that $p_{xy| z^*} = p_{x| z^*} p_{y | z^*}$)
%\begin{align*}
%\sqrt{\sum_{x,y} (q_{xy}(m) - q_{x\cdot}(m)q_{\cdot y}(m))^2} & \leq \sqrt{\sum_{x,y} (q_{xy}(m) - p_{x,y|z^*})^2} + \sqrt{\sum_{x,y} (q_{x\cdot}(m)(q_{\cdot y}(m) - p_{y|z^*}))^2} \\ &+ \sqrt{\sum_{x,y} (p_{y|z^*}(m)(q_{x\cdot}(m) - p_{x|z^*}))^2} 
%\end{align*}
%We will analyze these terms separately. Let $\tilde P(z) = \frac{P(z)}{\PP(Z \in A_m)}$ be the conditional distribution of $z$ constrained on the set $A_m$. For the first term we have
%\begin{align*}
%\sum_{x,y} (q_{xy}(m) - p_{x,y|z^*})^2 & = \sum_{x,y} \bigg(\int_{A_m} (p_{x,y|z} - p_{x,y|z^*}) d \tilde P(z) \bigg)^2 \leq \int_{A_m} \sum_{x,y}(p_{x,y|z} - p_{x,y|z^*})^2 d \tilde P(z)\\
%& \leq \int_{A_m} \|p_{x,y|z} - p_{x,y|z^*}\|^2_1 d \tilde P(z) \leq \int_{A_m} L^2 (z - z^*)^2d \tilde P(z) \\
%& \leq (L \diam(A_m))^2,
%\end{align*}
%where we used the monotonicity of the $\ell_p$ norms. We will now show how to handle the second term and the third term is analogous. We have
%\begin{align*}
%\sum_{x,y} (q_{x\cdot}(m)(q_{\cdot y}(m) - p_{y|z^*}))^2 & = \sum_{x,y} q^2_x(m)\bigg(\int_{A_m} (p_{y | z} - p_{y|z^*}) d \tilde P(z)\bigg)^2 \\
%& \leq \sum_{x} q^2_x(m)\int_{A_m} \sum_y (p_{y | z} - p_{y|z^*})^2 d \tilde P(z)\\
%& \leq 1 \int_{A_m} \|p_{Y | z} - p_{Y|z^*}\|_1^2 d \tilde P(z)\\
%& \leq (L\diam (A_m))^2.
%\end{align*}
%where the last inequality follows from \eqref{other:l1:lipschitz:ineq}. The third term is analogous and this finishes the proof. 
\end{proof}

Hence a bound on $\EE[T]$ is
\begin{align}\label{expectation:under:H0:times:n}
 \EE[T] \leq \frac{n L^2}{d^2},
\end{align}
since $\sum_{m \in [d]} \EE[\sigma_m \mathbbm{1}(\sigma_m \geq 4)]  \leq \sum_{m \in [d]} \EE[\sigma_m] = \sum_{m \in [d]}np_m = n$. Next we need to lower bound the expectation under the alternative. To this end consider the following

\begin{lemma}\label{lemma:lower:bound:expectation} We have that
\begin{align*}
\sum_{m \in [d]} \sqrt{\EE[U_m|\sigma_m]} p_m & =  \sum_{m \in [d]} \sqrt{\sum_{x, y} (q_{xy}(m) - q_{x\cdot}(m)q_{\cdot y}(m))^2} p_m \\
%& \geq \frac{\EE_Z \|p_{x,y|z} - p_{X|Z} p_{Y|Z}\|_1}{\sqrt{\ell_1 \ell_2}} -  3 L \max_m \diam(A_m) =:  \eta,
& \geq \frac{\EE_Z \|p_{X,Y|Z} - p_{X|Z} p_{Y|Z}\|_1 -  3 \frac{L}{d} }{\sqrt{\ell_1 \ell_2}} =:  \frac{\eta}{\sqrt{\ell_1 \ell_2}},
\end{align*}
%for some absolute constant $C$. %Note here that 
%$$
%\|p_{x,y|z} - p_{X|Z} p_{Y|Z}\|_1 = \sum_{x,y \in \{0,1\}} |p_{x,y|z} - p_{x|Z}p_{y|Z}|.
%$$
\end{lemma}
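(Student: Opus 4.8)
The plan is to lower bound $\sum_{m\in[d]}\sqrt{\EE[U_m|\sigma_m]}\,p_m$ by relating each local term to the total-variation "defect from independence" accumulated inside the bin $C_m$, and then summing over bins. First I would recall that $\EE[U_m|\sigma_m] = \sum_{x,y}(q_{xy}(m) - q_{x\cdot}(m)q_{\cdot y}(m))^2$ is the squared $L_2$ distance between the binned joint distribution $q(m) = (q_{xy}(m))$ and the product of its marginals. Since $q(m)$ is supported on $[\ell_1]\times[\ell_2]$, the elementary norm comparison $\|v\|_1 \le \sqrt{\ell_1\ell_2}\,\|v\|_2$ gives
\[
\sqrt{\EE[U_m|\sigma_m]} \;=\; \|q(m) - q_{X}(m)q_{Y}(m)\|_2 \;\ge\; \frac{1}{\sqrt{\ell_1\ell_2}}\,\|q(m) - q_X(m)q_Y(m)\|_1,
\]
where $q_X(m), q_Y(m)$ denote the marginals $q_{x\cdot}(m), q_{\cdot y}(m)$. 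Multiplying by $p_m = \PP(Z\in C_m)$ and summing, it therefore suffices to lower bound $\sum_{m\in[d]} p_m\,\|q(m) - q_X(m)q_Y(m)\|_1$ by $\eta = \EE_Z\|p_{X,Y|Z} - p_{X|Z}p_{Y|Z}\|_1 - 3L/d$.

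The next step is to compare the "binned defect" $\sum_m p_m \|q(m) - q_X(m)q_Y(m)\|_1$ to the "pointwise defect" $\EE_Z\|p_{X,Y|Z} - p_{X|Z}p_{Y|Z}\|_1$. By the triangle inequality, for each $m$,
\[
\int_{C_m}\!\|p_{X,Y|Z=z} - p_{X|Z=z}p_{Y|Z=z}\|_1\,dP_Z(z) \;\le\; p_m\|q(m) - q_X(m)q_Y(m)\|_1 \;+\; E_m^{(1)} \;+\; E_m^{(2)},
\]
where $E_m^{(1)} = \int_{C_m}\|p_{X,Y|Z=z} - q(m)\|_1\,dP_Z(z)$ measures how far the conditional joint law at $z\in C_m$ is from its bin-average $q(m)$, and $E_m^{(2)} = \int_{C_m}\|p_{X|Z=z}p_{Y|Z=z} - q_X(m)q_Y(m)\|_1\,dP_Z(z)$ does the same for the product of marginals. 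I would bound $E_m^{(1)}$ using Jensen's inequality together with the alternative TV-Lipschitz assumption ($p_{X,Y,Z}\in\cQ'_{0,[0,1],\TV}(L)$): since $q(m)$ is an average of $p_{X,Y|Z=z'}$ over $z'\in C_m$, we get $\|p_{X,Y|Z=z} - q(m)\|_1 \le \int_{C_m}\|p_{X,Y|Z=z} - p_{X,Y|Z=z'}\|_1\,d\tilde P_Z(z') \le L/d$, hence $\sum_m E_m^{(1)} \le L/d$. For $E_m^{(2)}$ I would use sub-additivity of TV on product distributions together with the induced Lipschitzness of the marginals (which follows from the joint TV-Lipschitzness, e.g. via the triangle-inequality direction of Lemma~\ref{lemma:TV:inclusions}), giving $\|p_{X|Z=z}p_{Y|Z=z} - q_X(m)q_Y(m)\|_1 \le \|p_{X|Z=z} - q_X(m)\|_1 + \|p_{Y|Z=z} - q_Y(m)\|_1 \le 2L/d$ after the same Jensen step, so $\sum_m E_m^{(2)} \le 2L/d$. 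Summing the displayed inequality over $m$ then yields
\[
\sum_{m\in[d]} p_m\|q(m) - q_X(m)q_Y(m)\|_1 \;\ge\; \EE_Z\|p_{X,Y|Z} - p_{X|Z}p_{Y|Z}\|_1 - 3L/d \;=\; \eta,
\]
and combining with the $L_1$–$L_2$ comparison from the first paragraph completes the proof.

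The main obstacle I anticipate is getting the constant in front of $L/d$ exactly right — in particular making sure the $E_m^{(2)}$ term really contributes only $2L/d$ rather than something worse. This requires that the conditional marginals $p_{X|Z=z}$ and $p_{Y|Z=z}$ are themselves $L$-TV-Lipschitz in $z$, which under the alternative class $\cQ'_{0,[0,1],\TV}(L)$ is only assumed for the joint $p_{X,Y|Z=z}$; one recovers marginal Lipschitzness with the same constant $L$ by marginalizing and using that marginalization is a contraction in TV (the triangle-inequality direction, as in Lemma~\ref{lemma:TV:inclusions}). A second subtlety is that the bins $C_m$ have $P_Z$-measure $p_m$, not Lebesgue measure $1/d$; since each $C_m$ is an interval of Lebesgue length $1/d$, the bound $|z - z'| \le 1/d$ holds for all $z,z'\in C_m$ regardless of $P_Z$, so the Lipschitz estimates go through uniformly and the $p_m$ weights simply integrate to the stated bounds. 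No delicate probabilistic argument is needed here — everything is deterministic manipulation of the binned distributions — so the proof is essentially bookkeeping of these triangle-inequality terms.
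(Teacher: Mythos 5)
Your proof is correct and arrives at the paper's bound with exactly the same constant $3L/d$, but it handles the passage from pointwise to binned quantities by a genuinely different (and somewhat leaner) device. The paper first proves that $z \mapsto \|p_{X,Y|Z=z} - p_{X|Z=z}p_{Y|Z=z}\|_1$ is $3L$-Lipschitz, uses compactness of $C_m$ to extract a maximizer $z^*_m$, lower bounds $\sum_{x,y}|q_{xy}(m) - q_{x\cdot}(m)q_{\cdot y}(m)|$ by the value of this map at $z^*_m$ minus three error terms (each at most $L/d$ via Jensen plus the TV-Lipschitz assumption), and then needs the maximizing property of $z^*_m$ again to dominate the bin average $\int_{C_m}\|p_{X,Y|Z=z}-p_{X|Z=z}p_{Y|Z=z}\|_1\,dP_Z(z)$ when summing over $m$. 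You instead apply the triangle inequality at every $z\in C_m$ and integrate directly against $P_Z$, which eliminates the continuity/compactness/maximizer step altogether; your error terms $E^{(1)}_m$ and $E^{(2)}_m$ are then controlled by the same two ingredients the paper uses (Jensen against the bin-average together with the $L|z-z'|\le L/d$ Lipschitz bound, and sub-additivity of TV on product distributions), and you correctly observe that the marginals inherit the $L$-TV-Lipschitzness from the joint by contraction of marginalization, exactly as in the triangle-inequality direction of Lemma~\ref{lemma:TV:inclusions}. The initial Cauchy--Schwarz step $\|\cdot\|_1\le\sqrt{\ell_1\ell_2}\,\|\cdot\|_2$ and the final summation are identical in both arguments, so the two proofs are interchangeable in content; yours trades the sup-point argument for direct integration, while the paper's maximizer formulation is the one it recycles later in the proof of Theorem~\ref{main:theorem:scaling:discrete:XY}.
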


\begin{proof}[Proof of Lemma \ref{lemma:lower:bound:expectation}] First we will show that the function $z \mapsto  \|p_{X,Y| Z = z} - p_{X| Z = z}p_{Y| Z = z}\|_1$ is continuous. Take two values $z, z' \in C_m$ and observe that 
\begin{align*}
\MoveEqLeft | \|p_{X,Y| Z = z} - p_{X| Z = z}p_{Y| Z = z}\|_1 -  \|p_{X,Y| Z = z'} - p_{X| Z = z}p_{Y| Z = z'}\|_1| \\
& \leq \|p_{X,Y| Z = z} - p_{X,Y| Z = z'} + p_{X| Z = z'}p_{Y| Z = z'}- p_{X| Z = z}p_{Y| Z = z}\|_1\\
& \leq \|p_{X,Y| Z = z} - p_{X,Y| Z = z'}\|_1 + \|p_{X| Z = z'}p_{Y| Z = z'}- p_{X| Z = z}p_{Y| Z = z}\|_1\\
& \leq \|p_{X,Y| Z = z} - p_{X,Y| Z = z'}\|_1 + \|p_{X| Z = z'}- p_{X| Z = z}\|_1 + \|p_{Y| Z = z'}- p_{Y| Z = z}\|_1\\
& \leq 3 L |z - z'|,
\end{align*}
where we first used the triangle inequality, next the fact that $\|\cdot\|_1$ is sub-additive on product distributions and finally we used our assumption on the distribution $p_{X,Y |Z}$ and noted that by the triangle inequality 
\begin{align*}
\max(\|p_{X| Z = z'}- p_{X| Z = z}\|_1, \|p_{Y| Z = z'}- p_{Y| Z = z}\|_1) \leq \|p_{X,Y| Z = z} - p_{X,Y| Z = z'}\|_1.
\end{align*}
Since $C_m$ is compact it follows that the function $z \mapsto  \|p_{X,Y| Z = z} - p_{X| Z = z}p_{Y| Z = z}\|_1$ achieves its maximum. Suppose that 
$$
z^*_m \in \argmax_{z \in C_m} \|p_{X,Y| Z = z} - p_{X| Z = z}p_{Y| Z = z}\|_1. %\sum_{x, y} (p_{x,y|z} - p_{x|z}(m)p_{y|z}(m))^2,
$$
%where if the $\max$ is not achieved we can take a sequence converging to $z_m^*$ and the argument will go through. 
%We now use the triangle inequality to bound:
%\begin{align*}
%\sqrt{\sum_{x, y} (q_{xy}(m) - q_{x\cdot}(m)q_{\cdot y}(m))^2} & \geq \|p_{x,y|z^*_m} - p_{X|z^*_m}p_{Y|z^*_m}\|_2 - \sqrt{\sum_{x, y} (q_{xy}(m) - p_{x,y|z_m^*})^2}\\
%& - \sqrt{\sum_{x,y} (q_{x\cdot}(m)(q_{\cdot y}(m) - p_{y|z_m^*}))^2} - \sqrt{\sum_{x,y} (p_{y|z_m^*}(m)(q_{x\cdot}(m) - p_{x|z_m^*}))^2}. 
%\end{align*}
%We may now use analogous bounds as in Lemma \ref{expectation:upper:bound:lemma} to show that the above is greater than or equal to
%\begin{align*}
%\sqrt{\sum_{x, y} (q_{xy}(m) - q_{x\cdot}(m)q_{\cdot y}(m))^2} & \geq \|p_{x,y|z^*_m} - p_{X|z^*_m}p_{Y|z^*_m}\|_2 - 3(L\diam(A_m))\\
%& \geq \frac{\|p_{x,y|z^*_m} - p_{X|z^*_m}p_{Y|z^*_m}\|_1}{\sqrt{\ell_1 \ell_2}} - 3(L\diam(A_m)).
%\end{align*}
%Note that by the definition of $z_m^*$ we have
%$$
%\sum_{m \in [d]} p_m \|p_{x,y|z^*_m} - p_{X|z^*_m}p_{Y|z^*_m}\|_1 \geq \EE_Z \|p_{x,y|z} - p_{X|Z} p_{Y|Z}\|_1,
%$$
%which completes the proof. 
 By Cauchy-Schwarz we have
\begin{align*}
\sqrt{\sum_{x, y} (q_{xy}(m) - q_{x\cdot}(m)q_{\cdot y}(m))^2} & \geq \frac{\sum_{x,y} |q_{xy}(m) - q_{x\cdot}(m)q_{\cdot y}(m))|}{\sqrt{\ell_1\ell_2}}. 
\end{align*}
Now we apply the triangle inequality to obtain
\begin{align*}
\MoveEqLeft \sum_{x,y} |q_{xy}(m) - q_{x\cdot}(m)q_{\cdot y}(m))| \geq \|p_{x,y|z = z^*_m} - p_{X|Z = z^*_m}p_{Y| Z = z^*_m}\|_1 - \sum_{x, y} |q_{xy}(m) - p_{X,Y| Z} (x,y|z_m^*)|\\
& - \sum_{x,y} |q_{x\cdot}(m)(q_{\cdot y}(m) - p_{Y|Z}(y|z_m^*))| - \sum_{x,y} |(p_{Y|Z}(y|z_m^*)(q_{x\cdot}(m) - p_{X|Z}(x|z_m^*))|. 
\end{align*}
For the first term we use
\begin{align*}
 \sum_{x, y} |q_{xy}(m) - p_{X,Y| Z} (x,y|z_m^*)| & = \sum_{x,y} \bigg|\int_{C_m} p_{X,Y|Z}(x,y|z) - p_{X,Y|Z}(x,y|z_m^*) d \tilde P(z)\bigg| \\
& \leq  \int_{C_m} \sum_{x,y} |p_{X,Y|Z}(x,y|z) - p_{X,Y|Z}(x,y|z_m^*)| d\tilde P(z)\\
& =  \int_{C_m} \|p_{X,Y|Z = z}- p_{X,Y|Z = z_m^*}\|_1 d\tilde P(z)\\
& \leq  \int_{C_m}L| z- z_m^*|d\tilde P(z) \leq L \diam(C_m) = \frac{L}{d},
\end{align*}
For the second term we have
\begin{align*}
\sum_{x,y} |q_{x\cdot}(m)(q_{\cdot y}(m) - p_{Y|Z}(y|z_m^*))| & = \sum_{x,y} \bigg|\int_{C_m} q_{x\cdot}(m)(p_{Y|Z}(y|z) - p_{Y|Z}(y | z_m^*)) d\tilde P(z)\bigg| \\
& \leq  \int_{C_m} \sum_{x,y} |q_{x\cdot}(m)(p_{Y|Z}(y | z) - p_{Y|Z}(y | z_m^*)) |d\tilde P(z)\\
& =  \int_{C_m} \|p_{Y|Z = z} - p_{Y|Z = z_m^*}\|_1 d\tilde P(z)\\
& \leq L \diam(C_m) = \frac{L}{d}.
\end{align*}
The last term is similar to the previous term so we conclude that 
$$
\sqrt{\sum_{x, y} (q_{xy}(m) - q_{x\cdot}(m)q_{\cdot y}(m))^2} \geq \frac{ \|p_{x,y|z = z^*_m} - p_{X |Z = z^*_m}p_{Y|Z = z^*_m}\|_1  - 3 \frac{L}{d}}{\sqrt{\ell_1\ell_2}}
$$
Summing up over $m$ and noting that by the definition of $z_m^*$ we obtain
$$
\sum_{m \in [d]}\|p_{XY| Z = z^*_m} - p_{X |Z = z^*_m}p_{Y| Z = z^*_m}\|_1 p_m \geq \EE_Z \|p_{X,Y|Z} - p_{X |Z} p_{Y|Z}\|_1 \geq \inf_{q \in \cP_{0,[0,1]}}\|p_{X,Y,Z} - q\|_1%\frac{\inf_{q \in \cP_{0,[0,1]}}\|p_{X,Y,Z} - q\|_1}{6},
$$
%where we used Lemma \ref{TV:proj:vs:all} in the last inequality. This completes the proof. 
where we used the fact that the distribution $p_{X|Z}p_{Y|Z}p_Z$ is a conditionally independent distribution.

%As before, we have that $q_{xy}(m) = p_{x,y|z^*_m} + \varepsilon_{m,xy}$, where $|\varepsilon_{m,xy}| \leq L\diam(A_m)$. Therefore
%$$
%|q_{00}(m)q_{11}(m) - q_{01}(m)q_{10}(m)| \geq |p_{00|z_m^*}p_{11|z_m^*} - p_{01|z_m^*}p_{10|z_m^*}| - C L \diam(A_m),
%$$
%where $C$ is some absolute constant (which absorbs a term $O((L \diam(A_m))^2)$). It follows that
%\begin{align*}
%n \sum_{m \in [d]} |q_{00}(m)q_{11}(m) - q_{01}(m)q_{10}(m)| p_m & \geq n \sum_{m \in [d]} |p_{00|z_m^*}p_{11|z_m^*} - p_{01|z_m^*}p_{10|z_m^*}| p_m - C n L \max_m \diam(A_m)\\
%& \geq n \EE_Z |p_{00|Z}p_{11|Z} - p_{01|Z}p_{10|Z}| - n C L \max_m \diam(A_m)\\
%& = n \frac{\EE_Z \|p_{x,y|z} - p_{X|Z} p_{Y|Z}\|_1}{4} - n C L \max_m \diam(A_m).
%\end{align*}
\end{proof}
Next we have
$$
\sum_{m \in [d]} \EE[U_m | \sigma_m] \EE[\sigma_m \mathbbm{1}(\sigma_m \geq 4)] \geq \gamma \sum_{m: (n p_m) > 1} \EE[U_m | \sigma_m] n p_m + \gamma \sum_{m: (n p_m) \leq 1} \EE[U_m | \sigma_m] (n p_m)^4.
$$
We now consider two cases:
\begin{itemize}
\item[i.] In the first case we assume	
\begin{align*}
\sum_{m: np_m > 1} \sqrt{\EE[U_m|\sigma_m]} n  p_m \geq \frac{n \eta}{2\sqrt{\ell_1 \ell_2}}.
\end{align*}
Then by Cauchy-Schwarz we have 
\begin{align*}
\sum_{m: (n p_m) > 1} \EE[U_m | \sigma_m] n p_m \geq \frac{(\sum_{m: np_m > 1} \sqrt{\EE[U_m|\sigma_m]} n  p_m)^2}{\sum_{m: (n p_m) > 1} n p_m} \geq \frac{n\eta^2}{4 \ell_1 \ell_2}. 
\end{align*}

\item[ii.] In the second case we suppose:
\begin{align*}
\sum_{m: np_m \leq 1} \sqrt{\EE[U_m|\sigma_m]} n  p_m \geq \frac{n \eta}{2\sqrt{\ell_1 \ell_2}}.
\end{align*}

By Jensen's inequality we have
\begin{align*}
\MoveEqLeft \sum_{m : np_m \leq 1} \frac{\EE[U_m | \sigma_m]^{1/3} }{\sum_{m : np_m \leq 1}  \EE[U_m | \sigma_m]^{1/3} }\EE[U_m | \sigma_m]^{2/3} (n p_m)^4 \\
&\geq \bigg(\sum_{m : np_m \leq 1} \frac{\EE[U_m | \sigma_m]^{1/3} }{\sum_{m : np_m \leq 1}  \EE[U_m | \sigma_m]^{1/3} }\EE[U_m | \sigma_m]^{1/6} n p_m\bigg)^4,
\end{align*}
which is equivalent to 
\begin{align*}
\bigg(\sum_{m : np_m \leq 1}  \EE[U_m | \sigma_m]^{1/3}\bigg)^3 \sum_{m : np_m \leq 1} \EE[U_m | \sigma_m] (n p_m)^4 \geq \bigg(\sum_{m : np_m \leq 1} \sqrt{\EE[U_m | \sigma_m]} n p_m\bigg)^4 \geq \frac{(n \eta)^4}{16 \ell_1^2 \ell_2^2},
\end{align*}

Since $\EE[U_m | \sigma_m] \leq \bigg(\sum_{x,y} |q_{xy}(m) -q_{x\cdot}(m)q_{\cdot y}(m)|\bigg)^2 \leq 4$ we have 
$$
\sum_{m : np_m \leq 1} \EE[U_m | \sigma_m] (n p_m)^4 \geq \frac{(n \eta)^4}{64 d^3 \ell_1^2 \ell_2^2}. 
$$
\end{itemize}

We will now select a threshold at the level of $\zeta \sqrt{d}$, and will give conditions on the minimum critical radius for each of the cases. We will use $\gtrsim$ in the sense bigger up to an absolute constant. We will assume that $\varepsilon - 3 \frac{L}{d} \geq \varepsilon/2$ so that $\eta \geq \varepsilon/2$. %If we take equally spaced grid this is equivalent to $\varepsilon - \frac{3}{2} L/d\geq \varepsilon/2$.
\begin{itemize}
\item In the first case we obtain the following bound 
$$
\frac{n\eta^2}{4\ell_1\ell_2} \gtrsim \zeta \sqrt{d}
$$
which is ensured when $\varepsilon \gtrsim  \sqrt{\frac{\zeta \sqrt{ d} \ell_1\ell_2}{n}} \vee \frac{1}{d}$
\item In the second case we have
$$
\frac{(n \eta)^4}{64 d^3\ell_1^2\ell_2^2} \gtrsim \zeta \sqrt{d},
$$
which happens when $\varepsilon \gtrsim \frac{\zeta^{1/4} d^{7/8}\sqrt{\ell_1\ell_2}}{n} \vee \frac{1}{d}$.
\end{itemize}
It is simple to check that when $d \asymp n^{2/5}$ the bigger of the two rates is $\varepsilon \gtrsim n^{-2/5}$.

\noindent \textbf{Analysis of the Variance of $T$.} The rule of total variance ensures that
$$
\Var T = \EE[\Var [T | \sigma]] + \Var[\EE [T | \sigma]],
$$
where $\sigma = (\sigma_m)_{m \in [d]}$. Put for brevity $T_m = \mathbbm{1}(\sigma_m \geq 4) U_m \sigma_m$ so that $T = \sum_{m \in [d]}T_m$. We first handle the first term. We have
$$
\Var [T | \sigma] = \sum_{m,k \in [d]} \Cov(T_m, T_k | \sigma_m, \sigma_k) = \sum_{m \in [d]} \Var(T_m | \sigma_m),
$$
where we used that $T_m$ and $T_k$ are independent given $\sigma_m, \sigma_k$. Using Lemma \ref{U:stat:variance:lemma} and the fact that $\sum_{x,y} q^2_{x,y}(m) \leq 1$ and $\sum_{x,y} q^2_{x\cdot}(m)q^2_{\cdot y}(m) \leq 1$, we have
\begin{align*}
\MoveEqLeft \sum_{m \in [d]} \Var(T_m | \sigma_m) \leq \sum_{m \in [d]} \sigma^2_m \mathbbm{1}(\sigma_m \geq 4) C\bigg(\frac{\EE[U_m | \sigma_m]}{\sigma_m} + \frac{1}{\sigma_m^2}\bigg) \\
&= \sum_{m \in [d]}  C(\EE[\mathbbm{1}(\sigma_m \geq 4)U_m\sigma_m | \sigma_m] + \mathbbm{1}(\sigma_m \geq 4)) . 
\end{align*}
Taking expectation of the expression above we end up with
$$
\EE[\Var[T | \sigma]] \leq C\bigg(\EE[T] + \EE \sum_{m \in [d]} \mathbbm{1}(\sigma_m \geq 4)\bigg) \leq C(\EE[T] + d).
$$

For the second term we have
$$
\EE[T | \sigma] = \sum_{m \in [d]} \sigma_m \mathbbm{1}(\sigma_m \geq 4) \EE[U_m | \sigma_m] = \sum_{m \in [d]} \sigma_m \mathbbm{1}(\sigma_m \geq 4) \sum_{x,y} (q_{xy}(m) - q_{x\cdot}(m)q_{\cdot y}(m))^2%(q_{00 }(m)q_{11}(m) - q_{01}(m)q_{10}(m))^2.
$$
Since the $\sigma_m$ are independent we have
$$
\Var[\EE[T | \sigma]] = \sum_{m \in [d]} \Var[\sigma_m \mathbbm{1}(\sigma_m \geq 4)] \bigg(\sum_{x,y} (q_{xy}(m) - q_{x\cdot}(m)q_{\cdot y}(m))^2\bigg)^2
$$
By Claim 2.1. of \cite{canonne2018testing} we have that $ \Var[\sigma_m \mathbbm{1}(\sigma_m \geq 4)] \leq C'\EE[ \sigma_m \mathbbm{1}(\sigma_m \geq 4)]$, and $\sum_{x,y} (q_{xy}(m) - q_{x\cdot}(m)q_{\cdot y}(m))^2 \leq (\sum_{x,y} |q_{xy}(m) - q_{x\cdot}(m)q_{\cdot y}(m)|)^2 \leq 4$ thus
$$
\Var[\EE[T | \sigma]] \leq 4C'\sum_{m \in [d]} \EE[\sigma_m \mathbbm{1}(\sigma_m \geq 4)] \sum_{x,y} (q_{xy}(m) - q_{x\cdot}(m)q_{\cdot y}(m))^2 = 4C' \EE[T].
$$
Hence we conclude $\Var T \leq C(\EE[T] + d)$. 

\noindent \textbf{Putting Things Together. } Recall that the threshold is set as $\tau = \zeta n^{1/5}$. First we handle the null hypothesis. By Chebyshev's inequality we have
$$
\PP(|T - \EE T| \geq \tau) \leq \frac{\Var T}{\tau^2} = \frac{C(\EE[T] + d)}{\tau^2} \leq \frac{C(C'n^{1/5} + n^{2/5})}{\zeta n^{2/5}} \leq \frac{1}{10},
$$
when $\zeta$ is large enough, where we used the bound \eqref{expectation:under:H0:times:n}. In this scenario we have that $T \leq \tau + \EE T \leq 2 \tau$ for large enough $\zeta$. Under the alternative 

$$
\PP(|T - \EE T| \geq \EE T/2) \leq \frac{4\Var T}{(\EE T)^2} \leq 4 C\bigg(\frac{d}{(\EE T)^2} + \frac{1}{\EE T}\bigg) \leq \frac{1}{10},
$$
since $\EE T \gtrsim \zeta \sqrt{d} \gtrsim \zeta n^{1/5}$. 
\end{proof}

\subsection{Proofs of Section \ref{scaling:l1:l2:section}}

\begin{proof}[Proof of Lemma \ref{second:variance:upper:bound}] 
We will show that the variance $ \Var[U_{W}(\cD) | \cD_{X'}, \cD_{Y'}]$ is bounded as 
$$
 C\bigg(\frac{ \|p_{X',Y',A} - p^\Pi_{X',Y',A}\|_2^2\max(\|p_{X',Y',A}\|_2, \|p^\Pi_{X',Y',A}\|_2)}{\sigma} + \frac{\max(\|p_{X',Y',A}\|^2_2, \|p^\Pi_{X',Y',A}\|^2_2)}{\sigma^2}\bigg),
$$
%We would like to relate this expression only to $ \|q_{\Pi,T_m}(m)\|_2$ since we have a bound on its expectation. 
We will now complete the proof assuming this is correct. We use the triangle inequality to obtain
$$
\|p_{X',Y',A}\|^2_2 \leq (\| p^\Pi_{X',Y',A} - p_{X',Y',A}\|_2 + \| p^\Pi_{X',Y',A}\|_2)^2 \leq 2 (\| p^\Pi_{X',Y',A}-p_{X',Y',A}\|_2^2 + \| p^\Pi_{X',Y',A}\|_2^2).
$$
This gives the following bound on the variance $\Var[U_{W}(\cD) | \cD_{X'}, \cD_{Y'}]$ %(conditional on $\sigma_m$ and $T_m$)
\begin{align}\label{bound:on:the:variance}
 \MoveEqLeft C\bigg(\frac{ \|p_{X',Y',A}  - p^\Pi_{X',Y',A} \|_2^2\|p^\Pi_{X',Y',A} \|_2}{\sigma} + \frac{\|p_{X',Y',A}  - p^\Pi_{X',Y',A}\|_2^3}{\sigma} \nonumber\\
&+ \frac{ \|p^\Pi_{X',Y',A}\|^2_2}{\sigma^2} +  \frac{\|p_{X',Y',A}  - p^\Pi_{X',Y',A} \|_2^2}{\sigma^2}\bigg),
\end{align}
which is what we wanted to show. Now it remains to show the first bound. The calculation is almost identical to the one of Lemma \ref{U:stat:variance:lemma} (but we will repeat it for the sake of completeness). Note that the sample size $2t + 4 \geq \sigma/2$ so by adjusting the constant we can get a bound with $\sigma$ in place of $2t + 4$. Going back to equation \eqref{equation:going:back:to}
\begin{align*}
\MoveEqLeft %\sum_{x,y,x',y'} \frac{\bigg\{\EE  \phi_{\pi_1\pi_2}(xy)\phi_{\pi_3 \pi_4}(xy) \phi_{\pi_1'\pi_2'}(x'y')\phi_{\pi_3' \pi_4'}(x'y') - \EE \phi_{\pi_1\pi_2}(xy)\EE \phi_{\pi_3 \pi_4}(xy)  \EE \phi_{\pi_1'\pi_2'}(x'y') \EE\phi_{\pi_3' \pi_4'}(x'y') \bigg\}}{(1 + a^m_{xy})(1 + a^m_{x'y'})} \\
\sum_{x,y,x',y'} \frac{\bigg\{\EE  \phi_{\pi_1\pi_2}(xy)\phi_{\pi_3 \pi_4}(xy) \phi_{\pi_1'\pi_2'}(x'y')\phi_{\pi_3' \pi_4'}(x'y') - (\EE \phi_{\pi_1\pi_2}(xy))^2(\EE \phi_{\pi_1'\pi_2'}(x'y'))^2 \bigg\}}{(1 + a_{xy})(1 + a_{x'y'})}\\
& \leq \sum_{x,y,x',y'} \frac{\bigg\{\EE  \phi_{\pi_1\pi_2}(xy)\phi_{\pi_3 \pi_4}(xy) \phi_{\pi_1'\pi_2'}(x'y')\phi_{\pi_3' \pi_4'}(x'y')\bigg\}}{(1 + a_{xy})(1 + a_{x'y'})}\\
& = \sum_{x,y,x',y'} \frac{\bigg\{\EE (\phi_{\pi_1\pi_2}(xy)\phi_{\pi_1'\pi_2'}(x'y'))\EE \phi_{\pi_3 \pi_4}(xy) \EE\phi_{\pi_3' \pi_4'}(x'y') \bigg\}}{(1 + a_{xy})(1 + a_{x'y'})}\\
&\leq \sqrt{\sum_{x,y,x',y'} \frac{\{\EE (\phi_{\pi_1\pi_2}(xy) \phi_{\pi_1'\pi_2'}(x'y'))\}^2}{(1 + a_{xy})(1 + a_{x'y'})}} \sqrt{\sum_{x,y,x',y'}  \frac{\{\EE\phi_{\pi_3 \pi_4}(xy)\}^2}{(1 + a_{xy})} \frac{\{\EE\phi_{\pi_3' \pi_4'}(x'y')\}^2}{(1 + a_{x'y'})}}\\
& = \sqrt{\sum_{x,y,x',y'} \frac{\{\EE (\phi_{\pi_1\pi_2}(xy) \phi_{\pi_1'\pi_2'}(x'y'))\}^2}{(1 + a_{xy})(1 + a_{x'y'})}}\|p_{X',Y',A}  - p^\Pi_{X',Y',A} \|_2^2%\EE[U^{\ba}| \sigma_m]
\end{align*}
where, as before we supposed that $i$ is some of $\pi_1,\pi_2$ and $\pi_1', \pi_2'$, and the next to last step follows from Cauchy-Schwarz. Now we apply Lemma \ref{simple:lemma:discrete} to the first term on the RHS with $A = \phi_{\pi_1\pi_2}(xy)$ and $B = \phi_{\pi_1'\pi_2'}(x'y')$. We obtain
\begin{align*}
\sum_{x,y,x',y'} \frac{\{\EE (\phi_{\pi_1\pi_2}(xy) \phi_{\pi_1'\pi_2'}(x'y'))\}^2}{(1 + a_{xy})(1 + a_{x'y'})} \leq \sum_{x,y,x',y'} \frac{\EE [|\phi_{\pi_1'\pi_2'}(x'y'))| | |\phi_{\pi_1\pi_2}(xy)| = 1]}{(1 + a_{x'y'})} \frac{(\EE |\phi_{\pi_1\pi_2}(xy)|)^2}{(1 + a_{xy})}
\end{align*}
Note that as before
\begin{align*}|\phi_{\pi_1\pi_2}(xy)| &\leq\mathbbm{1}(X_{\pi_1} = x, Y_{\pi_1} = y) + \mathbbm{1}(Y_{\pi_2} = y) \mathbbm{1}(X_{\pi_1} = x)
\end{align*}
and a similar inequality holds for $|\phi_{\pi_1'\pi_2'}(x'y')|$. Thus 
\begin{align*}
\sum_{x',y'} \frac{|\phi_{\pi_1'\pi_2'}(x'y')|}{1 + a_{x'y'}} \leq \sum_{x',y'}\frac{\mathbbm{1}(X_{\pi_1} = x', Y_{\pi_1} = y') + \mathbbm{1}(Y_{\pi_2} = y') \mathbbm{1}(X_{\pi_1} = x')}{1 + a_{x'y'}} \leq 2,
\end{align*}
since $a_{x'y'} \geq 0$. Thus 
\begin{align*}
\MoveEqLeft \sum_{x, y} \EE [\sum_{x',y'}\frac{|\phi_{\pi_1'\pi_2'}(x'y'))|}{1 + a_{x'y'}} | |\phi_{\pi_1\pi_2}(xy)| = 1] \frac{(\EE |\phi_{\pi_1\pi_2}(xy)|)^2}{1 + a_{xy}} \leq 2\sum_{x, y}\frac{(\EE |\phi_{\pi_1\pi_2}(xy)|)^2}{1 + a_{xy}} \\
& \leq 2 \sum_{x, y} \frac{(p_{X',Y'}(x,y) + p_{X'}(x)p_{Y'}(y))^2}{1 + a_{xy}} \leq 4 (\|p_{X',Y',A}\|^2_2 + \|p^\Pi_{X',Y',A}\|_2^2),
\end{align*}
which is what we wanted to show. 

Now we will show how to bound any higher order terms (i.e., according to \cite{van2000asymptotic} the variance of the next term is governed by $\Cov( h^{\ba}_{ijkl}, h^{\ba}_{ijk'l'})$ where it's possible for $k', l'$ to be equal to $k$ or $l$). To bound this we directly go back to the inequality
\begin{align*}
\leq \sum_{x,y,x',y'} \frac{\bigg\{\EE (\phi_{\pi_1\pi_2}(xy)\phi_{\pi_1'\pi_2'}(x'y'))\EE \phi_{\pi_3 \pi_4}(xy) \EE\phi_{\pi_3' \pi_4'}(x'y') \bigg\}}{(1 + a_{xy})(1 + a_{x'y'})}
\end{align*}
We will now use the fact that for $A, B \in \{0,\pm1\}$
$$
\EE AB \leq \EE|AB| = \EE[|B| | |A| = 1] \EE |A|,
$$
where $A = \phi_{\pi_1\pi_2}(xy)\phi_{\pi_3 \pi_4}(xy)$ and $B = \phi_{\pi_1'\pi_2'}(x'y')\phi_{\pi_3' \pi_4'}(x'y')$. We have
\begin{align*}
\MoveEqLeft \sum_{x,y,x',y'} \frac{\bigg\{\EE (\phi_{\pi_1\pi_2}(xy)\phi_{\pi_1'\pi_2'}(x'y'))\EE \phi_{\pi_3 \pi_4}(xy) \EE\phi_{\pi_3' \pi_4'}(x'y') \bigg\}}{(1 + a_{xy})(1 + a_{x'y'})}  \\
&\leq \sum_{x,y,x',y'} \EE\bigg[\frac{|\phi_{\pi_1'\pi_2'}(x'y')\phi_{\pi_3' \pi_4'}(x'y')|}{1 + a_{x'y'}} | |\phi_{\pi_1\pi_2}(xy)\phi_{\pi_3 \pi_4}(xy)| = 1\bigg] \frac{\EE |\phi_{\pi_1\pi_2}(xy)\phi_{\pi_3 \pi_4}(xy)|}{1 + a_{xy}}
\end{align*}
We now use that as we saw before
$$
|\phi_{\pi_1\pi_2}(xy)| \leq \mathbbm{1}(X_{\pi_1} = x, Y_{\pi_1} = y) + \mathbbm{1}(Y_{\pi_1} = y) \mathbbm{1}(X_{\pi_2} = x),
$$
and analogously for the others. Furthermore $|\phi_{\pi_1\pi_2}(xy)| \leq 1$. %Denote the RHS with $\kappa_{\pi_1 \pi_2}(xy)$ for brevity. Thus
\begin{align*}
\MoveEqLeft \sum_{x',y'}\EE\bigg[\frac{|\phi_{\pi_1'\pi_2'}(x'y')\phi_{\pi_3' \pi_4'}(x'y')|}{1 + a_{x'y'}} \bigg| |\phi_{\pi_1\pi_2}(xy)\phi_{\pi_3 \pi_4}(xy)| = 1\bigg] \leq \sum_{x',y'}\EE\bigg[\frac{\phi_{\pi_1'\pi_2'}(x'y')}{1 + a_{x'y'}} \bigg| |\phi_{\pi_1\pi_2}(xy)\phi_{\pi_3 \pi_4}(xy)| = 1\bigg]\\
& \leq  2.
\end{align*}
Next by independence, 
\begin{align*}
\MoveEqLeft\sum_{x, y}\frac{\EE |\phi_{\pi_1\pi_2}(xy)\phi_{\pi_3 \pi_4}(xy)|}{1 + a_{xy}} = \sum_{x, y}\frac{ \EE |\phi_{\pi_1\pi_2}(xy)| \EE|\phi_{\pi_3 \pi_4}(xy)|}{1 + a_{xy}}  \\
& \leq \sum_{x, y} \frac{(p_{X',Y'}(x,y) + p_{X'}(x)p_{Y'}(y))^2}{1 + a_{xy}}  \leq 2\sum_{x, y} \frac{p^2_{X',Y'}(x,y) + p^2_{X'}(x)p^2_{Y'}(y)}{1 + a_{xy}} \\
& = 2 (\|p_{X',Y',A}\|^2_2 + \|p^\Pi_{X',Y',A}\|_2^2).
\end{align*}
This completes the proof.
\end{proof}

We now state the following Poissonized version of Theorem \ref{main:theorem:scaling:discrete:XY}
\begin{theorem}[Scaling Discrete $X$, $Y$ Upper Bound]\label{main:theorem:scaling:discrete:XY:Poi} Set $d = \lceil \frac{n^{2/5}}{(\ell_1 \ell_2)^{1/5}}\rceil$ and set the threshold $\tau = \sqrt{\zeta d}$ for a sufficiently large absolute constant $\zeta$ (depending on $L$). Suppose that $\ell_1 \geq \ell_2$ satisfy the condition that 
$d \ell_1 \lesssim n$.
%(where $\lesssim$ indicates $\leq$ up to an absolute constant) 
%and that further $\frac{\sqrt{\ell_1 \ell_2}}{n} \lesssim 1$. \sbcomment{Isn't this condition $\frac{\sqrt{\ell_1 \ell_2}}{n} \lesssim 1$ always satisfied? i.e. otherwise the claim is trivial? Remove?} 
Then 
when $\varepsilon \geq c \frac{(\ell_1 \ell_2)^{1/5}}{n^{2/5}}$, for a sufficiently large absolute constant $c$ (depending on $\zeta$, $L$), %\sbcomment{I don't know what $C$ is. Remove?}
we have that 
\begin{align*}
\sup_{p \in \cP_{0,[0,1],\chi^2}'(L)}\sum_{k = 0}^\infty \PP(N = k)\EE_p[\psi_\tau(\cD_k)] & \leq \frac{1}{10},\\ 
\sup_{p \in \{p \in \cQ_{0,[0,1],\TV}'(L): \inf_{q \in \cP'_{0,[0,1]}} \|p - q\|_1 \geq \varepsilon\}}\sum_{k = 0}^\infty \PP(N = k)\EE_p[1 - \psi_\tau(\cD_k)]& \leq \frac{1}{10}.
\end{align*}
%when $\varepsilon \geq c \frac{(\ell_1 \ell_2)^{1/5}}{n^{2/5}}$, for a sufficiently large absolute constant $c$ (depending on $\zeta$, $L$, $C$)
\end{theorem}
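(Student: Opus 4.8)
The plan is to follow the template of the proof of Theorem~\ref{main:theorem:finite:discrete:XY:Poi}, replacing the unweighted U-statistic by the flattened one of \eqref{weighted:U:stat} and the unweighted combination by the weighted statistic $T$ of \eqref{T:stat:def}. As there, I would first reduce to $N\sim\operatorname{Poi}(n)$ without loss of generality, bin $Z$ into the $d$ equal intervals $\{C_1,\dots,C_d\}$, form the weighted U-statistics $U_m=U_W(\cD_m)$, and analyze $\psi_\tau(\cD_N)=\mathbbm 1(T\ge\tau)$ through Chebyshev's inequality, conditioning throughout on the bin counts $\sigma=(\sigma_m)$ and on the split subsamples $\cD_{m,X},\cD_{m,Y}$ that generate the flattening weights, then averaging. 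Writing $q_{xy}(m),q_{x\cdot}(m),q_{\cdot y}(m)$ for the within-bin conditional pmf of $(X,Y)\mid Z\in C_m$ and its marginals, the four ingredients needed are: (i) a uniform upper bound on $\EE[T]$ under the null; (ii) a matching lower bound on $\EE[T]$ over the $\varepsilon$-separated alternative; (iii) an upper bound on $\Var[T]$ under both; and (iv) the choice $\tau=\sqrt{\zeta d}$ which, together with (i)--(iii), lets Chebyshev close the argument exactly as in the fixed-alphabet case.

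For (i), conditioning on the flattening multiset $A_m$ gives $\EE[U_m\mid A_m]=\sum_{x,y}\frac{(q_{xy}(m)-q_{x\cdot}(m)q_{\cdot y}(m))^2}{1+a_{xy}}$. The key step is to bound the expectation of this quantity (over the random $A_m$) by the within-bin $\chi^2$ divergence $d_{\chi^2}\!\big(q_{\cdot,\cdot}(m),\,q_{\cdot}(m)q_{\cdot}(m)\big)$ divided by an appropriate power of $\sigma_m$ and of $\min(\sigma_m,\ell_1)\min(\sigma_m,\ell_2)$, using that $1+a_{xy}=(1+a_x)(1+a'_y)$ is, in expectation, of order $(1+t_1 q_{x\cdot}(m))(1+t_2 q_{\cdot y}(m))$ and then invoking the T2 Lemma; this is the intermediate $\chi^2$ bound alluded to in the remarks. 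The within-bin $\chi^2$ divergence is then controlled by the $\chi^2$-Lipschitz assumption: inside a bin of width $1/d$, every conditional $p_{X\mid Z=z}$ (resp.\ $p_{Y\mid Z=z}$) is within $\chi^2$ distance $L/d$ of every other, while conditional independence forces $p_{X,Y\mid Z=z}=p_{X\mid Z=z}p_{Y\mid Z=z}$ pointwise; combining these through the additive behaviour of $d_{\chi^2}$ on products (used in Section~\ref{examples:section}) yields $d_{\chi^2}(q_{\cdot,\cdot}(m),q_{\cdot}(m)q_{\cdot}(m))\lesssim L/d$, and hence, after propagating through the $\sigma_m\omega_m$ weights using $\EE[\sigma_m\mathbbm 1(\sigma_m\ge4)]\le np_m$, $\sum_m np_m=n$, $\omega_m\le\sqrt{\ell_1\ell_2}\wedge\sigma_m$, the choice $d\asymp n^{2/5}/(\ell_1\ell_2)^{1/5}$, and the hypothesis $d\ell_1\lesssim n$ (which guarantees that typical bins contain enough samples for the flattening to be effective), one obtains $\EE[T]\lesssim\sqrt{d}$ under the null, in particular $\EE[T]\le\tau/2$ for $\zeta$ large, paralleling the role of Lemma~\ref{expectation:upper:bound:lemma} in the fixed case.

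For (iii) I would use the rule of total variance to split $\Var[T]=\EE[\Var[T\mid\sigma,\text{weights}]]+\Var[\EE[T\mid\sigma,\text{weights}]]$; the conditional variance is the sum over bins of $\sigma_m^2\omega_m^2\Var[U_m\mid\cdot]$, controlled by Lemma~\ref{second:variance:upper:bound} together with the bound $\|p^\Pi_{X',Y',A_m}\|_2\lesssim 1/\omega_m$ (the flattening is precisely designed so that the split product distribution has small $L_2$ norm), while the second term is handled as in the fixed case via $\Var[\sigma_m\mathbbm 1(\sigma_m\ge4)]\lesssim\EE[\sigma_m\mathbbm 1(\sigma_m\ge4)]$; the outcome, after the same bookkeeping, is $\Var[T]\lesssim\EE[T]+d\lesssim d$ on the null scale. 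For (ii), the TV-Lipschitzness of $p_{X,Y\mid Z=z}$ in $\cQ'_{0,[0,1],\TV}(L)$ shows that within each bin $\sqrt{\EE[U_m\mid A_m]}$ is, up to a $1/d$ discretization error, at least the local $L_1$ gap to independence divided by $\sqrt{\min(\sigma_m,\ell_1)\min(\sigma_m,\ell_2)}$; summing, invoking Lemma~\ref{TV:proj:vs:all} to pass from $\EE_Z\|p_{X,Y\mid Z}-p_{X\mid Z}p_{Y\mid Z}\|_1$ to $\inf_{q\in\cP'_{0,[0,1]}}\|p-q\|_1\ge\varepsilon$, and splitting into the regimes $np_m>1$ and $np_m\le1$ (Cauchy--Schwarz in the first, Jensen in the second, as in the proof of Theorem~\ref{main:theorem:finite:discrete:XY:Poi} and Lemma~\ref{lemma:lower:bound:expectation}) yields $\EE[T]\gtrsim\sqrt{\zeta d}$ once $\varepsilon\ge c(\ell_1\ell_2)^{1/5}/n^{2/5}$. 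Finally, under the null $\PP(T\ge\tau)\le\PP(|T-\EE T|\ge\tau/2)\le 4\Var[T]/\tau^2\le\tfrac1{10}$ for $\zeta$ large, and under the alternative $\PP(T<\tau)\le\PP(|T-\EE T|\ge\EE T/2)\le 4\Var[T]/(\EE T)^2\lesssim (d+\EE T)/(\EE T)^2\le\tfrac1{10}$ since $\EE T\gtrsim\sqrt{\zeta d}$.

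The hardest part will be the analysis behind (i) and (iii) in the regime where $\ell_1,\ell_2$ (hence the per-bin sample sizes relative to the alphabet sizes) scale: the flattening counts $a_{xy}$ and the bin weights $\omega_m=\sqrt{\min(\sigma_m,\ell_1)\min(\sigma_m,\ell_2)}$ are random with non-negligible fluctuations, so one cannot simply substitute their means. I expect the bulk of the work to go into showing that they concentrate well enough — equivalently, that $\EE[U_W(\cD_m)]$ averaged over the weights is comparable to the ``ideal'' flattened quantity, and that $\|p^\Pi_{X',Y',A_m}\|_2$ really does enjoy the small $L_2$ bound that drives Lemma~\ref{second:variance:upper:bound}. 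This is exactly the high-dimensional complication highlighted in the remarks following Theorem~\ref{main:theorem:scaling:discrete:XY}, and it is also where the condition $d\ell_1\lesssim n$ is essential. Once the Poissonized statement is established, the fixed-sample-size Theorem~\ref{main:theorem:scaling:discrete:XY} follows from it by the same decoupling-and-Poisson-tail argument used to derive Theorem~\ref{main:theorem:finite:discrete:XY} from Theorem~\ref{main:theorem:finite:discrete:XY:Poi}.
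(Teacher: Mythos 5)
Your proposal reproduces the architecture of the paper's proof (Poissonization, binning, flattened U-statistics, conditioning on $\sigma$ and the flattening subsamples, the alternative-side lower bound via TV-Lipschitzness plus Lemma~\ref{TV:proj:vs:all} and a case split over $np_m$, and the variance bound via Lemma~\ref{second:variance:upper:bound} with $\|q_{\Pi,A_m}(m)\|_2^2\lesssim \omega_m^{-2}$), but there is a genuine gap at the one step that carries the whole null analysis. After the reduction \eqref{intermediate:bound:null:hypothesis:lemma}, what must be bounded is the dependence functional of the \emph{bin-averaged} distribution from the product of its \emph{own} bin-averaged marginals, $\sum_{x,y}(q_{xy}(m)-q_{x\cdot}(m)q_{\cdot y}(m))^2/\big(q_{x\cdot}(m)q_{\cdot y}(m)\big)$. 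You propose to bound this by $L/d$ by combining the within-bin $\chi^2$-Lipschitz fluctuation of the conditionals with additivity of $d_{\chi^2}$ on products; but that reasoning controls $d_{\chi^2}(p_{X,Y|Z=z},p_{X,Y|Z=z'})$ for two points of the bin, which is not the quantity in question. Under the null, $q_{xy}(m)-q_{x\cdot}(m)q_{\cdot y}(m)$ is an average over the bin of a product of \emph{centered} $X$- and $Y$-discrepancies, so first-order terms cancel and the functional factorizes (Cauchy--Schwarz, then mean value theorem and Jensen, as in the paper's Lemma~\ref{second:variance:bound:flattening}) into an $X$-term times a $Y$-term, each of order $L/d$, giving $(L/d)^2$. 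This is the exact analogue of Lemma~\ref{expectation:upper:bound:lemma} in the fixed-alphabet case, and the extra factor of $1/d$ is indispensable.

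Quantitatively, with only $L/d$ your step (i) yields $\EE[T\mid\sigma]\lesssim \frac{L}{d}\sum_m\frac{\sigma_m}{\omega_m}\mathbbm{1}(\sigma_m\ge4)$, and since $\EE\sum_m\frac{\sigma_m}{\omega_m}\mathbbm{1}(\sigma_m\ge4)$ can be of order $n/\sqrt{\ell_1\ell_2}$, the null expectation is only controlled at level $\frac{L}{d}\cdot\frac{n}{\sqrt{\ell_1\ell_2}}\asymp L\,d^{3/2}$ for the prescribed $d$ (recall $n/(d^2\sqrt{\ell_1\ell_2})\asymp\sqrt d$); this exceeds the threshold $\tau=\sqrt{\zeta d}$ for every fixed $\zeta$ once $d$ is large, so the Chebyshev argument for the type I error does not close. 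With the correct $(L/d)^2$ bound one gets $\EE[T]\lesssim n/(d^2\sqrt{\ell_1\ell_2})\asymp\sqrt d$, and the rest of your outline (including the role of $d\ell_1\lesssim n$ in bounding $\sum_m\sigma_m/\omega_m$, and the high-probability conditional statements before applying Chebyshev) goes through as in the paper. A secondary slip: in step (ii) you divide the within-bin $L_1$ gap by $\sqrt{\min(\sigma_m,\ell_1)\min(\sigma_m,\ell_2)}$, which is not a valid Cauchy--Schwarz step; since $\sum_{x,y}(1+a^m_{xy})=(\ell_1+t_{1,m})(\ell_2+t_{2,m})$, the correct denominator is $\sqrt{(\ell_1+t_{1,m})(\ell_2+t_{2,m})}\le 2\sqrt{\ell_1\ell_2}$, the compensating factor $\omega_m$ being supplied by the weights in the definition of $T$ rather than by this inequality; this does not change the final rate but should be stated correctly.
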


\begin{proof}[Proof of Theorem \ref{main:theorem:scaling:discrete:XY}] Using Theorem \ref{main:theorem:scaling:discrete:XY:Poi} the proof is the same as that of Theorem \ref{main:theorem:finite:discrete:XY}
\end{proof}

\begin{proof}[Proof of Theorem \ref{main:theorem:scaling:discrete:XY:Poi}] 
To prove this theorem we will assume that $N \sim Poi(n)$ instead of $Poi(n/2)$ for convenience. Since this changes only constant factors, we can do this WLOG. As discussed in the introduction of Section~\ref{scaling:l1:l2:section}, we remind the reader that we focus throughout this proof, without loss of generality, on the setting where $\sqrt{\ell_1 \ell_2}/n \lesssim 1$, noting that when this condition is not satisfied there is a trivial test which is minimax optimal.

For each dataset $\cD_m$ we will index with $m$ all the quantities defined in the main text. For example $t_{1,m}, t_{2,m}$ will refer to the sample sizes of $\cD_{m,X}$ and $\cD_{m,Y}$, while $t_m$ will be such that $\sigma_m = 4 + 4 t_m$. In addition $a_{xy}^m$, $a_x^m$ and $a_y^{'m}$ will denote the weighting amounts. Furthermore, $A_m$ will denote the multi-set of samples where $(x,y)$ appears $a_{xy}^m$ times. For brevity we will refer to the weighting randomness as $R_m$, i.e., $R_m = \{\cD_{m,X}, \cD_{m,Y}\}$. Furthermore we will denote $\sigma = \{\sigma_m\}_{m \in [d]}$ and $R = \{R_m\}_{m \in [d]}$. 

Let us also define 
\begin{align}\label{Tm:def}
T_m = \mathbbm{1}(\sigma_m \geq 4) \omega_m\sigma_m U_m.
\end{align}
Recall the definitions of $q_{xy}(m)$, $q_{x\cdot}(m)$ and $q_{\cdot y}(m)$ \eqref{qxy:def} and \eqref{qxcdot:qcdoty:def}. These distributions ``play the role'' of $p_{X',Y'}$ and $p_{X'}$ an $p_{Y'}$ from the main text. For brevity, denote the distributions with density $q_{xy}(m)$ and $q_{x \cdot}(m)q_{y\cdot}(m)$ with $q(m)$ and $q_\Pi(m)$. Denote the $A_m$-split distribution $q_{}(m)$ by $q_{A_m}(m)$ and the $A_m$-split distribution $q_{\Pi}(m)$ by $q_{\Pi,A_m}(m)$. 

Before we delve into the proof we give several useful definitions and results about the weighting which we take from \cite{canonne2018testing}.

\begin{definition}[1-Dimensional Split Distribution]\label{split:distribution} Given a discrete distribution $p$ over $[d]$ and a multi-set $S$ of elements of $[d]$ define the distribution $p_S$ over $[d + |S|]$. Let $a_i = \sum_{j \in S}\mathbbm{1}( j = i)$. Thus $\sum_{i \in [d]}1 + a_i  = d + |S|$. We can therefore associate elements of the set $[d + |S|]$ with elements in the set $B_S = \{(i,j) | i \in [d], 1 \leq j \leq 1 + a_i\}$. The split distribution $p_S$ is supported on $B_S$ and is obtained by sampling $i$ from $p$ and $j$ uniformly from the set $[1 + a_i]$. 
\end{definition}

\begin{lemma}[Fact 2.2 \cite{canonne2018testing}]\label{lemma:TV:identity:flattened} Let $p,q$ are distributions over $[d]$, and $S$ is a given multi-set of $[d]$. Then we can simulate a sample from $p_S$ or $q_S$ by taking a single sample from $p$ or $q$. It also holds that $d_{\TV}(p_S,q_S) = d_{\TV}(p,q)$. 
\end{lemma}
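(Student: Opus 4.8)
The plan is to verify both assertions directly from the definition of the split distribution (Definition~\ref{split:distribution}). Recall that if $p$ is a distribution over $[d]$ and $a_i = \sum_{j \in S}\mathbbm{1}(j = i)$ counts the multiplicity of $i$ in $S$, then $p_S$ is supported on $B_S = \{(i,j) : i \in [d],\, 1 \le j \le 1 + a_i\}$ with mass $p_S(i,j) = p(i)/(1+a_i)$; this is a valid probability distribution because $\sum_{(i,j) \in B_S} p_S(i,j) = \sum_{i \in [d]} (1+a_i)\cdot p(i)/(1+a_i) = \sum_{i} p(i) = 1$.

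For the simulation claim, I would simply spell out the sampling procedure: draw $i$ from $p$ using one sample, then independently draw $j$ uniformly from $\{1, \ldots, 1+a_i\}$ using auxiliary randomness; no second sample from $p$ is needed since $S$, and hence the multiplicities $a_i$, are fixed and known in advance. By construction the resulting pair $(i,j)$ has law exactly $p_S$, and the identical recipe turns one sample of $q$ into one sample of $q_S$ (indeed, applied to a coupled draw from $(p,q)$ it yields a coupling of $(p_S,q_S)$).

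For the total variation identity, I would expand the definition of $d_{\TV}$ on $B_S$ and group the sum by the first coordinate:
\begin{align*}
d_{\TV}(p_S, q_S) &= \frac12 \sum_{(i,j) \in B_S} \left| \frac{p(i)}{1+a_i} - \frac{q(i)}{1+a_i} \right| = \frac12 \sum_{i \in [d]} \sum_{j=1}^{1+a_i} \frac{|p(i) - q(i)|}{1+a_i} \\
&= \frac12 \sum_{i \in [d]} (1+a_i)\cdot \frac{|p(i) - q(i)|}{1+a_i} = \frac12 \sum_{i \in [d]} |p(i) - q(i)| = d_{\TV}(p,q),
\end{align*}
the only substantive point being that the fiber over each $i$ contributes exactly $1+a_i$ identical terms, each scaled by $1/(1+a_i)$, so the splitting is mass-preserving fiberwise.

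There is essentially no obstacle here; the sole care needed is bookkeeping on the support $B_S$ and checking normalization. The argument carries over verbatim to the two-dimensional split distribution of Definition~\ref{split:distribution:main:text} (replacing $[d]$ by $[d_1]\times[d_2]$ and $a_i$ by $a_{xy}$), which is the version actually invoked in the variance computations for the weighted $U$-statistic.
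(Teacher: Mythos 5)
Your verification is correct: the explicit formula $p_S(i,j)=p(i)/(1+a_i)$ on $B_S$, the one-sample simulation using auxiliary uniform randomness (valid since $S$ and hence the multiplicities $a_i$ are fixed), and the fiberwise cancellation of the $1+a_i$ factors in the total variation sum are exactly the standard argument. The paper itself gives no proof, deferring to Fact 2.2 of the cited reference, so your direct computation fills in precisely what is being invoked, and your remark that it transfers verbatim to the two-dimensional split distribution is the form actually used in the paper's analysis.
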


\begin{lemma}[Lemma 2.3 \cite{canonne2018testing}] Let $p$ be a discrete distribution over $[d]$. Then, for any multi-sets $S \subseteq S'$ of $[d]$, $\|p_S\|_2 \leq \|p_{S'}\|_2$, and if $S$ is obtained by $m$ independent samples from $p$, then $\EE[\|p_S\|_2^2] \leq \frac{1}{m+1}$. 
\end{lemma}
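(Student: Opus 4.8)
The plan is to reduce both assertions to one explicit formula for the squared $L_2$ norm of a split distribution, after which each part is a one-line argument. First I would record that, by Definition~\ref{split:distribution}, the density of $p_S$ at a point $(i,j)$ of its support $B_S$ equals $p(i)/(1+a_i)$, where $a_i := \sum_{s \in S}\mathbbm{1}(s=i)$ denotes the multiplicity of $i$ in $S$; since the fiber above each $i$ contains exactly $1+a_i$ points, summing squares over all of $B_S$ yields the workhorse identity
\[
\|p_S\|_2^2 = \sum_{i \in [d]}(1+a_i)\Bigl(\frac{p(i)}{1+a_i}\Bigr)^2 = \sum_{i \in [d]}\frac{p(i)^2}{1+a_i},
\]
where coordinates with $p(i)=0$ contribute nothing and need no special treatment.

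For the monotonicity claim I would simply note that $S \subseteq S'$ forces $a_i \le a_i'$ for every $i \in [d]$, and since $t \mapsto p(i)^2/(1+t)$ is non-increasing, the displayed identity gives the comparison between $\|p_S\|_2$ and $\|p_{S'}\|_2$ term by term.

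For the expectation bound, when $S$ consists of $m$ independent draws from $p$ the multiplicity $a_i$ is distributed as $\mathrm{Binomial}(m,p(i))$, so the only genuine computation is the elementary identity
\[
\EE\Bigl[\frac{1}{1+a_i}\Bigr] = \frac{1-(1-p(i))^{m+1}}{(m+1)\,p(i)},
\]
which follows from $\tfrac{1}{k+1}\binom{m}{k} = \tfrac{1}{m+1}\binom{m+1}{k+1}$ followed by re-indexing the binomial sum. Multiplying through by $p(i)^2$ and using $1-(1-p(i))^{m+1}\le 1$ gives $\EE[p(i)^2/(1+a_i)] \le p(i)/(m+1)$; summing over $i \in [d]$, invoking $\sum_i p(i)=1$, and taking expectations in the identity of the first paragraph then produces $\EE[\|p_S\|_2^2] \le 1/(m+1)$.

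There is no substantive obstacle here: the entire proof rests on the two elementary identities above --- the explicit split density and the binomial expectation --- and the remaining steps are bookkeeping. The one point worth a moment's care is the re-indexing of the binomial sum in the second identity (together with the harmless limiting behavior at $p(i)=0$), but neither presents any real difficulty.
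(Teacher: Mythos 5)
Your argument is the standard proof of this fact (the paper itself does not prove it but imports it from \cite{canonne2018testing}), and the three ingredients you use all check out: the identity $\|p_S\|_2^2=\sum_{i\in[d]}p(i)^2/(1+a_i)$ obtained by summing the constant mass $p(i)/(1+a_i)$ over the $1+a_i$ points of each fiber, termwise monotonicity in the multiplicities, and the binomial identity $\EE[1/(1+a_i)]=\bigl(1-(1-p(i))^{m+1}\bigr)/\bigl((m+1)p(i)\bigr)$ with $a_i\sim\mathrm{Binomial}(m,p(i))$, which also yields the inequality $\EE[1/(1+a_i)]\le 1/((m+1)p(i))$ that the paper records right after the lemma. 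One point you should make explicit rather than leave as ``the comparison'': since $S\subseteq S'$ gives $a_i\le a_i'$ and $t\mapsto p(i)^2/(1+t)$ is non-increasing, your argument establishes $\|p_{S'}\|_2\le\|p_S\|_2$, i.e.\ splitting more can only decrease the $L_2$ norm. This is the \emph{reverse} of the inequality as transcribed in the lemma statement above, which is a transcription slip: the version you prove is the one in \cite{canonne2018testing} and the one actually invoked later in the paper (e.g.\ in Lemma~\ref{lem:aa}, where adding nonnegative weights $a^m_{ij}$ is used to conclude $\|q_{A_m}-q_{\Pi,A_m}\|_2^2\le\|q-q_{\Pi}\|_2^2$), so state the direction explicitly and note the discrepancy. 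With that clarification, your proof is complete and essentially identical in spirit to the original.
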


An important implication of the proof of this lemma is that if $a_i$ denotes the number of samples in $S$ which equal to $i$, when $S$ is drawn as $m$ independent samples from $p$:
\begin{align}\label{flattening:inequality}
\EE \frac{1}{1 + a_i} \leq \frac{1}{(m+1)p_i}.
\end{align}
Using the independence of $a_x^m$ and $a_y^m$ we can therefore conclude that by \eqref{flattening:inequality}
\begin{align}\label{flattening:expectation:bound}
\EE\bigg[\frac{1}{1 + a^m_{xy}}\bigg] =\EE\bigg[\frac{1}{1 + a^m_x}\bigg]\EE\bigg[\frac{1}{1 +a^{'m}_y}\bigg] \leq \frac{1}{(1 + t_{1,m})(1 + t_{2,m})q_{x\cdot}(m)q_{\cdot y}(m)},
\end{align}
where the expectation above is with respect to the randomness in $R_m$. Hence
\begin{align}\label{bound:on:l2:norm:qpi}
\EE \|q_{\Pi,A_m}(m)\|_2^2 & = \sum_{x,y} \EE \frac{(q_{x\cdot}(m)q_{\cdot y}(m))^2}{1 + a^m_{xy}}\leq \sum_{x,y} \frac{q_{x\cdot}(m)q_{\cdot y}(m)}{(1 + t_{1,m})(1 + t_{2,m})} = \frac{1}{(1 + t_{1,m})(1 + t_{2,m})}.
\end{align}
\noindent \textbf{Analysis of the Expectation.} Recall that the test statistic is 
\begin{align*}
T = \sum_{m \in [d]} T_m = \sum_{m \in [d]}\mathbbm{1}(\sigma_m \geq 4) \sigma_m \omega_m U_m.
\end{align*}

Recall that we denote the randomness of the flattening with $R_m$ for bin $m$, the sample size within each bin as $\sigma_m$, and let us denote the randomness associated with the estimator $U_m$ with $K_m$. We have
$$
\EE[T_m | \sigma_m, R_m] = \sigma_m \omega_m \|q_{A_m}(m) - q_{\Pi,A_m}(m)\|_2^2\mathbbm{1}(\sigma_m \geq 4).
$$
Here we use the following bound
\begin{align*}
\|q_{A_m}(m) - q_{\Pi,A_m}(m)\|_2 & \geq \frac{\|q_{A_m}(m) - q_{\Pi,A_m}(m)\|_1}{\sqrt{(\ell_1 + t_{1,m})(\ell_2 + t_{2,m})}} = \frac{\|q_{}(m) - q_{\Pi}(m)\|_1}{\sqrt{(\ell_1 + t_{1,m})(\ell_2 + t_{2,m})}} \\
& \geq \frac{\|q_{}(m) - q_{\Pi}(m)\|_1}{2\sqrt{\ell_1\ell_2}},
\end{align*}
where the first inequality follows from Cauchy-Schwarz (or simply by the T2 Lemma) and the fact that $\sum_{x,y} 1 + a_{xy}^m = \sum_{x,y} (1 + a_{x}^m)(1 + a_y^{'m}) = (\ell_1 + t_{1,m})(\ell_2 + t_{2,m})$, and the second identity follows from Lemma \ref{lemma:TV:identity:flattened}. Denote by $\varepsilon_m = d_{\TV}(q_{}(m), q_{\Pi}(m)) = \frac{\|q_{}(m) - q_{\Pi}(m)\|_1}{2}$ for convenience. We have that 
\begin{align}\label{Tm:lower:bound}
\EE[T_m | \sigma_m, R_m] \geq \sigma_m \omega_m\mathbbm{1}(\sigma_m \geq 4) \frac{\varepsilon^2_m}{\ell_1\ell_2}.
\end{align}
Denote by $\alpha_m = np_m$ where $p_m= \PP(Z \in C_m)$. We have the following lemma
\begin{lemma} The following inequality holds
\begin{align}\label{lower:bound:on:A}
\EE \sum_{m \in [d]}  \sigma_m \omega_m\mathbbm{1}(\sigma_m \geq 4) \frac{\varepsilon^2_m}{\ell_1\ell_2} \geq \gamma \sum_{m \in [d]} \frac{\varepsilon_m^2 }{\ell_1\ell_2} \min(\alpha_m \beta_m, \alpha_m^4),
\end{align}
where $\beta_m = \sqrt{\min(\alpha_m, \ell_1)\min(\alpha_m, \ell_2)}$ and $\gamma$ is some absolute constant. 
\end{lemma}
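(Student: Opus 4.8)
The plan is to observe that the quantities $\varepsilon_m = d_{\TV}(q(m), q_\Pi(m))$ are deterministic functionals of the underlying distribution $p_{X,Y,Z}$ and of the fixed partition $\{C_m\}_{m\in[d]}$; in particular the coefficients $\varepsilon_m^2/(\ell_1\ell_2)$ carry no data randomness. Hence by linearity of expectation it suffices to prove the per-bin inequality
\[
\EE\bigl[\sigma_m\,\omega_m\,\mathbbm{1}(\sigma_m\ge 4)\bigr]\;\ge\;\gamma\,\min\bigl(\alpha_m\beta_m,\ \alpha_m^4\bigr)
\]
for each $m$ and then sum against the nonnegative weights $\varepsilon_m^2/(\ell_1\ell_2)$. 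Here $\sigma_m\sim\operatorname{Poi}(\alpha_m)$ with $\alpha_m=np_m$, $\omega_m=\sqrt{\min(\sigma_m,\ell_1)\min(\sigma_m,\ell_2)}$, and $\beta_m=\sqrt{\min(\alpha_m,\ell_1)\min(\alpha_m,\ell_2)}$. Writing $f(k)=k\sqrt{\min(k,\ell_1)\min(k,\ell_2)}$, which is nondecreasing in $k$, we have $\sigma_m\omega_m\mathbbm{1}(\sigma_m\ge 4)=f(\sigma_m)\mathbbm{1}(\sigma_m\ge 4)$. This is a weighted analogue of Lemma 3.1 of \cite{canonne2018testing}, and I would prove the per-bin bound by splitting on the size of $\alpha_m$.

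First I would treat the regime $\alpha_m\le 8$. Keeping only the atom at $k=4$, and using $\min(4,\ell_1),\min(4,\ell_2)\ge 1$ so that $f(4)\ge 4$, together with $\PP(\sigma_m=4)=e^{-\alpha_m}\alpha_m^4/24\ge e^{-8}\alpha_m^4/24$, gives $\EE[f(\sigma_m)\mathbbm{1}(\sigma_m\ge 4)]\ge (e^{-8}/6)\,\alpha_m^4\ge (e^{-8}/6)\,\min(\alpha_m\beta_m,\alpha_m^4)$. Next, for $\alpha_m>8$ I would use concentration instead: a standard Chernoff bound for the lower tail of a Poisson variable yields $\PP(\sigma_m\ge \alpha_m/2)\ge 1-e^{-\alpha_m/8}\ge 1-e^{-1}$, and on this event $\sigma_m\ge\alpha_m/2\ge 4$, so $f(\sigma_m)\ge \tfrac{\alpha_m}{2}\sqrt{\min(\alpha_m/2,\ell_1)\min(\alpha_m/2,\ell_2)}$. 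Combining this with the elementary inequality $\min(\alpha_m/2,\ell)\ge \tfrac12\min(\alpha_m,\ell)$ — verified by distinguishing the cases $\ell\ge\alpha_m$, $\alpha_m/2\le\ell<\alpha_m$, and $\ell<\alpha_m/2$ — gives $f(\sigma_m)\ge \alpha_m\beta_m/4$ on that event, hence $\EE[f(\sigma_m)\mathbbm{1}(\sigma_m\ge 4)]\ge \tfrac{1-e^{-1}}{4}\,\alpha_m\beta_m\ge \tfrac{1-e^{-1}}{4}\,\min(\alpha_m\beta_m,\alpha_m^4)$. Taking $\gamma=\min\{e^{-8}/6,\ (1-e^{-1})/4\}$ proves the per-bin bound, and summing over $m$ against $\varepsilon_m^2/(\ell_1\ell_2)\ge 0$ yields \eqref{lower:bound:on:A}.

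I do not expect a genuine obstacle in this argument; the two points that merit a little care are (i) justifying that each $\varepsilon_m$ is non-random, which is what permits the reduction to a per-bin expectation, and (ii) the coupling of the \emph{random} weight $\omega_m$ to the \emph{deterministic} weight $\beta_m$ in the large-$\alpha_m$ regime, which is exactly where the inequality $\min(\alpha_m/2,\ell)\ge\frac12\min(\alpha_m,\ell)$ is used. The threshold $8$ in the case split is chosen so that, in the concentrated regime, the event $\{\sigma_m\ge\alpha_m/2\}$ automatically forces $\sigma_m\ge 4$, so that the indicator $\mathbbm{1}(\sigma_m\ge 4)$ is harmless there.
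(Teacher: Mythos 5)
Your argument is correct, and its skeleton is the same as the paper's: since each $\varepsilon_m = d_{\TV}(q(m),q_\Pi(m))$ is a deterministic functional of $p_{X,Y,Z}$ and of the fixed partition, the claim reduces by linearity to the single-bin inequality $\EE[\sigma_m\omega_m\mathbbm{1}(\sigma_m\ge 4)] \ge \gamma\min(\alpha_m\beta_m,\alpha_m^4)$ for $\sigma_m\sim\operatorname{Poi}(\alpha_m)$. The only difference is how that per-bin fact is handled: the paper simply invokes Claim 2.3 of \citet{canonne2018testing}, which states exactly this bound for $\EE[X\sqrt{\min(X,a)\min(X,b)}\mathbbm{1}(X\ge 4)]$ with $X\sim\operatorname{Poi}(\lambda)$, whereas you re-prove it from scratch. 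Your elementary proof is sound: for $\alpha_m\le 8$ retaining the atom at $4$ gives $f(4)\PP(\sigma_m=4)\ge 4\cdot e^{-8}\alpha_m^4/24$, which dominates $\min(\alpha_m\beta_m,\alpha_m^4)$ up to a constant; for $\alpha_m>8$ the Poisson lower-tail bound $\PP(\sigma_m\le\alpha_m/2)\le e^{-\alpha_m/8}\le e^{-1}$, the monotonicity of $k\mapsto k\sqrt{\min(k,\ell_1)\min(k,\ell_2)}$, and the comparison $\min(\alpha_m/2,\ell)\ge\tfrac12\min(\alpha_m,\ell)$ yield $f(\sigma_m)\ge\alpha_m\beta_m/4$ with probability at least $1-e^{-1}$, and the event indeed forces $\sigma_m\ge 4$. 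What your route buys is self-containedness and an explicit constant $\gamma=\min\{e^{-8}/6,(1-e^{-1})/4\}$; what the paper's route buys is brevity and consistency with the other places in the analysis (e.g.\ Lemma 3.1 and Claims 2.1--2.2 of \citet{canonne2018testing}) where the same family of Poissonized moment bounds is cited rather than re-derived.
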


\begin{proof}
We have that 
\begin{align*}
\EE \sum_{m \in [d]}  \sigma_m \omega_m\mathbbm{1}(\sigma_m \geq 4) \frac{\varepsilon^2_m}{\ell_1\ell_2} =  \sum_{m \in [d]} \EE[ \sigma_m \omega_m\mathbbm{1}(\sigma_m \geq 4)] \frac{\varepsilon^2_m}{\ell_1\ell_2}
\end{align*}
By Claim 2.3 of \cite{canonne2018testing} we have that for $X \sim \operatorname{Poi}(\lambda)$: $$\EE[X \sqrt{\min(X,a) \min(X,b)} \mathbbm{1}(X \geq 4)] \geq \gamma \min(\lambda\sqrt{\min(\lambda, a)\min(\lambda, b)}, \lambda^4).$$ Thus
$$
\EE[ \sigma_m \omega_m\mathbbm{1}(\sigma_m \geq 4)] \geq \gamma \min(\alpha_m \sqrt{\min(\alpha_m, \ell_1)\min(\alpha_m, \ell_2)}, \alpha_m^4)
$$
which completes the proof. 
\end{proof}

Next suppose that $\inf_{q \in \cP_{0,[0,1]}'} \|p_{X,Y,Z} - q\|_1 > \varepsilon$. We want to show some lower bounds on the RHS of \eqref{lower:bound:on:A}.  We start by looking into the expression
\begin{align}\label{expression:to:lower:bound}
\sum_{m \in [d]} \frac{\varepsilon_m }{\sqrt{\ell_1\ell_2}} \alpha_m.
\end{align}
Recall that $\varepsilon_m = d_{\TV}(q_{}(m), q_{\Pi}(m))$. To obtain a lower bound we take 
$$
z^*_m \in \argmax_{z \in C_m} \|p_{X,Y| Z= z} - p_{X | Z = z}p_{Y|Z=z}\|_1, 
$$
where just as before in the proof of Lemma \ref{lemma:lower:bound:expectation} we can show that the map $z \mapsto \argmax_{z \in C_m} \|p_{X,Y| Z= z} - p_{X | Z = z}p_{Y|Z=z}\|_1$ is continuous. The rest of the proof is very similar to Lemma \ref{lemma:lower:bound:expectation} but we provide full details for completeness. By the triangle inequality we have 
\begin{align*}
2 \varepsilon_m & \geq \|p_{X,Y|Z = z_m^*} - p_{X |Z=z_m^*}p_{Y | Z = z_m^*}\|_1 - \|q(m) - p_{X,Y|Z = z_m^*}\|_1 \\
& -  \|q_{X \cdot}(m)(q_{\cdot Y}(m) - p_{Y|Z= z_m^*})\|_1 -  \|p_{Y|Z = z_m^*}(q_{X\cdot}(m) - p_{X| Z=z_m^*})\|_1,
\end{align*}
where we denoted the distributions with density $q_{x\cdot}(m)$: $q_{X\cdot}(m)$ and similarly for $q_{\cdot Y}(m)$. Now we will bound the three terms. We start with
\begin{align*}
\|q(m) - p_{X,Y| Z = z_m^*}\|_1 & = \sum_{x,y} \bigg|\int_{C_m} p_{X,Y|Z}(x,y|z) - p_{X,Y|Z}(x,y|z_m^*) d\tilde P(z)\bigg| \\
& \leq  \int_{C_m}\sum_{x,y} |p_{X,Y|Z}(x,y|z) - p_{X,Y|Z}(x,y|z_m^*)| d\tilde P(z)\\
& \leq  \int_{C_m}L| z- z_m^*|d\tilde P(z) \leq L \diam(C_m) = \frac{L}{d},
\end{align*}
where $d\tilde P(z) = \frac{d P(z)}{\PP(Z \in A_m)}$, is the conditional distribution of $Z \in C_m$. Similarly 
\begin{align*}
\|q_{X\cdot}(m)(q_{\cdot Y}(m) - p_{Y|z_m^*})\|_1  & = \sum_{x,y} \bigg|\int_{C_m} q_{x\cdot}(m)(p_{Y|Z}(y|z)- p_{Y|Z}(y|z_m^*)) d\tilde P(z)\bigg| \\
& \leq  \int_{C_m} \sum_{x,y} q_{x\cdot}(m)|p_{Y|Z}(y|z)- p_{Y|Z}(y|z_m^*)|d\tilde P(z)\\
& \leq L \diam(C_m) = \frac{L}{d}.
\end{align*}
The last term is similar to the previous term so we conclude that 
$$
\varepsilon_m \geq d_{\TV}(p_{X,Y | Z = z_m^*}, p_{X |Z = z_m^*}p_{Y |Z = z_m^*}) - \frac{3}{2} \frac{L}{d}. 
$$
%Thus using the fact that $(a -b)^2 = a^2 + b^2 - 2 ab \geq  a^2 + b^2 - (\frac{a^2}{2} + 2b^2) \geq \frac{a^2}{2} - b^2$ we have
%$$
%\varepsilon_m^2 \geq \frac{d^2_{\TV}(p_{|z_m^*}, p_{\Pi |z_m^*})}{2} - 9 (L\diam(A_m))^2.
%$$
Therefore 
\begin{align*}
\sum_{m \in [d]} \frac{\varepsilon_m }{\sqrt{\ell_1\ell_2}} \alpha_m & \geq \frac{n}{\sqrt{\ell_1\ell_2}} \bigg(\sum_{m} p_m d_{\TV}(p_{X,Y | Z = z_m^*}, p_{X |Z = z_m^*}p_{Y |Z = z_m^*}) - \frac{3}{2} \frac{L}{d}\bigg) \\
& \geq \frac{n}{\sqrt{\ell_1\ell_2}} \bigg(\frac{1}{2}\EE_{Z} \|p_{XY| Z} - p_{X| Z} p_{Y| Z}\|_1 - \frac{3}{2} \frac{L}{d} \bigg)\\
& =:  \frac{n}{\sqrt{\ell_1\ell_2}}  \eta~\footnotemark
\end{align*}
\footnotetext{Note here that $\EE_Z \|p_{X,Y| Z} - p_{X| Z} p_{Y| Z}\|_1 = \|p_{X,Y,Z} - p_{X | Z}p_{Y|Z}p_Z\|_1 \geq \varepsilon$}%\frac{\inf_{q \in \cP_{0,[0,1]}'} \|p_{X,Y,Z} - q\|_1}{6} > \frac{\varepsilon}{6}$.}

Now, as in the proof of Theorem \ref{main:theorem:finite:discrete:XY}, the analysis is partitioned into two parts. The first part takes the set $M_H = \{m | \alpha_m^3 \geq \beta_m\}$ and $M_L = \{m | \alpha_m^3 < \beta_m\}$. By the above analysis we know that either $\sum_{m \in M_H} \frac{\varepsilon_m }{\sqrt{\ell_1\ell_2}} \alpha_m \geq \frac{n}{2\sqrt{\ell_1\ell_2}}  \eta$ or $\sum_{m \in M_L} \frac{\varepsilon_m }{\sqrt{\ell_1\ell_2}} \alpha_m \geq \frac{n}{2\sqrt{\ell_1\ell_2}} \eta$.
In the first case we want to lower bound 
\begin{align*}
\sum_{m \in M_H} \frac{\varepsilon_m^2 }{\ell_1\ell_2} \alpha_m \beta_m.
\end{align*}
In order to determine the value of $\beta_m$ we consider three more cases. Suppose without loss of generality that $\ell_2 \leq \ell_1$. Define the three sets $M_{H,1} = \{m | \ell_2 \leq \ell_1 \leq \alpha_m\}$, $M_{H,2} = \{m | \ell_2 \leq \alpha_m \leq \ell_1 \}$, $M_{H,3} = \{m | \alpha_m\leq \ell_2 \leq \ell_1 \}$. We have that 
$$
\max_{i \in [3]} \sum_{m \in M_{H,i}} \frac{\varepsilon_m }{\sqrt{\ell_1\ell_2}} \alpha_m \geq \frac{n}{6\sqrt{\ell_1\ell_2}}  \eta
$$
We now analyze the three cases depending on where the maximum above is achieved. 
\begin{itemize}
\item Suppose that the maximum is achieved at $i = 1$. Thus
$$
\sum_{m \in M_{H,1}} \frac{\varepsilon_m }{\sqrt{\ell_1\ell_2}} \alpha_m \geq \frac{n}{6\sqrt{\ell_1\ell_2}} \eta.
$$
We have in this subcase that $\beta_m = \sqrt{\ell_1 \ell_2}$. Therefore
\begin{align*}
 \sum_{m \in M_{H,1}} \frac{\varepsilon_m^2 }{\ell_1\ell_2} \alpha_m \beta_m &= \sqrt{\ell_1\ell_2}  \sum_{m \in M_{H,1}} \frac{\varepsilon_m^2 }{\ell_1\ell_2} \alpha_m \geq \sqrt{\ell_1\ell_2} \frac{\bigg(\sum_{m \in M_{H,1}} \frac{\varepsilon_m \alpha_m}{\sqrt{\ell_1\ell_2}}\bigg)^2}{\sum_{m \in M_{H,1}} \alpha_m } \\
& \geq \sqrt{\ell_1\ell_2} \frac{n^2 \eta^2}{36\ell_1\ell_2 (\sum_{m \in M_{H,1}} \alpha_m )}.
\end{align*}
Now we have that $\sum_{m \in M_{H,1}} \alpha_m  = \sum_{m \in M_{H,1}} n p_m\leq n \sum_{m \in [d]} p_m = n$. Thus we conclude
\begin{align*}
 \sum_{m \in M_{H,1}} \frac{\varepsilon_m^2 }{\ell_1\ell_2} \alpha_m \beta_m \geq \frac{n \eta^2}{36\sqrt{\ell_1\ell_2}}.
\end{align*}
\item In the second case we have that the maximum is achieved in $i = 2$ which means
$$
\sum_{m \in M_{H,2}} \frac{\varepsilon_m }{\sqrt{\ell_1\ell_2}} \alpha_m \geq \frac{n}{6\sqrt{\ell_1\ell_2}} \eta.
$$
In this case $\beta_m = \sqrt{\ell_2}\sqrt{\alpha_m}$, so we have
\begin{align*}
 \sum_{m \in M_{H,2}} \frac{\varepsilon_m^2 }{\ell_1\ell_2} \alpha_m \beta_m &= \sqrt{\ell_2} \sum_{m \in M_{H,2}} \frac{\varepsilon_m^2 }{\ell_1\ell_2} \alpha_m^{3/2} \geq \sqrt{\ell_2} \frac{\bigg(\sum_{m \in M_{H,2}} \frac{\varepsilon_m }{\sqrt{\ell_1\ell_2}} \alpha_m\bigg)^2}{\sum_{m \in M_{H,2}} \sqrt{\alpha_m}}. 
\end{align*}
To bound $\sum_{m \in M_{H,2}} \sqrt{\alpha_m}$ we note that 
$$
\sum_{m \in M_{H,2}} \sqrt{\alpha_m} \leq \sum_{m \in [d]} \sqrt{n}\sqrt{p_m} \leq \sqrt{nd}. 
$$
On the other hand, we also have that $\ell_2 \leq \alpha_m \leq \ell_1$ so that $n \geq \sum_{m \in M_{H,2}} \alpha_m \geq \ell_2 |M_{H,2}|$, so that $|M_{H,2}| \leq \frac{n}{\ell_2}$. Therefore $\sum_{m \in M_{H,2}} \sqrt{\alpha_m} \leq |M_{H,2}| \sqrt{\ell_1} \leq \frac{n\sqrt{\ell_1}}{\ell_2}$. Thus 
\begin{align*}
 \sum_{m \in M_{H,2}} \frac{\varepsilon_m^2 }{\ell_1\ell_2} \alpha_m \beta_m \geq  \sqrt{\ell_2} \frac{\bigg(\sum_{m \in M_{H,2}} \frac{\varepsilon_m }{\sqrt{\ell_1\ell_2}} \alpha_m\bigg)^2}{\min(\sqrt{nd}, \frac{n \sqrt{\ell_1}}{\ell_2})} \geq \frac{n^{3/2}\eta^2}{36 \ell_1\sqrt{\ell_2}\min(\sqrt{d}, \frac{ \sqrt{n\ell_1}}{\ell_2})}. 
\end{align*}
Note that in this case we have to have
\begin{align*}
\frac{n \eta}{6\sqrt{\ell_1\ell_2}}\leq \sum_{m \in M_{H,2}} \frac{\varepsilon_m }{\sqrt{\ell_1\ell_2}} \alpha_m \leq \frac{|M_{H,2}|}{\sqrt{\ell_1\ell_2}} \ell_1 \leq \frac{d \ell_1}{\sqrt{\ell_1\ell_2}},
\end{align*}
where we used the fact that $\varepsilon_m \leq 1$. Thus this case cannot happen when $n \eta \geq 6 d\ell_1$.

\item In the last sub-case we have that $\beta_m = \alpha_m$, and the maximum is achieved at $i = 3$ so that we have
$$
\sum_{m \in M_{H,3}} \frac{\varepsilon_m }{\sqrt{\ell_1\ell_2}} \alpha_m \geq \frac{n}{6\sqrt{\ell_1\ell_2}} \eta.
$$
By the AM-GM inequality we have that 
\begin{align*}
\sum_{m \in M_{H,3}} \frac{\varepsilon_m^2 }{\ell_1\ell_2} \alpha_m \beta_m &= \sum_{m \in M_{H,3}} \frac{\varepsilon_m^2 }{\ell_1\ell_2} \alpha_m^2 \geq \frac{\bigg(\sum_{ m \in M_{H,3}} \frac{\varepsilon_m \alpha_m}{\sqrt{\ell_1\ell_2} }\bigg)^2}{|M_{H,3}|} \geq \frac{n^2 \eta^2}{36 \ell_1\ell_2|M_{H,3}|}.
\end{align*}
%CDKS say: A bound on the cardinality $|M_{H,3}| \leq \min(n,d)$ (since it cannot be more than the total number of samples, and cannot be more than the total number of categories). Thus we conclude the bound
We have a simple bound on the cardinality $|M_{H,3}| \leq d$ (since it cannot be more than the total number of categories). Thus we conclude the bound
%CDKS say: \begin{align*}
%\sum_{m \in M_{H,3}} \frac{\varepsilon_m^2 }{\ell_1\ell_2} \alpha_m \beta_m &= \sum_{m \in M_{H,3}} \frac{\varepsilon_m^2 }{\ell_1\ell_2} \alpha_m^2 \geq \frac{n^2 \eta^2}{36 \ell_1\ell_2\min(n,d)}.
%\end{align*}
\begin{align*}
\sum_{m \in M_{H,3}} \frac{\varepsilon_m^2 }{\ell_1\ell_2} \alpha_m \beta_m &= \sum_{m \in M_{H,3}} \frac{\varepsilon_m^2 }{\ell_1\ell_2} \alpha_m^2 \geq \frac{n^2 \eta^2}{36 \ell_1\ell_2d}.
\end{align*}
Similarly to before in this case we must have 
\begin{align*}
\frac{n \eta}{6\sqrt{\ell_1\ell_2}}\leq \sum_{m \in M_{H,3}} \frac{\varepsilon_m }{\sqrt{\ell_1\ell_2}} \alpha_m \leq \frac{|M_{H,3}|}{\sqrt{\ell_1\ell_2}} \ell_2 \leq \frac{d \ell_2}{\sqrt{\ell_1\ell_2}},
\end{align*}
i.e., when $n \eta \geq 6 d\ell_2$.
\end{itemize}
Finally we handle the last case in which we have
$$
\sum_{m \in M_L} \frac{\varepsilon_m }{\sqrt{\ell_1\ell_2}} \alpha_m \geq \frac{n}{2\sqrt{\ell_1\ell_2}} \eta.
$$
Here we have that, by Jensen's inequality:
\begin{align*}
\frac{1}{\ell_1\ell_2}\sum_{m \in M_L} \varepsilon_m^2  \alpha_m^4 \geq \frac{1}{\ell_1\ell_2}\frac{(\sum_{m \in M_L}  \varepsilon_m \alpha_m)^4}{(\sum_{m \in M_L}\varepsilon_m^{2/3})^3} \geq \frac{n^4 \eta^4 }{\ell_1 \ell_2 16(\sum_{m \in M_L}\varepsilon_m^{2/3})^3}.
\end{align*}
Now using that $\varepsilon_m \leq 1$ we have $\sum_{m \in M_L}\varepsilon_m^{2/3} \leq |M_L| \leq d$ thus we conclude that
\begin{align*}
\frac{1}{\ell_1\ell_2}\sum_{m \in M_L} \varepsilon_m^2  \alpha_m^4 \geq \frac{n^4 \eta^4 }{16\ell_1 \ell_2  d^3}.
\end{align*}
Finally in this case we have to have  
\begin{align*}
\frac{n \eta}{6\sqrt{\ell_1\ell_2}}\leq \sum_{m \in M_{L}} \frac{\varepsilon_m }{\sqrt{\ell_1\ell_2}} \alpha_m \leq \frac{|M_{L}|}{\sqrt{\ell_1\ell_2}} 1 \leq \frac{d}{\sqrt{\ell_1\ell_2}},
\end{align*}
or equivalently when $n \eta \geq 6d$ where we used that $\alpha_m^3 \leq \beta_m$ implies that $\alpha_m \leq 1$. 

%{\color{red}
We will now select a threshold at the level of $\zeta \sqrt{d}$, and will give conditions on the minimum sample size for each of the cases. We will use $\gtrsim$ in the sense bigger up to an absolute constant. We will assume that $\frac{\varepsilon}{2} - \frac{3}{2} \frac{L}{d} \geq \frac{\varepsilon}{4}$ so that $\eta \geq \frac{\varepsilon}{4}$. %If we take equally spaced grid this is equivalent to $\varepsilon/3 - \frac{3}{2} L/d\geq \varepsilon/6$
\begin{itemize}
\item In the first sub-case we have to satisfy $\frac{n \eta^2}{\sqrt{\ell_1\ell_2}} \gtrsim \sqrt{\zeta d}$. This is ensured when 
\begin{align}\label{epsilon:condition:one}
\varepsilon \gtrsim \sqrt{\frac{\sqrt{\zeta d\ell_1 \ell_2}}{n}} \vee \frac{1}{d}.
\end{align}
\item In the second sub-case we have $\frac{n^{3/2}\eta^2}{ \ell_1\sqrt{\ell_2}\frac{ \sqrt{n\ell_1}}{\ell_2}} \gtrsim \sqrt{\zeta d}$ or $\frac{n^{3/2}\eta^2}{ \ell_1\sqrt{\ell_2}\sqrt{d}} \gtrsim \sqrt{\zeta d},$
This is implied when 
\begin{align}\label{epsilon:condition:two}
\varepsilon \gtrsim \min\bigg(\sqrt{ \frac{\sqrt{\zeta d}\ell_1^{3/2}}{\sqrt{\ell_2}n}}, \sqrt{\frac{\sqrt{\zeta} d \ell_1 \sqrt{\ell_2}}{n^{3/2}}}, \frac{d \ell_1}{n} \bigg) \vee \frac{1}{d}.
\end{align}
\item In the third sub-case case we need $\frac{n^2 \eta^2}{36 \ell_1\ell_2d} \gtrsim \sqrt{\zeta d}$ 
which happens when 
\begin{align}\label{epsilon:condition:three}
\varepsilon \gtrsim \min\bigg(\frac{ \zeta^{1/4} d^{3/4}\sqrt{\ell_1\ell_2}}{n}, \frac{d\ell_2}{n}\bigg) \vee \frac{1}{d},
\end{align}
where the last condition enforces when this case is not feasible. 
\item Finally in the second case we need $\frac{n^4 \eta^4 }{16\ell_1 \ell_2  d^3} \gtrsim \sqrt{\zeta d}$, which is implied when 
\begin{align}\label{epsilon:condition:four}
\varepsilon \gtrsim \min\bigg(\frac{\zeta^{1/8} d^{7/8} (\ell_1\ell_2)^{1/4}}{n}, \frac{d}{n}\bigg)\vee \frac{1}{d}
\end{align}
\end{itemize}
%}

\noindent \textbf{Analysis of the Variance.} Now we derive a bound on the variance of the statistic 
$$
\sum_{m \in [d]}\sigma_m \omega_m\mathbbm{1}(\sigma_m \geq 4) \frac{\varepsilon^2_m}{\ell_1\ell_2}
$$
Recall now that $\sigma_m \sim \operatorname{Poi}(\alpha_m)$ are independent and therefore
\begin{align}
\Var\bigg[\sum_{m \in [d]}\sigma_m \omega_m\mathbbm{1}(\sigma_m \geq 4) \frac{\varepsilon^2_m}{\ell_1\ell_2}\bigg] & = \sum \Var\bigg[\sigma_m \omega_m\mathbbm{1}(\sigma_m \geq 4) \frac{\varepsilon^2_m}{\ell_1\ell_2}\bigg] \nonumber \\
& = \sum_{m \in [d]}\frac{\varepsilon^4_m}{\ell_1^2\ell^2_2} \Var (\sigma_m \omega_m\mathbbm{1}(\sigma_m \geq 4)) \nonumber \\
& \leq \frac{C'}{\ell_1\ell_2} \sum_{m \in [d]}\frac{\varepsilon^2_m}{\ell_1\ell_2} \EE (\sigma_m \omega_m\mathbbm{1}(\sigma_m \geq 4)) \nonumber\\
& \leq \frac{C'}{\ell_1\ell_2} \EE\bigg[\sum_{m \in [d]}\sigma_m \omega_m\mathbbm{1}(\sigma_m \geq 4) \frac{\varepsilon^2_m}{\ell_1\ell_2}\bigg],\label{D:variance:bound}
\end{align}
where in the next to last inequality we used Claim 2.2. of \cite{canonne2018testing}, and $C'$ is an absolute constant described in that claim. 

We now bound the variance of the statistic $T$ (recall the definition \eqref{T:stat:def}). Since $T_m$ (recall definition \ref{Tm:def}) are independent given $\sigma_m, R_m$ we have that
$$
\Var[T | \sigma, R] = \sum_{m \in [d]} \Var[T_m | \sigma_m, R_m].
$$
Next, by definition of $T_m$ we have that $ \Var[T_m | \sigma_m, R_m] = \sigma^2_m \omega^2_m \mathbbm{1}(\sigma_m \geq 4) \Var[U_m | R_m]$. Using the bound on the variance $\Var[U_m | R_m]$ \eqref{bound:on:the:variance}, we have to control four terms. We do so below. Denote
$$
E := \sum_{m \in [d]} \omega_m^2 \|q_{\Pi,A_m}(m)\|_2^2\mathbbm{1}(\sigma_m \geq 4)
$$

The first term we need to control is 
\begin{align*}
\MoveEqLeft \sum_{m \in [d]} \sigma^2_m \omega^2_m \mathbbm{1}(\sigma_m \geq 4) \frac{ \|q_{A_m}(m) - q_{\Pi,A_m}(m)\|_2^2\|q_{\Pi,A_m}(m)\|_2}{\sigma_m} \\
&\leq \sqrt{\bigg(\sum_{m \in [d]} \omega_m^2 \|q_{\Pi,A_m}(m)\|_2^2\mathbbm{1}(\sigma_m \geq 4)\bigg)}\sqrt{\sum_{m \in [d]} (\sigma_m \omega_m\|q_{A_m}(m) - q_{\Pi,A_m}(m)\|_2^2)^2 \mathbbm{1}(\sigma_m \geq 4)}\\
&\leq E^{1/2}\sum_{m \in [d]} \sigma_m \omega_m\|q_{A_m}(m) - q_{\Pi,A_m}(m)\|_2^2 \mathbbm{1}(\sigma_m \geq 4)\\
& = E^{1/2} \EE[T | \sigma, R],
\end{align*}
where we used Cauchy-Schwarz and the monotonicity of $L_p$ norms. The second term is 
\begin{align*}
\MoveEqLeft \sum_{m \in [d]} \sigma^2_m \omega^2_m \mathbbm{1}(\sigma_m \geq 4) \frac{ \|q_{A_m}(m) - q_{\Pi,A_m}(m)\|_2^3}{\sigma_m} = \sum_{m \in [d]} \sqrt{\frac{\omega_m}{\sigma_m}}\sigma^{3/2}_m \omega^{3/2}_m \mathbbm{1}(\sigma_m \geq 4)  \|q_{A_m}(m) - q_{\Pi,A_m}(m)\|_2^3 \\
& \leq \sum_{m \in [d]} (\sigma_m \omega_m \mathbbm{1}(\sigma_m \geq 4)  \|q_{A_m}(m) - q_{\Pi,A_m}(m)\|_2^2)^{3/2}\\
& \leq \bigg(\sum_{m \in [d]} \sigma_m \omega_m \mathbbm{1}(\sigma_m \geq 4)  \|q_{A_m}(m) - q_{\Pi,A_m}(m)\|_2^2\bigg)^{3/2}\\
& = \EE[T | \sigma, R]^{3/2},
\end{align*}
where we used that $\omega_m \leq \sigma_m$ by definition and the monotonicity of the $L_p$ norms. The third term is
\begin{align*}
\MoveEqLeft \sum_{m \in [d]} \sigma^2_m \omega^2_m \mathbbm{1}(\sigma_m \geq 4) \frac{ \|q_{\Pi,A_m}(m)\|_2^2}{\sigma^2_m} = E.
\end{align*}
Finally the fourth term is 
\begin{align*}
\MoveEqLeft \sum_{m \in [d]} \sigma^2_m \omega^2_m \mathbbm{1}(\sigma_m \geq 4) \frac{ \|q_{A_m}(m) - q_{\Pi,A_m}(m)\|_2^2}{\sigma^2_m} \leq \sum_{m \in [d]} \omega_m\sigma_m \mathbbm{1}(\sigma_m \geq 4) \|q_{A_m}(m) - q_{\Pi,A_m}(m)\|_2^2\\
& = \EE[T | \sigma, R]. 
\end{align*}
We conclude that
\begin{align}\label{varT:bound}
\Var[T | \sigma, R] \leq C(E + (E^{1/2} + 1)\EE[T | \sigma, R] + \EE[T | \sigma, R]^{3/2}).
\end{align}
Now we will show that $\EE[E | \sigma] = O(\min(d, N))$. We start by analyzing the expectation of one term from $E$ below.
\begin{align*}
\EE[\omega_m^2 \|q_{\Pi,A_m}(m)\|_2^2\mathbbm{1}(\sigma_m \geq 4) | \sigma_m] = \omega_m^2\mathbbm{1}(\sigma_m \geq 4)\EE[ \|q_{\Pi,A_m}(m)\|_2^2 | \sigma_m] \leq  \frac{\omega_m^2\mathbbm{1}(\sigma_m \geq 4) }{(1 + t_{1,m})(1 + t_{2,m})},
\end{align*}
where we applied \eqref{bound:on:l2:norm:qpi}. Recall that $t_{i,m} = \min((\sigma_m - 4)/4, \ell_i)$ and $\omega^2_m = \min(\sigma_m, \ell_1)\min(\sigma_m, \ell_2)$. Thus 
$$
\frac{\omega_m^2}{(1 + t_{1,m})(1 + t_{2,m})} \leq O(1).
$$
We conclude that 
\begin{align}\label{E:given:sigma:bound}
\EE[E | \sigma] = \EE[\sum_{m \in [d]} \omega_m^2 \|q_{\Pi,A_m}(m)\|_2^2\mathbbm{1}(\sigma_m \geq 4) | \sigma_m] \leq O(1)\sum_{m \in [d]} \mathbbm{1}(\sigma_m \geq 4)  \leq O(1)\min(d,N).
\end{align}

We have the following result

\begin{lemma}\label{variance:bound:flattening}Suppose $\inf_{q \in \cP_{0, [0,1]}'}\|p_{X,Y,Z} - q\|_1 \geq \varepsilon$, where $\varepsilon \geq  3\frac{L}{d}$ and it satisfies conditions \eqref{epsilon:condition:one}, \eqref{epsilon:condition:two}, \eqref{epsilon:condition:three} and \eqref{epsilon:condition:four}. Then with probability at least $19/20$ over $\sigma, R$ we have $\EE [T | \sigma, R] = \Omega(\sqrt{\zeta d})$ and 
\begin{align}\label{variance:bound:flattening:equation}
\Var[T | \sigma, R] \leq O(d + (\sqrt{d} + 1)\EE[T | \sigma, R] + \EE[T | \sigma, R]^{3/2}).
\end{align}
\end{lemma}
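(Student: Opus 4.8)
The plan is to deduce both conclusions directly from ingredients already in hand: the bin-wise expectation lower bound \eqref{Tm:lower:bound}, the variance bounds \eqref{D:variance:bound} and \eqref{varT:bound}, the moment bound \eqref{E:given:sigma:bound}, and the expectation computation carried out above under the separation conditions \eqref{epsilon:condition:one}--\eqref{epsilon:condition:four}. Set
\begin{align*}
D := \sum_{m \in [d]} \mathbbm{1}(\sigma_m \geq 4)\, \sigma_m \omega_m\, \frac{\varepsilon_m^2}{\ell_1 \ell_2},
\qquad
E := \sum_{m \in [d]} \omega_m^2\, \|q_{\Pi, A_m}(m)\|_2^2\, \mathbbm{1}(\sigma_m \geq 4).
\end{align*}
By \eqref{Tm:lower:bound}, $\EE[T | \sigma, R] \geq D$, and the key structural observation is that $D$ is a function of $\sigma$ alone, since the quantities $\varepsilon_m = d_{\TV}(q(m), q_\Pi(m))$ are deterministic (they depend only on $p_{X,Y,Z}$ and the bins $\{C_m\}$). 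The strategy is to show that each of the two bad events $\{D < \frac12 \EE D\}$ and $\{E > c_E d\}$ (for an appropriate absolute constant $c_E$) has probability at most $1/40$, and then take a union bound.

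First I would handle the lower bound on $\EE[T | \sigma, R]$. By \eqref{lower:bound:on:A}, $\EE D \geq \gamma \sum_{m} \frac{\varepsilon_m^2}{\ell_1 \ell_2}\min(\alpha_m \beta_m, \alpha_m^4)$; invoking the four-regime analysis of this sum performed above --- which, under the hypotheses $\varepsilon \geq 3L/d$ and \eqref{epsilon:condition:one}--\eqref{epsilon:condition:four}, shows that whichever of $M_{H,1}$, $M_{H,2}$, $M_{H,3}$, $M_L$ is dominant the corresponding partial sum is $\gtrsim \sqrt{\zeta d}$ --- gives $\EE D \gtrsim \sqrt{\zeta d}$. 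Next, \eqref{D:variance:bound} yields $\Var D \leq \frac{C'}{\ell_1\ell_2}\EE D \leq C' \EE D$, so Chebyshev's inequality gives
\begin{align*}
\PP\!\left(D < \tfrac12 \EE D\right) \leq \frac{4 \Var D}{(\EE D)^2} \leq \frac{4C'}{\EE D} \lesssim \frac{1}{\sqrt{\zeta d}} \leq \frac{1}{\sqrt{\zeta}} \leq \frac{1}{40}
\end{align*}
once $\zeta$ is a sufficiently large absolute constant (using $d \geq 1$). On the complementary event, $\EE[T | \sigma, R] \geq D \geq \frac12 \EE D = \Omega(\sqrt{\zeta d})$, which is the first claim.

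For the conditional variance, \eqref{varT:bound} states $\Var[T | \sigma, R] \leq C\big(E + (E^{1/2}+1)\EE[T|\sigma,R] + \EE[T|\sigma,R]^{3/2}\big)$, while \eqref{E:given:sigma:bound} gives $\EE[E | \sigma] \leq O(\min(d,N)) \leq O(d)$, hence $\EE E \leq O(d)$; Markov's inequality then furnishes an absolute constant $c_E$ with $\PP(E > c_E d) \leq 1/40$. On $\{E \leq c_E d\}$ we have $E^{1/2} = O(\sqrt d)$, and plugging this into \eqref{varT:bound} gives precisely \eqref{variance:bound:flattening:equation}. Intersecting the two good events and summing the probabilities of their complements ($\leq 1/40 + 1/40 = 1/20$) shows that both conclusions hold simultaneously with probability at least $19/20$ over $(\sigma, R)$, as claimed. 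The only step requiring real care is importing the multi-regime expectation computation verbatim to certify $\EE D \gtrsim \sqrt{\zeta d}$ under the four separation conditions; everything else is a routine second-moment / Markov argument built on bounds already proved.
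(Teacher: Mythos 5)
Your proposal is correct and follows essentially the same route as the paper's proof: lower-bound $\EE[T\mid\sigma,R]$ by the deterministic-weights quantity $D$ via \eqref{Tm:lower:bound}, certify $\EE D \gtrsim \sqrt{\zeta d}$ from the multi-regime expectation analysis under \eqref{epsilon:condition:one}--\eqref{epsilon:condition:four}, apply Chebyshev using \eqref{D:variance:bound}, control $E$ by Markov via \eqref{E:given:sigma:bound} and plug into \eqref{varT:bound}, then union bound. The only cosmetic difference is that you discard the extra $\ell_1\ell_2$ factor in the Chebyshev bound (the paper keeps it, getting $O(1/(\sqrt{\zeta d}\,\ell_1\ell_2))$), which changes nothing.
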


\begin{proof} Let 
$$
D = \sum_{m \in [d]}\sigma_m \omega_m\mathbbm{1}(\sigma_m \geq 4) \frac{\varepsilon^2_m}{\ell_1\ell_2}.
$$
We first showed that $\EE[T | \sigma, R] \geq D$ for all $\sigma, R$ \eqref{Tm:lower:bound}. We also derived that $\Var[D] \leq O(\EE[D]/(\ell_1\ell_2))$ \eqref{D:variance:bound}, and that for the selected regimes of sample size $\EE[D] \gtrsim \sqrt{\zeta d}$. Therefore we have 
\begin{align*}
\PP_{\sigma, R}\bigg(\EE[T | \sigma, R] \leq \kappa \sqrt{\zeta d}\bigg)\leq \PP_{\sigma, R}(D \leq O(\EE[D])) \leq O\bigg(\frac{\Var[D]}{(\EE[D])^2}\bigg) = O(1/(\sqrt{\zeta d}\ell_1 \ell_2)) \leq 1/40,
\end{align*}
for some small enough absolute constant $\kappa$. For the second statement we will use bound \eqref{varT:bound}. By \eqref{E:given:sigma:bound} we have  
$$
\EE[\EE[E | \sigma]] \leq O(1)\EE \min(d,N) \leq O(1) \min(d, n). 
$$
Thus by Markov's inequality $E \leq 200 \EE[E] = O(1) d$ with probability at least $39/40$. Therefore
$$
\PP_{\sigma,R}(\Var[T | \sigma, R] \geq \kappa'(d + (\sqrt{d} +1)\EE[T | \sigma, R] + \EE[T | \sigma, R]^{3/2})) \leq 1/40.
$$
A union bound over the two events completes the proof. 
\end{proof}

We now turn to bound the expectation and variance under the null hypothesis. 

\begin{lemma}\label{second:variance:bound:flattening}  Suppose $p_{X,Y,Z} \in \cP_{0,[0,1], \chi^2}'(L)$. Let further $\ell_1 \geq \ell_2$ be such that $\ell_1 d \lesssim n$ (here $\lesssim$ means smaller up to an absolute constant). Then with probability at least $19/20$ we have $\EE[T | \sigma, R] \leq C n\frac{L^2}{\sqrt{\ell_1\ell_2}d^2}$ and the variance $\Var[T | \sigma, R]$ satisfies \eqref{variance:bound:flattening:equation}. 
\end{lemma}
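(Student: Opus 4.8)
The plan is to control $\EE[T\mid\sigma,R]$ and $\var[T\mid\sigma,R]$ separately and then combine them via a union bound, in the spirit of the proof of Lemma~\ref{variance:bound:flattening} (this is the null-hypothesis analogue of that result, just as Lemma~\ref{expectation:upper:bound:lemma} is the null analogue of Lemma~\ref{lemma:lower:bound:expectation} in the fixed $\ell_1,\ell_2$ case). Since $\EE[T_m\mid\sigma_m,R_m]=\mathbbm{1}(\sigma_m\ge4)\sigma_m\omega_m\|q_{A_m}(m)-q_{\Pi,A_m}(m)\|_2^2$, averaging out the flattening randomness $R_m$ within each bin using the split-distribution identity together with the flattening bound~\eqref{flattening:expectation:bound} gives
$$
\EE_{R_m}\!\big[\|q_{A_m}(m)-q_{\Pi,A_m}(m)\|_2^2\mid\sigma_m\big]\;\le\;\frac{1}{(1+t_{1,m})(1+t_{2,m})}\sum_{x,y}\frac{(q_{xy}(m)-q_{x\cdot}(m)q_{\cdot y}(m))^2}{q_{x\cdot}(m)q_{\cdot y}(m)}\;=\;\frac{d_{\chi^2}(q(m),q_\Pi(m))}{(1+t_{1,m})(1+t_{2,m})},
$$
which is the intermediate bound~\eqref{intermediate:bound:null:hypothesis:lemma} referred to in the remarks following Theorem~\ref{main:theorem:scaling:discrete:XY}.

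The key step, and the one I expect to be the main obstacle, is bounding the discretization error $d_{\chi^2}(q(m),q_\Pi(m))$ using only the $\chi^2$-Lipschitzness of Definition~\ref{def:nullsmoothness}. Under the null, $p_{X,Y\mid Z}(x,y\mid z)=p_{X\mid Z}(x\mid z)p_{Y\mid Z}(y\mid z)$, so if $\tilde P$ denotes the law of $Z$ conditioned on $Z\in C_m$, then $q_{xy}(m)-q_{x\cdot}(m)q_{\cdot y}(m)=\operatorname{Cov}_{Z\sim\tilde P}\!\big(p_{X\mid Z}(x\mid Z),p_{Y\mid Z}(y\mid Z)\big)$. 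Applying Cauchy--Schwarz to the covariance and then summing over $x$ and $y$ yields
$$
d_{\chi^2}(q(m),q_\Pi(m))\;\le\;\Big(\sum_x\frac{\var_Z(p_{X\mid Z}(x\mid Z))}{q_{x\cdot}(m)}\Big)\Big(\sum_y\frac{\var_Z(p_{Y\mid Z}(y\mid Z))}{q_{\cdot y}(m)}\Big)\;=\;\EE_Z\,d_{\chi^2}\!\big(p_{X\mid Z=Z},q_{X\cdot}(m)\big)\cdot\EE_Z\,d_{\chi^2}\!\big(p_{Y\mid Z=Z},q_{Y\cdot}(m)\big),
$$
where the last identity is the algebraic fact $\sum_x \var_Z(p_{X\mid Z}(x\mid Z))/q_{x\cdot}(m)=\EE_Z\,d_{\chi^2}(p_{X\mid Z=Z},q_{X\cdot}(m))$. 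Because $q_{X\cdot}(m)$ is the $\tilde P$-mixture of the conditionals $p_{X\mid Z=z}$, convexity of $d_{\chi^2}(p,\cdot)$ in its second argument gives $d_{\chi^2}(p_{X\mid Z=z},q_{X\cdot}(m))\le\EE_{Z'}d_{\chi^2}(p_{X\mid Z=z},p_{X\mid Z=Z'})\le L\,\diam(C_m)=L/d$, and likewise for $Y$; hence $d_{\chi^2}(q(m),q_\Pi(m))\le L^2/d^2$.

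Next I would sum over $m$ and take expectations over $\sigma$. Using that $(1+t_{1,m})(1+t_{2,m})\asymp\min(\sigma_m,\ell_1)\min(\sigma_m,\ell_2)=\omega_m^2$, hence $\sigma_m\omega_m/\big((1+t_{1,m})(1+t_{2,m})\big)\asymp\sigma_m/\omega_m$, and splitting $[d]$ according to whether $\sigma_m\le\ell_2$, $\ell_2<\sigma_m\le\ell_1$, or $\sigma_m>\ell_1$ (here $\ell_1\ge\ell_2$), one bounds $\sum_m\mathbbm{1}(\sigma_m\ge4)\tfrac{\sigma_m}{\omega_m}\le d+d\sqrt{\ell_1/\ell_2}+N/\sqrt{\ell_1\ell_2}$. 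Taking $\EE[N]=n$ and invoking the hypothesis $\ell_1d\lesssim n$ (which forces each of the three terms to be $\lesssim n/\sqrt{\ell_1\ell_2}$), one obtains $\EE[T]\le C\,\tfrac{nL^2}{\sqrt{\ell_1\ell_2}\,d^2}$. Since $\EE[T\mid\sigma,R]$ is a nonnegative random variable with mean $\EE[T]$, Markov's inequality gives $\EE[T\mid\sigma,R]\le C\tfrac{nL^2}{\sqrt{\ell_1\ell_2}d^2}$ outside an event of probability at most $1/40$.

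Finally, for the variance I would reuse the distribution-free bound~\eqref{varT:bound}, namely $\var[T\mid\sigma,R]\le C\big(E+(E^{1/2}+1)\EE[T\mid\sigma,R]+\EE[T\mid\sigma,R]^{3/2}\big)$ with $E=\sum_m\omega_m^2\|q_{\Pi,A_m}(m)\|_2^2\mathbbm{1}(\sigma_m\ge4)$. By~\eqref{E:given:sigma:bound}, $\EE[E\mid\sigma]=O(\min(d,N))$, so $\EE[E]=O(d)$, and Markov gives $E\le C'd$ outside an event of probability at most $1/40$; on the complement the stated bound~\eqref{variance:bound:flattening:equation} follows immediately. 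A union bound over the two exceptional events yields the conclusion with probability at least $19/20$. Everything except the $\chi^2$-discretization estimate $d_{\chi^2}(q(m),q_\Pi(m))\le L^2/d^2$ is routine bookkeeping with the flattening weights and Poisson moment bounds already developed earlier in this proof; the conceptual content is entirely in the observation that, under the null, the within-bin joint is a mixture of product measures whose $\chi^2$ distance from the product of the mixed marginals is controlled by the $\chi^2$ modulus of continuity of $z\mapsto p_{X\mid Z=z}$ and $z\mapsto p_{Y\mid Z=z}$.
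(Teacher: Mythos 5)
Your proof is correct and follows essentially the same route as the paper: the flattening expectation bound \eqref{intermediate:bound:null:hypothesis:lemma}, the Cauchy--Schwarz covariance--variance factorization of the within-bin $\chi^2$ discrepancy under the null, the per-coordinate $L/d$ bound from $\chi^2$-Lipschitzness, the three-case bound $\sum_m \mathbbm{1}(\sigma_m\geq 4)\sigma_m/\omega_m \leq d + d\sqrt{\ell_1/\ell_2} + N/\sqrt{\ell_1\ell_2}$ combined with $d\ell_1 \lesssim n$, Markov's inequality, and the distribution-free variance argument reusing \eqref{varT:bound} and \eqref{E:given:sigma:bound}. The only (harmless, in fact slightly cleaner) deviation is that you bound $d_{\chi^2}(p_{X|Z=z}, q_{X\cdot}(m))$ directly by convexity of $d_{\chi^2}(p,\cdot)$ in its second argument pointwise in $z$, whereas the paper first invokes a continuity argument and the mean value theorem before applying the same Jensen step.
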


\begin{proof}
Let us start with bounding $\EE[T | \sigma,R]$ from above. Recall that 
$$
\EE[T | \sigma, R] = \sum_{m \in [d]} \sigma_m \omega_m \|q_{A_m}(m) - q_{\Pi,A_m}(m)\|_2^2\mathbbm{1}(\sigma_m \geq 4)
$$
We will now control $\EE[\|q_{A_m}(m) - q_{\Pi,A_m}(m)\|_2^2 | \sigma_m]$. Recall that%Using the fact that $a^m_{xy} \geq 0$, we have
\begin{align}\label{to:upper:bound}
 \|q_{A_m}(m) - q_{\Pi,A_m}(m)\|_2^2 & = \sum_{x,y} \frac{(q_{xy}(m) - q_{x\cdot}(m) q_{\cdot y}(m))^2}{1 + a^m_{xy}}.
\end{align}
Since by \eqref{flattening:expectation:bound} we have 
$$
\EE_{A_m} \frac{1}{1 + a^m_{xy}} \leq \frac{1}{(1 + t_{1,m})(1 + t_{2,m})q_{x\cdot}(m)q_{\cdot y}(m)} = \frac{O(1)}{ \omega_m^2 q_{x\cdot}(m)q_{\cdot y}(m)},
$$
we have that 
\begin{align}\label{intermediate:bound:null:hypothesis:lemma}
\EE_{A_m}  \|q_{A_m}(m) - q_{\Pi,A_m}(m)\|_2^2 \leq \frac{O(1)}{\omega_m^2} \sum_{x,y}  \frac{(q_{xy}(m) - q_{x\cdot}(m) q_{\cdot y}(m))^2}{q_{x\cdot}(m)q_{\cdot y}(m)}.
\end{align}
We will now focus on controlling the RHS. Using the fact that under the null hypothesis $p_{x,y|z = z} = p_{X|Z = z} p_{Y|Z = z}$ we have
\begin{align*}
\MoveEqLeft \sum_{x,y}  \frac{(q_{xy}(m) - q_{x\cdot}(m) q_{\cdot y}(m))^2}{q_{x\cdot}(m)q_{\cdot y}(m)} \\
& = \sum_{x,y} \frac{(\int (p_{X|Z}(x|z) - \int p_{X|Z}(x | z) d \tilde P(z)) (p_{Y|Z}(y|z) - \int p_{Y|Z}(y|z) d\tilde P(z)) d \tilde P(z))^2}{\int p_{X|Z}(x|z) d\tilde P(z)\int p_{Y|Z}(y|z) d\tilde P(z)}\\
& \leq \sum_{x} \frac{\int  p^2_{X|Z}(x | z) d\tilde P(z) - (\int  p_{X|Z}(x | z) d \tilde P(z))^2}{\int  p_{X|Z}(x | z) d\tilde P(z)} \sum_y \frac{\int p^2_{Y|Z}(y|z) d\tilde P(z) - (\int p_{Y|Z}(y|z) d\tilde P(z))^2}{\int p_{Y|Z}(y|z) d\tilde P(z)}.
\end{align*}
We now handle the first term on the RHS, the second one being analogous. 
\begin{align*}
\sum_{x} \frac{\int p^2_{X|Z}(x | z) d\tilde P(z) - (\int p_{X|Z}(x | z) d \tilde P(z))^2}{\int p_{X|Z}(x | z) d\tilde P(z)} & = \sum_{x} \frac{\int p^2_{X|Z}(x | z) d\tilde P(z)}{\int p_{X|Z}(x | z) d\tilde P(z)} - 1 \\
& = \int \sum_{x} \frac{p^2_{X|Z}(x | z)}{\int p_{X|Z}(x | z)d\tilde P(z)} d \tilde P(z) - 1.
\end{align*}
We now show that the function $z \mapsto \sum_{x} \frac{p^2_{X|Z}(x | z)}{\int p_{X|Z}(x | z) d\tilde P(z)}$ is continuous. This follows from the fact that each of $z \mapsto p_{X|Z}(x | z)$ is continuous. To see why $p_{X|Z}(x | z)$ is continuous first recall that $\|p_{X|Z= z} - p_{X|Z = z'}\|^2_1 \leq d_{\chi^2}(p_{X|z}, p_{X|z'}) \leq L |z - z'|$, which shows the continuity of each of $p_{X|Z}(x|z)$ for all $x$. Hence by the mean value theorem we have that 
\begin{align*}
\sum_{x} \frac{\int p^2_{X|Z}(x | z) d\tilde P(z)}{\int p_{X|Z}(x | z) d\tilde P(z)} - 1 = \int \sum_{x} \frac{p^2_{X|Z}(x | z)}{\int p_{X|Z}(x | z) d\tilde P(z)} d \tilde P(z) - 1= \sum_{x} \frac{p^2_{X|Z}(x | \tilde z)}{\int p_{X|Z}(x | z) d\tilde P(z)} -1 ,
\end{align*}
for some $\tilde z \in C_m$. Next since $x \mapsto \frac{1}{x}$ is convex on the positive reals, by Jensen's inequality we have 
\begin{align*}
\sum_{x} \frac{p^2_{X|Z}(x | \tilde z)}{\int p_{X|Z}(x | z) d\tilde P(z)} -1 & \leq \int \sum_{x} \frac{p^2_{X|Z}(x | \tilde z)}{ p_{X|Z}(x | z) } -1 d\tilde P(z)= \int d_{\chi^2}(p_{x|\tilde z}, p_{x|z}) d\tilde P(z)\\
&  \leq L \diam(C_m) = \frac{L}{d}. 
\end{align*}
We get a similar bound on the second term which implies that
\begin{align*}
\sum_{x,y}  \frac{(q_{xy}(m) - q_{x\cdot}(m) q_{\cdot y}(m))^2}{q_{x\cdot}(m)q_{\cdot y}(m)} \leq \frac{L^2}{d^2}.
\end{align*}
%where in the last line we used Lemma \ref{expectation:upper:bound:lemma}. Therefore
Hence, combining the last observation with \eqref{intermediate:bound:null:hypothesis:lemma} we have
\begin{align*}
\EE[T | \sigma] & = \sum_{m \in [d]} \sigma_m \omega_m\mathbbm{1}(\sigma_m \geq 4) \EE_{A_m} \|q_{A_m}(m) - q_{\Pi,A_m}(m)\|_2^2 \\
&\leq C  \frac{L^2}{d^2} \sum_{m \in [d]} \frac{\sigma_m}{\omega_m}\mathbbm{1}(\sigma_m \geq 4). 
\end{align*}
Recall that $\omega_m = \sqrt{\min (\sigma_m, \ell_1)\min(\sigma_m, \ell_2)}$. Recall that we are supposing (without loss of generality) $\ell_1 \geq \ell_2$. We therefore have
\begin{align*}
\sum_{m \in [d]} \frac{\sigma_m}{\omega_m}\mathbbm{1}(\sigma_m \geq 4) & \leq \sum_{m : \sigma_m \leq \ell_2} \mathbbm{1}(\sigma_m \geq  4) + \sum_{m : \ell_2 < \sigma_m \leq \ell_1} \frac{\sqrt{\sigma_m}}{\sqrt{\ell_2}}\mathbbm{1}(\sigma_m \geq  4) + \sum_{m : \sigma_m > \ell_1} \frac{\sigma_m}{\sqrt{\ell_1 \ell_2}} \mathbbm{1}(\sigma_m \geq  4)\\
& \leq d + d \sqrt{\frac{\ell_1}{\ell_2}} + \frac{N}{\sqrt{\ell_1 \ell_2}}
\end{align*}

%Since $N$ is a Poisson random variable with mean $n$, by Markov's inequality we have that $\PP(N \geq 200n) \leq \frac{1}{200}$, thus we conclude that 
Taking expectation it follows that 
\begin{align*}
\EE[T] \leq C \frac{L^2}{d^2} \bigg(d + d \sqrt{\frac{\ell_1}{\ell_2}} + \frac{n}{\sqrt{\ell_1 \ell_2}}\bigg) \leq C \frac{L^2}{d^2} \bigg(2d \sqrt{\frac{\ell_1}{\ell_2}} + \frac{n}{\sqrt{\ell_1 \ell_2}}\bigg) \leq \frac{C' n}{d^2 \sqrt{\ell_1 \ell_2}},
\end{align*}
for some constant $C'$ which depends on $L$, and we used the fact that $d \ell_1 \lesssim n$. It follows from Markov's inequality that
$$
\PP_{\sigma, R} \bigg(\EE[T | \sigma,R]  \geq 40 \frac{C' n}{d^2 \sqrt{\ell_1 \ell_2}}\bigg) \leq \frac{1}{40}. 
$$
The second part follows directly by Lemma \ref{variance:bound:flattening}. 
\end{proof}

\noindent \textbf{Putting Things Together.} For what follows suppose that $d$ is selected so that 
\begin{align}\label{d:n:condition}
\frac{n}{d^2\sqrt{\ell_1\ell_2}} \asymp \sqrt{d}.
\end{align}
%and that $\ell_1 d \ll n$ or $\ell_1 \asymp \ell_2$. 

\begin{lemma}\label{null:hypothesis:lemma} If $p_{X,Y,Z} \in \cP'_{0,[0,1], \chi^2}(L)$ and that \eqref{d:n:condition} holds. Then for a sufficiently large absolute constant $\alpha$ we have
\begin{align*}
\PP\bigg(T \geq  (\alpha + 1) \frac{C' n}{d^2 \sqrt{\ell_1\ell_2}}\bigg) \leq \frac{1}{10}.
\end{align*}
\end{lemma}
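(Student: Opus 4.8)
The plan is to obtain this as a short consequence of Lemma~\ref{second:variance:bound:flattening} combined with a conditional Chebyshev inequality. Write $\mu := \frac{C'n}{d^2\sqrt{\ell_1\ell_2}}$ for the quantity entering the threshold (we may assume $C'\ge 1$, since enlarging $C'$ only weakens the claim). Let $G$ be the $(\sigma,R)$-measurable event on which \emph{both} $\EE[T\mid\sigma,R]\le 40\mu$ \emph{and} the conditional variance bound~\eqref{variance:bound:flattening:equation} hold. Since $p_{X,Y,Z}\in\cP'_{0,[0,1],\chi^2}(L)$ (and the standing assumptions $\ell_1\ge\ell_2$, $\ell_1 d\lesssim n$ are in force), Lemma~\ref{second:variance:bound:flattening} gives $\PP(G)\ge 19/20$, so it remains to bound $\PP\big(\{T\ge(\alpha+1)\mu\}\cap G\big)$.

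Next I would feed in the calibration~\eqref{d:n:condition}, which provides absolute constants with $\tfrac{n}{d^2\sqrt{\ell_1\ell_2}}\asymp\sqrt d$, hence $\mu\asymp C'\sqrt d$ up to absolute constants. On $G$ this yields $\EE[T\mid\sigma,R]\le 40\mu\lesssim C'\sqrt d$, and plugging this estimate into~\eqref{variance:bound:flattening:equation} gives
\begin{align*}
\Var[T\mid\sigma,R]\;\lesssim\; d+\sqrt d\cdot C'\sqrt d+\big(C'\sqrt d\big)^{3/2}\;\lesssim\;(C')^{3/2}d,
\end{align*}
using $C'\ge 1$ and $d\ge 1$. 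The reason for tracking the $C'$-dependence is exactly so that $\alpha$ can be taken absolute: the ``gap'' $(\alpha+1)\mu-40\mu$ is of order $C'\sqrt d$, which dominates the conditional standard deviation, of order $(C')^{3/4}\sqrt d$.

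Concretely, on $G$ and for $\alpha>39$, Chebyshev's inequality applied conditionally on $(\sigma,R)$ gives
\begin{align*}
\PP\big(T\ge(\alpha+1)\mu\,\big|\,\sigma,R\big)
&\le \PP\big(T-\EE[T\mid\sigma,R]\ge(\alpha-39)\mu\,\big|\,\sigma,R\big) \\
&\le \frac{\Var[T\mid\sigma,R]}{\big((\alpha-39)\mu\big)^2}
\;\lesssim\;\frac{(C')^{3/2}d}{(\alpha-39)^2(C')^2 d}
\;=\;\frac{(C')^{-1/2}}{(\alpha-39)^2}\;\le\;\frac{C_\star}{(\alpha-39)^2},
\end{align*}
for an absolute constant $C_\star$. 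Choosing the absolute constant $\alpha$ large enough that $C_\star/(\alpha-39)^2\le 1/20$ makes this conditional probability at most $1/20$ on every outcome in $G$. A union bound then completes the proof:
\begin{align*}
\PP\big(T\ge(\alpha+1)\mu\big)\;\le\;\PP(G^c)+\EE\!\left[\mathbbm{1}(G)\,\PP\big(T\ge(\alpha+1)\mu\mid\sigma,R\big)\right]\;\le\;\tfrac{1}{20}+\tfrac{1}{20}\;=\;\tfrac{1}{10}.
\end{align*}

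I do not anticipate any real obstacle here: the substantive content --- the uniform-over-the-null control of $\EE[T\mid\sigma,R]$ via the $\chi^2$-Lipschitz assumption and of $\Var[T\mid\sigma,R]$ via the flattening analysis --- is already packaged in Lemma~\ref{second:variance:bound:flattening}. The only two points needing a little care are (i) verifying that the $L$-dependence is confined to $C'$, so that after dividing the variance by the squared gap the ratio is an absolute constant times $(\alpha-39)^{-2}$ and $\alpha$ may be taken absolute; and (ii) noting that condition~\eqref{d:n:condition} is precisely what makes $\mu$ sit at the scale $\sqrt d$ of the fluctuations of $T$, so that a constant-factor slack in the threshold suffices.
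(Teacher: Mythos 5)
Your proposal is correct and follows essentially the same route as the paper: split on the high-probability event supplied by Lemma~\ref{second:variance:bound:flattening}, use the conditional mean bound together with the variance bound~\eqref{variance:bound:flattening:equation} and the calibration~\eqref{d:n:condition} (so that the threshold scale $C'\sqrt d$ dominates the conditional standard deviation), apply Chebyshev conditionally, and finish with a union bound. Your version is in fact slightly more careful with the constant bookkeeping (conditioning on $(\sigma,R)$ inside the good event and subtracting $(\alpha-39)\mu$ rather than $\alpha\mu$), but this is only a cosmetic refinement of the paper's argument.
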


\begin{proof}
Let $T' = (T | \sigma, R)$. Denote the event from Lemma \ref{second:variance:bound:flattening} with $\cE$. Then we have 
\begin{align*}
\PP\bigg(T \geq  (\alpha + 1) \frac{C' n}{d^2 \sqrt{\ell_1\ell_2}}\bigg) & = \PP\bigg(T'  \geq  (\alpha + 1)\frac{C' n}{\sqrt{\ell_1\ell_2}d^2}\bigg) \\
& \leq  \PP\bigg(T' \geq (\alpha + 1) \frac{C' n}{\sqrt{\ell_1\ell_2} d^2} \bigg | \cE\bigg) + \PP(\cE^c). 
\end{align*}
Now we have
\begin{align*}
\PP\bigg(T' \geq  (\alpha + 1) \frac{C' n}{\sqrt{\ell_1\ell_2} d^2} \bigg| \cE\bigg) & \leq \PP\bigg(T' - \EE[T |\sigma, R] \geq \alpha \frac{C'n}{\sqrt{\ell_1\ell_2} d^2} \bigg| \cE\bigg) \leq \frac{\Var[T' | \cE]}{(\alpha \frac{C'n}{\sqrt{\ell_1\ell_2}d^2})^2}\\
& \leq \frac{O(d + (\sqrt{d} + 1)\EE[T | \sigma, R] + \EE[T | \sigma, R]^{3/2})}{(\alpha \frac{C'n}{\sqrt{\ell_1\ell_2}d^2})^2}\\
& \leq \frac{O(d + (\sqrt{d} + 1)(\frac{C' n}{\sqrt{\ell_1\ell_2}d^2}) + (\frac{C'n}{\sqrt{\ell_1\ell_2}d^2})^{3/2})}{(\alpha \frac{C'n}{\sqrt{\ell_1\ell_2}d^2})^2}\\
& \leq \frac{1}{20}
\end{align*}
where the above holds when $\frac{n}{\sqrt{\ell_1\ell_2}d^2} \asymp \sqrt{d}$ for a large enough $\alpha$.
\end{proof}

\begin{lemma}\label{alternative:hypothesis:lemma}
If $p_{X,Y,Z}$ is such that $\inf_{q \in \cP_{0,[0,1]}'}\|p_{X,Y,Z}-q\|_1 \geq \varepsilon$, and the conditions of Lemma \ref{variance:bound:flattening} hold. Then for a small enough absolute constant $\kappa$ we have that
\begin{align*}
\PP(T \leq \kappa \sqrt{\zeta d}) \leq \frac{1}{10}. 
\end{align*} 
\end{lemma}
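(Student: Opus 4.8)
The plan is to reduce the statement, via Lemma~\ref{variance:bound:flattening}, to a single application of Chebyshev's inequality conditionally on the Poisson bin counts $\sigma = \{\sigma_m\}_{m\in[d]}$ and the flattening randomness $R = \{R_m\}_{m\in[d]}$. Lemma~\ref{variance:bound:flattening} supplies an event $\cE$, measurable with respect to $(\sigma,R)$, with $\PP(\cE) \geq 19/20$, on which simultaneously $\EE[T \mid \sigma, R] = \Omega(\sqrt{\zeta d})$ and $\Var[T \mid \sigma, R] \leq O\big(d + (\sqrt d + 1)\EE[T \mid \sigma, R] + \EE[T\mid\sigma,R]^{3/2}\big)$. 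All the substantive work has been done there; what remains is bookkeeping.

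First I would fix a realization $(\sigma, R) \in \cE$ and write $m_{\sigma,R} := \EE[T\mid\sigma,R] \geq c_1\sqrt{\zeta d}$ for the absolute constant $c_1$ hidden in the $\Omega(\cdot)$. Applying Chebyshev's inequality to the conditional law of $T$ with deviation $m_{\sigma,R}/2$ gives
\begin{align*}
\PP\!\left(T \leq \tfrac{1}{2}m_{\sigma,R} \;\middle|\; \sigma, R\right) \leq \frac{4\,\Var[T\mid\sigma,R]}{m_{\sigma,R}^2} \leq O\!\left(\frac{d}{m_{\sigma,R}^2} + \frac{\sqrt d + 1}{m_{\sigma,R}} + \frac{1}{\sqrt{m_{\sigma,R}}}\right).
\end{align*}
Since $d \geq 1$ and $m_{\sigma,R}\geq c_1\sqrt{\zeta d}$, the three terms on the right are each bounded by an absolute constant times $\zeta^{-1}$, $\zeta^{-1/2}$, and $\zeta^{-1/4}$ respectively; hence choosing $\zeta$ sufficiently large (depending only on $L$, as $c_1$ is absolute) forces this conditional probability to be at most $1/20$, uniformly over $(\sigma,R)\in\cE$. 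Next I would pick $\kappa$ small enough that $\kappa\sqrt{\zeta d} \leq \tfrac12 m_{\sigma,R}$ on $\cE$; $\kappa = c_1/2$ suffices, and in fact $\kappa$ can be taken at least $1$, since the constant $c$ in the hypothesis $\varepsilon \geq c(\ell_1\ell_2)^{1/5}/n^{2/5}$ may be enlarged to make $\EE[T\mid\sigma,R]$ an arbitrarily large multiple of $\sqrt{\zeta d}$ (recall that the expectation lower bound in Lemma~\ref{variance:bound:flattening} scales like $\eta^2$ or $\eta^4$), which is precisely what is needed to compare against the test threshold $\tau = \sqrt{\zeta d}$ in Theorem~\ref{main:theorem:scaling:discrete:XY:Poi}.

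Finally I would assemble the pieces. On $\cE$ the inclusion $\{T \leq \kappa\sqrt{\zeta d}\} \subseteq \{T \leq \tfrac12 m_{\sigma,R}\}$ holds, so
\begin{align*}
\PP(T \leq \kappa\sqrt{\zeta d}) \;\leq\; \PP(\cE^c) + \EE\!\left[\mathbbm{1}(\cE)\,\PP\!\left(T \leq \tfrac12 m_{\sigma,R}\mid\sigma,R\right)\right] \;\leq\; \frac{1}{20} + \frac{1}{20} \;=\; \frac{1}{10},
\end{align*}
which is the desired bound, since the integrand in the middle term is at most $1/20$ on $\cE$ and $\PP(\cE)\le 1$.

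The one genuinely delicate point is the constant chase in the second paragraph: one must verify that the $\zeta$ used to define $\tau$ and the $\zeta$ appearing in Lemma~\ref{variance:bound:flattening} can be chosen consistently, and that $\kappa$ can simultaneously be small enough for the Chebyshev bound to hold and large enough to dominate the actual threshold $\sqrt{\zeta d}$. Both are arranged by the order of quantifiers in the theorem statement --- $\zeta$ is taken large relative to $L$, and then $c$ is taken large relative to $\zeta$. Everything else is entirely routine given Lemma~\ref{variance:bound:flattening}.
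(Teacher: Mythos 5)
Your argument is correct and matches the paper's proof essentially verbatim: condition on the event $\cE$ from Lemma~\ref{variance:bound:flattening}, apply Chebyshev's inequality to $T$ given $(\sigma,R)$ at deviation $\tfrac12\EE[T\mid\sigma,R]$, bound the resulting ratio by a vanishing power of $\zeta$ (your more careful split into $\zeta^{-1},\zeta^{-1/2},\zeta^{-1/4}$ is if anything slightly sharper than the paper's stated $O(\zeta^{-1/2})$), and add $\PP(\cE^c)\le 1/20$. The aside about boosting $\kappa$ above $1$ is unnecessary for the lemma and is handled in the paper by taking $\zeta$ large in the final assembly, but it does not affect the validity of your proof.
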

\begin{proof}
We apply Chebyshev's inequality to $T' = (T | \sigma, R)$. Let $\cE$ be the event of Lemma \ref{variance:bound:flattening}. Set $\tau = \kappa \sqrt{\zeta d}$ for some small enough absolute constant $\kappa$. 
\begin{align*}
\PP(T \leq \tau) &= \PP(T' \leq \tau) \leq \PP(T'\leq \tau| \cE) + \PP(\cE^c). \\
&\leq \PP\bigg(|T' - \EE[T|\sigma, R]| \geq \frac{1}{2} \EE[T | \sigma, R] | \cE\bigg) + \frac{1}{20}.
\end{align*}
Next
\begin{align*}
\PP\bigg(|T' - \EE[T|\sigma, R]| \geq \frac{1}{2} \EE[T | \sigma, R] | \cE\bigg) \leq O\bigg(\frac{d + (\sqrt{d} + 1)\EE[T | \sigma, R] + \EE[T | \sigma, R]^{3/2}}{\EE[T | \sigma, R]^2}\bigg)\\
= O(1/\zeta^{1/2}) \leq \frac{1}{20},
\end{align*}
for a large enough value of $\zeta$. 
\end{proof}

Combining Lemmas \ref{null:hypothesis:lemma} and \ref{alternative:hypothesis:lemma} we have that if $\kappa \sqrt{\zeta d} \geq (\alpha + 1) n C'/(\sqrt{\ell_1\ell_2}d^2)\asymp (\alpha +1) C' \sqrt{d}$, there will be a gap between the values under the null and the alternative hypothesis. This happens when $\zeta$ is large enough. Notice that $\frac{n C'}{\sqrt{\ell_1\ell_2} d^2} \asymp \sqrt{d}$ is equivalent to $d \asymp \frac{C'^{2/5} n^{2/5}}{(\ell_1\ell_2)^{1/5}}$. 
Plugging this in all the inequalities \eqref{epsilon:condition:one}, \eqref{epsilon:condition:two}, \eqref{epsilon:condition:three} and \eqref{epsilon:condition:four}, results in new inequalities that need to hold. We list those below, and in addition we recall that $d \ell_1 \lesssim n$. Condition \eqref{epsilon:condition:one} is equivalent to
\begin{align*}
\varepsilon \gtrsim \frac{(\ell_1 \ell_2)^{1/5}}{n^{2/5}}. 
\end{align*}
Taking the second term of \eqref{epsilon:condition:two}, and using the assumption that $d \ell_1 \lesssim n$  we have
\begin{align*}
\varepsilon \gtrsim  \frac{(\ell_1 \ell_2)^{1/5}}{n^{2/5}},
\end{align*}
since $\sqrt{\frac{d \ell_1 \sqrt{\ell_2}}{n^{3/2}}} = \sqrt[4]{\frac{d \ell_1}{n}}\sqrt{\frac{\sqrt{d \ell_1 \ell_2}}{n}}$. Taking \eqref{epsilon:condition:three}, we have
\begin{align*}
\varepsilon \gtrsim \bigg(\frac{\sqrt{\ell_1 \ell_2}}{n}\bigg)^{7/10},
\end{align*}
which is of smaller order than $\frac{(\ell_1 \ell_2)^{1/5}}{n^{2/5}}$ whenever $\frac{\sqrt{\ell_1 \ell_2}}{n} \lesssim 1$. Finally for \eqref{epsilon:condition:four} we have
\begin{align*}
\varepsilon \gtrsim \bigg(\frac{\sqrt{\ell_1\ell_2}}{n}\bigg)^{13/20} \frac{1}{\sqrt[4]{\ell_1 \ell_2}},
\end{align*}
which is also of smaller order than $\frac{(\ell_1 \ell_2)^{1/5}}{n^{2/5}}$ whenever $\frac{\sqrt{\ell_1 \ell_2}}{n} \lesssim 1$. This completes the proof. 
 % and that $\ell_1 d \ll n$ or $\ell_1 \asymp \ell_2$
\end{proof}

%\newpage

\subsection{Proofs from Section \ref{cont:case:upper:bounds:section}}

We start this section with showing the following result:

\begin{lemma}There exists a constant $C$ depending only on $(s, L)$ such that if $p_{X,Y|Z}(x,y|z) \in \cH^{2,s}(L)$ it follows that $p_{X|Z}(x|z)p_{Y|Z}(y|z) \in \cH^{2,s}(C)$.
\end{lemma}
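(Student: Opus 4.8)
The plan is to first transfer the H\"older smoothness of $(x,y)\mapsto p_{X,Y|Z}(x,y|z)$ to the one-dimensional marginals $x\mapsto p_{X|Z}(x|z)$ and $y\mapsto p_{Y|Z}(y|z)$, and then exploit the fact that $p_{X|Z}(x|z)p_{Y|Z}(y|z)$ is a product of a function of $x$ alone and a function of $y$ alone. Throughout we fix $z$; every constant produced below will depend only on $(s,L)$ and not on $z$, which is exactly what the statement requires. Since $p_{X,Y|Z}(\cdot,\cdot|z)\in\cH^{2,s}(L)$, all of its partial derivatives of order up to $\lfloor s\rfloor$ exist and are continuous on the compact square $[0,1]^2$ (the order-$\lfloor s\rfloor$ ones are H\"older, the lower-order ones are differentiable). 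Hence one may differentiate under the integral sign: for every $0\le k\le\lfloor s\rfloor$,
$$
\frac{d^k}{dx^k}p_{X|Z}(x|z)=\int_0^1\frac{\partial^k}{\partial x^k}p_{X,Y|Z}(x,y|z)\,dy ,
$$
and likewise for $p_{Y|Z}$. Taking $k=\lfloor s\rfloor$ and using $\big|\partial_x^{\lfloor s\rfloor}p_{X,Y|Z}\big|\le L$ together with the H\"older bound on $\partial_x^{\lfloor s\rfloor}p_{X,Y|Z}$ in the definition of $\cH^{2,s}(L)$, both estimates pass through the $y$-integral and give $\big|\tfrac{d^{\lfloor s\rfloor}}{dx^{\lfloor s\rfloor}}p_{X|Z}(x|z)\big|\le L$ and $\big|\tfrac{d^{\lfloor s\rfloor}}{dx^{\lfloor s\rfloor}}p_{X|Z}(x|z)-\tfrac{d^{\lfloor s\rfloor}}{dx^{\lfloor s\rfloor}}p_{X|Z}(x'|z)\big|\le L|x-x'|^{s-\lfloor s\rfloor}$, and analogously for $p_{Y|Z}$.

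Next I would bound the lower-order derivatives of the marginals, which the definition of $\cH^{2,s}(L)$ does not directly control. Each $p_{X|Z}(\cdot|z)$ is a genuine probability density on $[0,1]$: nonnegative, continuous, and with $\int_0^1 p_{X|Z}(x|z)\,dx=\int_{[0,1]^2}p_{X,Y|Z}(\cdot,\cdot|z)=1$. A standard one-dimensional fact is that any $g\in C^{\lfloor s\rfloor}[0,1]$ with $\|g^{(\lfloor s\rfloor)}\|_\infty\le L$ and $\int_0^1|g|\le 1$ obeys $\|g^{(k)}\|_\infty\le C_1$ for all $0\le k\le\lfloor s\rfloor$, with $C_1=C_1(s,L)$: compare $g$ on $[0,1]$ to its degree-$(\lfloor s\rfloor-1)$ Taylor polynomial $T$ at a fixed point, so $\|g-T\|_\infty\le L/\lfloor s\rfloor!$, hence $\|T\|_{L^1}\le 1+L/\lfloor s\rfloor!$, and since $\|\cdot\|_\infty$ and $\|\cdot\|_{L^1}$ are equivalent on the finite-dimensional space of polynomials of degree $<\lfloor s\rfloor$, $\|g\|_\infty$ is bounded; a Landau--Kolmogorov interpolation inequality on $[0,1]$ then controls the intermediate derivatives (when $\lfloor s\rfloor=0$ the claim is immediate from $|p_{X|Z}|\le\|p_{X,Y|Z}\|_\infty\le L$). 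Applying this with $g=p_{X|Z}(\cdot|z)$ and $g=p_{Y|Z}(\cdot|z)$ and combining with the previous paragraph, each marginal lies in a one-dimensional H\"older ball whose radius $C_1$ depends only on $(s,L)$: all derivatives up to order $\lfloor s\rfloor$ are bounded by $C_1$, and the $\lfloor s\rfloor$-th derivative is $(s-\lfloor s\rfloor)$-H\"older with constant $L$.

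Finally, because $p_{X|Z}(x|z)p_{Y|Z}(y|z)$ separates the variables, for $0\le k\le\lfloor s\rfloor$ the relevant partial factorizes,
$$
\frac{\partial^k}{\partial x^k}\frac{\partial^{\lfloor s\rfloor-k}}{\partial y^{\lfloor s\rfloor-k}}\big[p_{X|Z}(x|z)p_{Y|Z}(y|z)\big]=\Big(\tfrac{d^k}{dx^k}p_{X|Z}(x|z)\Big)\Big(\tfrac{d^{\lfloor s\rfloor-k}}{dy^{\lfloor s\rfloor-k}}p_{Y|Z}(y|z)\Big),
$$
which is bounded in absolute value by $C_1^2$; since the product is clearly $C^{\lfloor s\rfloor}$, all lower-order partials exist as well. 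For the H\"older estimate between $(x,y)$ and $(x',y')$, add and subtract $\big(\tfrac{d^k}{dx^k}p_{X|Z}(x'|z)\big)\big(\tfrac{d^{\lfloor s\rfloor-k}}{dy^{\lfloor s\rfloor-k}}p_{Y|Z}(y|z)\big)$: each factor is either $(s-\lfloor s\rfloor)$-H\"older with constant $\le C_1$ (when its order equals $\lfloor s\rfloor$) or Lipschitz with constant $\le C_1$ (when its order is $<\lfloor s\rfloor$), so the two cross-term contributions each have the desired form. One then converts single-coordinate increments to the joint radius via $|x-x'|,|y-y'|\le r:=((x-x')^2+(y-y')^2)^{1/2}$ and the elementary inequality $r\le\sqrt2\,r^{\,s-\lfloor s\rfloor}$, valid for $r\le\sqrt2$ since $0<s-\lfloor s\rfloor\le1$. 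This bounds the increment by $C(s,L)\,((x-x')^2+(y-y')^2)^{(s-\lfloor s\rfloor)/2}$, and taking $C$ to be the larger of that constant and $C_1^2$ gives $p_{X|Z}(\cdot|z)p_{Y|Z}(\cdot|z)\in\cH^{2,s}(C)$ uniformly in $z$, as claimed.

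The only step that is not pure bookkeeping is the one-dimensional estimate invoked in the second paragraph — upgrading a pointwise bound on the top derivative of a probability density on $[0,1]$ to bounds on all of its lower-order derivatives. This is a classical Sobolev-embedding / polynomial-approximation fact on a bounded interval, and it is the single place where the positivity and normalization of an actual density (rather than only the literal hypotheses in the definition of $\cH^{2,s}(L)$, which constrain only the order-$\lfloor s\rfloor$ partials) enter the argument; everything else is differentiation under the integral sign, the Leibniz rule, and elementary inequalities.
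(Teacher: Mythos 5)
Your proof is correct, and its overall skeleton coincides with the paper's: differentiate under the integral sign to transfer smoothness from $p_{X,Y|Z}(\cdot,\cdot|z)$ to the marginals, use the factorized form of the mixed partials of $p_{X|Z}(x|z)p_{Y|Z}(y|z)$, telescope the increment, and convert single-coordinate increments to the joint H\"{o}lder modulus by an elementary inequality on the bounded square. The one step you handle genuinely differently is the bound on the lower-order derivatives of the marginals. The paper reads the definition of $\cH^{2,s}(L)$ as also imposing a uniform bound $L$ on \emph{all} partial derivatives of order up to $\lfloor s\rfloor$ (this is what the remark after the definition asserts, even though the displayed condition only constrains the mixed partials of total order exactly $\lfloor s\rfloor$), and simply pushes these bounds through the $y$-integral via Jensen; in particular its Lipschitz bounds $L|x-x'|$ for the order-$k<\lfloor s\rfloor$ factors come from the assumed bound on the $(k+1)$-st derivative. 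You instead use only the literal displayed condition (bound and H\"{o}lder continuity of the order-$\lfloor s\rfloor$ partials) and recover the lower-order bounds from the fact that $p_{X|Z}(\cdot|z)$ is a normalized density on $[0,1]$, via a Taylor-polynomial comparison, norm equivalence on the finite-dimensional polynomial space, and a Landau--Kolmogorov-type interpolation. What each approach buys: yours is robust to the ambiguity in the class definition and in fact proves the lemma under the weaker, literal hypothesis, at the price of an extra one-dimensional lemma and of using the density structure (nonnegativity and normalization) of the marginal, which the paper's argument never needs; the paper's route is shorter and applies to arbitrary members of the class, but it leans on the stronger interpretation that all derivatives up to order $\lfloor s\rfloor$ are uniformly bounded by $L$. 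Your final conversion $r\le\sqrt{2}\,r^{\,s-\lfloor s\rfloor}$ for $r\le\sqrt{2}$ plays exactly the role of the paper's $(1\wedge u)^a\le u^a$ and generalized-means step, and your constants depend only on $(s,L)$, uniformly in $z$, as required.
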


\begin{proof} We will start by showing that $p_{X|Z}(x|z)$ is H\"{o}lder smooth in the $x$ dimension. We have, by the Leibniz rule and Jensen's inequality
\begin{align*}
\bigg|\frac{\partial^{\lfloor s \rfloor}}{\partial x^{\lfloor s \rfloor}}p_{X|Z}(x|z) - \frac{\partial^{\lfloor s \rfloor}}{\partial x^{\lfloor s \rfloor}}p_{X|Z}(x'|z)\bigg| \leq \int_{0}^1 \bigg|\frac{\partial^{\lfloor s \rfloor}}{\partial x^{\lfloor s \rfloor}} p_{XY |Z}(x,y|z) - \frac{\partial^{\lfloor s \rfloor}}{\partial x^{\lfloor s \rfloor}} p_{XY |Z}(x',y|z) \bigg| dy  \leq L |x - x'|^{s - \lfloor s \rfloor}.
\end{align*}
In addition any derivative of a lower or equal to $0 \leq k \leq \lfloor s \rfloor$ order is bounded since (by the Leibniz rule and Jensen's inequality)
\begin{align*}
\bigg|\frac{\partial^{k}}{\partial x^{k}}p_{X|Z}(x|z) \bigg| \leq \int_0^1 \bigg|\frac{\partial^{k}}{\partial x^{k}} p_{XY |Z}(x,y|z)\bigg | dy \leq L.
\end{align*}
By symmetry the same statement holds for $p_{Y|Z}(y|z)$. Next we show that the product $p_{X|Z}(x|z)p_{Y|Z}(y|z) \in \cH^{2,s}(C)$ for a sufficiently large $C$. First we argue that the derivatives are bounded. We have
\begin{align*}
\bigg|\frac{\partial^k}{\partial x^k}\frac{\partial^{\lfloor s \rfloor - k}}{\partial y^{\lfloor s \rfloor - k}} p_{X|Z}(x|z)p_{Y|Z}(y|z) \bigg|= \bigg| \frac{\partial^k}{\partial x^k}p_{X|Z}(x|z)\bigg| \bigg|\frac{\partial^{\lfloor s \rfloor - k}}{\partial y^{\lfloor s \rfloor - k}}p_{Y|Z}(y|z) \bigg| \leq L^2. 
\end{align*}
Next, take any $0 < k < \lfloor s \rfloor$. We have
\begin{align*}
\MoveEqLeft \bigg|\frac{\partial^k}{\partial x^k}\frac{\partial^{\lfloor s \rfloor - k}}{\partial y^{\lfloor s \rfloor - k}} p_{X|Z}(x|z)p_{Y|Z}(y|z) - \frac{\partial^k}{\partial x^k}\frac{\partial^{\lfloor s \rfloor - k}}{\partial y^{\lfloor s \rfloor - k}} p_{X|Z}(x'|z)p_{Y|Z}(y'|z)\bigg| \\
& \leq \bigg|\frac{\partial^k}{\partial x^k} p_{X|Z}(x|z) - \frac{\partial^k}{\partial x^k} p_{X|Z}(x'|z)\bigg| \bigg|\frac{\partial^{\lfloor s \rfloor - k}}{\partial y^{\lfloor s \rfloor - k}}p_{Y|Z}(y|z)\bigg| \\
& + \bigg|\frac{\partial^k}{\partial x^k} p_{X|Z}(x'|z)\bigg|\bigg|\frac{\partial^{\lfloor s \rfloor - k}}{\partial y^{\lfloor s \rfloor - k}}p_{Y|Z}(y|z)- \frac{\partial^{\lfloor s \rfloor - k}}{\partial y^{\lfloor s \rfloor - k}}p_{Y|Z}(y'|z)\bigg|\\
& \leq 2L^2 (1 \wedge |x - x'|) + 2L^2 (1 \wedge |y - y'|)\\
& \leq 2L^2 (|x-x'|^{s -\lfloor s \rfloor } + |y - y'|^{s - \lfloor s \rfloor})\\
& \leq L^2 4 \bigg(\frac{|x-x'|^{2} + |y-y'|^{2}}{2}\bigg)^{\frac{s - \lfloor s \rfloor}{2}},
\end{align*}
where the last inequality follows by the generalized means inequality and the third inequality follows because we can bound
\begin{align*}
\bigg|\frac{\partial^k}{\partial x^k} p_{X|Z}(x|z) - \frac{\partial^k}{\partial x^k} p_{X|Z}(x'|z)\bigg| \leq 2L \wedge L |x - x'|\leq 2L (1 \wedge |x-x'|).
\end{align*}
 Similarly when $k = \lfloor s \rfloor$ we have 
\begin{align*}
\MoveEqLeft \bigg|\frac{\partial^{\lfloor s \rfloor}}{\partial x^{\lfloor s \rfloor}} p_{X|Z}(x|z)p_{Y|Z}(y|z) - \frac{\partial^{\lfloor s \rfloor}}{\partial x^{\lfloor s \rfloor}}  p_{X|Z}(x'|z)p_{Y|Z}(y'|z)\bigg| \\
& \leq \bigg|\frac{\partial^{\lfloor s \rfloor}}{\partial x^{\lfloor s \rfloor}} p_{X|Z}(x|z) -\frac{\partial^{\lfloor s \rfloor}}{\partial x^{\lfloor s \rfloor}}  p_{X|Z}(x'|z)\bigg| | p_{Y|Z}(y|z)| +  |p_{Y|Z}(y|z)- p_{Y|Z}(y'|z)\bigg|\frac{\partial^{\lfloor s \rfloor}}{\partial x^{\lfloor s \rfloor}}  p_{X|Z}(x'|z) \bigg|\\
& \leq L^2 |x-x'|^{s -\lfloor s \rfloor} + 2L^2(1 \wedge |y - y'|)\\
& \leq 2L^2 (|x-x'|^{s -\lfloor s \rfloor } + |y - y'|^{s - \lfloor s \rfloor})\\
& \leq L^2 4 \bigg(\frac{|x-x'|^{2} + |y-y'|^{2}}{2}\bigg)^{\frac{s - \lfloor s \rfloor}{2}}.
\end{align*}
Similar logic shows that the same holds when $k = 0$. This completes the proof. 
\end{proof}

We now formulate a Poissonized version of Theorem \ref{continuous:case:upper:bound:theorem}.

\begin{theorem}[Continuous $X,Y,Z$ Upper Bound]\label{continuous:case:upper:bound:theorem:Poi} Set $d = \lceil n^{2s/(5s + 2)} \rceil$, $d' = \lceil d^{1/s}\rceil$ and set the threshold $\tau = \sqrt{\zeta d}$ for a sufficiently large $\zeta$ (depending on $L$). Define $\cH_0(s) = \cP_{0,[0,1]^3,\TV}(L) \cup \cP_{0,[0,1]^3,\chi^2}(L)$ when $s \geq 1$ and $\cH_0(s) = \cP_{0,[0,1]^3,\chi^2}(L)$ when $s < 1$. Then, for a sufficiently
 large absolute constant $c$ (depending on $\zeta, L$),
 when
 $\varepsilon \geq c n^{-2s/(5s+2)}$,
we have that
\begin{align*}
\sup_{p \in \cH_0(s)} \sum_{k = 0}^\infty \PP(N = k)\EE_p[\psi_\tau(\cD'_k)] & \leq \frac{1}{10},\\ 
\sup_{p \in \{p \in \cQ_{0,[0,1]^3, \TV}(L,s): \inf_{q \in \cP_{0,[0,1]^3}} \|p - q\|_1 \geq \varepsilon\}} \sum_{k = 0}^\infty \PP(N = k)\EE_p[1 - \psi_\tau(\cD'_k)] & \leq \frac{1}{10}.
\end{align*}
\end{theorem}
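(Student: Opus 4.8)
The plan is to derive this theorem from the scaling-discrete result, Theorem~\ref{main:theorem:scaling:discrete:XY:Poi}, applied to the discretized sample $\cD_N'$ with alphabet sizes $\ell_1=\ell_2=d'$. By construction $\psi_\tau(\cD_N')$ is precisely the test of Section~\ref{scaling:l1:l2:section} run on the discrete triples $(g(X_i),g(Y_i),Z_i)$, with $d$ bins for $Z$ and the two discrete coordinates taking $d'$ values each. For the stated choices $d\asymp n^{2s/(5s+2)}$ and $d'=\lceil d^{1/s}\rceil\asymp n^{2/(5s+2)}$ one checks that $d\,d'\asymp n^{2(s+1)/(5s+2)}\lesssim n$ (since $2(s+1)\le 5s+2$), so the structural hypothesis $d\ell_1\lesssim n$ of Theorem~\ref{main:theorem:scaling:discrete:XY:Poi} holds automatically; this is the special case $\ell_1\asymp\ell_2$ noted in the remarks there, and one verifies that the internally chosen number of $Z$-bins $\lceil n^{2/5}/(\ell_1\ell_2)^{1/5}\rceil$ equals $\lceil n^{2s/(5s+2)}\rceil=d$, and that the critical radius $(\ell_1\ell_2)^{1/5}/n^{2/5}=(d')^{2/5}/n^{2/5}$ equals $n^{-2s/(5s+2)}$. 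It therefore suffices to show (i) that binning maps $\cH_0(s)$ into (a slight enlargement of) the null class of Theorem~\ref{main:theorem:scaling:discrete:XY:Poi}, and (ii) that binning a member of $\{p\in\cQ_{0,[0,1]^3,\TV}(L,s):\inf_{q\in\cP_{0,[0,1]^3}}\|p-q\|_1\ge\varepsilon\}$ yields a discrete distribution on $[d']\times[d']\times[0,1]$ that is TV-Lipschitz in $z$ and is still $\Omega(\varepsilon)$-far from conditional independence.

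For (ii), TV-Lipschitzness is immediate because $(x,y)\mapsto(g(x),g(y))$ is deterministic, hence contracts $\|\cdot\|_1$, so $p_{g(X),g(Y),Z}\in\cQ_{0,[0,1],\TV}'(L)$. To lower bound the distance to the CI manifold I would apply Lemma~\ref{TV:proj:vs:all} (in its continuous extension): on $[0,1]^3$ it gives $\EE_Z\|p_{X,Y|Z}-p_{X|Z}p_{Y|Z}\|_1\ge\inf_{q\in\cP_{0,[0,1]^3}}\|p-q\|_1\ge\varepsilon$, and on $[d']\times[d']\times[0,1]$ it reduces the target quantity to $\tfrac16\EE_Z\|p_{g(X),g(Y)|Z}-p_{g(X)|Z}p_{g(Y)|Z}\|_1$. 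Writing $f_z:=p_{X,Y|Z=z}-p_{X|Z=z}p_{Y|Z=z}$, the discretized product-gap at $z$ is $\sum_{k,l}\big|\int_{C'_k\times C'_l}f_z\big|$, which differs from $\|p_{X,Y|Z=z}-p_{X|Z=z}p_{Y|Z=z}\|_1$ by at most the $L_1$ error of the piecewise-constant cell-average approximation of $f_z$. Since $p_{X,Y|Z}(\cdot,\cdot|z)\in\cH^{2,s}(L)$ and, by the first lemma of this subsection, $p_{X|Z}(\cdot|z)p_{Y|Z}(\cdot|z)\in\cH^{2,s}(C)$, we get $f_z\in\cH^{2,s}(L+C)$; binning such a function into $(d')^2$ cells incurs $L_1$ error $O\big((d')^{-\min(s,1)}\big)$, which for $s<1$ equals $O(1/d)$ by the choice of $d'$. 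Hence $\EE_Z\|p_{g(X),g(Y)|Z}-p_{g(X)|Z}p_{g(Y)|Z}\|_1\ge\varepsilon-O(1/d)\gtrsim n^{-2s/(5s+2)}$ once $\varepsilon\ge c\,n^{-2s/(5s+2)}$ with $c$ large, i.e.\ at least a constant multiple of the critical radius of the discrete problem. For $s\ge1$ this crude approximation bound is $O(1/d')\gg 1/d$ and is insufficient; here one must instead use the finer analysis of Lemmas~\ref{lem:aa},~\ref{lem:bb} and~\ref{lem:cc}, exploiting that $f_z$ has identically vanishing $X$- and $Y$-marginals and that its sign-change cells are confined to a thin neighbourhood of its zero set. \emph{This approximation-theoretic step in the regime $s\ge1$ is the main obstacle of the proof.}

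For (i), binning preserves conditional independence, so $p_{g(X),g(Y),Z}\in\cP_{0,[0,1]}'$. If $p\in\cP_{0,[0,1]^3,\chi^2}(L)$, the data-processing inequality for the $\chi^2$-divergence yields $d_{\chi^2}(p_{g(X)|Z=z},p_{g(X)|Z=z'})\le d_{\chi^2}(p_{X|Z=z},p_{X|Z=z'})\le L|z-z'|$ and likewise for $g(Y)$, so $p_{g(X),g(Y),Z}\in\cP_{0,[0,1],\chi^2}'(L)$ and Theorem~\ref{main:theorem:scaling:discrete:XY:Poi} applies as a black box; since $\cH_0(s)=\cP_{0,[0,1]^3,\chi^2}(L)$ for $s<1$, this settles that case entirely. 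For $s\ge1$ we additionally admit $p\in\cP_{0,[0,1]^3,\TV}(L)$, for which $\chi^2$-Lipschitzness of the binned conditionals need not hold; rather than quoting Theorem~\ref{main:theorem:scaling:discrete:XY:Poi} verbatim, I would re-run its null computation, replacing the step that controls $\EE[U_m\mid\sigma_m]$ through the $\chi^2$-divergence in \eqref{intermediate:bound:null:hypothesis:lemma} by the TV-Lipschitzness argument of Lemma~\ref{expectation:upper:bound:lemma} (valid because the discretized $X,Y$ alphabet is finite), still obtaining an expectation of order $nL^2/(d^2\sqrt{\ell_1\ell_2})$; this is exactly the ``larger null class for $s\ge1$'' mentioned in the remarks. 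The remainder is routine: set $\tau=\sqrt{\zeta d}$, apply Chebyshev's inequality to the conditional mean and variance of $T$ under null and alternative — the variance bound \eqref{variance:bound:flattening:equation} of Lemma~\ref{variance:bound:flattening} carries over since it depends only on $d$ and $\EE[T\mid\sigma,R]$ — to obtain a gap between $T$ under $\cH_0(s)$ and under the alternative, and finally depoissonize exactly as in the proof of Theorem~\ref{main:theorem:finite:discrete:XY}, giving the stated type~I and type~II error bounds.
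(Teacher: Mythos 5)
Your overall route coincides with the paper's: discretize $X$ and $Y$ into $d'=\lceil d^{1/s}\rceil$ cells, observe that $\psi_\tau(\cD'_N)$ is exactly the Section~\ref{scaling:l1:l2:section} test run with $\ell_1=\ell_2=d'$, check the calibration (the condition $d\,d'\lesssim n$, the internal choice of $Z$-bins, and the critical radius all reduce correctly to $d\asymp n^{2s/(5s+2)}$ and $n^{-2s/(5s+2)}$), and then verify that binning preserves (a) null smoothness, (b) TV-Lipschitzness in $z$ under the alternative, and (c) separation from conditional independence. Your treatment of (a) and (b) is sound: the data-processing argument placing the binned $\chi^2$-smooth null inside $\cP_{0,[0,1],\chi^2}'(L)$ is a clean shortcut for what the paper proves directly in Lemma~\ref{lem:bb}, and your TV-null fix for $s\ge1$ (re-running the null expectation with the product-of-TV bound of Lemma~\ref{expectation:upper:bound:lemma}, which here gives $\|q_{A_m}(m)-q_{\Pi,A_m}(m)\|_2^2\le(L/d)^4$ and hence a null mean of order $nL^4 d'/d^4\le L^4 n/(d^2\sqrt{\ell_1\ell_2})$ precisely because $d'\le d$ when $s\ge1$) is in substance the paper's Lemma~\ref{lem:aa}. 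Your use of Lemma~\ref{TV:proj:vs:all} on the binned space is in the correct direction, and your crude cell-average argument does settle (c) for $s\le1$ (it in fact covers $s=1$ as well, since there $1/d'=1/d$), so the $s\le 1$ case is complete in outline.

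The genuine gap --- which you flag yourself --- is step (c) for $s>1$, and that is the heart of the theorem. You frame the step as an approximation problem, bounding $\big|\,\|W_{d'}f_z\|_1-\|f_z\|_1\big|$ by the $L_1$ error of the piecewise-constant cell average $W_{d'}f_z$; that route is capped at $O(1/d')\gg 1/d$ for every $s>1$ and cannot be improved, because cell averages do not approximate even infinitely smooth functions faster than rate $1/d'$. What is actually needed is not an approximation bound but a one-sided, constant-factor norm comparison: the paper's Lemma~\ref{super:crucial:smoothing:lemma} (used inside Lemma~\ref{lem:cc}) states that for every $h\in\cH^{2,s}(L)$ one has $\|W_\kappa h\|_1\ge b_1\|h\|_1-b_2\kappa^{-s}$ with $b_1,b_2$ depending only on $(s,L)$; applied with $h=f_z$ and $\kappa=d'$ the additive term is $O((d')^{-s})=O(1/d)$ and the multiplicative loss $b_1$ is harmless since only constants matter. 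This is the single fact that makes the calibration $d'=d^{1/s}$ deliver the rate $n^{-2s/(5s+2)}$ for all $s$, and your two substitute ideas do not supply it: the vanishing $X$- and $Y$-marginals of $f_z$ play no role in the argument, and ``sign changes confined to a thin neighbourhood of the zero set'' is not guaranteed for an arbitrary $\cH^{2,s}$ difference --- the zero set can be dense at scale $1/d'$, which is exactly the regime in which block averaging threatens to cancel mass, and ruling out more than a constant-factor loss requires the shifted-grid/averaging argument behind the cited lemma (Lemma~3 of \cite{ery2018remember}, Proposition~2.16 of \cite{ingster2012nonparametric}). Without stating and proving (or importing) this smoothing lemma, your alternative analysis, and hence the theorem, is established only for $s\le1$.
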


\begin{proof}[Proof of Theorem \ref{continuous:case:upper:bound:theorem}] The proof is identical to that of Theorem \ref{main:theorem:finite:discrete:XY}. 
\end{proof}

\begin{proof}[Proof of Theorem \ref{continuous:case:upper:bound:theorem:Poi}]
To prove this theorem we will assume that $N \sim Poi(n)$ instead of $Poi(n/2)$ for convenience. Since this changes only constant factors, we can do this WLOG. 
%In this proof it is convenient to suppose that $[0,1]$ is split in different bins for each for each of $$

%Let us suppose that we have random variables $X,Y,Z \in [0,1]^3$ with joint density $p(x,y,z)$, where the following properties hold
%\begin{align*}
%\forall z,z' : \|p_{x,y|z} - p_{XY| z'}\|_1 = \int_{[0,1]^2} |p(x,y|z) - p(x,y|z')|dx dy \leq L_1|z - z'|.\\
%\forall z: |p(xy| z) - p(x'y'|z)| \leq L_2 (|x-x'| + |y - y'|)
%\forall z: |p(x,y|z)| \leq L.
%\end{align*}
%Note that by the second condition, it follows that the last condition always holds with $L = \sup_z p(00|z) + 2L_2$.
%Suppose we discretize $[0,1]$ in intervals $A_1, A_2, \ldots, A_{\ell_1}$, $B_1, B_2, \ldots, B_{\ell_2}$ and $C_1, C_2, \ldots, C_{d}$. On each of those sets the probabilities are given by
Denote by
$$
q_{ij}(m) := \PP(X \in C'_i, Y \in C'_j | Z \in C_m) = \int_{C_m} \int_{C'_i \times C'_j} p_{X,Y|Z}(x,y|z) dx dy d\tilde P(z),
$$
where $d\tilde P(z) = d P(z)/\PP(Z \in C_m)$. Denote by 
$$
q_{i\cdot}(m) := \sum_{j \in [d]} q_{ij}(m) = \int_{C_m} \sum_{j \in [d]}\int_{C'_i \times C'_j} p_{X,Y|Z}(x,y|z) dx dy d\tilde P(z)  = \int_{C_m} \int_{C'_i \times [0,1]} p_{X,Y|Z}(x,y|z) dx dy d\tilde P(z).
$$
Similarly define $q_{\cdot j}(m)$. Inspection of the proof of Theorem \ref{main:theorem:scaling:discrete:XY}, reveals that we need to re-prove several facts and the proof will hold.

First we need to upper bound \eqref{to:upper:bound} or \eqref{intermediate:bound:null:hypothesis:lemma}, where we now index by $i, j$ instead of $x,y$ for convenience.  We have the following two results which control this expression for each of the two null hypothesis respectively. 
\begin{lemma} 
\label{lem:aa}
Suppose that $p_{X,Y,Z} \in \cP_{0,[0,1]^3, \TV}(L)$. Then
\begin{align*}
\|q_{A_m} - q_{\Pi,A_m}\|_2^2 \leq \bigg(\frac{L}{d}\bigg)^4.
\end{align*}
\end{lemma}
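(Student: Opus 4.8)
The plan is to imitate the proof of Lemma~\ref{expectation:upper:bound:lemma} (the one establishing~\eqref{expectation:under:H0}), with two adjustments: we must absorb the extra discretization of $X$ and $Y$ through the fine partition $\{C'_i\}_{i \in [d']}$, and we invoke TV-Lipschitzness rather than H\"{o}lder-Lipschitzness. Write $q_{ij}(m)$, $q_{i\cdot}(m)$, $q_{\cdot j}(m)$ for the (unsplit) discretized masses and note that $q_{A_m}$ is the $A_m$-split of the distribution with mass function $q_{ij}(m)$ while $q_{\Pi,A_m}$ is the $A_m$-split of $q_{i\cdot}(m)q_{\cdot j}(m)$. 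The split-distribution identity following Definition~\ref{split:distribution:main:text}, together with $a^m_{ij} \geq 0$, gives
\begin{align*}
\|q_{A_m} - q_{\Pi,A_m}\|_2^2 = \sum_{i,j \in [d']} \frac{(q_{ij}(m) - q_{i\cdot}(m) q_{\cdot j}(m))^2}{1 + a^m_{ij}} \leq \bigg( \sum_{i,j \in [d']} |q_{ij}(m) - q_{i\cdot}(m) q_{\cdot j}(m)| \bigg)^2,
\end{align*}
so it suffices to prove $\sum_{i,j} |q_{ij}(m) - q_{i\cdot}(m) q_{\cdot j}(m)| \leq (L/d)^2$ when $p_{X,Y,Z} \in \cP_{0,[0,1]^3, \TV}(L)$.

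For this, set $r_i(z) := \int_{C'_i} p_{X|Z}(x|z)\,dx$ and $s_j(z) := \int_{C'_j} p_{Y|Z}(y|z)\,dy$, so that, under the null factorization $p_{X,Y|Z}(x,y|z) = p_{X|Z}(x|z)p_{Y|Z}(y|z)$, one has $q_{ij}(m) = \int_{C_m} r_i(z)s_j(z)\,d\tilde P(z)$, $q_{i\cdot}(m) = \int_{C_m} r_i(z)\,d\tilde P(z)$ and $q_{\cdot j}(m) = \int_{C_m} s_j(z)\,d\tilde P(z)$. Exactly as in Lemma~\ref{expectation:upper:bound:lemma}, one writes $q_{ij}(m) - q_{i\cdot}(m)q_{\cdot j}(m)$ as the integral over $C_m$ of the product of the two centred factors, applies the triangle inequality, and then uses Jensen's inequality to pull the averaging over an auxiliary $z'$ outside the absolute values, obtaining
\begin{align*}
\sum_{i,j} |q_{ij}(m) - q_{i\cdot}(m) q_{\cdot j}(m)| \leq \int_{C_m} \bigg( \int_{C_m} \sum_i |r_i(z) - r_i(z')|\, d\tilde P(z') \bigg)\bigg( \int_{C_m} \sum_j |s_j(z) - s_j(z')|\, d\tilde P(z') \bigg) d\tilde P(z).
\end{align*}
The one genuinely new observation is that coarsening contracts total variation: since $r_i(z) - r_i(z') = \int_{C'_i}(p_{X|Z}(x|z) - p_{X|Z}(x|z'))\,dx$, we get $\sum_i |r_i(z) - r_i(z')| \leq \|p_{X|Z = z} - p_{X|Z = z'}\|_1$, and similarly $\sum_j |s_j(z) - s_j(z')| \leq \|p_{Y|Z = z} - p_{Y|Z = z'}\|_1$. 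By the definition of $\cP_{0,[0,1]^3, \TV}(L)$ and $\diam(C_m) = 1/d$, each of these is at most $L|z - z'| \leq L/d$; plugging this in bounds the displayed triple integral by $(L/d)^2$, and squaring yields $\|q_{A_m} - q_{\Pi,A_m}\|_2^2 \leq (L/d)^4$.

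I do not foresee a real obstacle: the argument is a near-verbatim reprise of Lemma~\ref{expectation:upper:bound:lemma}. The two points requiring care are (i) the coarsening step, which is precisely what makes the additional binning of $X$ and $Y$ harmless in the null analysis, and (ii) the bookkeeping showing that, in the TV-Lipschitz class, each of the $X$- and $Y$-factors contributes a clean factor $L/d$ — rather than the $\sqrt{L/d}$ one would extract from the $\TV^2$ class — which is what produces the fourth power.
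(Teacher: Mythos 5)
Your proof is correct and follows essentially the same route as the paper's: bound the split (flattened) $L_2^2$ distance by the squared $L_1$ distance of the unsplit discretized masses (using $a^m_{ij}\geq 0$ and monotonicity of norms), write $q_{ij}(m)-q_{i\cdot}(m)q_{\cdot j}(m)$ in centred-product form under the null factorization, apply the triangle and Jensen inequalities to introduce the auxiliary $z'$, and use the fact that coarsening over the cells $C'_i, C'_j$ contracts total variation together with TV-Lipschitzness to extract a factor $L/d$ from each marginal, giving $(L/d)^2$ for the $L_1$ sum and hence $(L/d)^4$ after squaring. The paper's proof of Lemma~\ref{lem:aa} performs exactly these steps (mirroring Lemma~\ref{expectation:upper:bound:lemma}), so no further comment is needed.
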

\begin{proof} Since $a_{ij}^m \geq 0$ we have that $\|q_{A_m} - q_{\Pi,A_m}\|_2^2 \leq \|q_{} - q_{\Pi}\|_2^2$. Next by the monotonicity of the $L_p$ norms we have
$$
\|q_{} - q_{\Pi}\|_2^2 = \sum_{i,j} (q_{ij}(m) - q_{i\cdot}(m) q_{\cdot j}(m))^2 \leq \bigg(\sum_{i,j} |q_{ij}(m) - q_{i\cdot}(m) q_{\cdot j}(m)|\bigg)^2.
$$
Recall that  $p \in \cP_{0,[0,1]^3, \TV}(L)$ implies that $\|p_{X|Z = z} - p_{X| Z = z'}\|_1 \leq L|z - z'|$ and $\|p_{Y| Z = z} - p_{Y|Z = z'}\|_1 \leq L|z - z'|$. We have
\begin{align*}
\MoveEqLeft \sum_{i,j}\bigg|\int_{C_m} \int_{C'_i \times C'_j} p_{X|Z}(x|z)p_{Y|Z}(y|z)dx dy d\tilde P(z) - \int_{C_m} \int_{C'_i} p_{X|Z}(x|z) dxd\tilde P(z) \int_{C_m} \int_{C'_j} p_{Y|Z}(y|z)dy d\tilde P(z) \bigg|\\
&\leq \int_{C_m} \sum_i\bigg|\int_{C'_i } p_{X|Z}(x|z) dx- \int_{C_m} \int_{C'_i} p_{X|Z}(x|z) dx d\tilde P(z)\bigg| \times \\
&\sum_{j}\bigg|\int_{C'_j} p_{Y|Z}(y|z)dy-\int_{C_m} \int_{C'_j} p_{Y|Z}(y|z)dy d\tilde P(z)\bigg|d\tilde P(z) 
\end{align*}
Take the first summation, and apply Jensen's inequality to conclude
\begin{align*}
\MoveEqLeft \sum_i\bigg|\int_{C'_i } p_{X|Z}(x|z) dx- \int_{C_m} \int_{C'_i} p_{X|Z}(x|z) dx d\tilde P(z)\bigg| \\
&\leq \int_{C_m}\sum_i\bigg|\int_{C'_i } p_{X|Z}(x|z) dx-  \int_{C'_i} p_{X|Z}(x|z') dx \bigg|d\tilde P(z') \\
&\leq \int_{C_m}\sum_i\int_{C'_i } |p_{X|Z}(x|z) - p_{X|Z}(x|z')| dx d\tilde P(z') \\
&\leq \int L|z- z'| d \tilde P(z') \leq L\diam(C_m) = \frac{L}{d}.
\end{align*}
Using the same strategy for the second summation completes the proof, i.e. we establish
\begin{align*}
\sum_{i,j} (q_{ij}(m) - q_{i\cdot}(m) q_{\cdot j}(m))^2 \leq \bigg(\frac{L}{d}\bigg)^4.
\end{align*}
\end{proof}

\begin{lemma}
\label{lem:bb}
Suppose that $p_{X,Y,Z} \in \cP_{0, [0,1]^3, \chi^2}(L)$. Then
\begin{align*}
\EE_{A_m} \|q_{A_m} - q_{\Pi,A_m}\|_2^2 \leq \frac{O(1)}{\omega_m^2} \bigg(\frac{L}{d}\bigg)^2
\end{align*}
\end{lemma}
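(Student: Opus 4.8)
The plan is to transcribe the null-hypothesis computation from the proof of Lemma~\ref{second:variance:bound:flattening} (in particular the chain leading to~\eqref{intermediate:bound:null:hypothesis:lemma}), but now working with the \emph{coarsened} conditional marginals
$P_i(z) := \int_{C'_i} p_{X|Z}(x|z)\,dx$ and $Q_j(z) := \int_{C'_j} p_{Y|Z}(y|z)\,dy$ in place of the point values $p_{X|Z}(x|z)$, $p_{Y|Z}(y|z)$, and with $d'$ playing the role of $\ell_1,\ell_2$. First I would apply the flattening inequality~\eqref{flattening:expectation:bound} (valid verbatim with $d'$ in place of $\ell_1,\ell_2$), which gives $\EE_{A_m}(1+a_{ij}^m)^{-1} \le O(1)/(\omega_m^2\, q_{i\cdot}(m)q_{\cdot j}(m))$ and hence
\begin{align*}
\EE_{A_m}\|q_{A_m} - q_{\Pi,A_m}\|_2^2 = \sum_{i,j}(q_{ij}(m)-q_{i\cdot}(m)q_{\cdot j}(m))^2\,\EE_{A_m}\frac{1}{1+a_{ij}^m} \le \frac{O(1)}{\omega_m^2}\sum_{i,j}\frac{(q_{ij}(m)-q_{i\cdot}(m)q_{\cdot j}(m))^2}{q_{i\cdot}(m)q_{\cdot j}(m)}.
\end{align*}
It then remains to show that the last sum is at most $(L/d)^2$.

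Second, since under the null $p_{X,Y|Z}(x,y|z)=p_{X|Z}(x|z)p_{Y|Z}(y|z)$, we have $q_{ij}(m)=\int P_i(z)Q_j(z)\,d\tilde P(z)$ with marginals $q_{i\cdot}(m)=\int P_i\,d\tilde P$ and $q_{\cdot j}(m)=\int Q_j\,d\tilde P$, so $q_{ij}(m)-q_{i\cdot}(m)q_{\cdot j}(m)=\int(P_i(z)-q_{i\cdot}(m))(Q_j(z)-q_{\cdot j}(m))\,d\tilde P(z)$. Applying Cauchy--Schwarz in $z$ and then the T2 Lemma exactly as in the discrete argument factorizes the sum:
\begin{align*}
\sum_{i,j}\frac{(q_{ij}(m)-q_{i\cdot}(m)q_{\cdot j}(m))^2}{q_{i\cdot}(m)q_{\cdot j}(m)} \le \Bigl(\sum_i \frac{\int P_i^2\,d\tilde P}{\int P_i\,d\tilde P} - 1\Bigr)\Bigl(\sum_j \frac{\int Q_j^2\,d\tilde P}{\int Q_j\,d\tilde P} - 1\Bigr),
\end{align*}
using $\sum_i q_{i\cdot}(m)=\sum_j q_{\cdot j}(m)=1$.

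Third — the only genuinely new step, where the extra discretization of $X$ and $Y$ must be absorbed — I would bound each factor by $L/d$. The map $z\mapsto P_i(z)$ is continuous: by the T2 Lemma $\chi^2$-smoothness forces $\|p_{X|Z=z}-p_{X|Z=z'}\|_1\le\sqrt{L|z-z'|}$, and $|P_i(z)-P_i(z')|\le\|p_{X|Z=z}-p_{X|Z=z'}\|_1$. Hence $z\mapsto\sum_i P_i^2(z)/\int P_i\,d\tilde P$ is continuous on the compact $C_m$, and by the mean value theorem there is $\tilde z\in C_m$ with $\sum_i \frac{\int P_i^2\,d\tilde P}{\int P_i\,d\tilde P}-1=\sum_i\frac{P_i^2(\tilde z)}{\int P_i(z')\,d\tilde P(z')}-1$; convexity of $t\mapsto 1/t$ and Jensen bound this by $\int\bigl(\sum_i P_i^2(\tilde z)/P_i(z')\bigr)\,d\tilde P(z') - 1$. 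The key estimate is that binning $X$ cannot inflate the $\chi^2$-divergence: by the T2 Lemma $P_i^2(\tilde z)/P_i(z')=\bigl(\int_{C'_i}p_{X|Z}(x|\tilde z)\,dx\bigr)^2/\int_{C'_i}p_{X|Z}(x|z')\,dx \le \int_{C'_i}p_{X|Z}^2(x|\tilde z)/p_{X|Z}(x|z')\,dx$, so summing over $i$ gives $\sum_i P_i^2(\tilde z)/P_i(z')-1\le d_{\chi^2}(p_{X|Z=\tilde z},p_{X|Z=z'})\le L|\tilde z - z'|\le L/d$. The identical bound holds for the $Q_j$ factor, and multiplying the two yields $(L/d)^2$, which with the display from the first paragraph completes the proof. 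I expect this data-processing step for $\chi^2$ to be the crux; every other ingredient is a direct copy of the discrete null analysis.
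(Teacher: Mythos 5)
Your proposal is correct and follows essentially the same route as the paper's proof: the flattening bound reducing to the conditional $\chi^2$-type sum (the paper's~\eqref{intermediate:bound:null:hypothesis:lemma}), Cauchy--Schwarz in $z$ to factorize into the $X$ and $Y$ factors, and then continuity plus the mean value theorem, Jensen for $t\mapsto 1/t$, and the binned Cauchy--Schwarz (data-processing) step to bound each factor by $L/d$. The only cosmetic quibble is that the factorization after Cauchy--Schwarz is just a product of sums rather than an application of the T2 Lemma, but this does not affect correctness.
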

\begin{proof}
Using inequality \eqref{intermediate:bound:null:hypothesis:lemma} it suffices to directly control the quantity:
$$
\sum_{i,j} \frac{(q_{ij}(m) - q_{i\cdot}(m) q_{\cdot j}(m))^2}{q_{i\cdot}(m) q_{\cdot j}(m)}.
$$
%from above, under the null hypothesis assuming that
%\begin{align*}
%d_{\chi^2}(p_{X|z}, p_{X|z'}) & = \int_{[0,1]} \frac{p^2_{x|z'}}{p_{x|z}}  dx - 1 \leq L|z - z'|,\\
%d_{\chi^2}(p_{Y|z}, p_{Y|z'}) & = \int_{[0,1]} \frac{p^2_{y|z'}}{p_{y|z}} dy - 1 \leq L|z - z'|.
%\end{align*}
By definition we have
\begin{align*}
\MoveEqLeft \sum_{i,j} \frac{(q_{ij}(m) - q_{i\cdot}(m) q_{\cdot j}(m))^2}{q_{i\cdot}(m) q_{\cdot j}(m)} \\
& = \sum_{i,j} \frac{\bigg(\int_{C_m} \int_{C'_i \times C'_j} p_{X,Y|Z}(x,y|z) dx dy d\tilde P(z) - \int_{C_m} \int_{C'_i} p_{X|Z}(x|z) dx d\tilde P(z)\int_{C_m} \int_{C'_j} p_{Y|Z}(y|z) dy d\tilde P(z)\bigg)^2}{\int_{C_m} \int_{C'_i} p_{X|Z}(x|z) d x d\tilde P(z) \int_{C_m} \int_{C'_j} p_{Y|Z}(y|z) d y d\tilde P(z)}
\end{align*}
Using the fact that $p_{X,Y|Z}(x,y|z) = p_{X,Z}(x|z) p_{Y|Z}(y|z)$ and Cauchy-Schwarz we obtain
\begin{align*}
\MoveEqLeft \sum_{i,j} \frac{(q_{ij}(m) - q_{i\cdot}(m) q_{\cdot j}(m))^2}{q_{i\cdot}(m) q_{\cdot j}(m)} \\
& \leq \bigg(\sum_{i} \frac{\int_{C_m} \bigg(\int_{C'_i} p_{X|Z}(x|z) d x\bigg)^2 d \tilde P(z)}{\int_{C_m} \int_{C'_i} p_{X|Z}(x|z) d x d\tilde P(z)} - 1\bigg)\bigg(\sum_{j} \frac{\int_{C_m} \bigg(\int_{C'_j} p_{Y|Z}(y|z) d y\bigg)^2 d \tilde P(z)}{\int_{C_m} \int_{C'_j} p_{Y|Z}(y|z) d y d\tilde P(z)} - 1\bigg).
\end{align*}
We will handle each of these terms individually. We first note that the function $\int_{C_i} p(x|z) d x$ is continuous in $z$. To see this recall that (by Cauchy-Schwarz)
$$
\bigg(\sum_{i}\int_{C'_i} |p_{X|Z}(x|z) - p_{X|Z}(x|z')| dx\bigg)^2 \leq \int_{[0,1]} \frac{p^2_{X|Z}(x|z')}{p_{X|Z}(x|z)} dx - 1 \leq L |z - z'|. 
$$
It follows that $\bigg|\int_{C'_i} p_{X|Z}(x|z) dx -  \int_{C'_i} p_{X|Z}(x|z') dx \bigg| \leq \sqrt{L |z - z'|}$. It therefore follows by the mean value theorem (as in the discrete case) that for some $z' \in C_m$:
\begin{align*}
\MoveEqLeft \sum_{i} \frac{\int_{C_m} \bigg(\int_{C'_i} p_{X|Z}(x|z) d x\bigg)^2 d \tilde P(z)}{\int_{C_m} \int_{C'_i} p_{X|Z}(x|z) d x d\tilde P(z)} - 1 = \sum_{i} \frac{ \bigg(\int_{C'_i} p_{X|Z}(x|z') d x\bigg)^2}{\int_{C_m} \int_{C'_i} p_{X|Z}(x|z) d x d\tilde P(z)} - 1 \\
& \leq \int_{C_m} \bigg(\sum_{i} \frac{ \bigg(\int_{C'_i} p_{X|Z}(x|z') d x\bigg)^2}{ \int_{C'_i} p_{X|Z}(x|z) d x } - 1 \bigg)d\tilde P(z),
\end{align*}
where we used Jensen's inequality and the fact that $x\mapsto 1/x$ is convex in the last inequality. Now by Cauchy-Schwarz we have
$$
\frac{ \bigg(\int_{C'_i} p_{X|Z}(x|z') d x\bigg)^2}{ \int_{C'_i} p_{X|Z}(x|z) d x }  \leq \int_{C'_i} \frac{p^2_{X|Z}(x|z')}{p_{X|Z}(x|z)} dx. 
$$
Hence we conclude
$$
\sum_{i} \frac{\int_{C_m} \bigg(\int_{C'_i} p_{X|Z}(x|z) d x\bigg)^2 d \tilde P(z)}{\int_{C_m} \int_{C'_i} p_{X|Z}(x|z) d x d\tilde P(z)} - 1\leq  \int_{C_m} \int_{[0,1]} \frac{p^2_{X|Z}(x|z')}{p_{X|Z}(x|z)} dx - 1 d \tilde P(z) \leq L\diam(C_m) = \frac{L}{d},
$$
by assumption. Handling the second term in the same way warrants the desired conclusion. 
\end{proof}

Next we need to lower bound \eqref{expression:to:lower:bound}. For this it suffices to lower bound the distance $d_{\TV}(q(m), q_\Pi(m))$, where $q(m)$ is the distribution with ``density'' $q_{ij}(m)$ while $q_{\Pi}(m)$ is the distribution with density $ q_{i\cdot}(m)q_{\cdot j}(m)$. We have
\begin{lemma} 
\label{lem:cc}
There exist constants $b_1, b_2 > 0$ depending only on $(s,L)$, such that the following bound holds
\begin{align*}
\|q(m) - q_\Pi(m)\|_1 \geq b_1 \sup_{z \in C_m} \int_{[0,1]^2} |p_{X,Y|Z}(x,y|z) - p_{X|Z}(x  |z) p_{Y|Z}( y | z)| dx dy - \frac{b_2}{d}.
\end{align*}

\end{lemma}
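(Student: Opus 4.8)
The goal is to compare the total-variation distance between the ``discretized joint'' $q(m)$ (with masses $q_{ij}(m)$) and the ``discretized product'' $q_\Pi(m)$ (with masses $q_{i\cdot}(m)q_{\cdot j}(m)$) against the continuous conditional-independence discrepancy $\sup_{z\in C_m}\int_{[0,1]^2}|p_{X,Y|Z}(x,y|z)-p_{X|Z}(x|z)p_{Y|Z}(y|z)|\,dx\,dy$. This parallels exactly the lower-bound step in the proof of Theorem~\ref{main:theorem:scaling:discrete:XY:Poi} (the bound $\varepsilon_m \geq d_{\TV}(p_{X,Y|Z=z_m^*},p_{X|Z=z_m^*}p_{Y|Z=z_m^*}) - \tfrac32\tfrac{L}{d}$), with the extra twist that $X$ and $Y$ are themselves binned into the $d'$ intervals $C'_i$. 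I would first show the map $z\mapsto \int_{[0,1]^2}|p_{X,Y|Z}(x,y|z)-p_{X|Z}(x|z)p_{Y|Z}(y|z)|\,dx\,dy$ is continuous on the compact set $C_m$, using the TV-Lipschitzness of $p_{X,Y|Z}$ in $z$ together with sub-additivity of TV on products and the triangle inequality (verbatim the continuity argument from the proof of Lemma~\ref{lemma:lower:bound:expectation}); hence it attains its maximum at some $z_m^*\in C_m$.

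Next, the key observation is that binning $X$ and $Y$ can only \emph{decrease} $L_1$ distance between conditional laws: for any fixed $z$, writing $\bar p_{X,Y|Z}$ for the $d'\times d'$-discretized version of $p_{X,Y|Z=z}$ and similarly for the marginals, the data-processing inequality for total variation gives $\|\bar p_{X,Y|Z=z} - \bar p_{X|Z=z}\bar p_{Y|Z=z}\|_1 \le \|p_{X,Y|Z=z} - p_{X|Z=z}p_{Y|Z=z}\|_1$, but this is the wrong direction, so I instead need a lower bound. The right move is: by the triangle inequality applied coordinate-wise in $(i,j)$,
\begin{align*}
\|q(m) - q_\Pi(m)\|_1 &\ge \|\bar p_{X,Y|Z=z_m^*} - \bar p_{X|Z=z_m^*}\bar p_{Y|Z=z_m^*}\|_1 \\
&\quad - \sum_{i,j}|q_{ij}(m) - \bar p_{X,Y|Z}(i,j|z_m^*)| - \sum_{i,j}|q_{i\cdot}(m)q_{\cdot j}(m) - \bar p_{X|Z}(i|z_m^*)\bar p_{Y|Z}(j|z_m^*)|,
\end{align*}
and each of the two correction sums is controlled by integrating the TV-Lipschitzness of $p_{X,Y|Z}$ (resp.\ $p_{X|Z}$, $p_{Y|Z}$, which inherit Lipschitzness by the triangle-inequality bound $\max(\|p_{X|Z=z}-p_{X|Z=z'}\|_1,\|p_{Y|Z=z}-p_{Y|Z=z'}\|_1)\le\|p_{X,Y|Z=z}-p_{X,Y|Z=z'}\|_1$) over $C_m$ against $d\tilde P(z)$, giving terms of order $L/d$; a further split of the product term via sub-additivity of TV on products contributes another $2L/d$. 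So $\|q(m)-q_\Pi(m)\|_1 \ge \|\bar p_{X,Y|Z=z_m^*} - \bar p_{X|Z=z_m^*}\bar p_{Y|Z=z_m^*}\|_1 - O(L/d)$, verbatim in structure to the discrete proof.

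The remaining, and I expect the main, obstacle is relating $\|\bar p_{X,Y|Z=z_m^*} - \bar p_{X|Z=z_m^*}\bar p_{Y|Z=z_m^*}\|_1$ (TV of the \emph{binned} laws) to $\int_{[0,1]^2}|p_{X,Y|Z}(x,y|z_m^*)-p_{X|Z}(x|z_m^*)p_{Y|Z}(y|z_m^*)|\,dx\,dy$ (TV of the \emph{continuous} laws). Binning shrinks TV, so I cannot simply drop the bars; I must use the H\"older smoothness in $\cH^{2,s}(L)$ (and the earlier lemma showing $p_{X|Z}p_{Y|Z}\in\cH^{2,s}(C)$) to argue that the difference function $f(x,y):=p_{X,Y|Z}(x,y|z_m^*)-p_{X|Z}(x|z_m^*)p_{Y|Z}(y|z_m^*)$ does not oscillate too much within a single $C'_i\times C'_j$ cell of side $1/d'$. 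Concretely, within each cell $|f(x,y)-f(x_0,y_0)|\le C'(|x-x_0|+|y-y_0|)^{\min(s,1)}\lesssim (1/d')^{\min(s,1)}$ (using that $f$ has bounded first derivatives when $s\ge1$, or is itself H\"older when $s<1$), so the averaged cell-mass $q_{ij}$-discrepancy differs from the pointwise integral by at most $(1/d')^{\min(s,1)}$ per cell, summed over $(d')^2$ cells and weighted by cell area $1/(d')^2$: this yields $\big|\,\|\bar f\|_1 - \|f\|_1\,\big| \lesssim (d')^{-\min(s,1)}$. Since $d'=\lceil d^{1/s}\rceil$, this error is $\lesssim d^{-\min(1,1/s)} \le d^{-1/s}\cdot d^{(1/s-1)_+}$; I would check that for the relevant regime this is $O(1/d)$ up to constants depending on $(s,L)$ — and if $s<1$ one gets $(d')^{-s} = d^{-1}$ exactly, while for $s\ge1$ one gets $(d')^{-1}\le d^{-1/s}\le d^{-\dots}$; being slightly careful here, the clean statement is that the error is $O(d^{-1})$ when $s\le 1$ and needs the $s\ge1$ bookkeeping otherwise, but in all cases it is $\le b_2/d$ for suitable $b_2(s,L)$ since $d' \ge d^{1/s} \ge d$ when $s\le 1$ and $(d')^{-1}\le d^{-1/s}$. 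Collecting the three error contributions ($O(L/d)$ from $z$-binning, $O((d')^{-\min(s,1)})$ from $x,y$-binning) into a single constant $b_2/d$, and setting $b_1$ to absorb the $\tfrac12$ from TV-vs-$L_1$, completes the proof. The cleanest way to present it is to mirror the structure of Lemma~\ref{lemma:lower:bound:expectation}'s proof, inserting the $x,y$-discretization error estimate as one additional triangle-inequality term.
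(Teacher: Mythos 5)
Your first half matches the paper: the continuity of $z \mapsto \int_{[0,1]^2}|p_{X,Y|Z}(x,y|z)-p_{X|Z}(x|z)p_{Y|Z}(y|z)|\,dx\,dy$ on the compact bin $C_m$, the choice of a maximizer $z_m^*$, the triangle-inequality decomposition isolating the binned discrepancy at $z_m^*$, and the three $z$-binning error terms each of order $L/d$ are exactly the steps in the paper's argument (mirroring Lemma~\ref{lemma:lower:bound:expectation}). The gap is in the step you yourself flag as the main obstacle. To compare $\sum_{i,j}\big|\int_{C_i'\times C_j'} f\big|$ with $\int_{[0,1]^2}|f|$ for $f(x,y)=p_{X,Y|Z}(x,y|z_m^*)-p_{X|Z}(x|z_m^*)p_{Y|Z}(y|z_m^*)$, you use only the modulus of continuity of $f$, i.e.\ H\"older smoothness of exponent $\min(s,1)$, obtaining an additive error of order $(d')^{-\min(s,1)}$. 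Since $d'=\lceil d^{1/s}\rceil$, this is $\asymp d^{-1}$ when $s\le 1$, but only $\asymp d^{-1/s}$ when $s>1$, which is polynomially larger than the $b_2/d$ demanded by the lemma; downstream, the whole analysis needs the discretization bias to be $O(1/d)=O(\varepsilon)$ to get the rate $n^{-2s/(5s+2)}$, so an error of $d^{-1/s}$ would degrade the rate for every $s>1$. Your parenthetical justification ("in all cases it is $\le b_2/d$ since $(d')^{-1}\le d^{-1/s}$") points the inequality the wrong way: $d^{-1/s}\ge d^{-1}$ precisely when $s\ge 1$.

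The paper closes this gap differently: it invokes Lemma~\ref{super:crucial:smoothing:lemma} (adapted from Lemma 3 of \cite{ery2018remember} and Proposition 2.16 of \cite{ingster2012nonparametric}), which states that for any $h\in\cH^{2,s}(L)$ the piecewise-constant averaging $W_{\kappa}$ satisfies $\|W_{\kappa}h\|_1 \ge b_1\|h\|_1 - b_2\kappa^{-s}$. The crucial point is that this exploits the full order-$s$ smoothness, giving an error $(d')^{-s}\le 1/d$ for \emph{all} $s$, at the cost of a multiplicative constant $b_1<1$ --- which is exactly why the statement of Lemma~\ref{lem:cc} carries the factor $b_1$ in front of the supremum, a factor your approach would not need but cannot afford to trade for the worse error exponent. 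A per-cell oscillation bound cannot be patched by a higher-order Taylor expansion either: the linear term cancels in $\int_{\mathrm{cell}} f$ but not in $\int_{\mathrm{cell}}|f|$, so controlling $\|h\|_1-\|W_{\kappa}h\|_1$ additively at order $\kappa^{-s}$ for $s>1$ requires an argument of the projection/approximation type used in the cited lemmas rather than pointwise continuity. For $s\le 1$ your argument is correct (indeed with $b_1=1$), so the fix is to replace the smoothness step by the paper's smoothing lemma (or prove an analogue of it) to cover $s>1$.
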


\begin{proof}

Take a point $z^* \in C_m$ such that it maximizes the function $z \mapsto \int_{[0,1]^2} |p_{X,Y|Z}(x,y|z) - p_{X|Z}(x  |z) p_{Y|Z}( y | z)| dx dy$. If such a point does not exist, take a sequence of points that converge to the supremum. By the triangle inequality we have 
\begin{align*}
\MoveEqLeft \sum_{i,j} |q_{ij}(m) - q_{i\cdot}(m)q_{\cdot j}(m)| \\
& \geq \sum_{i,j} \bigg|\int_{C'_i \times C'_j} p_{X,Y|Z}(x,y|z^*) dx dy - \int_{C'_i \times [0,1]} p_{X,Y|Z}(x,y|z^*) dx dy \int_{[0,1] \times C'_j} p_{X,Y|Z}(x,y|z^*) dx dy\bigg|\\
& - \sum_{i,j} \bigg| \int_{C_m} \int_{C'_i \times C'_j} p_{X,Y|Z}(x,y|z) - p_{X,Y|Z}(x,y|z^*) dx dy d \tilde P(z) \bigg|\\
& - \sum_{i,j} \bigg|q_{i\cdot}(m) \bigg(\int_{C_m} \int_{[0,1] \times C'_j} p_{X,Y|Z}(x,y|z) - p_{X,Y|Z}(x,y|z^*) dx dy d\tilde P(z)\bigg)\bigg|\\
& - \sum_{i,j} \bigg|\int_{[0,1] \times C'_j} p_{X,Y|Z}(x,y|z^*) dx dy \bigg(\int_{C_m} \int_{C'_i \times [0,1]} p_{X,Y|Z}(x,y|z) - p_{X,Y|Z}(x,y|z^*) dx dy d\tilde P(z)\bigg)\bigg|
\end{align*}
We will first handle the last three terms, starting with the first one. Next we obtain
\begin{align*}
\MoveEqLeft \sum_{i,j} \bigg| \int_{C_m} \int_{C'_i \times C'_j} p_{X,Y|Z}(x,y|z) - p_{X,Y|Z}(x,y|z^*) dx dy d \tilde P(z) \bigg|\\
& \leq  \int_{C_m} \sum_{i,j} \int_{C'_i \times C'_j} |p_{X,Y|Z}(x,y|z) - p_{X,Y|Z}(x,y|z^*)| dx dy d \tilde P(z) \\
& \leq \int_{C_m} L| z - z^*| d \tilde P(z) \leq \frac{L}{d}.
\end{align*}
For the second term we have
\begin{align*}
\MoveEqLeft \sum_{i,j} \bigg|q_{i\cdot}(m) \bigg(\int_{C_m} \int_{[0,1] \times C'_j} p_{X,Y|Z}(x,y|z) - p_{X,Y|Z}(x,y|z^*) dx dy d\tilde P(z)\bigg)\bigg|\\
& \leq \int_{C_m} \sum_{i} q_{i\cdot}(m)\sum_{j} \int_{[0,1] \times C'_j} |p_{X,Y|Z}(x,y|z) - p_{X,Y|Z}(x,y|z^*) | dx dy d\tilde P(z)\\
& \leq \int_{C_m} L| z - z^*| d \tilde P(z) \leq \frac{L}{d}.
\end{align*}
The analysis of the third term is the same. We will finally need a lower bound on 
\begin{align}\label{cont:term:in:question}
\sum_{i,j} \bigg|\int_{C'_i \times C'_j} p_{X,Y|Z}(x,y|z^*) dx dy - \int_{C'_i \times [0,1]} p_{X,Y|Z}(x,y|z^*) dx dy \int_{[0,1] \times C'_j} p_{X,Y|Z}(x,y|z^*) dx dy\bigg|
\end{align}
We will compare this term to the following 
\begin{align}\label{term:to:compare:to}
\int_{[0,1]^2} |p_{X,Y|Z}(x,y|z^*) - p_{X|Z}(x |z^*) p_{Y|Z}( y | z^*)| dx dy,
\end{align}
where $p_{X|Z}(x |z^*) = \int_{[0,1]} p_{X,Y|Z}(x,y|z^*) d y$ and similarly for $p_{Y|Z}( y | z^*)$. Note that in terms of this notation we may rewrite the term in question as
\begin{align*}
 \MoveEqLeft \sum_{i,j} \bigg|\int_{C'_i \times C'_j} p_{X,Y|Z}(x,y|z^*) dx dy - \int_{C'_i} p_{X|Z}(x|z^*) dx \int_{C'_j} p_{Y|Z}(y|z^*) dy\bigg| = \\
& \sum_{i,j} \bigg|\int_{C'_i \times C'_j} [p_{X,Y|Z}(x,y|z^*) - p_{X|Z}(x|z^*) p_{Y|Z}(y|z^*)] dx dy \bigg|.
\end{align*}
%{\color{red} We will first check that the function $p_{X|Z}(x | z^*)$ satisfies the H\"{o}lder condition. By the Leibniz rule and the H\"{o}lder assumption we have
%\begin{align*}
%|p^{\lfloor s \rfloor}_{X|Z}(x | z^*) - p_{X|Z}^{\lfloor s \rfloor}(x' | z^*)| & = \bigg|\int_{[0,1]} \frac{\partial }{\partial x^{\lfloor s \rfloor}}p_{X,Y|Z}(x,y | z^*) - \frac{\partial }{\partial x^{\lfloor s \rfloor}}p_{X,Y|Z}(x',y | z^*)dy \bigg| \\
%& \leq \int_{[0,1]} \bigg|\frac{\partial }{\partial x^{\lfloor s \rfloor}}p_{X,Y|Z}(x,y | z^*) - \frac{\partial }{\partial x^{\lfloor s \rfloor}}p_{X,Y|Z}(x',y | z^*) \bigg|dy\\
%%& = \int_{[0,1]} |p^{\lfloor s \rfloor}_{X,Y|Z}(x,y | z^*) -p^{\lfloor s \rfloor}_{X,Y|Z}(x',y | z^*) |dy\\
%& \leq L |x - x'|.
%\end{align*}
%Similarly one can check that $p_{Y|Z}(y|z^*)$ is H\"{o}lder in $y$.} 

For what follows let $\mu$ denote the Lebesgue measure on $[0,1]$. 
We now need the following Lemma
\begin{lemma}\label{super:crucial:smoothing:lemma}Suppose that $h \in \cH^{2,s}(L)$. For an integer $\kappa \in \NN$, take a decomposition of $[0,1]^2 = \prod_{i, j \in [\kappa]} B_i \times B_j$ where the length of each $\mu(B_i) = \kappa^{-1}$ and $B_i$ form a decomposition of $[0,1]$. Let $W_{\kappa} h = \sum_{i,j \in [\kappa]} \kappa^2 (\int_{B_i \times B_j} h )\mathbbm{1}_{B_i \times B_j}$. There exist constants $b_1, b_2 > 0$ depending only on $(s, L)$ such that 
\begin{align*}
\|W_{\kappa}(h)\|_1 \geq b_1 \|h\|_1 - b_2 \kappa^{-s}, ~~~~~ \forall h \in \cH^{2,s}(L)
\end{align*}
\end{lemma}

\begin{proof} The proof of this Lemma is extremely similar to the proof of Lemma 3 of \cite{ery2018remember} modulo changes from $L_2$ distance to $L_1$ distance which do not modify the arguments. Hence we omit the details. We also remark that the proof of this Lemma in the one-dimensional case can be found in Proposition 2.16 of \cite{ingster2012nonparametric}. 
\end{proof}

In order to use Lemma \ref{super:crucial:smoothing:lemma}, we note that the difference $p_{X,Y|Z}(x,y|z^*) - p_{X|Z}(x|z^*) p_{Y|Z}(y|z^*)$ belongs to the H\"{o}lder class $\cH^{2,s}(2L)$ by definition.%To do so note that by the triangle inequality:
%{\color{red}\begin{align*}
%p_{X|Z}(x|z^*) p_{Y|Z}(y|z^*) - p_{X|Z}(x'|z^*) p_{Y|Z}(y'|z^*)
%\end{align*}}
Using Lemma \ref{super:crucial:smoothing:lemma}, we can write the following bound 

\begin{align*}
\MoveEqLeft \sum_{i,j} \bigg|\int_{C'_i \times C'_j} [p_{X,Y|Z}(x,y|z^*) - p_{X|Z}(x|z^*) p_{Y|Z}(y|z^*)] dx dy \bigg| \geq \\
& b_1 \int_{[0,1]^2} |p_{X,Y|Z}(x,y|z^*) - p_{X|Z}(x |z^*) p_{Y|Z}( y | z^*)| dx dy - b_2 \frac{1}{(d^{1/s})^s}
\end{align*}
Putting everything together we conclude that
\begin{align*}
\sum_{i,j} |q_{ij}(m) - q_{i\cdot}(m)q_{\cdot j}(m))| & \geq b_1 \sup_{z \in C_m} \int_{[0,1]^2} |p_{X,Y|Z}(x,y|z) - p_{X|Z}(x  |z) p_{Y|Z}( y | z)| dx dy - \frac{b_2}{d},
\end{align*}
completing the proof. 
\end{proof}

After having these three results, the remaining details of the proof follow closely that of Theorem \ref{main:theorem:scaling:discrete:XY} so we omit the details. 
\end{proof}

\section{Unbounded $Z$, Discrete $X,Y$ Non-Scaling}

%{\color{red} Define the class of conditionally independent TV smooth distributions with unbounded support as $\cP'$}

Define the set of distributions $\cE'_{}$ as distributions whose generating mechanism of the triple $(X,Y,Z)$ supported on $\RR^3$ is as follows: first a $Z$ from the distribution $p_Z$ (which is absolutely continuous with respect to the Lebesgue measure) with potentially unbounded support is generated. Next, $X$ and $Y$ are generated from the distribution $p_{X,Y|Z}$ which is supported on $[\ell_1] \times [\ell_2]$ for (almost) all $Z$. Denote by $\cP_{0}' \subset \cE'_{}$ the set of null distributions (i.e. distributions such that $X \independent Y | Z$) and let $\cQ_{0}' = \cE'_{} \setminus \cP_{0}'$.

\begin{definition}[Null Lipschitzness] \leavevmode 
\begin{enumerate}
\item {Null TV Lipschitzness: } Let $\cP_{0, \TV}'(L) \subset \cP_{0}'$
be the collection of distributions $p_{X,Y,Z}$ such that for all $z,z'$ we have:
\begin{align*}
\|p_{X | Z = z} - p_{X | Z = z'}\|_1 \leq L |z - z'| \mbox{ and } \|p_{Y | Z = z} - p_{Y | Z = z'}\|_1 \leq L |z - z'|,
\end{align*}
where $p_{X| Z = z}$ and $p_{Y | Z = z}$ denote the conditional distributions of $X | Z = z$ and $Y | Z = z$ under $p_{X,Y,Z}$  respectively. 
\item {Null $\chi^2$ Lipschitzness: } Let $\cP_{0, \chi^2}'(L) \subset \cP_{0}'$ 
be the collection of distributions $p_{X,Y,Z}$ such that for all $z,z' $ we have:
\begin{align*}
d_{\chi^2}(p_{X | Z = z}, p_{X | Z = z'}) \leq L |z - z'| \mbox{ and } d_{\chi^2}(p_{Y | Z = z}, p_{Y | Z = z'}) \leq L |z - z'|,
\end{align*}
where $p_{X| Z = z}$ and $p_{Y | Z = z}$ denote the conditional distributions of $X | Z = z$ and $Y | Z = z$ under $p_{X,Y,Z}$ respectively. The distance $d_{\chi^2}(p_{X | Z = z}, p_{X | Z = z'})$ is considered $\infty$ if $p_{X | Z = z} \ll p_{X | Z = z'}$ is violated. 
\item {Null H\"{o}lder Lipschitzness: } Let $\cP_{0, \TV^2}'(L) \subset \cP_{0}'$ be the collection of distributions $p_{X,Y,Z}$ such that for all $z,z'$ we have:
\begin{align*}
\|p_{X | Z = z} - p_{X | Z = z'}\|_1 \leq \sqrt{L |z - z'|} \mbox{ and } \|p_{Y | Z = z} - p_{Y | Z = z'}\|_1 \leq \sqrt{L |z - z'|},
\end{align*}
where $p_{X| Z = z}$ and $p_{Y | Z = z}$ denote the conditional distributions of $X | Z = z$ and $Y | Z = z$ under $p_{X,Y,Z}$  respectively. 
\end{enumerate}
\end{definition}
\begin{definition}[Alternative TV Lipschitzness]Let $\cQ_{0, \TV}'(L) \subset \cQ_{0}'$ be the collection of distributions 
$p_{X,Y,Z}$ such that for all $z, z'$ we have
\begin{align*}
\|p_{X,Y | Z= z} - p_{X,Y | Z = z'}\|_1 \leq L |z - z'|, 
\end{align*}
where $p_{X,Y| Z = z}$ denotes the conditional distribution of $X,Y | Z = z$ under $p_{X,Y,Z}$. 
\end{definition}

First it is straightforward to generalize the results of Section \ref{upper:bound:section}, to the case where $Z$ is supported on an interval $[A, B]$ with $A < B$. The proof needs to be modified to respect the fact that the bins have a size of $\frac{B-A}{d}$ instead of size $d$. The final rate that one obtains is $\frac{(B-A)^{1/5}}{n^{2/5}} \vee \frac{(B-A)^{7/15}}{n^{8/15}}$.

\begin{lemma}\label{bounding:Z:lemma} Suppose that $\inf_{p \in \cP_0'} \|q_{X,Y,Z} - p\|_1 = \varepsilon$. Define the distribution $\tilde q_{X,Y,Z} = q_{X,Y|Z} \tilde q_Z$ where $\tilde q_Z$ is the restriction of $q_Z$ on an interval $S_\eta$ with the property that $\PP(Z \in S_\eta) \geq 1 - \eta$. Then  $\inf_{p \in \cP_0'} \|\tilde q_{X,Y,Z} - p\|_1 \geq \frac{1}{6}\frac{\varepsilon - 2\eta}{1 - \eta}$.
\end{lemma}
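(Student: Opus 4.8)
The plan is to transfer the ``$\varepsilon$-far from conditional independence'' property of $q_{X,Y,Z}$ to its truncated version $\tilde q_{X,Y,Z}$, exploiting the fact that the two distributions share the conditional law $q_{X,Y|Z}$ and differ only in their $Z$-marginals.

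First I would record that $\tilde q_{X,Y,Z}$ is $2\eta$-close to $q_{X,Y,Z}$ in $\|\cdot\|_1$. Writing $\beta := \PP(Z \in S_\eta) \ge 1-\eta$, the density of $\tilde q_Z$ is $z \mapsto q_Z(z)\mathbbm{1}(z \in S_\eta)/\beta$; since $\tilde q_{X,Y,Z}(x,y,z) = q_{X,Y|Z}(x,y|z)\,\tilde q_Z(z)$ and $\sum_{x,y} q_{X,Y|Z}(x,y|z) = 1$, integrating out $(x,y)$ gives $\|q_{X,Y,Z} - \tilde q_{X,Y,Z}\|_1 = \|q_Z - \tilde q_Z\|_1$, and splitting this integral over $S_\eta$ and its complement gives $\|q_Z - \tilde q_Z\|_1 = 2(1-\beta) \le 2\eta$. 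I would also note that on $S_\eta$ all the relevant conditionals coincide: $\tilde q_{X,Y|Z=z} = q_{X,Y|Z=z}$, $\tilde q_{X|Z=z} = q_{X|Z=z}$, $\tilde q_{Y|Z=z} = q_{Y|Z=z}$.

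The main step is to apply Lemma \ref{TV:proj:vs:all} to $\tilde q$ (its proof uses only the triangle inequality and the subadditivity of total variation on product distributions, so it goes through verbatim in the present, possibly unbounded-$Z$, setting). This yields
\[
\inf_{p \in \cP_0'} \|\tilde q_{X,Y,Z} - p\|_1 \ \ge\ \frac{1}{6}\,\big\|\tilde q_{X,Y,Z} - \tilde q_{X|Z}\tilde q_{Y|Z}\tilde q_Z\big\|_1 \ =\ \frac{1}{6} \int_{S_\eta} \big\|q_{X,Y|Z=z} - q_{X|Z=z}q_{Y|Z=z}\big\|_1\, \frac{q_Z(z)}{\beta}\, dz,
\]
where the equality uses the coincidence of conditionals on $S_\eta$. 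Then I would lower bound the integral: with $f(z) := \|q_{X,Y|Z=z} - q_{X|Z=z}q_{Y|Z=z}\|_1 \in [0,2]$, it equals $\frac{1}{\beta}\big(\int f\, q_Z - \int_{S_\eta^c} f\, q_Z\big)$; the first term is exactly $\|q_{X,Y,Z} - q_{X|Z}q_{Y|Z}q_Z\|_1$, which is $\ge \varepsilon$ since $q_{X|Z}q_{Y|Z}q_Z \in \cP_0'$, and the second is $\le 2(1-\beta) \le 2\eta$. Hence the integral is at least $\frac{\varepsilon - 2(1-\beta)}{\beta} = 2 + \frac{\varepsilon - 2}{\beta}$, which (since $\varepsilon \le 2$) is nondecreasing in $\beta$, so over $\beta \in [1-\eta, 1]$ it is minimized at $\beta = 1-\eta$, giving $\frac{\varepsilon - 2\eta}{1-\eta}$; combining with the displayed inequality completes the proof.

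There is no serious obstacle here: the only points needing care are the bookkeeping with the normalization constant $\beta$, the trivial monotonicity check for $\beta \mapsto 2 + \frac{\varepsilon-2}{\beta}$ (which relies on $\varepsilon \le 2$, valid because $\|\cdot\|_1 \le 2$), and the verification that $q_{X|Z}q_{Y|Z}q_Z$ genuinely lies in $\cP_0'$ so that it is an admissible competitor in the infimum. I would also remark that a bare triangle inequality using the $2\eta$-closeness from the first step already gives the stronger bound $\inf_{p \in \cP_0'}\|\tilde q_{X,Y,Z} - p\|_1 \ge \varepsilon - 2\eta$, which implies the stated inequality whenever $\eta \le 5/6$; the route through Lemma \ref{TV:proj:vs:all} is what makes the bound non-vacuous across the full range $\eta \in (0,1)$.
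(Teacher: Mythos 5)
Your proof is correct and follows essentially the same route as the paper's: apply Lemma \ref{TV:proj:vs:all} to $\tilde q$, write $\|\tilde q_{X,Y,Z}-\tilde q_{X|Z}\tilde q_{Y|Z}\tilde q_Z\|_1$ as the integral of $\|q_{X,Y|Z=z}-q_{X|Z=z}q_{Y|Z=z}\|_1$ over $S_\eta$ normalized by $\PP(Z\in S_\eta)$, and lower bound it by the full-space quantity (which dominates $\varepsilon$ since $q_{X|Z}q_{Y|Z}q_Z\in\cP_0'$) minus the tail contribution. The only difference is that you make explicit the monotonicity of $\beta\mapsto 2+(\varepsilon-2)/\beta$ over $\beta\in[1-\eta,1]$ needed to land exactly on $(\varepsilon-2\eta)/(1-\eta)$, a step the paper's proof leaves implicit.
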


\begin{proof}[Proof of Lemma \ref{bounding:Z:lemma}] By Lemma \ref{TV:proj:vs:all} (whose proof does not depend on the bounded $Z$ support) we know that 
\begin{align*}
\inf_{p \in \cP_0'} \|q_{X,Y,Z} - p\|_1 \geq \|\tilde q_{X,Y,Z} - \tilde q_{X | Z} \tilde q_{Y|Z} \tilde q_{Z}\|_1/6
\end{align*}
Next
\begin{align*}
\|\tilde q_{X,Y,Z} - \tilde q_{X | Z} \tilde q_{Y|Z} \tilde q_{Z}\|_1 = \int_{S_\eta} \sum_{x,y} |q_{X,Y|Z}(x,y|z) - q_{X|Z}(x|z)q_{Y|Z}(y|z)|  \frac{q_Z(z)}{\PP(Z \in S_\eta)} dz
\end{align*}
On the other hand
\begin{align*}
\varepsilon \leq \| q_{X,Y,Z} -  q_{X | Z}  q_{Y|Z}  q_{Z}\|_1 & = \int_{S_\eta} \sum_{x,y} |q_{X,Y|Z}(x,y|z) - q_{X|Z}(x|z)q_{Y|Z}(y|z)|  q_Z(z)dz\\
& +  \int_{S_\eta^c} \sum_{x,y} |q_{X,Y|Z}(x,y|z) - q_{X|Z}(x|z)q_{Y|Z}(y|z)|  q_Z(z)dz\\
& \leq \int_{S_\eta} \sum_{x,y} |q_{X,Y|Z}(x,y|z) - q_{X|Z}(x|z)q_{Y|Z}(y|z)|  q_Z(z)dz\\
& + 2\PP(Z \in S_\eta^c)\\
\end{align*}

We conclude that 
\begin{align*}
\inf_{p \in \cP_0'} \|q_{X,Y,Z} - p\|_1  \geq \frac{1}{6}\frac{\varepsilon - 2\eta}{1 - \eta},
\end{align*}
as claimed.

\end{proof}

Next suppose we split the sample into two samples of equal size. Assume for simplicity that initially we had $2n$ samples. Take the second data, and construct the shortest interval $\hat S_\eta$ containing $n(1-\eta) + C\sqrt{n\log n}$ of the samples $\{Z_1, \ldots, Z_n\}$, where $C$ is an absolute constant. We have the following result.

\begin{lemma} \label{VC:ineq:lemma} For a sufficiently large $C$ we have
\begin{align*}
\PP(Z \in \hat S_\eta) \geq 1 - \eta,
\end{align*}
with probability at least $1- \frac{1}{n}$. In addition $\mu(\hat S_\eta) \leq \mu(S_{\eta/2})$ with probability at least $1- \frac{1}{n}$, where $\mu$ is the Lebesgue measure, $\eta \geq 4 C \sqrt{\frac{\log n}{n}}$, and $S_{\eta/2}$ is the shortest interval such that $\PP(Z \in S_{\eta/2}) \geq 1 - \eta/2$. 
\end{lemma}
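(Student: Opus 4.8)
The plan is to deduce both statements from a single uniform Vapnik--Chervonenkis deviation bound for the class $\cI$ of all intervals $I\subseteq\RR$, which has VC dimension $2$. Writing $\PP_n$ for the empirical distribution of the $n$ auxiliary samples used to construct $\hat S_\eta$, the key estimate I would establish is that there is an absolute constant $C_0$ such that, with probability at least $1-\tfrac1n$,
\begin{align}\label{vc:deviation}
\sup_{I\in\cI}\big|\PP_n(I)-\PP(Z\in I)\big|\ \leq\ C_0\sqrt{\frac{\log n}{n}}.
\end{align}
This follows from the VC inequality $\PP\big(\sup_{I\in\cI}|\PP_n(I)-\PP(Z\in I)|>t\big)\leq 8(n+1)^2e^{-nt^2/32}$ evaluated at $t=C_0\sqrt{\log n/n}$ for a suitable absolute constant $C_0$ (alternatively, one can bound $\EE\sup_{I}|\PP_n(I)-\PP(Z\in I)|\lesssim n^{-1/2}$ via VC dimension $2$ and add a bounded-differences tail). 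Since $Z$ is absolutely continuous, its CDF is continuous, which guarantees that the shortest population interval $S_{\eta/2}$ is attained, that $\hat S_\eta$ is attained (only finitely many combinatorially distinct intervals arise from $n$ points), and that the supremum in \eqref{vc:deviation} may be computed over rational endpoints, so it is a genuine measurable quantity; rounding in the definition of $\hat S_\eta$ only affects absolute constants, and $\eta\geq 4C\sqrt{\log n/n}$ ensures $n(1-\eta)+C\sqrt{n\log n}\leq n$, so the construction is well posed.

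Granting \eqref{vc:deviation}, the first claim is immediate: by construction $\PP_n(\hat S_\eta)\geq (1-\eta)+C\sqrt{\log n/n}$, hence on the event of \eqref{vc:deviation},
\begin{align*}
\PP(Z\in\hat S_\eta)\ \geq\ \PP_n(\hat S_\eta)-C_0\sqrt{\frac{\log n}{n}}\ \geq\ (1-\eta)+(C-C_0)\sqrt{\frac{\log n}{n}}\ \geq\ 1-\eta,
\end{align*}
provided the absolute constant $C$ is chosen with $C\geq C_0$.

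For the second claim, still on the event of \eqref{vc:deviation}, the population interval $S_{\eta/2}$ contains an empirical fraction
\begin{align*}
\PP_n(S_{\eta/2})\ \geq\ \PP(Z\in S_{\eta/2})-C_0\sqrt{\frac{\log n}{n}}\ \geq\ \Big(1-\frac{\eta}{2}\Big)-C_0\sqrt{\frac{\log n}{n}},
\end{align*}
so $S_{\eta/2}$ contains at least $n(1-\eta)+C\sqrt{n\log n}$ of the samples as soon as $\tfrac{\eta}{2}\geq (C+C_0)\sqrt{\log n/n}$, which holds because $\eta\geq 4C\sqrt{\log n/n}$ and $C\geq C_0$ give $4C\geq 2(C+C_0)$. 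Thus $S_{\eta/2}$ is feasible for the optimisation defining $\hat S_\eta$, and since $\hat S_\eta$ is the shortest feasible interval, $\mu(\hat S_\eta)\leq\mu(S_{\eta/2})$. Both conclusions hold simultaneously on the common event \eqref{vc:deviation}, which has probability at least $1-\tfrac1n$.

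The only real subtlety, and hence the main obstacle, is that $\hat S_\eta$ is a function of the very samples whose coverage we wish to control, so a pointwise Chernoff/Bernstein bound for a fixed interval does not suffice; the uniform control over all intervals in \eqref{vc:deviation} (i.e.\ the finite VC dimension of $\cI$) is what makes the argument go through. Everything else is routine constant-chasing: picking $C$ large enough relative to the universal constant $C_0$ coming from the VC inequality.
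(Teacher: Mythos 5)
Your proposal is correct and follows essentially the same route as the paper: both rest on the VC inequality for the class of intervals (VC dimension $2$), using the uniform deviation bound to transfer the empirical coverage of the data-dependent interval $\hat S_\eta$ to population coverage, and to show $S_{\eta/2}$ contains enough sample points to be feasible, whence $\mu(\hat S_\eta)\leq\mu(S_{\eta/2})$. The only cosmetic difference is that you work on a single uniform event covering all intervals, whereas the paper invokes the VC inequality twice (once for each claim); the constant bookkeeping is the same.
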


\begin{proof}[Proof of Lemma \ref{VC:ineq:lemma}] By the VC inequality \cite[cf. Theorem 12.5]{devroye2013probabilistic}, and the fact that closed intervals on the real line have VC dimension equal to 2, we have that
\begin{align*}
\PP\bigg(\bigg|\PP(Z \in \hat S_{\eta}) - \bigg((1-\eta) + C\sqrt{\frac{\log n}{n}}\bigg)\bigg| \geq t\bigg) \leq 8 (\frac{ne}{2})^2 \exp(-32n t^2)
\end{align*}

Now set $t = C\sqrt{\frac{\log n}{n}}$. For large enough $C$, the above probability is bounded by $\frac{1}{n}$ which is what we claimed. %For the second claim suppose the contrary. %By the fact that $\mu(S_{\eta/2}) \leq  \mu(\hat S_{\eta})$ and the choice of $\hat S_\eta$ it follows that 
%\begin{align*}
%\sum_{n} \mathbbm{1}(Z_i \in S_{\eta/2}) \leq n(1 - \eta) + C\sqrt{n{\log n}}
%\end{align*}

By the VC inequality 

\begin{align*}
\PP\bigg(\bigg|\PP(Z \in S_{\eta/2}) - n^{-1}\sum_{n} \mathbbm{1}(Z_i \in S_{\eta/2}) \bigg)\bigg| \geq t\bigg) \leq 8 (\frac{ne}{2})^2 \exp(-32n t^2)
\end{align*}

Thus with a choice of $t = C \sqrt{\frac{\log n}{n}}$ we obtain 
\begin{align*}
n^{-1}\sum_{n} \mathbbm{1}(Z_i \in S_{\eta/2})  \geq 1 - \eta/2 - C \sqrt{\frac{\log n}{n}} > 1 - \eta + C\sqrt{\frac{\log n}{n}},
\end{align*}
with probability $\frac{1}{n}$, under our assumption on $\eta$. Thus if $\mu(S_{\eta/2}) \leq \mu(\hat S_{\eta})$ that would be a contradiction. 

\end{proof}

Next we describe the testing procedure. Suppose we have $2n$ observations. Split the data equally, and estimate $\hat S_{\eta}$, where $1 \geq \eta \geq C\sqrt{\frac{\log n}{n}}$ on the second half as discussed above. 
%\begin{align*}
%\eta := \argmin_{1 \geq \eta \geq C\sqrt{\frac{\log n}{n}}} \max\bigg(\frac{\mu(S_{\eta/2})^{1/5}}{n^{2/5}}, \frac{\mu(S_{\eta/2})^{7/15}}{n^{8/15}}, \eta\bigg)
%\end{align*}

Draw $N \sim Poi(\frac{n}{2})$. If $N > n$ accept the null hypothesis. If $N \leq n$ take arbitrary $N$ out of the $n$ samples from the first half of the data and work with them. The next step is to discretize the variable $Z$ into $d$ bins of equal size. Denote those bins with $\{C_1, \ldots, C_d\}$, so that $\cup_{i \in [d]} C_i = \hat S_{\eta}$, and each $C_i$ is an interval of length $\frac{\mu(\hat S_{\eta})}{d}$. Next construct the datasets $\cD_{m} = \{(X_i, Y_i) : Z_i \in C_m, i \in [N]\}$. Let $\sigma_m = |\cD_m|$ be the sample size in each set $\cD_m$, so that $\sum_{m \in [d]} \sigma_m = N$. For bins $\cD_m$ with at least $\sigma_m \geq 4$ observations, let for brevity $U_m = U(\cD_m)$. Each $U_m$ can be thought of as a local test of independence within the bin $C_m$ --- if the value of $U_m$ is close to $0$ then intuitively independence holds within that bin, while if the value of $U_m$ is large, independence is potentially violated within that bin. In order to combine these different statistics we follow \citet{canonne2018testing} and consider the following test statistic
\begin{align*}
%\label{eqn:fixedstat:unbounded}
T = \sum_{m \in [d]} \mathbbm{1}(\sigma_m \geq 4) \sigma_m U_m. 
\end{align*}
We will prove that under the null hypothesis the value of $T$ is likely to be below a threshold $\tau$ (to be specified), while under the alternative hypothesis $T$ will likely exceed the value $\tau$. Define the test 
$$\psi_\tau(\cD_N) = \mathbbm{1}(T \geq \tau),$$ 
where $\cD_N = \{(X_1,Y_1,Z_1), \ldots, (X_N,Y_N,Z_N)\}$. Recall the definitions of the null Lipschitzness classes defined above
$\cP_{0,\TV}'(L), \cP_{0,\chi^2}'(L), \cP_{0,\TV^2}'(L)$ and the alternative Lipschitzness classes $\cQ_{0,\TV}'(L)$. We have
\begin{theorem}[Finite Discrete $X$, $Y$ Upper Bound] \label{main:theorem:finite:discrete:XY:unbounded} Set $d = \lceil \mu(\hat S_{\eta})^{4/5} n^{2/5} \rceil \wedge \lceil \mu(\hat S_{\eta})^{8/15} n^{8/15} \rceil$ and let $\tau = \zeta \sqrt{d}$ for a sufficiently large absolute constant $\zeta$ (depending on $L$). 
Finally, suppose that $\varepsilon \geq c \max\bigg(\frac{\mu(S_{\eta/2})^{1/5}}{n^{2/5}}, \frac{\mu(S_{\eta/2})^{7/15}}{n^{8/15}}, \eta\bigg)$, for a sufficiently large constant $c$ (depending on $\zeta$, $L$, $\ell_1,\ell_2$).
Then we have that 
\begin{align*}
\sup_{p \in \cP_{0,\TV^2}'(L) \cup \cP_{0,\TV}'(L) \cup \cP_{0,\chi^2}'(L)} \EE_p[\psi_\tau(\cD_N)] & \leq \frac{1}{10},\\ 
\sup_{p \in \{p \in \cQ_{0,\TV}'(L): \inf_{q \in \cP'_{0}} \|p - q\|_1 \geq \varepsilon\}} \EE_p[ 1- \psi_\tau(\cD_N)] & \leq \frac{1}{10} + \exp(-n/8).
\end{align*}
\end{theorem}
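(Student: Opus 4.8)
The plan is to reduce this statement, by sample splitting, to the interval-supported version of Theorem \ref{main:theorem:finite:discrete:XY} discussed immediately above. First I would condition on the second half of the sample, which is used only to build $\hat S_\eta$ and is therefore independent of the data feeding the statistic $T$. On the event $\mathcal{E}$ of Lemma \ref{VC:ineq:lemma} (which, by taking the absolute constant $C$ large enough in the VC bound, holds with probability at least $1-2n^{-10}$), the random interval $\hat S_\eta$ satisfies $\PP(Z\in\hat S_\eta)\ge 1-\eta$ and $\mu(\hat S_\eta)\le \mu(S_{\eta/2})$. A sample $(X_i,Y_i,Z_i)$ contributes to $T$ only when $Z_i\in\hat S_\eta$, since the bins $C_1,\dots,C_d$ cover exactly $\hat S_\eta$; hence by Poisson thinning the contributing points form an i.i.d.\ sample of Poisson size $n'\in[n(1-\eta),\,n]$ (after the $\mathrm{Poi}(n/2)\to\mathrm{Poi}(n)$ rescaling used in the proof of Theorem \ref{main:theorem:finite:discrete:XY:Poi}) drawn from the restricted law $\tilde q_{X,Y,Z}:=q_{X,Y|Z}\,\tilde q_Z$ with $\tilde q_Z=q_Z(\,\cdot\mid Z\in\hat S_\eta)$. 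Conditional on $\hat S_\eta$, the procedure therefore coincides with the test of Section \ref{fixed:l1:l2:section} run on $\tilde q_{X,Y,Z}$ with $Z$ supported on the interval $\hat S_\eta$, with $d$ bins and threshold $\tau=\zeta\sqrt d$, so the interval extension of Theorem \ref{main:theorem:finite:discrete:XY} applies.

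For the type I bound I would observe that replacing $q_Z$ by $\tilde q_Z$ leaves the conditional laws $p_{X\mid Z=z}$ and $p_{Y\mid Z=z}$ unchanged, so $\tilde q_{X,Y,Z}$ inherits membership in $\cP_{0,\TV^2}'(L)$, $\cP_{0,\TV}'(L)$ or $\cP_{0,\chi^2}'(L)$ and remains conditionally independent. Applying the null half of the interval extension with interval length $\le\mu(S_{\eta/2})$, the stated choice of $d$, and $\zeta$ enlarged so that the arbitrary constant there becomes $\tfrac1{20}$ (as in the remarks following Theorem \ref{main:theorem:finite:discrete:XY}), gives conditional type I error $\le\tfrac1{20}$; unconditioning over $\hat S_\eta$, adding $\PP(\mathcal{E}^c)\le 2n^{-10}$, and noting that the event $\{N>n\}$ only helps under the null, yields the claimed $\le\tfrac1{10}$.

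For the type II bound, $\tilde q_{X,Y,Z}$ still lies in $\cQ_{0,\TV}'(L)$ since Lipschitzness of $z\mapsto p_{X,Y\mid Z=z}$ is a property of the conditionals. By Lemma \ref{bounding:Z:lemma} applied with $S_\eta=\hat S_\eta$ we get $\inf_{p\in\cP_0'}\|\tilde q_{X,Y,Z}-p\|_1\ge\tfrac16\cdot\frac{\varepsilon-2\eta}{1-\eta}\gtrsim\varepsilon$, where the last step uses that $c$ is large enough to force $\varepsilon\ge4\eta$. On $\mathcal{E}$ we have $\mu(\hat S_\eta)\le\mu(S_{\eta/2})$, so the separation needed for the power bound of the interval extension, namely $\gtrsim\mu(\hat S_\eta)^{1/5}n^{-2/5}\vee\mu(\hat S_\eta)^{7/15}n^{-8/15}$, is implied by the hypothesis $\varepsilon\ge c\max\!\big(\mu(S_{\eta/2})^{1/5}n^{-2/5},\,\mu(S_{\eta/2})^{7/15}n^{-8/15},\,\eta\big)$. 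This gives conditional type II error $\le\tfrac1{20}$; unconditioning over $\hat S_\eta$, adding $\PP(\mathcal{E}^c)\le 2n^{-10}$ and the truncation cost $\PP(N>n)\le\exp(-n/8)$ exactly as in the passage from Theorem \ref{main:theorem:finite:discrete:XY:Poi} to Theorem \ref{main:theorem:finite:discrete:XY}, produces the bound $\tfrac1{10}+\exp(-n/8)$.

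The main obstacle is the interval extension of Theorem \ref{main:theorem:finite:discrete:XY} invoked above: while routine, it requires re-running the expectation and variance computations of Appendix \ref{app:upper} with $\diam(C_m)=\mu(\hat S_\eta)/d$ in place of $1/d$, which rescales the null expectation bound to $O\!\big(nL^2\mu(\hat S_\eta)^2/d^2\big)$ and the alternative expectation error terms to $O\!\big(L\mu(\hat S_\eta)/d\big)$, and then re-optimizing over $d$ — this is precisely where the two regimes $d\asymp\mu^{4/5}n^{2/5}$ and $d\asymp\mu^{8/15}n^{8/15}$, and hence the $\vee$ in the rate, arise. A secondary point requiring care is that $\hat S_\eta$ is data-dependent, so one must use the independence of the two sample halves to condition cleanly on it and check that the thinned Poisson rate $n'$ stays $\asymp n$; this is where $\eta$ bounded away from $1$ is used, and in the remaining regime (when $\eta$ is a constant close to $1$) the hypothesis $\varepsilon\gtrsim\eta$ already forces $\varepsilon$ to be a constant, leaving no distributions in the alternative, so the statement is vacuous there.
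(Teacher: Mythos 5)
Your reduction is exactly the argument the paper intends: its own proof of Theorem \ref{main:theorem:finite:discrete:XY:unbounded} simply points to the preceding discussion, i.e.\ conditioning on the independently estimated interval $\hat S_\eta$ via Lemma \ref{VC:ineq:lemma}, using Lemma \ref{bounding:Z:lemma} to keep the restricted distribution $\gtrsim\varepsilon$-separated, and invoking the interval-length rescaling of Theorem \ref{main:theorem:finite:discrete:XY} with bin diameter $\mu(\hat S_\eta)/d$, which is precisely the chain you lay out (including the Poissonization bookkeeping and the vacuity of the $\eta$ close to $1$ regime). So the proposal is correct and takes essentially the same approach, with somewhat more detail than the paper's omitted proof.
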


\begin{proof} The proof of this result follows directly from the previous discussion and the proof of Theorem \ref{main:theorem:finite:discrete:XY}. We omit the details. 
\end{proof}

\begin{remark}
In addition we point out that for sub-Gaussian distributions, the above strategy can lead to rates which coincide with the rates described in the bounded support case up to logarithmic factors. To see this it suffices to select $S_{\eta} = [Z_{(1)}, Z_{[n]}]$ where $Z_{(i)}$ denote the order statistics on a second dataset; by sub-Gaussianity one has with high probability that $|Z_{(n)}|, |Z_{(1)}| \lesssim \sqrt{\log n}$. Furthermore $\EE \PP(Z \geq Z_{(n)}) = \EE F_Z(Z)^n = \int_{0}^1 u^n du = \frac{1}{n + 1}$. Thus by Markov's inequality $\PP(\PP(Z \geq Z_{(n)}) \geq a_n) \leq \frac{1}{(n + 1)a_n}$. Thus if $a_nn \rightarrow \infty$ with high probability $\PP(Z \geq Z_{(n)}) \leq a_n$. By symmetry the same argument is valid for $\PP(Z \leq Z_{(1)}) \leq a_n$. Thus it suffices that the critical radius scales as
\begin{align*}
\varepsilon \gtrsim \frac{(\sqrt{\log n})^{1/5}}{n^{2/5}}. 
\end{align*}
\end{remark}

%Combining the above results with the rates that we obtained in the beginning of this section we conclude that it suffices for the critical radius to satisfy the following inequality 
%\begin{align*}
%\varepsilon \geq c \min_{1 \geq \eta \geq C\sqrt{\frac{\log n}{n}}}\max\bigg(\frac{\mu(S_{\eta/2})^{1/5}}{n^{2/5}}, \frac{\mu(S_{\eta/2})^{7/15}}{n^{8/15}}, \eta\bigg)
%\end{align*}
%for a sufficiently large constant $c$, and testing conditional independence will be possible. 

%
%The next lemma is a simple consequence of the Dvoretzky-Kiefer-Wolfowitz (DKW) inequality.
%
%\begin{lemma} Using a sample of size $n$, let $F_{n,Z}$ be the empirical estimate of the cdf of $Z$. Take estimate $f(\eta/2) = \sup\{x : F_{n,Z} (x) \leq \eta/2\}$. Then $\PP(F_Z(f(\eta/2)) \leq \eta/2 + \epsilon) \geq 1 - 2 \exp(-2n\epsilon^2)$ .... In addition if $F_Z$ has an inverse, 
%\end{lemma}
%
%\begin{proof} By the DKW inequality we have
%\begin{align*}
%\PP(\sup_x |F_{n,Z} (x) - F_Z(x)| \geq \varepsilon) \leq e^{-2n\varepsilon^2}
%\end{align*}
%\end{proof}
%

\section{Proofs from Section \ref{examples:section}}

\begin{proof}[Proof of Lemma \ref{continuous:lemma:lipschitz:TV:smooth}] Note that condition \eqref{xy:Lipschitzness:condition} ensures the second part of the definition of the set $\cQ_{0,[0,1]^3,\TV}$ hence we only need to prove that $\|p_{X,Y|Z = z} - p_{X,Y|Z = z'}\|_1$ is bounded by $(e^L-1)|z - z'|$.
\begin{align*}
\MoveEqLeft \int \int |p_{X,Y|Z}(x,y|z) - p_{X,Y|Z}(x,y|z')|  dx dy \\
& = \int \int  \bigg(\frac{\max(p_{X,Y|Z}(x,y|z), p_{X,Y|Z}(x,y|z'))}{\min(p_{X,Y|Z}(x,y|z), p_{X,Y|Z}(x,y|z'))} - 1\bigg) \min(p_{X,Y|Z}(x,y|z), p_{X,Y|Z}(x,y|z'))dx dy\\
& \leq  \int \int  \bigg(\frac{\max(p_{X,Y|Z}(x,y|z), p_{X,Y|Z}(x,y|z'))}{\min(p_{X,Y|Z}(x,y|z), p_{X,Y|Z}(x,y|z'))} - 1\bigg) p_{X,Y|Z}(x,y|z) dx dy.
\end{align*}
By the Lipschitz property of $\log p_{X,Y|Z}(x,y|z)$ we have
\begin{align*}
\frac{\max(p_{X,Y|Z}(x,y|z), p_{X,Y|Z}(x,y|z'))}{\min(p_{X,Y|Z}(x,y|z), p_{X,Y|Z}(x,y|z'))} - 1 \leq \exp(L|z - z'|) - 1.
\end{align*}
From here the proof can continue as in the proof of Lemma \ref{log:Lipschitz:functions:lemma}. 

The fact that $p_{X,Y|Z}(x,y|z)$ is H\"{o}lder in $x,y$ for every $z$ with $s = 1$ is clear since $C |x - x'| + |y - y'| \leq \sqrt{2}C ((x - x')^2 + (y - y')^2)^{1/2}$. 

This completes the proof.
\end{proof}

\section{Proofs from Section \ref{numerical:experiments:section}}
\label{app:exp}

In this section we prove that the the generation mechanism of Section \ref{numerical:experiments:section} is indeed ``TV smooth''. We first start by the example $X = \frac{U_1 + Z}{2}$ and $Y = \frac{U_2 + Z}{2}$. We want to show that $\|p_{X| Z = z}- p_{X|Z = z'}\|_1 \leq L|z - z'|$ for an appropriate $L$ (and similarly for $Y$). In this example it is simple to verify that 
\begin{align*}
\|p_{X| Z = z} - p_{X|Z = z'}\|_1 = \int_0^{1} 2|\mathbbm{1}_{[\frac{z}{2}, \frac{z + 1}{2}]}(x) - \mathbbm{1}_{[\frac{z'}{2}, \frac{z' + 1}{2}]}(x)| dx = 2 |z - z'|.
\end{align*}
This implies that the so generated data belongs to the set $\cP_{0,[0,1]^3, \TV}(2)$. 

Next we consider $X = \frac{U_1 + U + Z}{3}$ and $Y = \frac{U_2 + U + Z}{3}$. We will argue that $\|p_{X,Y|Z = z} - p_{X,Y|Z = z'}\|_1 \leq L|z - z'|$ for an appropriate $L$. Let $\Sigma$ be the Borel $\sigma$-field on $[0,1]^2$. We note the following bounds
\begin{align*}
\MoveEqLeft d_{\TV}(p_{X,Y|Z = z}, p_{X,Y|Z = z'})\\
& = \sup_{A \in \Sigma} \bigg|\PP\bigg(\bigg(\frac{U_1 + U + z}{3}, \frac{U_2 + U + z}{3}\bigg) \in A\bigg) - \PP\bigg(\bigg(\frac{U_1 + U + z'}{3}, \frac{U_2 + U + z'}{3}\bigg) \in A\bigg)\bigg|\\
& =  \sup_{A \in \Sigma} \bigg|\int_{0}^{1}\PP\bigg(\bigg(\frac{U_1 + U + z}{3}, \frac{U_2 + U + z}{3}\bigg) \in A \bigg | U = u \bigg) du \\
& ~~~~~~~ - \int_{0}^{1}\PP\bigg(\bigg(\frac{U_1 + U + z'}{3}, \frac{U_2 + U + z'}{3}\bigg) \in A \bigg | U = u\bigg) du\bigg|\\
&\leq \int_{0}^{1} \sup_{A \in \Sigma}  \bigg|\PP\bigg(\bigg(\frac{U_1 + u + z}{3}, \frac{U_2 + u + z}{3}\bigg) \in A   \bigg)  - \PP\bigg(\bigg(\frac{U_1 + u + z'}{3}, \frac{U_2 + u + z'}{3}\bigg) \in A \bigg) \bigg|du\\
\end{align*}

The expression in the above integral is nothing but the total variation between the law of $(\frac{U_1 + u + z}{3}, \frac{U_2 + u + z}{3})$ and $(\frac{U_1 + u + z'}{3}, \frac{U_2 + u + z'}{3})$ which is $\frac{1}{2}$ of the $L_1$ distance. Since $U_1 $ and $U_2$ are independent it is simple to see that for a fixed $u$ the above is equivalent to

\begin{align}\label{TV:calculation:equation}
\MoveEqLeft \sup_{A \in \Sigma}  \bigg|\PP\bigg(\bigg(\frac{U_1 + u + z}{3}, \frac{U_2 + u + z}{3}\bigg) \in A   \bigg)  - \PP\bigg(\bigg(\frac{U_1 + u + z'}{3}, \frac{U_2 + u + z'}{3}\bigg) \in A \bigg) \bigg| \nonumber \\
& = \frac{9}{2} \int_0^1 \int_0^1\bigg| \mathbbm{1}_{[\frac{u + z}{3},\frac{1 + u + z}{3}]}(u_1)\mathbbm{1}_{[\frac{u + z}{3},\frac{1 + u + z}{3}]}(u_2) -  \mathbbm{1}_{[\frac{u + z'}{3},\frac{1 + u + z'}{3}]}(u_1)\mathbbm{1}_{[\frac{u + z'}{3},\frac{1 + u + z'}{3}]}(u_2) \bigg| du_1 du_2.
 \end{align}

% Siva temporarily commented this out
\begin{figure}[H]
\centering
\includegraphics[scale=.2]{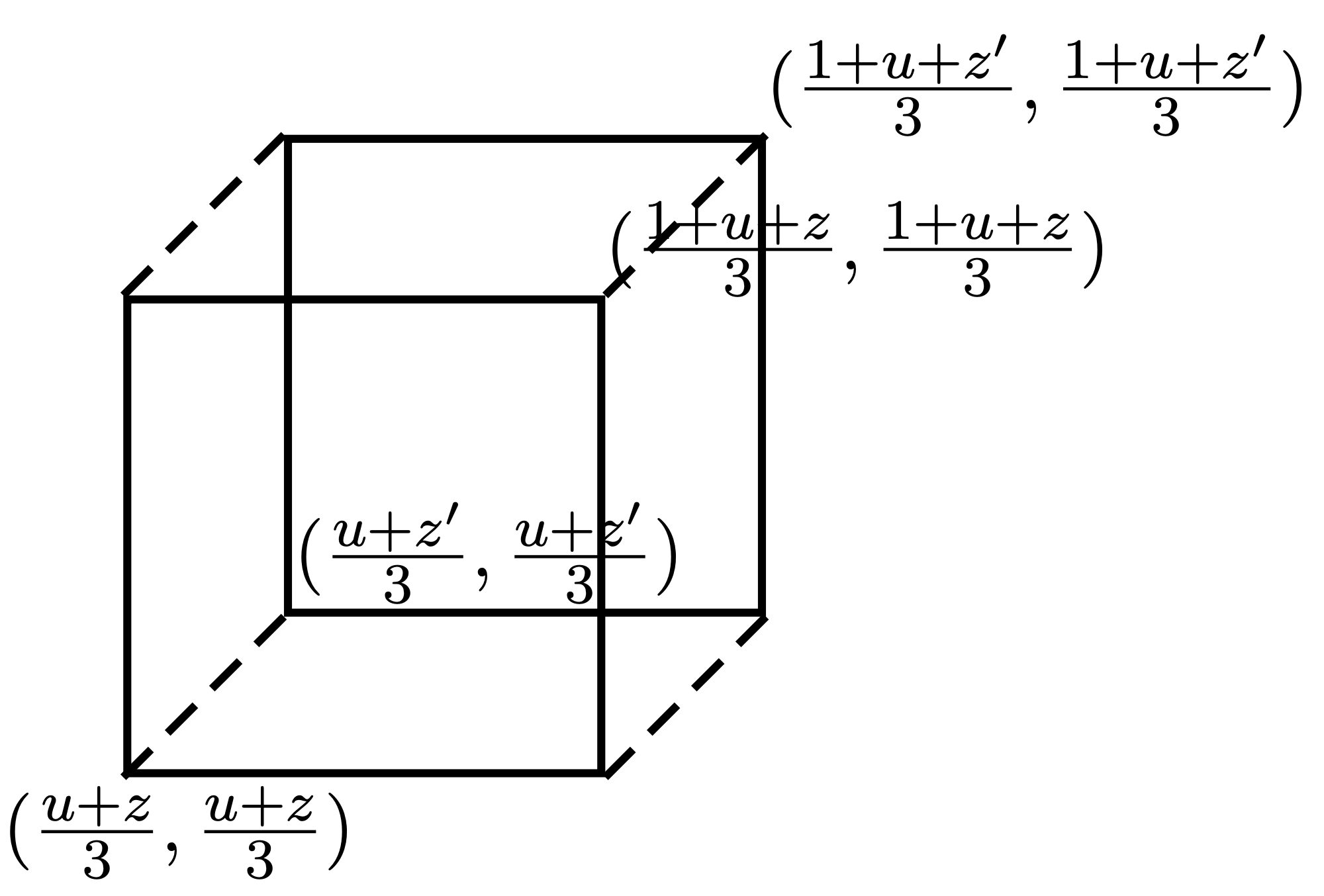}
%\psscalebox{1.0 1.0} % Change this value to rescale the drawing.
%{
%\begin{pspicture}(0,-2.03)(5.38,2.03)
%\psframe[linecolor=black, linewidth=0.04, dimen=outer](3.6,1.57)(1.2,-0.83)
%\psframe[linecolor=black, linewidth=0.04, dimen=outer](2.8,0.77)(0.4,-1.63)
%\psline[linecolor=black, linewidth=0.04, linestyle=dashed, dash=0.17638889cm 0.10583334cm](3.6,-0.83)(2.8,-1.63)
%\psline[linecolor=black, linewidth=0.04, linestyle=dashed, dash=0.17638889cm 0.10583334cm](1.2,1.57)(0.4,0.77)
%\psline[linecolor=black, linewidth=0.04, linestyle=dashed, dash=0.17638889cm 0.10583334cm](2.8,0.77)(3.6,1.57)
%\psline[linecolor=black, linewidth=0.04, linestyle=dashed, dash=0.17638889cm 0.10583334cm](0.4,-1.63)(1.2,-0.83)
%\rput[bl](-0.2,-2.13){$(\frac{u + z}{3}, \frac{u + z}{3})$}
%\rput[bl](1.25,-0.76){$(\frac{u + z'}{3}, \frac{u + z'}{3})$}
%\rput[bl](2.8,0.77){$(\frac{1 + u + z}{3}, \frac{1 + u + z}{3})$}
%\rput[bl](3.6,1.57){$(\frac{1 + u + z'}{3}, \frac{1 + u + z'}{3})$}
%\end{pspicture}
%}
\caption{The two rectangles given in equation \eqref{TV:calculation:equation} are shown above. The area of the symmetric difference is bounded by 4 times the area of a parallelogram. }\label{simple:figure:area:difference}
\end{figure}

Using Figure \ref{simple:figure:area:difference} we can bound the integral in \eqref{TV:calculation:equation} by $4$ times the area of a parallelogram, which is further bounded by $\sqrt{2}/9 |z - z'|$. We conclude that 
\begin{align*}
\frac{1}{2} \|p_{X,Y|Z = z} - p_{X,Y|Z = z'}\|_1 = d_{\TV}(p_{X,Y|Z = z}, p_{X,Y|Z = z'}) \leq 2 \sqrt{2} |z - z'|,
\end{align*}
which is what we wanted to show. 

The fact that the density $p_{X,Y|Z}(x,y|z)$ is Lipschitz in $x,y$ follows by a straightforward but tedious calculation which is omitted. The idea is to calculate the cdf by conditioning on $U$ and then take derivatives with respect to $x$ and $y$. 

\section{Extension to a Multivariate $Z$} \label{multivariate:Z:extension}

In this section, we give lower bounds for the multivariate $Z$ case. We also provide matching upper bounds for dimensions $d_Z = 1, 2$. Giving matching upper bounds for the case when $d_Z > 2$ is an open problem. 

\subsection{Discrete $X,Y$ Case with Fixed Many Categories}

Let $Z \in [0,1]^{d_Z}$. Define $\cP'_{0, [0,1]^{d_Z}}$ and $\cQ'_{0, [0,1]^{d_Z}}$ analogously to the sets $\cP'_{0, [0,1]}$ and $\cQ'_{0, [0,1]}$. Analogously to the Null and Alternative TV Lipschitzness define the following classes. 

\begin{definition}[Null Lipschitzness] \leavevmode 
\begin{enumerate}
\item {Null TV Lipschitzness: } Let $\cP_{0,[0,1]^{d_Z}, \TV}'(L) \subset \cP_{0, [0,1]^{d_Z}}'$ be the collection of distributions $p_{X,Y,Z}$ such that for all $z,z' \in [0,1]^{d_Z}$ we have:
\begin{align*}
\|p_{X | Z = z} - p_{X | Z = z'}\|_1 \leq L \|z - z'\|_2 \mbox{ and } \|p_{Y | Z = z} - p_{Y | Z = z'}\|_1 \leq L \|z - z'\|_2,
\end{align*}
where $p_{X| Z = z}$ and $p_{Y | Z = z}$ denote the conditional distributions of $X | Z = z$ and $Y | Z = z$ under $p_{X,Y,Z}$  respectively. 
\item {Null $\chi^2$ Lipschitzness: } Let $\cP_{0,[0,1]^{d_Z}, \chi^2}'(L) \subset \cP_{0, [0,1]^{d_Z}}'$ be the collection of distributions $p_{X,Y,Z}$ such that for all $z,z' \in [0,1]^{d_Z}$ we have:
\begin{align*}
d_{\chi^2}(p_{X | Z = z}, p_{X | Z = z'}) \leq L \|z - z'\|_2 \mbox{ and } d_{\chi^2}(p_{Y | Z = z}, p_{Y | Z = z'}) \leq L \|z - z'\|_2,
\end{align*}
where $p_{X| Z = z}$ and $p_{Y | Z = z}$ denote the conditional distributions of $X | Z = z$ and $Y | Z = z$ under $p_{X,Y,Z}$ respectively. The distance $d_{\chi^2}(p_{X | Z = z}, p_{X | Z = z'})$ is considered $\infty$ if $p_{X | Z = z} \ll p_{X | Z = z'}$ is violated. 
\item {Null H\"{o}lder Lipschitzness: } Let $\cP_{0,[0,1]^{d_Z}, \TV^2}'(L) \subset \cP_{0, [0,1]^{d_Z}}'$ be the collection of distributions $p_{X,Y,Z}$ such that for all $z,z' \in [0,1]^{d_Z}$ we have:
\begin{align*}
\|p_{X | Z = z} - p_{X | Z = z'}\|_1 \leq \sqrt{L \|z - z'\|_2} \mbox{ and } \|p_{Y | Z = z} - p_{Y | Z = z'}\|_1 \leq \sqrt{L \|z - z'\|_2},
\end{align*}
where $p_{X| Z = z}$ and $p_{Y | Z = z}$ denote the conditional distributions of $X | Z = z$ and $Y | Z = z$ under $p_{X,Y,Z}$  respectively. 
\end{enumerate}
\label{def:nullsmoothness:extension}
\end{definition}
Under the alternative we consider slightly different classes in the discrete and continuous cases. Formally, we define the following class for the discrete $X$ and $Y$ setting:
\begin{definition}[Alternative TV Lipschitzness]\label{alternative:TV:smoothness:def:extension} Let $\cQ_{0,[0,1]^{d_Z}, \TV}'(L) \subset \cQ_{0,[0,1]^{d_Z}}'$ be the collection of distributions 
$p_{X,Y,Z}$ such that for all $z, z' \in [0,1]^{d_Z}$ we have
\begin{align*}
\|p_{X,Y | Z= z} - p_{X,Y | Z = z'}\|_1 \leq L \|z - z'\|_2, 
\end{align*}
where $p_{X,Y| Z = z}$ denotes the conditional distribution of $X,Y | Z = z$ under $p_{X,Y,Z}$. 
\end{definition}

\begin{theorem}[Critical Radius Lower Bound]\label{first:lower:bound:app} Let $\overline \cH_0 = \cP_{0,[0,1]^{d_Z}}'$. Suppose that $\cH_0$ is either of $\cP_{0,[0,1]^{d_Z}, \TV}'(L) $, $\cP_{0,[0,1]^{d_Z}, \TV^2}'(L) $ or $\cP_{0,[0,1]^{d_Z}, \chi^2}'(L)$, while $\cH_1 = \cQ_{0,[0,1]^{d_Z}, \TV}'(L)$ for some fixed $L \in \RR^+$. Then for some absolute constant $c_0 > 0$ the critical radius defined in \eqref{critical:radius} is bounded as
\begin{align*}
\varepsilon_n(\cH_0,\overline \cH_0, \cH_1) \geq c_0 \bigg(\frac{(\ell_1 \ell_2)^{1/(d_Z + 4)}}{n^{2/(d_Z + 4)}} \wedge 1\bigg).
\end{align*}
\end{theorem}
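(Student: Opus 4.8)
The plan is to mimic the proof of Theorem~\ref{first:lower:bound} from Appendix~\ref{app:lb}, replacing the one-dimensional bump family $\{h_{j,d}\}_{j\in[d]}$ by a tensor-product family indexed by the cells of a regular grid on $[0,1]^{d_Z}$. First I would take the null distribution to be $p_{X,Y|Z}(x,y|z)=\frac{1}{\ell_1\ell_2}$ for all $(x,y,z)$ with $Z\sim U([0,1]^{d_Z})$; exactly as in the univariate case this lies in all three null classes $\cP_{0,[0,1]^{d_Z},\TV}'(L)$, $\cP_{0,[0,1]^{d_Z},\TV^2}'(L)$, $\cP_{0,[0,1]^{d_Z},\chi^2}'(L)$. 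Next I would partition $[0,1]^{d_Z}$ into $d^{d_Z}$ congruent cubes of side $1/d$ indexed by tuples $\mathbf{j}\in[d]^{d_Z}$, and to each cube associate the bump $h_{\mathbf{j}}(z)=d^{d_Z/2}\prod_{l=1}^{d_Z}h(d z_l-j_l+1)$ supported on that cube, where $h$ is the fixed smooth mean-zero profile with $\int h^2=1$ used in the univariate proof. These bumps have disjoint supports and satisfy $\int_{[0,1]^{d_Z}}h_{\mathbf{j}}h_{\mathbf{k}}=\mathbbm{1}(\mathbf{j}=\mathbf{k})$ and $\int|h_{\mathbf{j}}|=d^{-d_Z/2}c^{d_Z}$ with $c=\int_0^1|h|$. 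The alternative is $q_{X,Y|Z}(x,y|z)=\frac{1}{\ell_1\ell_2}+\tilde\delta_{xy}\,\eta_\nu(z)$ with $\eta_\nu(z)=\rho\sum_{\mathbf{j}\in[d]^{d_Z}}\nu_{\mathbf{j}}h_{\mathbf{j}}(z)$, $\nu_{\mathbf{j}}\in\{\pm1\}$, $\rho>0$, and $\tilde\Delta=(\tilde\delta_{xy})$ the $\pm1$ matrix with vanishing row and column sums built from a free sign matrix $\Delta\in\{\pm1\}^{\ell_1'\times\ell_2'}$ exactly as in Figure~\ref{tilde:Delta:figure} (with the same parity fix if $\ell_1$ or $\ell_2$ is odd), so the conditional marginals of $q$ stay equal to $\frac{1}{\ell_1}$ and $\frac{1}{\ell_2}$.

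The next block of steps is bookkeeping on the three constraints that pin down $\rho$ and $d$. Validity of $q$ as a probability distribution requires $\rho\,d^{d_Z/2}\|h\|_\infty^{d_Z}\le 1/(\ell_1\ell_2)$, which will hold for a small enough constant after the later choices. For membership in $\cQ_{0,[0,1]^{d_Z},\TV}'(L)$ I would use that the bumps have disjoint support, so each partial derivative of $\eta_\nu$ is bounded by $\rho\,d^{d_Z/2+1}\|h'\|_\infty\|h\|_\infty^{d_Z-1}$; combined with the mean value theorem and $\|z-z'\|_1\le\sqrt{d_Z}\|z-z'\|_2$ this gives $\|q_{X,Y|Z=z}-q_{X,Y|Z=z'}\|_1=\ell_1\ell_2|\eta_\nu(z)-\eta_\nu(z')|\lesssim \ell_1\ell_2\,\rho\,d^{d_Z/2+1}\|z-z'\|_2$, so it suffices to take $\rho\asymp(\ell_1\ell_2 d^{d_Z/2+1})^{-1}$ up to a constant depending on $L$ and $h$. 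Finally, for the separation from the conditional-independence manifold, disjoint supports give $\EE_Z\,\ell_1\ell_2|\eta_\nu(Z)|=\ell_1\ell_2\,\rho\,d^{d_Z}\cdot d^{-d_Z/2}c^{d_Z}=\ell_1\ell_2\,\rho\,d^{d_Z/2}c^{d_Z}$, and Lemma~\ref{TV:proj:vs:all} (whose proof is insensitive to the dimension of $Z$) then shows $\inf_{p\in\cP_{0,[0,1]^{d_Z}}'}\|q-p\|_1\gtrsim \ell_1\ell_2\,\rho\,d^{d_Z/2}$, which is a constant times $1/d$ for the above choice of $\rho$; in particular $q\in\cQ_{0,[0,1]^{d_Z},\TV}'(L)$.

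For the information-theoretic step I would put i.i.d.\ Rademacher priors on $(\nu_{\mathbf{j}})$ and on the free entries of $\Delta$, and bound the risk of the likelihood ratio test by $1-\tfrac12\sqrt{\EE_0 W^2-1}$ where $W=\EE_{\nu,\Delta}\prod_{i=1}^n(1+\tilde\delta_{X_i,Y_i}\bar\eta_\nu(Z_i))$, $\bar\eta_\nu=\ell_1\ell_2\eta_\nu$, exactly as in Theorem~\ref{first:lower:bound}. The computation is identical to the univariate one line by line with two cosmetic changes: orthonormality of the tensor bumps gives $\EE_Z\bar\eta_\nu(Z)\bar\eta_{\nu'}(Z)=(\ell_1\ell_2)^2\rho^2\langle\nu,\nu'\rangle$, and the number of independent bump signs is now $d^{d_Z}$ rather than $d$, so after taking $\cosh$ over $\nu$ and using $\cosh(x)\le e^{x^2/2}$ one reaches the requirement $(4n\ell_1\ell_2\rho^2)^2\,d^{d_Z}\,\ell_1'\ell_2'<1$, and the subsequent Gaussian comparison together with the $\chi^2(1)$ moment generating function bound $\EE_0 W^2$ by $\big(1-(4n\ell_1\ell_2\rho^2)^2 d^{d_Z}\ell_1'\ell_2'\big)^{-1/2}$. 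With $\rho\asymp(\ell_1\ell_2 d^{d_Z/2+1})^{-1}$ and $\ell_i'\asymp\ell_i$, the relevant exponent is a constant multiple of $n^2/(\ell_1\ell_2\,d^{d_Z+4})$, so choosing $d\asymp(n^2/(\ell_1\ell_2))^{1/(d_Z+4)}$ (and $d=1$ when this exceeds a constant, which forces the $\wedge 1$) makes $\EE_0 W^2$ arbitrarily close to $1$ while keeping the separation at $\varepsilon\asymp 1/d\asymp(\ell_1\ell_2)^{1/(d_Z+4)}/n^{2/(d_Z+4)}$. Poissonization via Lemma~\ref{minimax:risk:to:poi:risk} (or simply repeating the argument at fixed $n$) then yields the stated bound. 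I do not expect a conceptual obstacle; the only delicate point is the simultaneous accounting of the $d^{d_Z/2}$ normalization factors across the validity, Lipschitz, and second-moment constraints, and ensuring the Lipschitz estimate is carried out in the $\ell_2$ metric on $[0,1]^{d_Z}$ demanded by Definition~\ref{alternative:TV:smoothness:def:extension}.
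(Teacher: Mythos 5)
Your proposal is correct and follows essentially the same route as the paper's own proof: the same uniform null, the same $\tilde\Delta$ construction with vanishing row and column sums, the same tensor-product bumps $d^{d_Z/2}\prod_k h(dz_k-j_k+1)$ indexed by $[d]^{d_Z}$, the same validity/Lipschitz/separation bookkeeping via Lemma~\ref{TV:proj:vs:all}, and the identical second-moment bound on the likelihood ratio leading to the constraint $n^2/(\ell_1\ell_2 d^{d_Z+4})\lesssim 1$ with $\rho\asymp(\ell_1\ell_2 d^{d_Z/2+1})^{-1}$. The parameter choices and resulting rate $(\ell_1\ell_2)^{1/(d_Z+4)}n^{-2/(d_Z+4)}\wedge 1$ coincide with the paper's.
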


\begin{proof} The proof is identical with the one of Theorem \ref{first:lower:bound}. For completeness we provide full details below. 
To derive a lower bound we will first show how to obtain multiple distributions which are far from independent by perturbing the uniform discrete distribution. Suppose for simplicity that $\ell_1 = 2 \ell_1'$ and $\ell_2 = 2\ell_2'$ for some integers $\ell_1'$ and $\ell_2'$ (although this simplifies our calculation we will remark how to fix the calculation for the odd case as well).  
 
We first construct a single null hypothesis distribution. Suppose that $Z \sim U[0,1]$. Let the basic null distribution be given by the density $p_{X,Y | Z} (x,y|z) = \frac{1}{\ell_1 \ell_2}$ for each $x, y \in [\ell_1] \times [\ell_2]$ and $z \in [0,1]^{d_Z}$. Clearly, this distribution belongs to all three sets of null distributions $\cP_{0,[0,1]^{d_Z}, \TV}'(L) $, $\cP_{0,[0,1]^{d_Z}, \TV^2}'(L)$ and $\cP_{0,[0,1]^{d_Z}, \chi^2}'(L)$.  

Next we will perturb this null distribution $p$ to obtain alternative distributions. Let $\Delta = (\delta_{xy})_{x \in [\ell_1'], y \in [\ell_2']}$ be a matrix of $\pm 1$ numbers $\delta_{xy}$. We create the $\ell_1 \times \ell_2$ matrix $\tilde \Delta$ so that 
\begin{align*}
\tilde \delta_{xy}  & = \delta_{xy} \mbox { for } x,y \in [\ell_1']\times[\ell_2'], \\
\tilde \delta_{xy} & = -\delta_{(x -\ell_1')y} \mbox { for } x > \ell_1', y \in [\ell_2'], \\
\tilde \delta_{xy} & = -\delta_{x(y -\ell_2')} \mbox { for } x \in [\ell_1'], y  > \ell_2', \\
\tilde \delta_{xy} & = \delta_{(x-\ell_1')(y -\ell_2')} \mbox { for } x  > \ell_1', y  > \ell_2'.
\end{align*}

% Siva commented this out temporarily
\begin{figure}
\centering
\includegraphics[scale=.2]{figs/matrix.png}
%\psscalebox{.75 .75} % Change this value to rescale the drawing.
%{
%\begin{pspicture}(0,-2.8)(6.4,2.8)
%\psframe[linecolor=black, linewidth=0.04, dimen=outer](6.4,2.8)(0.8,-2.8)
%\psline[linecolor=black, linewidth=0.04](3.6,2.8)(3.6,-2.8)
%\psline[linecolor=black, linewidth=0.04](0.8,0.0)(6.4,0.0)
%\rput[bl](2.0,1.2){$\Delta$}
%\rput[bl](4.6,1.2){$-\Delta$}
%\rput[bl](4.8,-1.6){$\Delta$}
%\rput[bl](1.8,-1.6){$-\Delta$}
%\rput[bl](0.0,0.0){$\tilde \Delta =$}
%\end{pspicture}
%}
\caption{The construction of the matrix $\tilde\Delta$ from $\Delta$.}\label{tilde:Delta:figure:appendix}
\end{figure}

It is simple to check that all row sums and column sums of the matrix $\tilde \Delta$ are 0 (see also Figure \ref{tilde:Delta:figure:appendix}). We perturb the null distribution $p$ using the following procedure: for $x,y \in [\ell_1] \times [\ell_2]$ we take $q_{X,Y|Z}(x,y|z) = \frac{1}{\ell_1 \ell_2} + \tilde \delta_{xy} \eta_\nu(z)$, where 
$$
\eta_{\nu}(z) = \rho \sum_{j \in [d]} \nu_{j_1,\ldots j_{d_Z}}  \prod_{k = 1}^{d_Z} h_{j_k, d}(z_k),
$$
where $\rho > 0$ is a constant, $d \in \NN$, $\nu_{j_1,\ldots j_{d_Z}} \in \{-1,+1\}$, and $h_{j, d}(z) = \sqrt{d}h(d z - j + 1)$ for $z \in [(j-1)/d, j/d]$, and $h$ is an infinitely differentiable function supported on $[0,1]$ 
%(and it's zero outside of $[0,1]$) 
such that $\int h(z) dz = 0$ and $\int h^2(z) dz = 1$. Since the row sums and column sums of $\tilde \Delta$ are $0$, it is simple to verify that the marginals of the distribution remain unchanged under this perturbation, i.e. $q_{X | Z}(x|z) = \sum_{y} q_{X,Y| Z}(x,y|z) = p_{X|Z}(x|z) = \frac{1}{\ell_1}$ and similarly $q_{Y | Z}(y|z) =\sum_{x} q_{X,Y| Z}(x,y|z) = p_{Y|Z}(y|z) = \frac{1}{\ell_2}$. 
We note that in the case that one of $\ell_1$ or $\ell_2$ or both is odd, the fix is to add one row and/or column to the matrix $\Delta$ to be fixed, and reason as in the even case. 

When perturbing, in order to ensure that we create valid probability distributions, we need to satisfy the conditions that 
$$
\frac{1}{\ell_1 \ell_2} - \rho (\sqrt{d})^{d_Z} \|h\|^{d_Z}_{\infty} \geq 0,
$$
and 
$$
\frac{1}{\ell_1 \ell_2} + \rho (\sqrt{d})^{d_Z} \|h\|^{d_Z}_{\infty} \leq 1.
$$
We will ensure this by our choices of $\rho$ and $d$. Next, we need to verify that $q_{X,Y,Z} \in \cQ_{0,[0,1]^{d_Z}, \TV}'(L)$. We start by showing that $\|q_{X,Y |Z = z} - q_{X,Y|Z = z'}\|_1 \leq L \|z - z'\|_2$. We have%Let $k,l \in [d]$ be such that $z \in [(k-1)/d, k/d]$ and $z' \in [(l-1)/d, l/d]$. We have that 
\begin{align*}
\|q_{X,Y | Z= z} - q_{X,Y| Z= z'}\|_1 & = \ell_1\ell_2 |\eta_{\nu}(z) - \eta_\nu(z')|. %= %\ell_1\ell_2 \rho \sqrt{d} \bigg|\sum_{j \in [d]} \nu_j[h(d z - j + 1) - h(d z' - j + 1)]\bigg|\\
%& \leq \ell_1\ell_2 \rho \sqrt{d} \bigg(|h(d z - k + 1) - h(d z' - k + 1)| + |h(d z - l + 1) - h(d z' - l + 1)| \bigg)\\
%& \leq \ell_1\ell_2  \rho d^{3/2}  2\|h'\|_{\infty} |z - z'|\\
%& \leq L |z - z'|.
\end{align*}
Using telescoping arguments we can bound
\begin{align*}
\|q_{X,Y |Z= z} - q_{X,Y | Z=z'}\|_1  \lesssim \ell_1\ell_2 d_Z \rho d (\sqrt{d})^{d_Z} \|h'\|_{\infty}\|h\|^{d_Z-1}_{\infty} \leq L.
\end{align*}

We let $Z \sim U([0,1])$. Next we will show that the constructed distributions $q_{X,Y,Z}$ are $\varepsilon$ far from being independent, that is we will show that 
\begin{align*}
\inf_{p \in \cP_{0,[0,1]}'} \|q_{X,Y,Z} - p\|_1 \geq \varepsilon,
\end{align*}
for some $\varepsilon > 0$. %The above will show that the distributions $q_{X,Y,Z}$ are $\varepsilon$ far from any of the smoothness sets $\cP_{0,[0,1], \TV}'(L) $, $\cP_{0,[0,1], \TV^2}'(L)$ and $\cP_{0,[0,1], \chi^2}'(L)$ (since they are all subsets of $\cP_{0,[0,1]}'$). 
%To this end we need the following result which is essentially proved in \cite{canonne2018testing} for the discrete case, here we prove it for continuous $Z$:

%\begin{lemma}\label{TV:proj:vs:all} Suppose that a distribution $q$ satisfies
%\begin{align*}
%\varepsilon = \inf_{p \in \cP_{0,[0,1]}'}\|q_{X,Y,Z} - p\|_1.
%\end{align*} 
%Then 
%\begin{align*}
%\|q_{X,Y,Z} - q_{X |Z} q_{Y|Z}q_Z\|_1 \leq 6 \varepsilon.
%\end{align*}
%%it's at most $6\varepsilon$ from the distribution $q_{X |Z} q_{Y|Z}q_Z$.%, where $q_{x\cdot |Z=z} = \sum_y q_{x,y| Z = z}$ and similarly for $q_{\cdot y |Z = z}$. 
%\end{lemma}

%The proof of Lemma \ref{TV:proj:vs:all} can be found in Appendix~\ref{app:lb}. 
Next, suppose that $h$ satisfies $\int |h(z)|dz = c$ for some $0 < c < 1$. We then have that the $L_1$ distance between $q_{X,Y,Z}$ and $q_{X|Z}q_{Y |Z}q_Z$ satisfies
$$
\varepsilon := \EE_Z \ell_1\ell_2 |\eta_\nu(Z)| = \ell_1\ell_2 \rho \sum_{\bj \in [d]^{d_Z}} \int  \prod_{k = 1}^{d_Z} |h_{j_k, d}(z_k)| d z_k, = \ell_1 \ell_2 \rho (\sqrt{d})^{d_Z}c^{d_Z},
$$
where in the above we used that $h_{j,d}$ have disjoint support. By Lemma \ref{TV:proj:vs:all} this shows that $q_{X,Y,Z}$ is at least $\varepsilon/6$ from any conditionally independent distribution in $L_1$ distance.

Next, we put uniform priors over $\nu$ and $\Delta$, i.e., the random variables $(\nu_i)_{i \in [d]}$ and $(\delta_{xy})_{x,y \in [\ell_1]\times[\ell_2]}$ are taken as i.i.d. Rademachers. The likelihood ratio is 
\begin{align*}
W = \EE_{\nu, \Delta} \prod_{i = 1}^n (1 + \tilde \delta_{X_i, Y_i} \bar \eta_\nu(Z_i))
%W = 2^{-\ell_1'\ell_2'} 2^{-d}\sum_{\nu \in \{\pm 1\}^d, \Delta \in \{\pm 1\}^{\ell_1'\times \ell_2'}} \prod_{i = 1}^n (1 + \tilde \delta_{X_i, Y_i} \bar \eta_\nu(Z_i))
%(1 + \bar \eta_\nu(Z_i))^{\mathbbm{1}(\delta_{X_i - \ell_1', Y_i-\ell_2'} = 1)}(1 - \bar \eta_\nu(Z_i))^{\mathbbm{1}(\delta_{X_i - \ell_1', Y_i-\ell_2'} = -1)} \mathbbm{1}((X_i > \ell_1', Y_i > \ell_2') +\\
%(1 + \bar \eta_\nu(Z_i))^{\mathbbm{1}(\delta_{X_i - \ell_1', Y_i} = -1)}(1 - \bar \eta_\nu(Z_i))^{\mathbbm{1}(\delta_{X_i - \ell_1', Y_i} = 1)} \mathbbm{1}((X_i > \ell_1', Y_i \in [\ell_2']) +\\
%(1 + \bar \eta_\nu(Z_i))^{\mathbbm{1}(\delta_{X_i, Y_i-\ell_2'} = -1)}(1 - \bar \eta_\nu(Z_i))^{\mathbbm{1}(\delta_{X_i, Y_i-\ell_2'} = 1)} \mathbbm{1}((X_i \in [\ell_1'], Y_i > \ell_2') 
% \bigg\}
\end{align*}
where $\bar \eta_\nu(z) = \ell_1 \ell_2 \eta_\nu(z)$ and the expectation is taken over all Rademacher sequences $\nu$ and $\Delta$.  By a standard argument \cite{ery2018remember, balakrishnan2017hypothesis,ingster2003nonparametric}  the risk of the likelihood ratio (which is the optimal test by Neyman-Pearson's Lemma) is bounded from below by $1 - \frac{1}{2}\sqrt{\Var_0 W}$. Hence it suffices to study $\EE_0(W^2) - 1$ (here $\EE_0$ is the expectation under the null hypothesis). We have
\begin{align*}
\EE_0 W^2 & = \EE_{\nu,\nu',\Delta,\Delta'} \prod_{i = 1}^n \EE_0 (1 + \tilde \delta_{X_i, Y_i}\bar \eta_\nu(Z_i))(1 + \tilde \delta'_{X_i, Y_i}\bar \eta_{\nu'}(Z_i))\\
& =\EE_{\nu,\nu',\Delta,\Delta'}\prod_{i = 1}^n \EE_{Z_i} \sum_{(x,y) \in [\ell_1]\times [\ell_2]}  \frac{(1 + \tilde \delta_{xy}\bar \eta_\nu(Z_i))(1 + \tilde \delta'_{xy}\bar \eta_{\nu'}(Z_i))}{\ell_1\ell_2}\\
& =  \EE_{\nu,\nu',\Delta,\Delta'}\prod_{i = 1}^n  \bigg(  1 + \frac{\EE_{Z_i} \bar \eta_\nu(Z_i)\bar \eta_{\nu'}(Z_i)}{\ell_1 \ell_2}\sum_{(x,y) \in [\ell_1]\times [\ell_2]} \tilde \delta_{xy}\tilde \delta'_{xy} \bigg)\\
&= \EE_{\nu,\nu',\Delta,\Delta'}\prod_{i = 1}^n (1 + 4\ell_1 \ell_2 \rho^2\langle\nu, \nu'\rangle \langle\Delta, \Delta' \rangle)\\
& \leq\EE_{\nu,\nu',\Delta,\Delta'} \exp(4n \ell_1 \ell_2 \rho^2\langle\nu, \nu'\rangle \langle\Delta, \Delta' \rangle)%\bigg[ \frac{(1 + \bar \eta_\nu(Z_i))(1 + \bar \eta_{\nu'}(Z_i))}{2} +  \frac{(1 - \bar \eta_\nu(Z_i))(1 - \bar \eta_{\nu'}(Z_i))}{2}\bigg]\\
\end{align*}
In the above $\langle \Delta, \Delta'\rangle = \Tr(\Delta\T \Delta')$ is the standard matrix dot product, while $\langle \nu, \nu' \rangle$ is the standard vector dot product and $\EE_{\nu,\nu',\Delta,\Delta'}$ is the expectation with respect to independent Rademacher draws of $\nu, \nu', \Delta, \Delta'$. Thus,
\begin{align*}
\EE W^2 & \leq \EE_{\Delta, \Delta'} \EE_{\nu, \nu'} [\exp(4n \ell_1 \ell_2 \rho^2\langle\nu, \nu'\rangle \langle\Delta, \Delta' \rangle)] = \EE_{\Delta, \Delta'} \cosh(4n \ell_1 \ell_2 \rho^2 \langle\Delta, \Delta' \rangle)^d \\
& \leq \EE_{\Delta, \Delta'} \exp((4n \ell_1 \ell_2 \rho^2 \langle\Delta, \Delta' \rangle)^2d^{d_Z}/2), %(1 + (16n\rho^2)^2)^d \leq \exp(d (16n\rho^2)^2),
\end{align*}
where we used the inequality $\cosh(x) \leq \exp(x^2/2)$, which can be verified by a Taylor expansion. Next, since when we condition on one value of $\Delta'$ all values of $\langle\Delta, \Delta'\rangle$ happen with the same probability as if we conditioned on any other value of $\Delta'$ we have the identity
\begin{align}
 \EE_{\Delta, \Delta'} \exp((4n \ell_1 \ell_2 \rho^2 \langle\Delta, \Delta' \rangle)^2d^{d_Z}/2) =  \EE_{\Delta} \exp((4n \ell_1 \ell_2 \rho^2 \sum_{xy \in [\ell_1']\times[\ell_2']}\delta_{xy})^2d^{d_Z}/2)\label{exponential:identity:lower:bound:appendix}
\end{align}
Note that if one has i.i.d. Rademacher random variables $\delta_{xy}$ and i.i.d. standard normal variables $W_{xy}$ for any nonnegative integers $a_{xy}$ for  $ x,y \in [\ell_1']\times [\ell_2']$ one has
$$
\EE \prod_{xy \in [\ell_1']\times [\ell_2']} \delta^{a_{xy}}_{xy}\leq \EE \prod_{xy \in [\ell_1']\times [\ell_2']} W^{a_{xy}}_{xy}
$$
Expanding the exponential function in \eqref{exponential:identity:lower:bound:appendix} one can control all moments of $\delta_{xy}$ using the inequality above with corresponding moments of $W_{xy}$. Thus we conclude
\begin{align}
\EE_{\Delta} \exp((4n \ell_1 \ell_2 \rho^2 \sum_{xy \in [\ell_1']\times[\ell_2']}\delta_{xy})^2d^{d_Z}/2)\leq \EE_{\bW} \exp((4n \ell_1 \ell_2 \rho^2 \sum_{xy \in [\ell_1']\times[\ell_2']}W_{xy})^2d^{d_Z}/2)
\end{align}
The random variable $\sum_{xy \in [\ell_1']\times[\ell_2']}W_{xy} \sim N(0, \ell_1'\ell_2')$ and therefore $(\sum_{xy \in [\ell_1']\times[\ell_2']}W_{xy})^2/\ell_1'\ell_2' := \chi^2$ has a $\chi^2(1)$ distribution. We have 
\begin{align}
 \EE_{\bW} \exp((4n \ell_1 \ell_2 \rho^2 \sum_{xy \in [\ell_1']\times[\ell_2']}W_{xy})^2d^{d_Z}/2) \leq  \EE_{\chi^2} \exp((4n \ell_1 \ell_2 \rho^2)^2\ell_1'\ell_2'\chi^2d^{d_Z}/2).
\end{align}
Suppose now that $(4n \ell_1 \ell_2 \rho^2)^2d^{d_Z}\ell_1'\ell_2' < 1$. The above is the mgf of a chi-squared random variable and hence equals to
\begin{align*}
\sqrt{\frac{1}{1 - (4n \ell_1 \ell_2 \rho^2)^2d^{d_Z}\ell_1'\ell_2'}}.
\end{align*}
This quantity can be made arbitrarily close to $1$ provided that $(4n \ell_1 \ell_2 \rho^2)^2d^{d_Z}\ell_1'\ell_2' $ is small. Based on this select $\frac{1}{d} \asymp \frac{(\ell_1\ell_2)^{1/(d_Z + 4)}}{n^{2/(4 + d_Z)}} \wedge 1$, $\rho \asymp \frac{1}{\ell_1 \ell_2 d^{d_Z/2 + 1}}$ for some sufficiently small constants. This ensures $\frac{1}{\ell_1 \ell_2} - \rho \sqrt{d} \|h\|_{\infty} \geq 0$ and $\ell_1 \ell_2 \rho d^{d_Z/2}d  \leq C$. With these choices we obtain that the critical radius is bounded from below by a constant times $\frac{1}{d} \asymp  \frac{(\ell_1\ell_2)^{1/(d_Z + 4)}}{n^{2/(4 + d_Z)}} \wedge 1$. 
\end{proof}

Next we show a matching upper bound, only in the case when $d_Z \leq 2$, and in the case when $\ell_1$ and $\ell_2$ are not allowed to scale with $n$. 

\begin{theorem}[Finite Discrete $X$, $Y$ Upper Bound] \label{main:theorem:finite:discrete:XY:app} Set $d = \lceil n^{2/(4 + d_Z)} \rceil$ and let $\tau = \zeta d^{d_Z/2}$ for a sufficiently large absolute constant $\zeta$ (depending on $L$), where $d_Z \leq 2$ (this latter  condition is required in order to ensure that sufficiently many bins will have at least 4 samples).  Finally, suppose that $\varepsilon \geq c n^{-2/(4 + d_Z)}$, for a sufficiently large constant $c$ (depending on $\zeta$, $L$, $\ell_1,\ell_2$).
Then we have that 
\begin{align*}
\sup_{p \in \cP_{0,[0,1]^{d_Z},\TV^2}'(L) \cup \cP_{0,[0,1]^{d_Z},\TV}'(L) \cup \cP_{0,[0,1]^{d_Z},\chi^2}'(L)} \EE_p[\psi_\tau(\cD_N)] & \leq \frac{1}{10},\\ 
\sup_{p \in \{p \in \cQ_{0,[0,1]^{d_Z},\TV}'(L): \inf_{q \in \cP'_{0,[0,1]^{d_Z}}} \|p - q\|_1 \geq \varepsilon\}} \EE_p[ 1- \psi_\tau(\cD_N)] & \leq \frac{1}{10} + \exp(-n/8).
\end{align*}
\end{theorem}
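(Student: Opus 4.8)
The plan is to mirror the proof of Theorem~\ref{main:theorem:finite:discrete:XY} and its Poissonized companion Theorem~\ref{main:theorem:finite:discrete:XY:Poi}, with the interval $[0,1]$ replaced by the cube $[0,1]^{d_Z}$, partitioned into $D := d^{d_Z}$ sub-cubes $\{C_1,\dots,C_D\}$, each of side length $1/d$ and hence of Euclidean diameter $\diam(C_m)=\sqrt{d_Z}/d \lesssim 1/d$. First I would Poissonize: draw $N\sim\operatorname{Poi}(n/2)$, accept if $N>n$, and reduce the fixed-$n$ statement to a Poissonized one exactly as Theorem~\ref{main:theorem:finite:discrete:XY} is derived from Theorem~\ref{main:theorem:finite:discrete:XY:Poi} (this is where the additive $\exp(-n/8)$ term comes from, via the Poisson tail bound). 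The statistic is $T=\sum_{m\in[D]}\mathbbm{1}(\sigma_m\geq 4)\,\sigma_m U_m$ with the unweighted U-statistic of~\eqref{U:stat} computed within each $Z$-bin, and the rejection region is $\{T\geq\tau\}$ with $\tau=\zeta d^{d_Z/2}\asymp\zeta\sqrt{D}$.

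For the null, the only change is geometric. Writing $q_{xy}(m),q_{x\cdot}(m),q_{\cdot y}(m)$ as in~\eqref{qxy:def}--\eqref{qxcdot:qcdoty:def} but conditioning on $Z\in C_m$, the argument of Lemma~\ref{expectation:upper:bound:lemma} goes through verbatim with $|z-z'|$ replaced by $\|z-z'\|_2\leq\diam(C_m)\lesssim 1/d$, giving $\EE[U_m\mid\sigma_m]\lesssim L^2/d^2$ for any $p\in\cP_{0,[0,1]^{d_Z},\TV^2}'(L)$ and hence, by the inclusions of Section~\ref{examples:section}, for the $\TV$ and $\chi^2$ classes as well. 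Summing as in~\eqref{expectation:under:H0:times:n} yields $\EE[T]\lesssim L^2 n/d^2$, and the total-variance decomposition of the fixed $\ell_1,\ell_2$ proof carries over unchanged to give $\Var[T]\lesssim\EE[T]+D$ (the $D$ replacing the ``$d$'' there, counting bins). With $d\asymp n^{2/(4+d_Z)}$ one has $n/d^2\asymp n^{d_Z/(4+d_Z)}$ and $D=d^{d_Z}\asymp n^{2d_Z/(4+d_Z)}=\tau^2/\zeta^2$, so Chebyshev's inequality at level $\tau$ controls the Type I error below $1/10$ once $\zeta$ is large.

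For the alternative I would adapt Lemma~\ref{lemma:lower:bound:expectation}: the same continuity/argmax argument for $z\mapsto\|p_{X,Y|Z=z}-p_{X|Z=z}p_{Y|Z=z}\|_1$ on each compact cube $C_m$, together with the diameter bound, gives $\sum_{m\in[D]}\sqrt{\EE[U_m\mid\sigma_m]}\,p_m\gtrsim\eta/\sqrt{\ell_1\ell_2}$ with $\eta:=\EE_Z\|p_{X,Y|Z}-p_{X|Z}p_{Y|Z}\|_1-O(L/d)$, and Lemma~\ref{TV:proj:vs:all} (whose proof does not use $d_Z=1$) shows $\EE_Z\|p_{X,Y|Z}-p_{X|Z}p_{Y|Z}\|_1\geq\varepsilon$ for $p$ that is $\varepsilon$-separated from the CI manifold, so $\eta\geq\varepsilon/2$ once $\varepsilon\geq 4\,O(L/d)$. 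Splitting into the cases $\sum_{m:np_m>1}\sqrt{\EE[U_m\mid\sigma_m]}\,np_m\geq\tfrac12 n\eta/\sqrt{\ell_1\ell_2}$ and its complement, Cauchy--Schwarz (case i) gives $\EE[T]\gtrsim n\eta^2/(\ell_1\ell_2)$, while Jensen (case ii), using $\EE[U_m\mid\sigma_m]\leq 4$ and $\sum_m\EE[U_m\mid\sigma_m]^{1/3}\leq 4^{1/3}D$, gives $\EE[T]\gtrsim(n\eta)^4/(D^3\ell_1^2\ell_2^2)$. Plugging in $d\asymp n^{2/(4+d_Z)}$, the requirement $\EE[T]\gtrsim\tau$ becomes $\varepsilon\gtrsim\sqrt{\zeta\ell_1\ell_2}\,n^{-2/(4+d_Z)}$ in case i and $\varepsilon\gtrsim\zeta^{1/4}(\ell_1\ell_2)^{1/2}n^{(3d_Z/4-4)/(4+d_Z)}$ in case ii; since $d_Z\leq 2$ forces $3d_Z/4-4\leq -2$, the case ii exponent is at most $-2/(4+d_Z)$, so case i is binding and $\varepsilon\geq c\,n^{-2/(4+d_Z)}$ suffices in both. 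The same bound $\Var[T]\lesssim\EE[T]+D$ holds under the alternative, so Chebyshev at level $\tfrac12\EE[T]$ (using $\EE[T]\gtrsim\tau=\zeta\sqrt{D}$) controls the Type II error below $1/10$, and de-Poissonizing adds $\exp(-n/8)$. The main obstacle is this second (large-deviation) case of the power analysis: one must check quantitatively that the bins with $np_m\leq 1$ — equivalently, bins unlikely to contain four samples — do not force a separation larger than $n^{-2/(4+d_Z)}$, and it is precisely here that the hypothesis $d_Z\leq 2$ enters; everything else is a routine transcription of the one-dimensional argument with the two bookkeeping changes $\diam(C_m)\lesssim 1/d$ and $D=d^{d_Z}$, $\tau\asymp\sqrt{D}$.
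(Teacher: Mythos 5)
Your proposal is correct and follows essentially the same route as the paper's own (very terse) proof: transcribe the one-dimensional argument of Theorem~\ref{main:theorem:finite:discrete:XY} with $d^{d_Z}$ bins of diameter $\lesssim 1/d$, threshold $\zeta(\sqrt{d})^{d_Z}$, and the same two-case power analysis, observing that at $d\asymp n^{2/(4+d_Z)}$ the Cauchy--Schwarz (many-sample) case is binding precisely because $d_Z\leq 2$ makes the low-count-bin rate $n^{(3d_Z/4-4)/(4+d_Z)}$ of smaller order. Your identification of where $d_Z\leq 2$ enters matches the paper's remark exactly.
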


\begin{proof} The proof follows very closely that of Theorem \ref{main:theorem:finite:discrete:XY} so we omit most details. We only note that in the variance calculations the $d$ has to be substituted with $d^{d_Z}$ and the threshold needs to be substituted from $\sqrt{d}$ to $\sqrt{d}^{d_Z}$. This yields the following two rates 
\begin{align*}
\sqrt{\frac{\sqrt{d}^{d_Z}}{n}} \vee \frac{1}{d},
\end{align*}
and 
\begin{align*}
\frac{\sqrt[4]{\sqrt{d}^{d_Z}d^{3d_Z}}}{n} \vee \frac{1}{d},
\end{align*}
It turns out that when $d \asymp n^{2/(4 + d_Z)}$, and $d_Z \leq 2$ the first rate dominates, hence the proof is complete. 
\end{proof}

\subsection{Continuous Case}

Define $\cP_{0, [0,1]^{2 + d_Z}}$ and $\cQ_{0,[0,1]^{2 + d_Z}}$ analogously to $\cP_{0, [0,1]^{3}}$ and $\cQ_{0,[0,1]^{3}}$. 

\begin{definition}[Null Lipschitzness] \leavevmode 
Let $\cP_{0,[0,1]^{2 + d_Z}, \TV}(L) \subset \cP_{0, [0,1]^{2 + d_Z}}$ 
be the collection of distributions $p_{X,Y,Z}$ such that for all $z,z' \in [0,1]$ we have:
\begin{align*}
\|p_{X | Z = z} - p_{X | Z = z'}\|_1 \leq L \|z - z'\|_2 \mbox{ and } \|p_{Y | Z = z} - p_{Y | Z = z'}\|_1 \leq L \|z - z'\|_2,
\end{align*}
where $p_{X| Z = z}$ and $p_{Y | Z = z}$ denote the conditional distributions of $X | Z = z$ and $Y | Z = z$ under $p_{X,Y,Z}$  respectively. 
\end{definition}

\begin{definition}[Alternative Lipschitzness]\label{alternative:smoothness:holder:app} Let $\cQ_{0,[0,1]^{2 + d_Z}, \TV}(L,s) \subset \cQ_{0,[0,1]^{2 + d_Z}}$ be the collection of distributions $p_{X,Y,Z}$ 
such that for all $z, z' \in [0,1]^{d_Z}$ we have
\begin{align*}
\|p_{X,Y | Z= z} - p_{X,Y | Z = z'}\|_1 \leq L \|z - z'\|_2, 
\end{align*}
where $p_{X,Y| Z = z}$ denotes the conditional distribution of $X,Y | Z = z$ under $p_{X,Y,Z}$. In addition we assume that for all $z,x,y \in [0,1]$: $p_{X,Y|Z}(x,y|z) \in \cH^{2,s}(L)$. 
\end{definition}

 \begin{theorem}[Critical Radius Lower Bound]\label{lower:bound:continuous:case:app} Let $\overline \cH_0 = \cP_{0,[0,1]^{2 + d_Z}}$. Suppose that $\cH_0$ is $\cP_{0,[0,1]^{2 + d_Z}, \TV}(L)$, and $\cH_1 = \cQ_{0,[0,1]^{2 + d_Z}, \TV}(L,s)$ for some fixed $L, s\in \RR^+$. Then we have that for some absolute constant $c_0 > 0$,
 \begin{align*}
\varepsilon_n(\cH_0,\overline \cH_0, \cH_1) \geq \frac{c_0}{n^{2s/((4 + d_Z)s + 2)}}.
\end{align*}
 \end{theorem}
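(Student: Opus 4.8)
The plan is to transcribe the construction from the proof of Theorem~\ref{lower:bound:continuous:case} to the multivariate setting, the only structural change being that the perturbation along $Z$ becomes a tensor product of one-dimensional bumps. Take the null distribution to be uniform on $[0,1]^{2+d_Z}$ (which clearly lies in $\cP_{0,[0,1]^{2+d_Z},\TV}(L)$), and under the alternative set $q_{X,Y|Z}(x,y|z) = 1 + \gamma_\Delta(x,y)\,\eta_\nu(z)$, where $\gamma_\Delta(x,y) = \rho^2\sum_{i,j\in[d']}\delta_{ij}h_{i,d'}(x)h_{j,d'}(y)$ is exactly as in the proof of Theorem~\ref{lower:bound:continuous:case} with $\Delta = (\delta_{ij})$ a $d'\times d'$ array of i.i.d.\ Rademachers, and
\[
\eta_\nu(z) \;=\; \rho\sum_{\mathbf{j}\in[d]^{d_Z}}\nu_{\mathbf{j}}\prod_{k=1}^{d_Z}h_{j_k,d}(z_k),
\]
with $\nu = (\nu_{\mathbf{j}})$ an i.i.d.\ Rademacher array indexed by $[d]^{d_Z}$, and $h$, $h_{j,d}$, $\rho$, $d$, $d'$ as in Theorem~\ref{lower:bound:continuous:case}. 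Since $\int_0^1 h = 0$ we have $\int\gamma_\Delta(x,y)\,dx = \int\gamma_\Delta(x,y)\,dy = 0$, so $q_{X|Z}$ and $q_{Y|Z}$ remain uniform, $q_{X|Z}q_{Y|Z}\in\cH^{2,s}(L)$ trivially, $q_Z = p_Z \equiv 1$, and $\int_{[0,1]^2}q_{X,Y|Z}(x,y|z)\,dx\,dy = 1$.

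Next I would verify the separation and the constraints defining $\cQ_{0,[0,1]^{2+d_Z},\TV}(L,s)$. By disjointness of supports, $\|q_{X,Y,Z} - q_{X|Z}q_{Y|Z}q_Z\|_1 = \big(\int_{[0,1]^2}|\gamma_\Delta|\big)\big(\int_{[0,1]^{d_Z}}|\eta_\nu|\big) \asymp \rho^3\,d'\,d^{d_Z/2}\,c^{2+d_Z}$, where $c = \int_0^1|h|$; by Lemma~\ref{TV:proj:vs:all} (whose proof extends verbatim to continuous $X,Y$ and to $z\in[0,1]^{d_Z}$) this gives $\inf_{p\in\cP_{0,[0,1]^{2+d_Z}}}\|q-p\|_1 \gtrsim \rho^3\,d'\,d^{d_Z/2}$. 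For the TV-Lipschitzness in $z$, $\|q_{X,Y|Z=z} - q_{X,Y|Z=z'}\|_1 = \big(\int_{[0,1]^2}|\gamma_\Delta|\big)\,|\eta_\nu(z)-\eta_\nu(z')| \le \big(\int_{[0,1]^2}|\gamma_\Delta|\big)\,\|\nabla\eta_\nu\|_2\,\|z-z'\|_2$, and each partial $\partial_{z_k}\eta_\nu$ is bounded by $\rho\,d^{3/2}\|h'\|_\infty(\sqrt d\,\|h\|_\infty)^{d_Z-1}$, so the Lipschitz constant is $\asymp \rho^3\,d'\,d^{(d_Z+2)/2}$. For the H\"older requirement on $p_{X,Y|Z}(\cdot,\cdot|z)$, the $(x,y)$-derivatives hit only $\gamma_\Delta$, and since $|\eta_\nu|\le\rho\,d^{d_Z/2}\|h\|_\infty^{d_Z}$, repeating the computation in the proof of Theorem~\ref{lower:bound:continuous:case} shows $q_{X,Y|Z}(\cdot,\cdot|z)\in\cH^{2,s}(L)$ as soon as $\rho^3\,d^{d_Z/2}(d')^{1+s}\lesssim 1$; the positivity constraint $|\gamma_\Delta\eta_\nu|\le 1$ is implied by this.

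For the risk bound I would form the Bayes likelihood ratio $W = \EE_{\nu,\Delta}\prod_{i=1}^n\big(1 + \gamma_\Delta(X_i,Y_i)\eta_\nu(Z_i)\big)$ and, using independence of $(X,Y)$ from $Z$ under the null together with $\EE_0\eta_\nu(Z)\eta_{\nu'}(Z) = \rho^2\langle\nu,\nu'\rangle$ (a dot product of $d^{d_Z}$-dimensional vectors) and $\EE_0\gamma_\Delta(X,Y)\gamma_{\Delta'}(X,Y) = \rho^4\langle\Delta,\Delta'\rangle$, compute
\[
\EE_0 W^2 \;=\; \EE_{\nu,\nu',\Delta,\Delta'}\prod_{i=1}^n\big(1 + \rho^6\langle\nu,\nu'\rangle\langle\Delta,\Delta'\rangle\big) \;\le\; \EE_{\nu,\nu',\Delta,\Delta'}\exp\big(n\rho^6\langle\nu,\nu'\rangle\langle\Delta,\Delta'\rangle\big).
\]
Integrating out $\nu,\nu'$ via $\cosh(x)\le e^{x^2/2}$, then replacing $\langle\Delta,\Delta'\rangle$ by a Gaussian and recognizing a $\chi^2(1)$ moment generating function exactly as in Theorem~\ref{lower:bound:continuous:case}, one gets $\EE_0 W^2 \le (1 - 2t_0)^{-1/2}$ with $t_0 \asymp n^2\rho^{12}(d')^2 d^{d_Z}$, so $\EE_0 W^2 - 1$ is small once $n^2\rho^{12}(d')^2 d^{d_Z} \lesssim 1$. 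Finally choose $d' = \lceil d^{1/s}\rceil$ and $\rho^3 \asymp d^{-(d_Z + 2 + 2/s)/2}$: with this choice the Lipschitz and H\"older constraints hold and in fact coincide up to constants, the separation is $\rho^3\,d'\,d^{d_Z/2}\asymp 1/d$, and the second-moment constraint becomes $d \gtrsim n^{2/(4 + d_Z + 2/s)} = n^{2s/((4+d_Z)s + 2)}$; taking $d \asymp n^{2s/((4+d_Z)s+2)}$ and using the standard bound $1 - \frac{1}{2}\sqrt{\Var_0 W}$ on the Neyman--Pearson risk yields $\varepsilon_n(\cH_0,\overline\cH_0,\cH_1) \ge c_0\,n^{-2s/((4+d_Z)s+2)}$.

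The one nonroutine point is the bookkeeping in the H\"older verification: checking that the tensorized bump $\eta_\nu$ — whose sup-norm carries only the benign factor $d^{(d_Z-1)/2}$ from the extra smooth coordinates, and which does not interact with the $(x,y)$-derivatives of $\gamma_\Delta$ — still leaves $q_{X,Y|Z}(\cdot,\cdot|z)$ in $\cH^{2,s}(L)$ with the advertised $d,d'$-dependence, and observing that the constraint from TV-Lipschitzness in $z$ and the constraint from H\"older smoothness in $(x,y)$ become simultaneously feasible precisely when $d'\asymp d^{1/s}$. Everything else is a line-by-line copy of the $d_Z = 1$ argument with $\sqrt d$ replaced by $d^{d_Z/2}$.
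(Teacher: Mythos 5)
Your proposal is correct and follows essentially the same route as the paper's own proof: the identical tensor-product perturbation $\eta_\nu(z)=\rho\sum_{\mathbf{j}}\nu_{\mathbf{j}}\prod_k h_{j_k,d}(z_k)$ paired with the same $\gamma_\Delta$, the extension of Lemma~\ref{TV:proj:vs:all} for the separation, the same TV-Lipschitz and H\"{o}lder bookkeeping (conditions $\rho^3 d' d^{d_Z/2+1}\lesssim 1$ and $\rho^3 d^{d_Z/2}(d')^{1+s}\lesssim 1$), and the same second-moment bound reducing to $(n\rho^6)^2(d')^2 d^{d_Z}\lesssim 1$. Your parameter choices $d'=\lceil d^{1/s}\rceil$, $\rho^3\asymp d^{-(1+d_Z/2+1/s)}$, $d\asymp n^{2s/((4+d_Z)s+2)}$ coincide with the paper's and yield the stated rate.
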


\begin{proof}[Proof of Theorem \ref{lower:bound:continuous:case:app}]  Suppose $(X,Y,Z) \in [0,1]^{2 + d_Z}$ are variables with a joint density with respect to the Lebesgue measure in $[0,1]^{2 + d_Z}$. Under the null hypothesis we specify the distribution as $p_{X,Y,Z}(x,y,z) = 1$ for all $(x,y,z) \in [0,1]^{2 + d_Z}$, or in other words the variables have independent uniform distributions on $[0,1]$. Clearly this distribution belongs to the sets $\cP_{0,[0,1]^{2 + d_Z},\TV}(L)$ and $\cP_{0,[0,1]^{2 + d_Z},\chi^2}(L)$. Under the alternative hypothesis we specify the distribution as
\begin{align*}
q_{X,Y|Z}(x,y|z) = 1 + \gamma_{\Delta}(x,y)\eta_{\nu}(z),
\end{align*}
where as in the proof of Theorem \ref{first:lower:bound} 
$$
\eta_{\nu}(z) = \rho \sum_{j \in [d]} \nu_{j_1,\ldots j_{d_Z}}  \prod_{k = 1}^{d_Z} h_{j_k, d}(z_k),
$$
where $\rho > 0$ is a constant, $d \in \NN$, $\nu_{j_1,\ldots j_{d_Z}} \in \{-1,+1\}$ ,and $h_{j, d}(z) = \sqrt{d}h(d z - j + 1)$ for $z \in [(j-1)/d, j/d]$, and $h$ is an infinitely differentiable function supported on $[0,1]$ 
%(and it's zero outside of $[0,1]$) 
such that $\int h(z) dz = 0$ and $\int h^2(z) dz = 1$. Furthermore we take
$$
\gamma_{\Delta}(x,y) =  \rho^2 \sum_{j \in [d']}\sum_{i \in [d']} \delta_{ij} h_{i, d'}(x)h_{j, d'}(y),\footnotemark
$$
\footnotetext{Here $\Delta = \{\delta_{ij}\}_{i,j\in[d]}$.}and we let the marginal distribution of $Z$ be uniform on $[0,1]^{d_Z}$ (i.e. we let $q_Z = p_Z \equiv 1$). In order for this perturbation to be meaningful we need that $1 \geq \sqrt{(d')^2}\sqrt{d}^{d_Z} \|h\|_{\infty}^{2 + d_Z} \rho^3$. It is simple to check that $\int_{[0,1]^2} q_{X,Y|Z}(x,y|z) dx dy = 1$. Let us now check what are the marginals of such a distribution conditioned on $z$. We have
$$
q_{X|Z}(x|z) = \int_{[0,1]} q_{X,Y | Z}(x,y|z) dy = 1 +  \eta_{\nu}(z)\int_{[0,1]} \gamma_{\Delta}(x,y) dy = 1.
$$
Similarly $\int_{[0,1]} q_{X,Y|Z}(x,y|z) dx = 1$. It is therefore clear that $q_{X|Z}(x|z) q_{Y|Z}(y|z)  \in \cH^{2,s}(L)$ for any $L$. We now check how far away is the distribution $q_{X,Y,Z}(x,y,z)= q_{X,Y|Z}(x,y|z)q_Z(z) = q_{X,Y|Z}(x,y|z)p_Z(z) = q_{X,Y|Z}(x,y|z)$ with respect to $p_{X,Y,Z}(x,y,z)$ (note that $p_{X,Y,Z} = q_{X | Z} q_{Y|Z} q_Z$) in total variation. We have
\begin{align*}
\MoveEqLeft \|q_{X,Y|Z}q_Z - p_{X,Y,Z}\|_1 = \int_{[0,1]^{2 + d_Z}} |\gamma_{\Delta}(x,y)\eta_{\nu}(z)| dx dy dz \\
&= \int_{[0,1]}  \rho \sum_{j \in [d']} |h_{j,d'}(x)| dx \int_{[0,1]}\rho \sum_{j \in [d']} |h_{j,d'}(y)|  dy \int_{[0,1]^{d_Z}} \rho \sum_{j \in [d]}  \prod_{k = 1}^{d_Z} |h_{j_k, d}(z_k)| \prod_{k = 1}^{d_Z} dz_k.
\end{align*} 
Calculating each of the above integrals and multiplying them yields 
\begin{align*}
\|q_{X,Y|Z}p_Z - p_{X,Y,Z}\|_1 = \|q_{X,Y,Z} - q_{X | Z} q_{Y|Z} q_Z\|_1 = \sqrt{d}^{d_Z} \sqrt{(d')^2} \rho^3 c^{2 + d_Z},
\end{align*}
where $c = \int_{[0,1]} |h(x)| dx$. Using Lemma \ref{TV:proj:vs:all} (here we use this lemma with a slight abuse of notation since the lemma is only valid for discrete $X,Y$ and continuous $Z$, but the same proof extends to the continuous case) we have that 
$$\inf_{p \in \cP_{0,[0,1]^{2 + d_Z}}}\|q - p\|_1 \geq \frac{\sqrt{d}^{d_Z} \sqrt{(d')^2} \rho^3 c^{2 + d_Z}}{6}.$$ Next we check that the TV between the distributions $(X,Y | Z = z)$ and $(X,Y| Z = z')$ is Lipschitz.
\begin{align*}
\int_{[0,1]^2}|\gamma_{\Delta}(x,y)| |\eta_{\nu}(z) - \eta_{\nu}(z')| dx dy = \sqrt{(d')^2} \rho^2 c^2  |\eta_{\nu}(z) - \eta_{\nu}(z')|. 
\end{align*}
%We now observe that the derivative of $\eta_{\nu}(z)$ is bounded by $\sqrt{d}d\rho \|h'\|_{\infty}$, therefore the above is bounded by
%$$
%\sqrt{(d')^2} \rho^2 c^2 \sqrt{d}d\rho \|h'\|_{\infty} |z - z'|. 
%$$
It is simple to see that by telescoping and triangle inequality we have
\begin{align*}
\bigg|\rho \sum_{j \in [d]} \nu_{j_1,\ldots j_{d_Z}}  \prod_{k = 1}^{d_Z} h_{j_k, d}(z_k) - \rho \sum_{j \in [d]} \nu_{j_1,\ldots j_{d_Z}}  \prod_{k = 1}^{d_Z} h_{j_k, d}(z'_k)\bigg|\leq \rho \sqrt{d}^{d_Z} \|h\|^{d_Z-1}_{\infty} d \|h'\|_{\infty} \sqrt{d_Z} \|\zb - \zb'\|_2,
\end{align*}
and therefore we need $\sqrt{d}^{d_Z}d \sqrt{(d')^2}\rho^3$ to be smaller than some constant. Next we check that $q(x, y | z)$ belongs to the H\"{o}lder class in $x$ and $y$. We have that 
\begin{align*}
%\bigg|\frac{d}{dx} \gamma_{\Delta}(x,y) \eta_{\nu}(z)\bigg| \leq \sqrt{d} d\rho \|h'\|_{\infty}(\sqrt{d}\rho)^2 \|h\|^2_{\infty},
\MoveEqLeft \bigg|\frac{\partial^k}{\partial x^k} \frac{\partial^{\lfloor s \rfloor - k}}{\partial y^{\lfloor s \rfloor - k}}\gamma_{\Delta}(x,y)\eta_{\nu}(z) - \frac{\partial^k}{\partial x^k} \frac{\partial^{\lfloor s \rfloor - k}}{\partial y^{\lfloor s \rfloor - k}}\gamma_{\Delta}(x',y')\eta_{\nu}(z)\bigg| \\
& \leq \sqrt{d}\rho \|h\|_{\infty} \bigg|\frac{\partial^k}{\partial x^k} \frac{\partial^{\lfloor s \rfloor - k}}{\partial y^{\lfloor s \rfloor - k}}\gamma_{\Delta}(x,y) - \frac{\partial^k}{\partial x^k} \frac{\partial^{\lfloor s \rfloor - k}}{\partial y^{\lfloor s \rfloor - k}}\gamma_{\Delta}(x',y')\bigg|\\
& = \sqrt{d}\rho \|h\|_{\infty}\rho^2 (\sqrt{d'})^2 (d')^{\lfloor s \rfloor} \bigg| \sum_{i,j \in [d']} \delta_{ij}\bigg[ h^{(k)}(d'x - i + 1) h^{(\lfloor s \rfloor -k)}(d'y - j + 1) \\
& -h^{(k)}(d'x' - i + 1) h^{(\lfloor s \rfloor -k)}(d'y' - j + 1)  \bigg]\bigg|.
%& \leq 
\end{align*}
Suppose now that $x \in \bigg[\frac{i_x - 1}{d'}, \frac{i_x}{d'}\bigg], y \in \bigg[\frac{j_y - 1}{d'}, \frac{j_y}{d'}\bigg]$, and $x' \in \bigg[\frac{i_{x'} - 1}{d'}, \frac{i_{x'}}{d'}\bigg], y' \in \bigg[\frac{j_{y'} - 1}{d'}, \frac{j_{y'}}{d'}\bigg]$. Therefore the above summation can be bounded as
\begin{align*}
& \sqrt{d}\rho \|h\|_{\infty}\rho^2 (\sqrt{d'})^2 (d')^{\lfloor s \rfloor} \bigg[\bigg| h^{(k)}(d'x - i_x + 1) h^{(\lfloor s \rfloor -k)}(d'y - j_y + 1) -h^{(k)}(d'x' - i_x + 1) h^{(\lfloor s \rfloor -k)}(d'y' - j_y + 1)\bigg|\\
&+  \bigg| h^{(k)}(d'x - i_{x'} + 1) h^{(\lfloor s \rfloor -k)}(d'y - j_{y'} + 1) -h^{(k)}(d'x' - i_{x'} + 1) h^{(\lfloor s \rfloor -k)}(d'y' - j_{y'} + 1)\bigg|  \bigg].
\end{align*}
Next we will handle the first expression in the bracket above:
\begin{align*}
& \leq |h^{(k)}(d'x - i_x + 1) -h^{(k)}(d'x' - i_x + 1)| |h^{(\lfloor s \rfloor -k)}(d'y - j_y + 1)| \\
& +  |h^{(k)}(d'x' - i_x + 1)| |h^{(\lfloor s \rfloor -k)}(d'y - j_y + 1) - h^{(\lfloor s \rfloor -k)}(d'y' - j_y + 1)|\\
& \leq d' \|h^{(k + 1)}\|_\infty |x - x'| \|h^{(\lfloor s \rfloor -k)}\|_{\infty} \wedge (2 \|h^{(k)}\|_\infty\|h^{(\lfloor s \rfloor -k)}\|_{\infty})  \\
& + d' \|h^{(\lfloor s \rfloor -k) + 1}\|_\infty |y - y'| \|h^{k}\|_{\infty} \wedge (2\|h^{(\lfloor s \rfloor -k)}\|_\infty \|h^{k}\|_{\infty})\\
& \leq C(1 \wedge d' \sqrt{(x - x')^2 + (y - y')^2})\\
& \leq C (d' \sqrt{(x - x')^2 + (y - y')^2})^{s - \lfloor s \rfloor},
\end{align*}
where in the last inequality we used that $(1 \wedge u)^a \leq u^a$ for $u > 0$ and $0 \leq a \leq 1$. We can handle the second expression in the bracket above in a similar way. 

In addition it is clear that any lower order $k \leq \lfloor s \rfloor$ partial derivatives with respect to $x$ and $y$ of $q(x,y|z)$ are bounded by $(\sqrt{d})^{d_Z}\rho^3 \|h\|^{d_Z}_{\infty} \sqrt{d'}^2 (d')^{k} C$ for some constant $C$ which will depend on the function $h$.

It therefore suffices that $\sqrt{d}^{d_Z}\rho^3 (\sqrt{d'})^2 (d')^s$ to be smaller than a constant and we will have both conditions satisfied. Now we write down the likelihood ratio between the null and the alternative mixing over all choices of Rademacher vector and matrix $\nu, \Delta$:
\begin{align*}
W = \EE_{\nu,\Delta} \prod_{i = 1}^n (1 + \gamma_{\Delta}(X_i, Y_i) \eta_{\nu}(Z_i)).
\end{align*}

The second moment of $W$ is 
\begin{align*}
\EE W^2 & = \EE_{\nu, \nu',\Delta,\Delta'}  \prod_{i = 1}^n \EE_0(1 + \gamma_{\Delta}(X_i, Y_i) \eta_{\nu}(Z_i))(1 +\gamma_{\delta'}(X_i, Y_i) \eta_{\nu'}(Z_i))\\
& = \EE_{\nu, \nu',\Delta,\Delta'}  \prod_{i = 1}^n (1 + \EE_0 \gamma_{\Delta}(X_i, Y_i) \eta_{\nu}(Z_i)\gamma_{\delta'}(X_i, Y_i) \eta_{\nu'}(Z_i)),
\end{align*}
where the above follows from the fact that $\EE_0 \eta_{\nu}(Z_i) = 0$ (and that $X_i$ and $Y_i$ are independent of $Z_i$ under the null hypothesis). Continuing the identities yields
\begin{align*}
\EE W^2 & =  \EE_{\nu, \nu',\Delta,\Delta'}  \prod_{i = 1}^n (1 + \EE_0 \gamma_{\Delta}(X_i, Y_i) \gamma_{\delta'}(X_i, Y_i) \EE_0\eta_{\nu}(Z_i) \eta_{\nu'}(Z_i))\\
& = \EE_{\nu, \nu',\Delta,\Delta'}  \prod_{i = 1}^n (1 + \rho^6 \langle\Delta, \Delta'\rangle\langle \nu,\nu'\rangle),
\end{align*}
where $\langle\Delta,\Delta' \rangle= \Tr(\Delta\T \Delta')$. From here the proof can continue as in Theorem \ref{first:lower:bound}. The final expression that needs to be smaller than a constant is $(n \rho^6)^2 d^{d_Z} (d')^2$. Set $d \asymp n^{2s/((4 + d_Z)s + 2)}$, $d' = d^{1/s}$, $\rho^3 \asymp d^{-(1 + d_Z/2 + 1/s)}$. This results in a rate $\asymp 1/d \asymp n^{-2s/((4 + d_Z)s + 2)}$. 
\end{proof}

We will now sketch the details of the following theorem:

\begin{theorem}[Continuous $X,Y,Z$ Upper Bound]\label{continuous:case:upper:bound:theorem:app} Suppose $s \geq 1$ and $d_Z$ is $\leq 2$ (this latter  condition is required in order to ensure that sufficiently many bins will have at least 4 samples). Set $d = \lceil n^{2s/((4 + d_Z)s + 2)} \rceil$, $d' = d^{1/s}$ and set the threshold $\tau = \zeta(\sqrt{ d})^{d_Z}$ for a sufficiently large $\zeta$ (depending on $L$). Then, for a sufficiently
 large absolute constant $c$ (depending on $\zeta, L$),
 when
 $\varepsilon \geq c n^{-2s/((4 + d_Z)s + 2)}$,
we have that
\begin{align*}
\sup_{p \in \cP_{0,[0,1]^{2 + d_Z},\TV}(L)} \EE_p[\psi_\tau(\cD'_k)] & \leq \frac{1}{10},\\ 
\sup_{p \in \{p \in \cQ_{0,[0,1]^{2 + d_Z}, \TV}(L,s): \inf_{q \in \cP_{0,[0,1]^{2 + d_Z}}} \|p - q\|_1 \geq \varepsilon\}} \EE_p[1 - \psi_\tau(\cD'_k)] & \leq \frac{1}{10} + \exp(-n/8).
\end{align*}
\end{theorem}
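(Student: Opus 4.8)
The plan is to adapt the proof of Theorem~\ref{continuous:case:upper:bound:theorem} to $d_Z$-dimensional $Z$, replacing the partition of $[0,1]$ into $d$ intervals by a partition of $[0,1]^{d_Z}$ into $D := d^{d_Z}$ congruent cubes $\{C_m\}_{m\in[D]}$ of side $1/d$ (so that $\diam(C_m)=\sqrt{d_Z}/d$), while keeping the partition of $[0,1]$ into $d'=\lceil d^{1/s}\rceil$ intervals for discretizing $X$ and $Y$, the discretization map $g$, and the weighted-$U$-statistic test $\psi_\tau(\cD'_N)$ of~\eqref{general:discretization:test} with threshold $\tau\asymp(\sqrt d)^{d_Z}$. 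As usual I would first prove the Poissonized version ($N\sim\mathrm{Poi}(n)$) and then transfer it to the fixed-$n$ statement exactly as in the proof of Theorem~\ref{main:theorem:finite:discrete:XY}. I would also invoke the dimension-free lemma from the beginning of the proofs of Section~\ref{cont:case:upper:bounds:section}, namely that $p_{X|Z}(x|z)p_{Y|Z}(y|z)\in\cH^{2,s}(C)$ whenever $p_{X,Y|Z}(x,y|z)\in\cH^{2,s}(L)$, which holds regardless of $d_Z$.

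The two ingredients that genuinely use the geometry of the cubes $C_m$ are the null-expectation bound and the alternative lower bound, and both are obtained by transcribing the proofs of Lemmas~\ref{lem:aa} and~\ref{lem:cc} with $\diam(C_m)=\sqrt{d_Z}/d$ in place of $1/d$. For the null: if $p_{X,Y,Z}\in\cP_{0,[0,1]^{2+d_Z},\TV}(L)$, then for the $A_m$-split distributions indexed by the $X,Y$-bins one gets $\|q_{A_m}-q_{\Pi,A_m}\|_2^2\le(c_1 L/d)^4$ with $c_1$ depending only on $d_Z$, the only change being that the TV-Lipschitzness of $p_{X|Z=z},p_{Y|Z=z}$ is now measured in $\|z-z'\|_2$ over a set of Euclidean diameter $\sqrt{d_Z}/d$ (and a $\chi^2$ analogue of Lemma~\ref{lem:bb} goes through identically should one wish to enlarge the null class). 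For the alternative: picking $z^*_m\in C_m$ maximizing $z\mapsto\int_{[0,1]^2}|p_{X,Y|Z}(x,y|z)-p_{X|Z}(x|z)p_{Y|Z}(y|z)|\,dx\,dy$, the proof of Lemma~\ref{lem:cc} yields $\|q(m)-q_\Pi(m)\|_1\ge b_1\sup_{z\in C_m}\int_{[0,1]^2}|p_{X,Y|Z}-p_{X|Z}p_{Y|Z}|-b_2/d$, where the three $Z$-binning error terms are each at most $L\diam(C_m)=\sqrt{d_Z}L/d$ and the $X,Y$-discretization error is controlled by Lemma~\ref{super:crucial:smoothing:lemma} applied with $\kappa=d'=\lceil d^{1/s}\rceil$ to the function $p_{X,Y|Z}(\cdot,\cdot|z^*_m)-p_{X|Z}(\cdot|z^*_m)p_{Y|Z}(\cdot|z^*_m)\in\cH^{2,s}(2C)$, contributing $b_2(d')^{-s}\lesssim b_2/d$.

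With these two facts in hand, the expectation and variance analysis of the statistic $T$ is exactly the one carried out in the proof of Theorem~\ref{main:theorem:scaling:discrete:XY} (i.e. its Poissonized version), run with the substitutions $d\rightsquigarrow D=d^{d_Z}$ for the $Z$-bin count — hence in the threshold and in every variance term — and $\ell_1=\ell_2=d'$. A two-moment (Chebyshev) argument conditional on the bin sizes $\sigma$ and the flattening weights $R$ then separates the null from the $\varepsilon$-separated alternative once $\zeta$ is taken large, precisely as in Lemmas~\ref{null:hypothesis:lemma} and~\ref{alternative:hypothesis:lemma}. The remaining routine step is to redo the rate bookkeeping: substituting $d\rightsquigarrow d^{d_Z}$, $\ell_1=\ell_2=d^{1/s}$, and the choice $d\asymp n^{2s/((4+d_Z)s+2)}$ into the conditions on $\varepsilon$ arising from the sub-cases of the power analysis (the analogues of~\eqref{epsilon:condition:one}--\eqref{epsilon:condition:four}), and checking that each of them is implied by $\varepsilon\gtrsim n^{-2s/((4+d_Z)s+2)}=1/d$, using the ancillary facts $d^{d_Z}\lesssim n$ and $d^{1/s}\lesssim n$.

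The main obstacle, and the source of the restriction $d_Z\le 2$, is exactly this last bookkeeping. As in the discrete multivariate case of Theorem~\ref{main:theorem:finite:discrete:XY:app}, most of the sub-case conditions collapse to $\varepsilon\gtrsim 1/d$, but the condition coming from the regime of bins with very few samples (the fourth-power term, i.e.\ the analogue of the second alternative in~\eqref{epsilon:condition:four}) scales like $(\text{(number of }Z\text{-bins)})^{7/8}(\ell_1\ell_2)^{1/4}/n\asymp d^{7d_Z/8+1/(2s)}/n$, and this is dominated by $1/d$ for every $s\ge1$ only when $d_Z\le 2$ (for small $s$ one could allow $d_Z=3$, but $d_Z\le 2$ is the clean blanket condition); moreover $d_Z\le 2$ also guarantees $d^{d_Z}\lesssim n$, so that a positive fraction of the $d^{d_Z}$ bins contain at least four observations and the $U$-statistic construction is non-vacuous. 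Everything else is a mechanical dimensional update of the one-dimensional argument, so I expect the proof to be short once the substituted rate inequalities have been verified.
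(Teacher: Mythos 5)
Your proposal follows essentially the same route as the paper's own proof: partition $[0,1]^{d_Z}$ into $d^{d_Z}$ cubes, reuse the weighted-U-statistic analysis of Theorem~\ref{main:theorem:scaling:discrete:XY} with $\ell_1=\ell_2=d^{1/s}$, bound the null expectation via the Lemma~\ref{lem:aa}-type TV argument (which is where $s\ge1$ enters), handle the alternative via the Lemma~\ref{lem:cc}/Lemma~\ref{super:crucial:smoothing:lemma} combination, and redo the rate bookkeeping with $d\rightsquigarrow d^{d_Z}$. You also correctly identify the analogue of condition~\eqref{epsilon:condition:four} as the binding term forcing $d_Z\le 2$ (together with $d^{d_Z}\lesssim n$ for the bins), which is exactly how the paper obtains the restriction.
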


\begin{proof}[Proof of Theorem \ref{continuous:case:upper:bound:theorem:app}]

For what follows set for brevity $\ell_1 = \ell_2 = d^{1/s}$. We only need minor modifications in the variance calculation to track the dimension of $Z$. 

We will now select a threshold at the level of $\zeta (\sqrt{d})^{d_Z}$, and will give conditions on the minimum sample size for each of the cases. We will use $\gtrsim$ in the sense bigger up to an absolute constant. We will assume that $\frac{\varepsilon}{2} -  \frac{C}{d} \geq \frac{\varepsilon}{4}$ so that $\eta \geq \frac{\varepsilon}{4}$. %If we take equally spaced grid this is equivalent to $\varepsilon/3 - \frac{3}{2} L/d\geq \varepsilon/6$
\begin{itemize}
\item In the first sub-case we have to satisfy $\frac{n \eta^2}{\sqrt{\ell_1\ell_2}} \gtrsim \zeta(\sqrt{ d})^{d_Z}$. This is ensured when 
\begin{align}\label{epsilon:condition:one:app}
\varepsilon \gtrsim \sqrt{\frac{(\sqrt{d})^{d_Z}\sqrt{\ell_1 \ell_2}}{n}} \vee \frac{1}{d}.
\end{align}
\item In the second sub-case we have $\frac{n^{3/2}\eta^2}{ \ell_1\sqrt{\ell_2}\frac{ \sqrt{n\ell_1}}{\ell_2}} \gtrsim \zeta(\sqrt{ d})^{d_Z}$ or $\frac{n^{3/2}\eta^2}{ \ell_1\sqrt{\ell_2}\sqrt{d^{d_Z}}} \gtrsim \zeta(\sqrt{ d})^{d_Z},$
This is implied when 
\begin{align}\label{epsilon:condition:two:app}
\varepsilon \gtrsim \min\bigg(\sqrt{ \frac{(\sqrt{ d})^{d_Z}\ell_1^{3/2}}{\sqrt{\ell_2}n}}, \sqrt{\frac{ \sqrt{d}^{d_Z} \sqrt{d^{d_Z}} \ell_1 \sqrt{\ell_2}}{n^{3/2}}}, \frac{d^{d_Z} \ell_1}{n} \bigg) \vee \frac{1}{d}.
\end{align}
\item In the third sub-case case we need $\frac{n^2 \eta^2}{36 \ell_1\ell_2d^{d_Z}} \gtrsim (\sqrt{ d})^{d_Z}$ 
which happens when 
\begin{align}\label{epsilon:condition:three:app}
\varepsilon \gtrsim \min\bigg(\frac{ \zeta^{1/4} \sqrt{d^{d_Z} \sqrt{d}^{d_Z}\ell_1\ell_2}}{n}, \frac{d^{d_Z}\ell_2}{n}\bigg) \vee \frac{1}{d},
\end{align}
where the last condition enforces when this case is not feasible. 
\item Finally in the second case we need $\frac{n^4 \eta^4 }{16\ell_1 \ell_2  d^{3d_Z}} \gtrsim \sqrt{\zeta d^{d_Z}}$, which is implied when 
\begin{align}\label{epsilon:condition:four:app}
\varepsilon \gtrsim \min\bigg(\frac{\zeta^{1/8} (d^{3d_Z} \sqrt{d}^{d_Z}\ell_1\ell_2)^{1/4}}{n}, \frac{d^{d_Z}}{n}\bigg)\vee \frac{1}{d}
\end{align}
\end{itemize}

\noindent \textbf{Analysis of the Variance.} 

Now we derive a bound on the variance of the statistic 
$$
\sum_{m \in [d^{d_Z}]}\sigma_m \omega_m\mathbbm{1}(\sigma_m \geq 4) \frac{\varepsilon^2_m}{\ell_1\ell_2}
$$
Recall now that $\sigma_m \sim \operatorname{Poi}(\alpha_m)$ are independent and therefore
\begin{align}
\Var\bigg[\sum_{m \in [d^{d_Z}]}\sigma_m \omega_m\mathbbm{1}(\sigma_m \geq 4) \frac{\varepsilon^2_m}{\ell_1\ell_2}\bigg] & = \sum \Var\bigg[\sigma_m \omega_m\mathbbm{1}(\sigma_m \geq 4) \frac{\varepsilon^2_m}{\ell_1\ell_2}\bigg] \nonumber \\
& = \sum_{m \in [d^{d_Z}]}\frac{\varepsilon^4_m}{\ell_1^2\ell^2_2} \Var (\sigma_m \omega_m\mathbbm{1}(\sigma_m \geq 4)) \nonumber \\
& \leq \frac{C'}{\ell_1\ell_2} \sum_{m \in [d^{d_Z}]}\frac{\varepsilon^2_m}{\ell_1\ell_2} \EE (\sigma_m \omega_m\mathbbm{1}(\sigma_m \geq 4)) \nonumber\\
& \leq \frac{C'}{\ell_1\ell_2} \EE\bigg[\sum_{m \in [d^{d_Z}]}\sigma_m \omega_m\mathbbm{1}(\sigma_m \geq 4) \frac{\varepsilon^2_m}{\ell_1\ell_2}\bigg],\label{D:variance:bound:app}
\end{align}
where in the next to last inequality we used Claim 2.2. of \cite{canonne2018testing}, and $C'$ is an absolute constant described in that claim. 

We now bound the variance of the statistic $T$ (recall the definition \eqref{T:stat:def}). Since $T_m$ (recall definition \ref{Tm:def}) are independent given $\sigma_m, R_m$ we have that
$$
\Var[T | \sigma, R] = \sum_{m \in [d^{d_Z}]} \Var[T_m | \sigma_m, R_m].
$$
Next, by definition of $T_m$ we have that $ \Var[T_m | \sigma_m, R_m] = \sigma^2_m \omega^2_m \mathbbm{1}(\sigma_m \geq 4) \Var[U_m | R_m]$. Using the bound on the variance $\Var[U_m | R_m]$ \eqref{bound:on:the:variance}, we have to control four terms. We do so below. Denote
$$
E := \sum_{m \in [d^{d_Z}]} \omega_m^2 \|q_{\Pi,A_m}(m)\|_2^2\mathbbm{1}(\sigma_m \geq 4)
$$

The first term we need to control is 
\begin{align*}
\MoveEqLeft \sum_{m \in [d]} \sigma^2_m \omega^2_m \mathbbm{1}(\sigma_m \geq 4) \frac{ \|q_{A_m}(m) - q_{\Pi,A_m}(m)\|_2^2\|q_{\Pi,A_m}(m)\|_2}{\sigma_m} \\
&\leq \sqrt{\bigg(\sum_{m \in [d]} \omega_m^2 \|q_{\Pi,A_m}(m)\|_2^2\mathbbm{1}(\sigma_m \geq 4)\bigg)}\sqrt{\sum_{m \in [d]} (\sigma_m \omega_m\|q_{A_m}(m) - q_{\Pi,A_m}(m)\|_2^2)^2 \mathbbm{1}(\sigma_m \geq 4)}\\
&\leq E^{1/2}\sum_{m \in [d^{d_Z}]} \sigma_m \omega_m\|q_{A_m}(m) - q_{\Pi,A_m}(m)\|_2^2 \mathbbm{1}(\sigma_m \geq 4)\\
& = E^{1/2} \EE[T | \sigma, R],
\end{align*}
where we used Cauchy-Schwarz and the monotonicity of $L_p$ norms. The second term is 
\begin{align*}
\MoveEqLeft \sum_{m \in [d^{d_Z}]} \sigma^2_m \omega^2_m \mathbbm{1}(\sigma_m \geq 4) \frac{ \|q_{A_m}(m) - q_{\Pi,A_m}(m)\|_2^3}{\sigma_m} = \sum_{m \in [d^{d_Z}]} \sqrt{\frac{\omega_m}{\sigma_m}}\sigma^{3/2}_m \omega^{3/2}_m \mathbbm{1}(\sigma_m \geq 4)  \|q_{A_m}(m) - q_{\Pi,A_m}(m)\|_2^3 \\
& \leq \sum_{m \in [d^{d_Z}]} (\sigma_m \omega_m \mathbbm{1}(\sigma_m \geq 4)  \|q_{A_m}(m) - q_{\Pi,A_m}(m)\|_2^2)^{3/2}\\
& \leq \bigg(\sum_{m \in [d^{d_Z}]} \sigma_m \omega_m \mathbbm{1}(\sigma_m \geq 4)  \|q_{A_m}(m) - q_{\Pi,A_m}(m)\|_2^2\bigg)^{3/2}\\
& = \EE[T | \sigma, R]^{3/2},
\end{align*}
where we used that $\omega_m \leq \sigma_m$ by definition and the monotonicity of the $L_p$ norms. The third term is
\begin{align*}
\MoveEqLeft \sum_{m \in [d^{d_Z}]} \sigma^2_m \omega^2_m \mathbbm{1}(\sigma_m \geq 4) \frac{ \|q_{\Pi,A_m}(m)\|_2^2}{\sigma^2_m} = E.
\end{align*}
Finally the fourth term is 
\begin{align*}
\MoveEqLeft \sum_{m \in [d^{d_Z}]} \sigma^2_m \omega^2_m \mathbbm{1}(\sigma_m \geq 4) \frac{ \|q_{A_m}(m) - q_{\Pi,A_m}(m)\|_2^2}{\sigma^2_m} \leq \sum_{m \in [d^{d_Z}]} \omega_m\sigma_m \mathbbm{1}(\sigma_m \geq 4) \|q_{A_m}(m) - q_{\Pi,A_m}(m)\|_2^2\\
& = \EE[T | \sigma, R]. 
\end{align*}
We conclude that
\begin{align}\label{varT:bound:app}
\Var[T | \sigma, R] \leq C(E + (E^{1/2} + 1)\EE[T | \sigma, R] + \EE[T | \sigma, R]^{3/2}).
\end{align}
Now we will show that $\EE[E | \sigma] = O(\min(d^{d_Z}, N))$. We start by analyzing the expectation of one term from $E$ below.
\begin{align*}
\EE[\omega_m^2 \|q_{\Pi,A_m}(m)\|_2^2\mathbbm{1}(\sigma_m \geq 4) | \sigma_m] = \omega_m^2\mathbbm{1}(\sigma_m \geq 4)\EE[ \|q_{\Pi,A_m}(m)\|_2^2 | \sigma_m] \leq  \frac{\omega_m^2\mathbbm{1}(\sigma_m \geq 4) }{(1 + t_{1,m})(1 + t_{2,m})},
\end{align*}
where we applied \eqref{bound:on:l2:norm:qpi}. Recall that $t_{i,m} = \min((\sigma_m - 4)/4, \ell_i)$ and $\omega^2_m = \min(\sigma_m, \ell_1)\min(\sigma_m, \ell_2)$. Thus 
$$
\frac{\omega_m^2}{(1 + t_{1,m})(1 + t_{2,m})} \leq O(1).
$$
We conclude that 
\begin{align}\label{E:given:sigma:bound:app}
\EE[E | \sigma] = \EE[\sum_{m \in [d^{d_Z}]} \omega_m^2 \|q_{\Pi,A_m}(m)\|_2^2\mathbbm{1}(\sigma_m \geq 4) | \sigma_m] \leq O(1)\sum_{m \in [d^{d_Z}]} \mathbbm{1}(\sigma_m \geq 4)  \leq O(1)\min(d^{d_Z},N).
\end{align}

We have the following result

\begin{lemma}\label{variance:bound:flattening:app}Suppose $\inf_{q \in \cP_{0, [0,1]}'}\|p_{X,Y,Z} - q\|_1 \geq \varepsilon$, where $\varepsilon \geq  C\frac{L}{d}$ and it satisfies conditions \eqref{epsilon:condition:one:app}, \eqref{epsilon:condition:two:app}, \eqref{epsilon:condition:three:app} and \eqref{epsilon:condition:four:app}. Then with probability at least $19/20$ over $\sigma, R$ we have $\EE [T | \sigma, R] = \Omega(\sqrt{\zeta d^{d_Z}})$ and 
\begin{align}\label{variance:bound:flattening:app:equation}
\Var[T | \sigma, R] \leq O(d^{d_Z} + (\sqrt{d^{d_Z}} + 1)\EE[T | \sigma, R] + \EE[T | \sigma, R]^{3/2}).
\end{align}
\end{lemma}

\begin{proof} Let 
$$
D = \sum_{m \in [d^{d_Z}]}\sigma_m \omega_m\mathbbm{1}(\sigma_m \geq 4) \frac{\varepsilon^2_m}{\ell_1\ell_2}.
$$
We first showed that $\EE[T | \sigma, R] \geq D$ for all $\sigma, R$ \eqref{Tm:lower:bound}. We also derived that $\Var[D] \leq O(\EE[D]/(\ell_1\ell_2))$ \eqref{D:variance:bound:app}, and that for the selected regimes of sample size $\EE[D] \gtrsim \sqrt{\zeta d^{d_Z}}$. Therefore we have 
\begin{align*}
\PP_{\sigma, R}\bigg(\EE[T | \sigma, R] \leq \kappa \sqrt{\zeta d^{d_Z}}\bigg)\leq \PP_{\sigma, R}(D \leq O(\EE[D])) \leq O\bigg(\frac{\Var[D]}{(\EE[D])^2}\bigg) = O(1/(\sqrt{\zeta d^{d_Z}}\ell_1 \ell_2)) \leq 1/40,
\end{align*}
for some small enough absolute constant $\kappa$. For the second statement we will use bound \eqref{varT:bound:app}. By \eqref{E:given:sigma:bound:app} we have  
$$
\EE[\EE[E | \sigma]] \leq O(1)\EE \min(d^{d_Z},N) \leq O(1) \min(d^{d_Z}, n). 
$$
Thus by Markov's inequality $E \leq 200 \EE[E] = O(1) d^{d_Z}$ with probability at least $39/40$. Therefore
$$
\PP_{\sigma,R}(\Var[T | \sigma, R] \geq \kappa'(d^{d_Z} + (\sqrt{d^{d_Z}} +1)\EE[T | \sigma, R] + \EE[T | \sigma, R]^{3/2})) \leq 1/40.
$$
A union bound over the two events completes the proof. 
\end{proof}

We now turn to bound the expectation and variance under the null hypothesis. 

\begin{lemma}\label{seconD:variance:bound:app:flattening}  Suppose $p_{X,Y,Z} \in \cP_{0,[0,1]^3, \TV}(L)$. Then with probability at least $19/20$ we have $\EE[T | \sigma, R] \leq C \frac{L^2n}{d^2\sqrt{\ell_1\ell_2}}$ and the variance $\Var[T | \sigma, R]$ satisfies \eqref{variance:bound:flattening:app:equation}. 
\end{lemma}

\begin{proof}
Let us start with bounding $\EE[T | \sigma,R]$ from above. Recall that 
$$
\EE[T | \sigma, R] = \sum_{m \in [d]} \sigma_m \omega_m \|q_{A_m}(m) - q_{\Pi,A_m}(m)\|_2^2\mathbbm{1}(\sigma_m \geq 4)
$$
We will now control $\EE[\|q_{A_m}(m) - q_{\Pi,A_m}(m)\|_2^2 | \sigma_m]$. Recall that%Using the fact that $a^m_{xy} \geq 0$, we have
\begin{align}\label{to:upper:bound:new:app}
 \|q_{A_m}(m) - q_{\Pi,A_m}(m)\|_2^2 & = \sum_{x,y} \frac{(q_{xy}(m) - q_{x\cdot}(m) q_{\cdot y}(m))^2}{1 + a^m_{xy}}\\
& \leq \sum_{x,y} (q_{xy}(m) - q_{x\cdot}(m) q_{\cdot y}(m))^2\nonumber
\end{align}
Using the strategy of Lemma \ref{lem:aa} we can bound the above by $C/d^4$ for some constant depending on $L$. 
Therefore,
\begin{align*}
\EE[T | \sigma] & = \sum_{m \in [d^{d_Z}]} \sigma_m \omega_m\mathbbm{1}(\sigma_m \geq 4) \EE_{A_m} \|q_{A_m}(m) - q_{\Pi,A_m}(m)\|_2^2 \\
&\leq C/d^4   \sum_{m \in [d^{d_Z}]} \sigma_m \omega_m\mathbbm{1}(\sigma_m \geq 4) \leq C/d^4   N\sqrt{\ell_1\ell_2}. 
\end{align*}
Hence $\EE T \leq C \frac{n\sqrt{\ell_1\ell_2}}{d^4} $. By Markov's inequality we therefore have
\begin{align*}
\PP_{\sigma, R} \bigg(\EE[T | \sigma,R]  \geq 40 C \frac{n\sqrt{\ell_1\ell_2}}{d^4}\bigg) \leq \frac{1}{40}. 
\end{align*}
Since $\ell_1 = \ell_2 = d^{1/s}$ and $s \geq 1 $we have that the above is smaller than $ C \frac{n\sqrt{\ell_1\ell_2}}{d^4} \leq  C \frac{n}{d^2\sqrt{\ell_1\ell_2}}$, which completes the proof. 
\end{proof}

\noindent \textbf{Putting Things Together.} For what follows suppose that $d$ is selected so that 
\begin{align}\label{d:n:condition:app}
\frac{n}{d^2\sqrt{\ell_1\ell_2}} \asymp (\sqrt{d})^{d_Z}.
\end{align}
%and that $\ell_1 d \ll n$ or $\ell_1 \asymp \ell_2$. 

\begin{lemma}\label{null:hypothesis:lemma:app} If $p_{X,Y,Z} \in \cP_{0,[0,1]^3, \TV}(L)$ and that \eqref{d:n:condition:app} holds. Then for a sufficiently large absolute constant $\alpha$ we have
\begin{align*}
\PP\bigg(T \geq  (\alpha + 1) \frac{C' n}{d^2 \sqrt{\ell_1\ell_2}}\bigg) \leq \frac{1}{10}.
\end{align*}
\end{lemma}

\begin{proof}
Let $T' = (T | \sigma, R)$. Denote the event from Lemma \ref{seconD:variance:bound:app:flattening} with $\cE$. Then we have 
\begin{align*}
\PP\bigg(T \geq  (\alpha + 1) \frac{C' n}{d^2 \sqrt{\ell_1\ell_2}}\bigg) & = \PP\bigg(T'  \geq  (\alpha + 1)\frac{C' n}{\sqrt{\ell_1\ell_2}d^2}\bigg) \\
& \leq  \PP\bigg(T' \geq (\alpha + 1) \frac{C' n}{\sqrt{\ell_1\ell_2} d^2} \bigg | \cE\bigg) + \PP(\cE^c). 
\end{align*}
Now we have
\begin{align*}
\PP\bigg(T' \geq  (\alpha + 1) \frac{C' n}{\sqrt{\ell_1\ell_2} d^2} \bigg| \cE\bigg) & \leq \PP\bigg(T' - \EE[T |\sigma, R] \geq \alpha \frac{C'n}{\sqrt{\ell_1\ell_2} d^2} \bigg| \cE\bigg) \leq \frac{\Var[T' | \cE]}{(\alpha \frac{C'n}{\sqrt{\ell_1\ell_2}d^2})^2}\\
& \leq \frac{O(d^{d_Z} + (\sqrt{d^{d_Z}} + 1)\EE[T | \sigma, R] + \EE[T | \sigma, R]^{3/2})}{(\alpha \frac{C'n}{\sqrt{\ell_1\ell_2}d^2})^2}\\
& \leq \frac{O(d^{d_Z} + (\sqrt{d^{d_Z}} + 1)(\frac{C' n}{\sqrt{\ell_1\ell_2}d^2}) + (\frac{C'n}{\sqrt{\ell_1\ell_2}d^2})^{3/2})}{(\alpha \frac{C'n}{\sqrt{\ell_1\ell_2}d^2})^2}\\
& \leq \frac{1}{20}
\end{align*}
where the above holds when $\frac{n}{\sqrt{\ell_1\ell_2}d^2} \asymp (\sqrt{d})^{d_Z}$ for a large enough $\alpha$.
\end{proof}

\begin{lemma}\label{alternative:hypothesis:lemma}
If $p_{X,Y,Z}$ is such that $\inf_{q \in \cP_{0,[0,1]}'}\|p_{X,Y,Z}-q\|_1 \geq \varepsilon$, and the conditions of Lemma \ref{variance:bound:flattening:app} hold. Then for a small enough absolute constant $\kappa$ we have that
\begin{align*}
\PP(T \leq \kappa \zeta(\sqrt{ d})^{d_Z}) \leq \frac{1}{10}. 
\end{align*} 
\end{lemma}
\begin{proof}
We apply Chebyshev's inequality to $T' = (T | \sigma, R)$. Let $\cE$ be the event of Lemma \ref{variance:bound:flattening:app}. Set $\tau = \kappa \zeta(\sqrt{ d})^{d_Z}$ for some small enough absolute constant $\kappa$. 
\begin{align*}
\PP(T \leq \tau) &= \PP(T' \leq \tau) \leq \PP(T'\leq \tau| \cE) + \PP(\cE^c). \\
&\leq \PP\bigg(|T' - \EE[T|\sigma, R]| \geq \frac{1}{2} \EE[T | \sigma, R] | \cE\bigg) + \frac{1}{20}.
\end{align*}
Next
\begin{align*}
\PP\bigg(|T' - \EE[T|\sigma, R]| \geq \frac{1}{2} \EE[T | \sigma, R] | \cE\bigg) \leq O\bigg(\frac{d^{d_Z} + (\sqrt{d^{d_Z}} + 1)\EE[T | \sigma, R] + \EE[T | \sigma, R]^{3/2}}{\EE[T | \sigma, R]^2}\bigg)\\
= O(1/\zeta^{1/2}) \leq \frac{1}{20},
\end{align*}
for a large enough value of $\zeta$. 
\end{proof}

Combining Lemmas \ref{null:hypothesis:lemma:app} and \ref{alternative:hypothesis:lemma} we have that if $\kappa \zeta(\sqrt{ d})^{d_Z} \geq (\alpha + 1) n C'/(\sqrt{\ell_1\ell_2}d^2)\asymp (\alpha +1) C' (\sqrt{d})^{d_Z}$, there will be a gap between the values under the null and the alternative hypothesis. This happens when $\zeta$ is large enough. Notice that $\frac{n C'}{\sqrt{\ell_1\ell_2} d^2} \asymp (\sqrt{d})^{d_Z}$ is equivalent to $d \asymp n^{\frac{2s}{(4+ d_Z)s + 2}}$. 
Plugging this in all the inequalities \eqref{epsilon:condition:one:app}, \eqref{epsilon:condition:two:app}, \eqref{epsilon:condition:three:app} and \eqref{epsilon:condition:four:app}, results in new inequalities that need to hold. %We list those below, and in addition we recall that $d \ell_1 \lesssim n$. 
Condition \eqref{epsilon:condition:one:app} is equivalent to
\begin{align*}
\varepsilon \gtrsim n^{-\frac{2s}{(4 + d_Z)s + 2}}. 
\end{align*}
Since the first term of \eqref{epsilon:condition:two:app} is of the same order as the term in \eqref{epsilon:condition:one:app}, the rate will remain unchanged. %and using the fact that $d^{d_Z} \ell_1 \lesssim n$  we have
%\begin{align*}
%\varepsilon \gtrsim  n^{-\frac{2s}{(4 + d_Z)s + 2}},
%\end{align*}
%since $\sqrt{\frac{\sqrt{d}^{d_Z} \sqrt{d} \ell_1 \sqrt{\ell_2}}{n^{3/2}}} = \sqrt[4]{\frac{d \ell_1}{n}}\sqrt{\frac{\sqrt{d}^{d_Z} \sqrt{\ell_1 \ell_2}}{n}}$. 
Next, taking \eqref{epsilon:condition:three:app}, we have
%\begin{align*}
%\varepsilon \gtrsim  n^{-\frac{2s}{(4 + d_Z)s + 2}}%\bigg(\frac{\sqrt{\ell_1 \ell_2}}{n}\bigg)^{7/10},
%\end{align*}
$\frac{\sqrt{d^{d_Z} \sqrt{d}^{d_Z}\ell_1\ell_2}}{n} = \frac{d^{3d_Z/4 + 1/s}}{n}$.%\sqrt{d^{d_Z} \sqrt{\ell_2\ell_2}/n}\frac{\sqrt{\sqrt{d}^{d_Z}\sqrt{\ell_1\ell_2}}}{n} \leq \frac{\sqrt{\sqrt{d}^{d_Z}\sqrt{\ell_1\ell_2}}}{n}$, when $d^{d_Z}  \ell_1 \lesssim n$ (which is the case as required by assumption). 
Finally the first term in \eqref{epsilon:condition:four:app} equals to $\frac{d^{7d_Z/8 + 1/s}}{n}$. It can be checked that is dominated by the first term in \eqref{epsilon:condition:two:app}, at the value of $d = n^{\frac{2s}{(4 + d_Z)s + 2}}$ whenever $\frac{5s d_Z + 4}{2((4+d_Z)s + 2)}\leq 1$, which holds when $d_Z \leq 2$.
%\begin{align*}
%\varepsilon \gtrsim n^{-\frac{2s}{(4 + d_Z)s + 2}},
%\end{align*}
%which can be checked with a direct calculation.

The rest of the proof can continue as in the proof of Theorem \ref{continuous:case:upper:bound:theorem} and Theorem \ref{continuous:case:upper:bound:theorem:Poi}. We omit the details.  
\end{proof}

\end{document}